\newcommand{\myprob}[1]{\mathbb P \left[ #1 \right]}
\newcommand{\probCond}[2]{\mathbb P \left[ #1 \left| #2 \right. \right]}
\newcommand{\omid}[1]{\mathbb E \left[ #1 \right]}
\newcommand{\omidCond}[2]{\mathbb E \left[ #1 \left| #2 \right. \right]}
\newcommand{\identity}[1]{1_{#1}}
\newcommand{\bs}[1]{\boldsymbol{#1}}
\newcommand{\card}[1]{\# #1}
\newcommand{\sif}{\sim_{\ff}}
\newcommand {\ff}{f}
\renewcommand{\L}[2][]{{\mathcal L}^{#2}_{#1}}
\newcommand{\ft}{\texttt{FT}}
\newcommand{\eft}{\texttt{EFT}}
\newcommand{\gw}{\texttt{GW}}
\newcommand{\gwt}{\texttt{GWT}}
\newcommand{\egw}{\texttt{EGW}}
\newcommand{\egwt}{\texttt{EGWT}}
\newcommand{\emgw}{\texttt{EMGW}}
\newcommand{\emgwt}{\texttt{EMGWT}}
\theoremstyle{theorem}
\newtheorem{theorem}{Theorem}[section]
\newtheorem{lemma}[theorem]{Lemma}
\newtheorem{proposition}[theorem]{Proposition}
\newtheorem{corollary}[theorem]{Corollary}
\theoremstyle{definition}
\newtheorem{definition}[theorem]{Definition}
\newtheorem{example}[theorem]{Example}
\newtheorem{remark}[theorem]{Remark}
\theoremstyle{remark}
\numberwithin{equation}{section}
\begin{document}
	\title{Eternal Family Trees\\\hspace{-.45cm}and Dynamics on Unimodular Random Graphs}
	%\title{ETERNAL FAMILY TREES and DYNAMICS on UNIMODULAR RANDOM GRAPHS}
	\author{Francois Baccelli}
	\address{The University of Texas at Austin, USA}
	\curraddr{}
	\email{baccelli@math.utexas.edu}
	\thanks{}
	
	\author{Mir-Omid Haji-Mirsadeghi}
	\address{Sharif University of Technology, Iran}
	\curraddr{}
	\email{mirsadeghi@sharif.ir}
	\thanks{}
	
	\author{Ali Khezeli}
	\address{The University of Texas at Austin, USA}
	\curraddr{{Institute for Research in Fundamental Sciences, Iran}}
	\email{alikhezeli@ipm.ir}
	\thanks{}
	
	\subjclass[2010]{Primary 60C05; Secondary: 60K99; 05C80} 
	
	\keywords{Local weak convergence, networks, infinite graphs, branching process, criticality, offspring-invariance, point-shift, dynamical
        system, stable manifold, foliation.}
	
	\date{\today}
%	\author{Fran\c{c}ois Baccelli \footnote{The University of Texas at Austin, baccelli@math.utexas.edu}, Mir-Omid Haji-Mirsadeghi\footnote{Sharif University of Technology, mirsadeghi@sharif.ir} and Ali Khezeli \footnote{Institute for Research in Fundamental Sciences and The University of Texas at Austin, alikhezeli@ipm.ir}}

	\begin{abstract}
                This paper is centered on covariant dynamics on unimodular random graphs
		and random networks (marked graphs), namely maps from 
		the set of vertices to itself which are
		preserved by graph or network isomorphisms.
		Such dynamics are referred to as vertex-shifts here.
		
		The first result of the paper is a classification of vertex-shifts on
		unimodular random networks.
		Each such vertex-shift partitions the vertices into a collection
		of connected components and foils. The latter are discrete analogues 
                the stable manifold of the dynamics.
		The classification is based on the cardinality of the connected
		components and foils. Up to an event of zero probability,
		there are three classes of foliations in a connected component:
		F/F (with finitely many finite foils), I/F (infinitely many finite foils),
		and I/I (infinitely many infinite foils).
		
		An infinite connected component of the graph of a vertex-shift
		on a random network forms an infinite tree with one selected end which is
                referred to as an Eternal Family Tree. Such trees can be seen
                as stochastic extensions of branching processes.
		Unimodular Eternal Family Trees can be seen as
                extensions of critical branching processes. The class of
                offspring-invariant Eternal Family Trees, which is introduced in the paper,
                allows one to analyze dynamics on networks which are not necessarily unimodular.
                These can be seen as extensions of non-necessarily critical branching processes.
		Several construction techniques of Eternal Family Trees are proposed,
                like the joining of trees or moving the root to a far descendant.

                Eternal Galton-Watson Trees and Eternal Multitype Galton-Watson Trees 
                are also introduced as special cases of Eternal Family Trees satisfying
                additional independence properties. These examples allow one to show that
                the results on Eternal Family Trees unify and extend to the dependent
                case several well known theorems of the literature on branching processes.
                 
	\end{abstract}

	\maketitle
	
	\tableofcontents
	
	\section{Introduction}

	A network is a graph with marks on its vertices and edges.
	The marks can be used to assign weights to the vertices
	and lengths, capacities or directions to the edges.
	A rooted network is a network with a distinguished vertex called its root.
	The heuristic interpretation of unimodularity is that the root is
	\textit{equally likely} to be any vertex,
	even though the network may have an infinite number of vertices.
	This interpretation is made precise by assuming that the random rooted
	network satisfies the \textit{mass transport principle}.
	
	This paper studies general (that is, non-neces\-sa\-ri\-ly
	continuous, non-necessa\-ri\-ly measure preserving)
	dynamical systems on unimodular networks.
	A dynamical system on a network $G$ is a map $f:V(G)\rightarrow V(G)$,
	where $V(G)$ denotes the set of vertices of $G$. 	
	If $f$ depends on the network in a measurable and isomorphism-covariant way,
	it will be called a \textit{vertex {shift}}.
	The \textit{stable manifolds} of $f$ form a partition of $V(G)$
	which will be referred to as the foliation of
	the vertices here: two vertices $v$ and $w$ are in the same
	\textit{foil}\footnote{Usually, in the context of foliations,
		the word ``leaf'' is used. But since the paper discusses several types of
		trees where the word leaf has another meaning,
		the authors preferred to use the word ``foil'' rather than 
		``leaf''.}
	when $f^n(w)=f^n(v)$ for some $n\geq 0$.
	\textit{Connected components} give another partition of $V(G)$ in
	the sense that $v$ and $w$ are in the
	same component if $f^n(v)=f^m(w)$ for some $m,n\geq 0$.

	The first
	result of this paper is a classification of such dynamical systems
	on unimodular networks in terms of the cardinality of the
	connected components, the cardinality of the foils,
	and the limit of the images of $f$. 
	It is established that, almost surely, there are three classes
        of components:
	(F/F), those where there are finitely many foils each of
	finite cardinality, and where the images of the component under $f^n$
	tend to a limit cycle (this is the only case where connected components
	can have a cycle); (I/F), those with infinitely many foils, all with
	finite cardinality, where the images under $f^n$ tend to a
	single bi-infinite path (this is the only case where connected
	components form a two-ended tree); and (I/I), those with infinitely many foils,
	all with infinite cardinality, where the images under $f^n$ converge
	to the empty set (this is the only case where connected
	components form a one-ended tree).
	In the last case, the set of $f$-pre-images of all
	orders of any vertex in the component is a.s. finite.
	
	Palm probabilities of stationary point processes provide a subclass
	of unimodular networks, so that this classification generalizes
	that proposed in~\cite{foliation} for point-shifts on stationary
	point processes.  Several other basic ideas from the theory of
	point processes are extended to general unimodular networks.
	In particular analogues of Mecke's point stationarity theorem for
	Palm probabilities and of Neveu's exchange formula
	between two Palm probabilities are derived.

	Any infinite connected component 
	(or connected set of orbits) described above can be regarded
	as a directed tree, which is called an \textit{Eternal Family Tree}. 
	Such trees are studied in complement to the classification, with
	a focus on basic properties and construction methods.
	Such trees share similarities
	with branching processes, by regarding $f(v)$ as the \textit{parent} 
	of $v$, with the specificity that there is no vertex which is an
	ancestor of all other vertices, which explains the chosen name.
	For dynamics on unimodular networks, these
	branching like processes are always critical,
        that is, the expected number of children
        of the root of the associated Eternal Family Tree is one.
        The class of offspring-invariant Eternal Family Trees, defined in the paper,
        is used to analyze a class of dynamics on non-unimodular networks such that the
        associated Eternal Family Tree generalizes non-critical branching processes.
	A general way for obtaining unimodular 
        and offspring-invariant Eternal Family Trees
	as limits is presented. Roughly speaking, it consists in moving the root 
        of a given family tree to a \textit{typical far vertex}
	(Theorem \ref{thm:p_infty}). In particular, any unimodular
	Eternal Family Tree of class $\mathcal I/\mathcal I$ is the limit of some
	random finite tree when moving the root to a typical far vertex 
	(Proposition \ref{prop:anyI/I}).  Another general way for obtaining unimodular
	Eternal Family Trees is by joining the roots of a stationary sequence
        of rooted trees with finite mean population. Any unimodular
	Eternal Family Tree of class $\mathcal I/\mathcal F$ is the joining of a 
	sequence of such trees (Theorem \ref{thm:stationarySeqUnimodular}).
	
        The \textit{Eternal Galton-Watson Tree} is introduced in Section~\ref{sec:etbra}. It is obtained
        from the classical Galton-Watson Tree by moving the root to a typical far vertex.
        It is also characterized as the
	Eternal Family Trees which are offspring-invariant and enjoy a certain independence property.
        This framework is extended to the multi-type setting as well.
        These trees allow one to show in what sense the general results obtained on
	offspring-invariant Eternal Family Trees
        unify classical analytical results of the literature
        on branching processes and extend them to a non-independent framework. 
        Such trees are also used to define a natural generalization of the Diestel--Leader graph,
        which is an instance of generalization of the offspring-invariance framework to networks.

	The paper is structured as follows. Basic definitions on unimodular networks are
	recalled in Section~\ref{sec:unimodular}. Vertex-shifts are discussed
	in Section~\ref{sec:vertexshift-and-classification} together with the classification theorem. 
	Eternal Family Trees are discussed in Section~\ref{sec:eternal} together with
        construction methods like tree joining and moving to a far vertex.
        Section \ref{sec:beyond} gathers results on offspring-invariance.
	The Eternal Galton-Watson Tree and related examples are discussed in 
        Section \ref{sec:etbra}.
	
	The paper is based on the following literature: it leverages the framework for
	unimodular networks elaborated in \cite{processes},
	and it extends to all unimodular networks the classification of dynamics
	on the points of a stationary point process established in \cite{foliation}.
	To the best of the authors' knowledge, the unimodular
	network dynamics classification theorem,
	the notions of Eternal Family Tree and Eternal Galton-Watson Tree, which are used to state
	and embody this classification, as well as
	the constructions proposed for such trees are new.
        Each section contains a bibliographical subsection
        which gathers the connections with earlier results.

	\section{Unimodular Networks} \label{sec:unimodular}
	\subsection{Definition}
	The following definitions are borrowed from~\cite{processes}.
	A \textbf{network} is a (multi-) graph $G=(V,E)$ equipped with a complete
	separable metric space $\Xi$, called the \textbf{mark space} and with
	two maps from $V$ and $\{(v,e):v\in V, e\in E, v\sim e\}$ to $\Xi$ (the symbol $\sim$ is used for adjacent pairs of vertices or edges).
	The image of $v$ (resp. $(v,e)$) in $\Xi$ is called its \textbf{mark}.
	Note that graphs and directed graphs are special cases of networks.
	In this paper, all networks are assumed to be locally finite;
	that is, the degree of each vertex is assumed to be finite. 
	Moreover, a network is assumed to be connected except
	when explicitly mentioned. For $r\geq 0$, the closed ball of
	(graph-distance) radius $\lfloor r \rfloor$ with center $v\in V(G)$ 
	is denoted by $N_r(G,v)$. 
	
	An \textbf{isomorphism} between two graphs $G$ and $G'$ is a pair
	of bijections, one from $V$ to $V'$, and one from $E$ to $E'$, which are
	compatible, namely the end vertices of an edge in $G$ are mapped to
	the end vertices of the image of the edge in $G'$. An \textbf{isomorphism}
	between two networks is a graph isomorphism which also preserves the marks,
	namely the mark of the image of $v$ (resp. $(v,e)$) by the bijection
	is the same as that of $v$ (resp. $(v,e)$).
	A network isomorphism from $G$ to itself is referred to as an \textbf{automorphism} of $G$.
	
	A \textbf{rooted network} is a pair $(G,o)$ in which $G$ is a network
	and $o$ is a distinguished vertex of $G$ called the \textbf{root}.
	An \textbf{isomorphism} of rooted networks is a network
	isomorphism that takes the root of one to that of the other.
	Let $\mathcal G$ denote the set of isomorphism classes of
	connected and locally finite networks and $\mathcal G_*$ the set of 
	isomorphism classes of rooted, connected and locally finite networks.
	The set $\mathcal G_{**}$ is defined similarly for networks with a pair
	of distinguished vertices. The isomorphism class of a network $G$
	(resp. $(G,o)$ or $(G,o,v)$) is denoted by $[G]$
	(resp. $[G,o]$ or $[G,o,v]$). 
	The sets $\mathcal G_*$ and $\mathcal G_{**}$ can be equipped
	with a metric and its Borel sigma-field. The distance between $(G,o)$ 
	and $(G',o')$ is $2^{-\alpha}$, where $\alpha$ is the supremum
	of those $r>0$ such that there is a rooted isomorphism between
	$N_r(G,o)$ and $N_r(G',o')$ such that the distance of the marks
	of the corresponding elements is at most $\frac 1r$.
	This makes $\mathcal G_*$ a complete separable {non-compact}
	metric space.
	With respect to the Borel sigma-field on $\mathcal G_*$, measurable
	subsets of $\mathcal G_*$ and measurable functions on $\mathcal G_*$ are,
	roughly speaking, those which can be identified by looking at  finite
	neighborhoods of the root. For example, the degree of the root is a
	measurable function.  
	
	A \textbf{random network} is a random element in $\mathcal G_*$, that is,
	a measurable function from some probability space
	$(\Omega,\mathcal F,\mathbb P)$ to $\mathcal G_*$. This function will be denoted by $[\bs G,\bs o]$ with bold symbols. A probability measure $\mathcal P$
	on $\mathcal G_*$ can {also} be regarded as a random network when
	considering the identity map on the canonical probability
	space $(\mathcal G_*, \mathcal P)$. In general, e.g. the case where several random networks are
	simultaneously considered, a richer probability space is needed and the
	blackboard bold notation $\mathbb P$ will be used instead.

	For all measurable functions
	$g:\mathcal G_{**}\rightarrow \mathbb R^{\geq 0}$
	and all $[G,o]\in\mathcal G_*$, let 
	(throughout the paper, $[G,o]$ will be used in place
        of $([G,o])$ for the sake of light notation).
	\begin{eqnarray*}
	g^+_G({o})&:=&g^+[G,o]:=\sum_{v\in V(G)} g[G,o,v],\\
	g^-_G({o})&:=&g^-[G,o]:=\sum_{v\in V(G)} g[G,v,o].
	\end{eqnarray*}
	It is straightforward to see that $g^+$ and $g^-$ are well-defined measurable functions on $\mathcal G_*$.

	\begin{definition} \label{def:unimodular}
		A random network $[\bs{G}, \bs o]$ is \textbf{unimodular} if for
		all measurable functions $g:\mathcal G_{**}\rightarrow \mathbb R^{\geq 0}$,
		\begin{equation} \label{eq:unimodular}
		\omid{ g_{\bs G}^+(\bs o)} = \omid{ g_{\bs G}^-(\bs o)},
		\end{equation}
		where the expectations may be infinite.
		A probability measure on $\mathcal G_*$ is called \textbf{unimodular}
		when, by considering it as a random network, one gets a unimodular network.
	\end{definition}
	
	Note that Definition~\ref{def:unimodular} applies to the deterministic case as well.
        Therefore, it makes sense to speak about unimodularity of a deterministic rooted network or
        a deterministic network with a random root.
	
	\begin{remark}
		One can interpret $g([G,v,w])$ as the amount of mass which is sent from $v$ to $w$.  
		Using this intuition, $g^+_G(o)$ (resp. $g^-_G(o)$) can be seen as the amount of mass that goes out of (comes into) $o$, and (\ref{eq:unimodular}) expresses 
		some conservation of mass in expectation. It is referred to as the \emph{mass transport principle} in the literature.
	\end{remark}

	Many examples of unimodular networks can be found in the literature,
	see for instance~\cite{processes}.
	Here, only
	some elementary examples are recalled for the purpose
	of illustrating the new definitions.
	More elaborate examples will be discussed throughout the paper.
	
	\begin{example}[Deterministic Graphs]
		\label{ex:deterministic}
		Let $G$ be a deterministic finite graph or network and $\bs o$
		be a random vertex in $G$.
		It is easy to see that $[G, \bs o]$ is unimodular if and only if
		$\bs o$ is uniformly chosen in $V(G)$.
		This construction can be extended to infinite deterministic
		networks under some conditions,
		as discussed in detail in~\cite{processes}
		(see also examples~\ref{ex:regular} and~\ref{ex:canopy} below).
		Also, weak limits of the distributions of finite unimodular networks
		 are unimodular.
	\end{example}
	
	An \textbf{end} in a tree (\cite{Di10})
	of semi-infinite simple paths (also known as \emph{rays}), 
	where two simple paths are
	equivalent when they have infinitely many common vertices.

	\begin{example}[Regular Tree]
		\label{ex:regular}
		The deterministic $d$-regular rooted tree $T_d$ rooted at an arbitrary vertex
                is a unimodular network.
		In contrast, it can be shown that the $d$-regular rooted tree with 
		one distinguished end is only unimodular in the case $d=2$. 
	\end{example}
	
	\begin{example}[Canopy Tree]
		\label{ex:canopy}
		The Canopy Tree with offspring cardinality $d\in\mathbb N$,
		introduced in~\cite{canopy}, is the tree $C_d$ whose vertices
		can be partitioned in infinitely many layers $L_0,L_1,L_2,\ldots$
		and such that for each $i\geq 0$, each vertex $v \in L_i$
		is connected to exactly one vertex $F(v)$ in $L_{i+1}$,
		and (for $i\geq 1$) to exactly $d$ vertices in $L_{i-1}$. 
		It is a one-ended tree.
		Assuming $d\geq 2$, let $\bs o$ be a random vertex such that
		$\myprob{\bs o\in L_i}=cd^{-i}$,
		where $c=\frac {d-1}d$ (note that $[C_d,v]=[C_d,w]$ for $v,w\in L_i$).
		Then, $[C_d, \bs o]$ is a unimodular network.
		This can be proved directly from the definition; another proof is given
		in Example~\ref{ex:canopy-cutting} below.
	\end{example}

	\begin{example}[Marks: the i.i.d. Case]
		\label{ex:iidMarks}
		Let $[\bs G, \bs o]$ be a unimodular network.
		It is possible to enrich the marks of the vertices and edges
                of $[\bs G, \bs o]$
		with i.i.d. marks which are also independent of the
                network itself. By doing so,
		the new random rooted network is also unimodular
		(Lemma~4.1 in~\cite{BeLySc}).
		If $[\bs G, \bs o]$ is the usual grid structure of 
		$\mathbb Z^d$ with the origin as the root, the same holds if 
		the marks are stationary (but not necessarily i.i.d.),
		i.e. if their joint distribution is invariant under the
		translations of $\mathbb Z^d$. 
		
                More generally, graphs which are covariant with a stationary
                (marked) point process in $\mathbb R^d$ allow one to
                construct unimodular random networks in a systematic way
                by taking the root to be the origin under the Palm probability
                of the point process. See Example~9.5 of~\cite{processes} 
                for more details. Several tools of Euclidean point process theory
                have some form of continuation valid for all 
                unimodular networks. This will be discussed in
                Subsection~\ref{sec:bibUnimodular} below.

	\end{example}

	\subsection{On Subset Selection}
	\label{sec:subsetSelection}
	This subsection {mainly} formulates a preliminary result,
	the \textit{no infinite/finite inclusion lemma} 
	(Lemma~\ref{lemma:finiteSelection} below),
	which will be extensively used in the proofs,
	and which is also of independent interest.

	\begin{definition}
		\label{def:covarsubset}
		A \textbf{covariant subset (of the set of vertices)} is a map
		$S$ which associates to each network $G$ a set $S_G\subseteq V(G)$
		which is (1) covariant under network isomorphisms (that is,
		for all isomorphisms $\rho:G\rightarrow G'$, one has $\rho(S_G) = S_{G'}$),
		and (2) such that the function
		$[G,o]\mapsto \identity{\{o\in S_G\}}$ is measurable.
	\end{definition}
	
	For instance, $\{v\in V(G): d(v)=1\}$ is a covariant subset.
	More generally, for an event $A\subseteq\mathcal G_*$, 
	$S_{G}:=\{v\in V(G): [G,v]\in A\}$
	is a covariant subset. Any covariant subset is obtained in this way. Hence,
	covariant subsets can be seen as measurable subsets
	of $\mathcal G_*$. 
	\begin{lemma} \label{lemma:happensAtRoot}
		Let $[\bs G,\bs o]$ be a unimodular network and $S$ be a covariant
		subset of the vertices. Then $\mathbb P[S_{\mathbf G}\neq \emptyset]>0$
		if and only if $\mathbb P[\bs o \in S_{\mathbf G}]>0$.
	\end{lemma}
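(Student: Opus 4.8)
The plan is to prove the non-trivial implication by contraposition, using the mass transport principle~(\ref{eq:unimodular}). The implication ``$\mathbb P[\bs o\in S_{\bs G}]>0 \Rightarrow \mathbb P[S_{\bs G}\neq\emptyset]>0$'' is immediate, since the event $\{\bs o\in S_{\bs G}\}$ is contained in the event $\{S_{\bs G}\neq\emptyset\}$. It therefore suffices to show that $\mathbb P[\bs o\in S_{\bs G}]=0$ implies $\mathbb P[S_{\bs G}\neq\emptyset]=0$.

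First I would build a covariant mass transport that sends a unit mass out of every vertex toward the set $S_G$, but only on networks where $S_G$ is non-empty. For a network $G$ and a vertex $o\in V(G)$ with $S_G\neq\emptyset$, let $D_G(o)\subseteq S_G$ be the set of vertices of $S_G$ at minimal graph distance from $o$; since $G$ is connected and locally finite, $D_G(o)$ is a finite non-empty set, and since $S$ is a covariant subset and isomorphisms preserve graph distance, the map $(G,o)\mapsto D_G(o)$ respects network isomorphisms. Define $g:\mathcal G_{**}\to\mathbb R^{\geq 0}$ by $g[G,o,v]:=\identity{\{v\in D_G(o)\}}/\#D_G(o)$ when $S_G\neq\emptyset$, and $g[G,o,v]:=0$ for all $v$ when $S_G=\emptyset$. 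Measurability of $g$ on $\mathcal G_{**}$ is a routine verification (deciding $o\in S_G$ needs only a bounded neighborhood of $o$, and locating and counting the nearest $S_G$-vertices needs only an a.s.\ finite neighborhood), and I would not spell it out.

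Next I would evaluate both sides of (\ref{eq:unimodular}) for this $g$. On the outgoing side, the total mass leaving $o$ is $g^+_G(o)=\identity{\{S_G\neq\emptyset\}}$, hence $\omid{g^+_{\bs G}(\bs o)}=\mathbb P[S_{\bs G}\neq\emptyset]$. On the incoming side, $o$ receives mass only from vertices $v$ with $o\in D_G(v)$, and this forces $o\in S_G$; thus $g^-_G(o)=0$ whenever $o\notin S_G$ — and this is an honest sum of zeros, not an indeterminate form. Under the hypothesis $\mathbb P[\bs o\in S_{\bs G}]=0$ we therefore get $g^-_{\bs G}(\bs o)=0$ almost surely, so $\omid{g^-_{\bs G}(\bs o)}=0$. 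The mass transport principle then yields $\mathbb P[S_{\bs G}\neq\emptyset]=\omid{g^+_{\bs G}(\bs o)}=\omid{g^-_{\bs G}(\bs o)}=0$, which completes the argument.

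The only point requiring genuine care — and the step I expect to be the main obstacle — is checking that $g$ is well-defined and measurable: one must know that, on $\{S_{\bs G}\neq\emptyset\}$, there is a nearest $S_{\bs G}$-vertex at finite distance (true since $\bs G$ is connected) and that the minimizing set is finite (true by local finiteness), so that splitting mass evenly over $D_G(o)$ makes sense and depends measurably on $[G,o]$. Once this is in place, the conclusion is a one-line consequence of (\ref{eq:unimodular}).
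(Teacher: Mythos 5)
Your proof is correct, but it uses a different mass transport than the paper's. The paper simply takes $g[G,o,s]:=\identity{\{s\in S_G\}}$, so that $g^+_G(o)=\card{S_G}$ and $g^-_G(o)=\identity{\{o\in S_G\}}\card{V(G)}$; the mass transport principle then gives $\omid{\card{S_{\bs G}}}=\omid{\identity{\{\bs o\in S_{\bs G}\}}\card{V(\bs G)}}$, and positivity of the left side forces positivity of $\myprob{\bs o\in S_{\bs G}}$. That transport needs no geometry at all --- no nearest-point construction, no appeal to connectedness or local finiteness --- and its measurability is immediate from the definition of a covariant subset. Your version, which spreads a unit of mass uniformly over the nearest $S_G$-vertices, buys a bounded outgoing mass ($g^+\equiv\identity{\{S_G\neq\emptyset\}}$), so you read off $\myprob{S_{\bs G}\neq\emptyset}$ directly and never confront the $0\cdot\infty$ term that the paper has to wave away when $\card{V(\bs G)}=\infty$; the price is the extra well-definedness and measurability checks you correctly flag. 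One small inaccuracy in your justification: deciding $\bs o\in S_{\bs G}$ does \emph{not} in general require only a bounded neighborhood of the root --- rather, measurability of $[G,o]\mapsto\identity{\{o\in S_G\}}$ is part of the definition of a covariant subset (Definition~\ref{def:covarsubset}), and the remaining measurability of your $g$ follows by countable operations over balls $N_r(G,o)$. This does not affect the validity of the argument.
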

	\begin{proof}
		Define $g[G,o,s]:=\identity{\{s\in S_G\}}$
		(which is well-defined and measurable by the definition of $S_G$).
		Assume $\mathbb P[S_{\mathbf G}\neq \emptyset]>0$. By~\eqref{eq:unimodular},	
		\begin{eqnarray*}
			0<\omid{\card{S_{\bs G}}} & =  &\omid{\sum_{s\in V(\bs{G})} g[\bs G, \bs o, s]}
			\\ & = & \omid{\sum_{s\in V(\bs{G})} g[{\bs G}, s, \bs o]} =
			\omid{\identity{\{\bs o\in S_{\bs G}\}} \card{V(\bs G)}}.
		\end{eqnarray*}
		Therefore, $\myprob{\bs o\in S_{\bs G}}>0$
		(note that the situation when $\card{ {V(\bs G)}}=\infty$ poses no problem). 
                The converse is clear.
	\end{proof}

	\begin{definition}
		\label{def:partition}
		A \textbf{covariant (vertex) partition} is a map $\Pi$
		which associates to all networks $G$ a partition $\Pi_G$
		of $V(G)$ which is (1)
		covariant under networks isomorphisms (that is
		$\Pi_{\rho(G)}=\rho\circ \Pi_G$ for all isomorphisms $\rho$), and
		(2) such that 
		the (well-defined) subset
		$\{[G,o,s]: s\in \Pi_G(o)\}\subseteq\mathcal G_{**}$ is measurable, {where $\Pi_G(o)$ is the element of $\Pi_G$ that contains $o$.}
	\end{definition}
	
	\begin{remark}
		A covariant partition $\Pi$ is not necessarily a collection of disjoint
		covariant subsets. Indeed, in some cases there is no covariant subset
                which is always an element of $\Pi$ (in other words, one can not \textit{select} an 
                element of $\Pi$). For example, let $[\bs G, \bs o]$ be a unimodular network and
                $\Pi$ be the covariant partition consisting of the single vertices.
                If $\bs G$ is almost surely infinite, then there is no covariant subset
                which is a single vertex a.s. (see Corollary~\ref{cor:infiniteSubset} below).
                An Example with the opposite property is the trivial partition or the layers of 
                the Canopy Tree in Example~\ref{ex:canopy}. Here, the layers
                $(L_i)_{i=0}^{\infty}$ of $C_d$ form a covariant partition and each $L_i$ 
                forms a covariant subset (note that the partition and the subsets should be
                defined for all networks $G$. Here, for $G\neq C_d$, one can define them arbitrarily).
		
	\end{remark}

	The following lemma 
	will be used several times in Subsection~\ref{sec:classification} below.

	\begin{lemma}[No Infinite/Finite Inclusion] 
		\label{lemma:finiteSelection}
		Let $[\bs G,\bs o]$ be a unimodular network, $\Pi$ a covariant partition,
		and $S$ a covariant subset. Almost surely, there is no infinite element
		$E$ of $\Pi_{\bs G}$ such that $E\cap S_{\bs G}$ is finite and non-empty.
	\end{lemma}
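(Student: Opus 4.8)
The plan is to argue by contradiction using the mass transport principle: I package the ``bad'' configuration as a covariant subset, reduce the statement to one about the root via Lemma~\ref{lemma:happensAtRoot}, and then exhibit a mass transport whose outgoing mass at the root is bounded by $1$ but whose incoming mass at the root is infinite on the bad event.

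First I would introduce the covariant subset
\[
B_G := \{\,v\in V(G):\ \Pi_G(v)\ \text{is infinite and}\ 0<\#(\Pi_G(v)\cap S_G)<\infty\,\}.
\]
Covariance of $B$ is immediate from that of $\Pi$ and $S$, and measurability of $[G,o]\mapsto\identity{\{o\in B_G\}}$ holds because $\#\Pi_G(o)$ and $\#(\Pi_G(o)\cap S_G)$ are measurable $[0,\infty]$-valued functions on $\mathcal G_*$ (they are sums of measurable indicators over $V(G)$, formed exactly as $g^+$ is). I would then observe the pointwise equivalence $B_G\neq\emptyset \iff B_G\cap S_G\neq\emptyset$: if $E$ is an infinite element of $\Pi_G$ with $E\cap S_G$ finite and non-empty then $E\subseteq B_G$ and $\emptyset\neq E\cap S_G\subseteq B_G\cap S_G$, and the converse is trivial. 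Since the assertion of the lemma is precisely $\mathbb P[B_{\bs G}\neq\emptyset]=0$, applying Lemma~\ref{lemma:happensAtRoot} to the covariant subset $B\cap S$ reduces everything to proving $\mathbb P[\bs o\in B_{\bs G}\cap S_{\bs G}]=0$.

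Next I would define the transport
\[
g[G,v,s]:=\identity{\{v\in B_G\}}\,\identity{\{s\in\Pi_G(v)\cap S_G\}}\,\frac{1}{\#(\Pi_G(v)\cap S_G)},
\]
understood to be $0$ when $v\notin B_G$ (and where, when $v\in B_G$, the denominator is a positive integer), so $g$ is a well-defined measurable function $\mathcal G_{**}\to\mathbb R^{\geq0}$ with values in $[0,1]$. On the outgoing side, $g^+_G(o)=\identity{\{o\in B_G\}}\le 1$, hence $\omid{g^+_{\bs G}(\bs o)}=\mathbb P[\bs o\in B_{\bs G}]\le 1<\infty$. On the incoming side, $g^-_G(o)$ is non-zero only if $o\in S_G$; moreover if $o\in S_G$ and $\Pi_G(o)$ is such a ``bad'' element (equivalently $o\in B_G\cap S_G$), then every $v\in\Pi_G(o)$ lies in $B_G$, there are infinitely many such $v$, and each contributes the fixed positive amount $1/\#(\Pi_G(o)\cap S_G)$, so $g^-_G(o)=\infty$; in all other cases $g^-_G(o)=0$. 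Therefore $\omid{g^-_{\bs G}(\bs o)}=\infty\cdot\mathbb P[\bs o\in B_{\bs G}\cap S_{\bs G}]$. The mass transport principle~\eqref{eq:unimodular} gives $\omid{g^+_{\bs G}(\bs o)}=\omid{g^-_{\bs G}(\bs o)}$; finiteness of the left-hand side then forces $\mathbb P[\bs o\in B_{\bs G}\cap S_{\bs G}]=0$, which completes the argument.

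The difficulties here are bookkeeping rather than conceptual. The main pitfall — and the reason the transport is normalized — is that the naive choice ``send mass $1$ from each $v\in E$ to each vertex of $E\cap S_G$'' yields $g^+_G(o)=\#(\Pi_G(o)\cap S_G)\identity{\{o\in B_G\}}$, whose expectation need not be finite, so the contradiction with the mass transport principle collapses; dividing by $\#(\Pi_G(v)\cap S_G)$ keeps the outgoing mass bounded by $1$ while leaving the incoming mass infinite on the bad event. The other points to handle carefully are that the normalizing weight is only ever evaluated where it is a positive integer (so no division by zero), and the measurability verifications for $B$ and $g$, which are routine given the measurability clauses in the definitions of covariant partition and covariant subset.
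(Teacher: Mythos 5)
Your proposal is correct and follows essentially the same route as the paper's proof: the same normalized mass transport (each vertex of a bad element sends total mass $1$ split equally over the finitely many selected vertices of its class), combined with Lemma~\ref{lemma:happensAtRoot} to localize the bad event at the root, yielding bounded outgoing mass but infinite incoming mass. The only cosmetic difference is that you restrict the transport to the covariant subset $B_G$ of bad vertices, whereas the paper's $g$ omits the infiniteness indicator since $g^+\le 1$ holds anyway.
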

	
	In the above lemma, intersecting $S$ with the elements of $\Pi$
	may be regarded as selecting a (possibly empty) subset from each element
	of the partition for each network.

	\begin{proof}[Proof of Lemma~\ref{lemma:finiteSelection}]
		Let
		$$g[G,o,s]:=\frac 1{\card {(\Pi_{G}(o) \cap S_G)}}
		\identity{\{
			s\in \Pi_{G}(o) \cap S_G, 0<\card {(\Pi_{G}(o) \cap S_G)}<\infty\}}.$$
		Suppose the claim is not true. Thus, by Lemma~\ref{lemma:happensAtRoot},
		with a positive probability 
		$0<\card {(\Pi_{\bs G}(\bs o) \cap S_{\bs G})}<\infty$, $\card{\Pi_{\bs G}(\bs o)}=\infty$ and $\bs o \in S_{\bs G}$.
		Therefore, {\bf $\omid{g_{\bs G}^-(\bs o)}=\infty$} by the definition of $g$. 
		But $0\leq {g^+}\leq 1$.
		This contradicts the mass transport relation~\eqref{eq:unimodular}.
	\end{proof}

	\begin{corollary} \label{cor:infiniteSubset}
		Let $[\bs G,\bs o]$ be a unimodular network in which ${V(\bs G)}$ is almost surely infinite. Then any covariant subset $S$ of the vertices is almost surely either  empty or infinite; that is,
		\[
		\myprob{\card{S_{\bs G}}\in \{0,\infty\}}=1.
		\]
	\end{corollary}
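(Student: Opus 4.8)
The plan is to deduce Corollary~\ref{cor:infiniteSubset} directly from Lemma~\ref{lemma:finiteSelection} by choosing the right covariant partition. The only extra ingredient needed beyond the no infinite/finite inclusion lemma is the observation that, since $\bs G$ is connected (all networks here are assumed connected) and $V(\bs G)$ is a.s. infinite, the whole vertex set is a single infinite element of a suitable partition.

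Concretely, first I would let $\Pi$ be the trivial covariant partition, namely $\Pi_G := \{V(G)\}$ for every network $G$; this is obviously covariant under isomorphisms, and the associated subset $\{[G,o,s]: s\in\Pi_G(o)\} = \{[G,o,s] : s\in V(G)\} = \mathcal G_{**}$ is measurable, so $\Pi$ is indeed a covariant partition in the sense of Definition~\ref{def:partition}. Then I would apply Lemma~\ref{lemma:finiteSelection} to $[\bs G,\bs o]$, this partition $\Pi$, and the given covariant subset $S$: it yields that, almost surely, there is no infinite element $E$ of $\Pi_{\bs G}$ with $E\cap S_{\bs G}$ finite and non-empty. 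But almost surely the unique element of $\Pi_{\bs G}$ is $V(\bs G)$, which is infinite by hypothesis, and $V(\bs G)\cap S_{\bs G} = S_{\bs G}$. Hence almost surely $S_{\bs G}$ is not finite and non-empty, i.e. $S_{\bs G}$ is either empty or infinite, which is exactly the claim $\myprob{\card{S_{\bs G}}\in\{0,\infty\}}=1$.

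There is essentially no obstacle here; the step requiring the slightest care is simply checking that the trivial partition satisfies the measurability condition of Definition~\ref{def:partition} and that applying the lemma is legitimate when $E\cap S_{\bs G}$ might be empty (the lemma already allows the empty case, so emptiness is fine). One could alternatively give a self-contained proof mimicking the proof of Lemma~\ref{lemma:finiteSelection} with $g[G,o,s] := \frac{1}{\card{S_G}}\identity{\{s\in S_G,\ 0<\card{S_G}<\infty\}}$ and deriving a contradiction with the mass transport principle, but reusing Lemma~\ref{lemma:finiteSelection} is cleaner and is presumably the intended route.
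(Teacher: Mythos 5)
Your proposal is correct and coincides with the paper's own proof, which also deduces the corollary from Lemma~\ref{lemma:finiteSelection} applied to the trivial partition $\Pi_G=\{V(G)\}$; your additional checks of the measurability condition are fine but routine.
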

	
	\begin{proof}
		This is a direct corollary of Lemma~\ref{lemma:finiteSelection}
		by letting $\Pi_G$ be the trivial partition $\{V(G)\}$.
	\end{proof}
	
	The notion of subnetwork is a natural extension of that of subset.
	\begin{definition}
        \label{def:covarnet}
		A \textbf{covariant subnetwork} $H$ is a map $G\mapsto H_G$ defined for
		all networks $G$, where $H_G$ is the restriction of $G$ to a 
		covariant subset. 
		For a unimodular network $[\bs G, \bs o]$, the \textbf{intensity} of $H$
		relative to the $\bs G$ is
		defined by $\lambda_{H}^{\bs G}:=\mathbb P[\bs o\in V(H_{\bs G})]$ 
		(see Lemma~\ref{lemma:happensAtRoot} below). 	
		The conditioning of $\mathbb P$ on the event $\bs o\in V(H_{\bs G})$ 
		will be denoted by $\mathbb P_H$.
	\end{definition}

	\begin{remark}
		\label{remark-sub}
		It is easy to see that
		$[H_{\bs G}, \bs o]$, conditioned on $\bs o\in V(H_{\bs G})$,
		is a unimodular network, provided $H_{\bs G}$ is connected a.s.
	\end{remark}
	
	The following proposition gives a general connection between two covariant subnetworks.
	
	\begin{proposition}[Exchange Formula]
		\label{prop:neveu}
		Let $[\bs G, \bs o]$ be a unimodular network and 
		$H$ and $H'$ be two covariant subnetworks.
		For all measurable functions $g:\mathcal G_{**}\rightarrow\mathbb R^{\geq 0}$,
		\[ \lambda_{H}^{\bs G}\mathbb E_{H}\left[\sum_{v'\in V(H'_{\bs G})}
		g[\bs G, \bs o, v'] \right] =
		\lambda_{H'}^{\bs G} \mathbb E_{H'}\left[\sum_{{v\in V(H_{\bs G})}}
		g[\bs G, v,  \bs o] \right].
		\]
	\end{proposition}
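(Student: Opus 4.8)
The plan is to reduce the Exchange Formula to a single application of the mass transport principle~\eqref{eq:unimodular} by constructing an appropriate mass transport function. The key observation is that both sides involve conditional expectations with respect to $\mathbb P_H$ and $\mathbb P_{H'}$, which are $\mathbb P$ conditioned on $\bs o \in V(H_{\bs G})$ and $\bs o \in V(H'_{\bs G})$ respectively. By Definition~\ref{def:covarnet}, the normalizing constants are $\lambda_H^{\bs G} = \mathbb P[\bs o \in V(H_{\bs G})]$ and $\lambda_{H'}^{\bs G} = \mathbb P[\bs o \in V(H'_{\bs G})]$, so multiplying a $\mathbb P_H$-expectation by $\lambda_H^{\bs G}$ turns it into a $\mathbb P$-expectation with the indicator $\identity{\{\bs o \in V(H_{\bs G})\}}$ inserted.

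First I would define the mass transport function
\[
\tilde g[G, u, v] := \identity{\{u \in V(H_G)\}}\, \identity{\{v \in V(H'_G)\}}\, g[G, u, v].
\]
This is measurable because $H$ and $H'$ are covariant subnetworks (so the events $\{u \in V(H_G)\}$ and $\{v \in V(H'_G)\}$ correspond to measurable subsets of $\mathcal G_*$, pulled back appropriately to $\mathcal G_{**}$), and $g$ is measurable by hypothesis. Then I would compute $\tilde g^+$ and $\tilde g^-$ at the root:
\[
\tilde g^+_{\bs G}(\bs o) = \identity{\{\bs o \in V(H_{\bs G})\}} \sum_{v' \in V(H'_{\bs G})} g[\bs G, \bs o, v'],
\qquad
\tilde g^-_{\bs G}(\bs o) = \identity{\{\bs o \in V(H'_{\bs G})\}} \sum_{v \in V(H_{\bs G})} g[\bs G, v, \bs o].
\]
Applying~\eqref{eq:unimodular} gives $\omid{\tilde g^+_{\bs G}(\bs o)} = \omid{\tilde g^-_{\bs G}(\bs o)}$. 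The left-hand side equals $\mathbb P[\bs o \in V(H_{\bs G})]\, \mathbb E_H\big[\sum_{v' \in V(H'_{\bs G})} g[\bs G, \bs o, v']\big] = \lambda_H^{\bs G} \mathbb E_H\big[\sum_{v' \in V(H'_{\bs G})} g[\bs G, \bs o, v']\big]$, and symmetrically the right-hand side equals $\lambda_{H'}^{\bs G} \mathbb E_{H'}\big[\sum_{v \in V(H_{\bs G})} g[\bs G, v, \bs o]\big]$, which is exactly the claimed identity.

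The only genuine subtlety, rather than an obstacle, is bookkeeping around the conditioning: one must check that $\omidPalm{H}{X} = \omid{X \identity{\{\bs o \in V(H_{\bs G})\}}} / \mathbb P[\bs o \in V(H_{\bs G})]$ applies here, which requires $\lambda_H^{\bs G} > 0$ (and similarly for $H'$); if either intensity is zero the corresponding side is interpreted as zero and the identity holds trivially. One should also note that $\tilde g$ takes values in $\mathbb R^{\geq 0}$ since $g$ does, so~\eqref{eq:unimodular} applies even when the expectations are infinite, and no integrability hypothesis is needed. This mirrors Neveu's exchange formula for Palm measures of point processes, with covariant subnetworks playing the role of the two point processes.
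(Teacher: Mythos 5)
Your proof is correct and uses exactly the same argument as the paper: define $\hat g[G,v,w]:=\identity{\{v\in V(H_G)\}}\identity{\{w\in V(H'_G)\}}g[G,v,w]$ and apply the mass transport principle~\eqref{eq:unimodular}. The paper leaves the bookkeeping about the conditional expectations implicit, whereas you spell it out; there is no substantive difference.
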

	
	\begin{proof}
		Let $\hat g(G, v, w):=\identity{\{{v\in V(H_G)}\}}\identity{\{w\in V(H'_G)\}}g[G,v,w]$.
		The claim is a direct implication of~\eqref{eq:unimodular} for $\hat g$.
	\end{proof}

\subsection{Bibliographical Comments} 
\label{sec:bibUnimodular}
As already mentioned, the underlying fra\-me\-work for this section 
is that of \cite{processes}. In particular, a result analogous to
Lemma \ref{lemma:happensAtRoot}
can be found in Lemma~2.3 in~\cite{processes}.
Unimodular networks have many analogies with stationary point processes,
and more precisely the Palm versions of stationary
marked point processes~\cite{DaVe08}.
Since random graphs and networks which are covariant
with a stationary point process are unimodular~\cite{LaTh09},
the main notions and results
of this section can be seen as extensions of notions and results 
already known in the point process case.
The notion of subnetwork in Definition~\ref{def:covarnet}
is analogous to that of thinning 
of a point process based on a selection by the marks~\cite{DaVe08}.
The associated notion of intensity extends that of a sub-point process.
The exchange formula (Proposition \ref{prop:neveu}) is the analogue of 
Neveu's exchange formula \cite{neveu}.
The no infinite/finite inclusion lemma (Lemma~\ref{lemma:finiteSelection})
extends a result in~\cite{foliation} 
(in fact, the idea behind this lemma is present in some proofs in the literature in special cases).
All these extensions and formalizations are new to the best of the authors' knowledge.

	\section{Vertex-Shifts and Foil Classification}
	\label{sec:vertexshift-and-classification}
	
	Roughly speaking, vertex-shifts are dynamics on
	the vertices of networks.
	Such a dynamics prescribes to what vertex of a network to
	move from a given vertex.
	\subsection{Vertex-Shifts as Dynamical Systems}
	
	\begin{definition}
		A \textbf{covariant vertex-shift} is a map $f$ which associates
		to each network $G$ a function $f_G:V(G)\rightarrow V(G)$ such that
		(i) $f$ is covariant under isomorphisms (that is, for all isomorphisms
		$\rho:G\rightarrow G'$, one has $f_{G'}\circ \rho = \rho \circ f_G$) and 
		(ii) the well-defined function $[G,o,s]\mapsto\identity{\{f_G(o)=s\}}$ on $\mathcal G_{**}$ is measurable.
		Below, a vertex-shift will always mean a covariant vertex-shift.
	\end{definition}

	\begin{remark}
		Note that the definition of vertex-shifts (as well as covariant subsets and partitions) does not require
		a probability measure on $\mathcal G_*$. Also note that
                a vertex-shifts should be defined for all networks.
		When defining a vertex-shift on a subclass of networks,
                the last requirement can be met by making the vertex-shift 
                to be the identity on networks outside of this subclass.
		This approach will be used in several of the forthcoming examples.
	\end{remark}

	\begin{remark}
		\label{rem:automorphism}
		Since vertex-shifts are covariant under network isomorphisms,
		the existence of non-trivial network automorphisms
		implies restrictions in the 
		definition of vertex-shifts. For example, in the network of
		Figure~\ref{fig:simple},
		the image of $b$ under a vertex-shift cannot be $c_1$
                and if the image of $b$ is $a$, so is the image of $b^\prime$. 
		\begin{figure}[t]
			\centering
			\includegraphics[width=.5\textwidth]{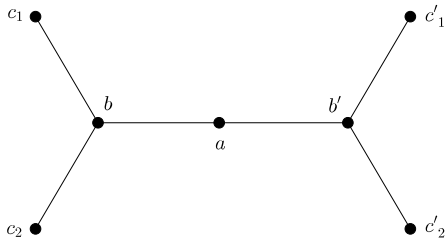}
			\caption{The graph in Remark~\ref{rem:automorphism}.}
			\label{fig:simple}
		\end{figure}
	\end{remark}
	
	\begin{example}
		\label{ex:distinguished-end}
		For an infinite tree $T$ with a distinguished end
		(for instance the Canopy Tree of Example~\ref{ex:canopy}), let $f_{T}(v)$ be the neighbor of $v$
		in the unique semi-infinite simple path that starts at $v$ and passes
		through the distinguished end. Then $f$ is a vertex-shift.
	\end{example}
	
	\begin{example}
		\label{ex:smallestMark}
		Consider the example of i.i.d. marks
		(Example~\ref{ex:iidMarks}). Assume the mark space
		is a subset of $\mathbb R$.
		Let $f(v)$ be neighbor of $v$ with the smallest mark
		(let $f(v)=v$ if there is a tie). Then $f$ is a vertex-shift.
		
		Similarly, let $f'(v)$ be the neighbor $w$ of $v$ such that the mark
		of edge $vw$ is the smallest (and $f'(v)=v$ if there is a tie).
		Then $f'$ is also a vertex-shift.
	\end{example}
	
	\begin{example}
		\label{ex:degreeIncrease}
		Let $G$ be a network with mark space $\mathbb R$ and assume the
		marks of the vertices are all distinct.
		Let $h:\mathcal G_*\rightarrow\mathbb N$ be a given measurable function.
		Define $f_G(v)$ to be the closest vertex $w$ to $v$ such that
		$h[G, w]>h[G, v]$, choosing the one with the smallest mark when
		there is a tie and let $f(v)=v$ when there is no such vertex.
		Then $f$ is a vertex-shift.
	\end{example}

		In the following proposition, given a vertex-shift $f$,  the dynamical system
                $\theta_f:\mathcal G_*\rightarrow\mathcal G_*$ is defined by
		\[
			\theta_f[G,o]:=[G,f(o)],
		\]
		which is a well-defined and measurable function.
	
	\begin{proposition}[Mecke's Point Stationarity]
		\label{prop:Mecke}
		Let $f$ be a vertex-shift and $[\bs G, \bs o]$ be a unimodular network. Then $\theta_f$ preserves the distribution of $[\bs G, \bs o]$ if and only if $f_{\bs G}$ is almost surely bijective.
	\end{proposition}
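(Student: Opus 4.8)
The plan is to establish both implications using the mass transport principle, applied to carefully chosen transport functions.

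\medskip

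\noindent\textbf{The ``if'' direction.} Suppose $f_{\bs G}$ is almost surely bijective. I want to show $\theta_f$ preserves the law of $[\bs G, \bs o]$, i.e. for every bounded measurable $h:\mathcal G_*\to\mathbb R^{\geq 0}$, one has $\omid{h[\bs G, f(\bs o)]}=\omid{h[\bs G,\bs o]}$. The idea is to apply~\eqref{eq:unimodular} to the transport function $g[G,v,w]:=h[G,w]\,\identity{\{f_G(v)=w\}}$, which sends mass $h[G,w]$ from $v$ to $f_G(v)$. Then $g^+_G(o)=h[G,f_G(o)]$, so $\omid{g^+_{\bs G}(\bs o)}=\omid{h[\bs G,f(\bs o)]}$. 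On the other side, $g^-_G(o)=\sum_{v\in V(G)} h[G,o]\identity{\{f_G(v)=o\}}=h[G,o]\cdot\card{f_G^{-1}(o)}$. When $f_{\bs G}$ is a.s.\ bijective, $\card{f_G^{-1}(o)}=1$ for every $o$, so $\omid{g^-_{\bs G}(\bs o)}=\omid{h[\bs G,\bs o]}$, and the two sides match. (One should run this first for bounded $h$ to avoid integrability issues, then extend; measure-preservation for all bounded $h$ is exactly the statement that $\theta_f$ preserves the distribution.)

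\medskip

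\noindent\textbf{The ``only if'' direction.} Suppose $\theta_f$ preserves the law of $[\bs G,\bs o]$; I must show $f_{\bs G}$ is a.s.\ bijective. Since $f_{\bs G}$ maps $V(\bs G)$ to itself, bijectivity is equivalent to surjectivity, which is equivalent to saying that a.s.\ every vertex has exactly one $f$-preimage — equivalently, the covariant subset $S_G:=\{v\in V(G): \card{f_G^{-1}(v)}\neq 1\}$ is a.s.\ empty. By Lemma~\ref{lemma:happensAtRoot} it suffices to show $\myprob{\card{f_{\bs G}^{-1}(\bs o)}\neq 1}=0$. Apply~\eqref{eq:unimodular} to $g[G,v,w]:=\identity{\{f_G(v)=w\}}$: then $g^+_G(o)=1$ identically, so $\omid{g^+_{\bs G}(\bs o)}=1$, while $g^-_G(o)=\card{f_G^{-1}(o)}$, giving $\omid{\card{f_{\bs G}^{-1}(\bs o)}}=1$. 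So the nonnegative integer-valued random variable $\card{f_{\bs G}^{-1}(\bs o)}$ has mean $1$; this alone does not force it to equal $1$ a.s., so the measure-preservation hypothesis must be used to rule out fluctuations. The point is: $\theta_f$ preserving the law means $[\bs G, f(\bs o)]$ has the same distribution as $[\bs G,\bs o]$; pulling a test function through, $\myprob{\card{f_{\bs G}^{-1}(f_{\bs G}(\bs o))}=0}=\myprob{\card{f_{\bs G}^{-1}(\bs o)}=0}$. But $f_{\bs G}(\bs o)$ always has at least one preimage (namely $\bs o$), so the left side is $0$, hence $\myprob{\card{f_{\bs G}^{-1}(\bs o)}=0}=0$; i.e.\ $f_{\bs G}$ is a.s.\ surjective. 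Combined with surjectivity of a self-map of a (possibly infinite) set\,---\,here one needs that surjectivity plus the mean-one computation forces injectivity: if some vertex had $\geq 2$ preimages, then since $\omid{\card{f_{\bs G}^{-1}(\bs o)}}=1$ and $\card{f_{\bs G}^{-1}(\bs o)}\geq 1$ a.s., we would need $\card{f_{\bs G}^{-1}(\bs o)}=1$ a.s., contradiction\,---\,so $f_{\bs G}$ is a.s.\ bijective.

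\medskip

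\noindent\textbf{Main obstacle.} The delicate point is the ``only if'' direction: the mass transport computation alone only yields $\omid{\card{f_{\bs G}^{-1}(\bs o)}}=1$, which is far weaker than a.s.\ bijectivity. The crux is to extract from $\theta_f$-invariance the fact that $f_{\bs G}$ is a.s.\ surjective (no vertex has empty preimage), via the observation that $f_{\bs G}(\bs o)$ trivially has a preimage; once surjectivity is in hand, the first-moment identity $\omid{\card{f_{\bs G}^{-1}(\bs o)}}=1$ combined with $\card{f_{\bs G}^{-1}(\bs o)}\geq 1$ a.s.\ pins down $\card{f_{\bs G}^{-1}(\bs o)}=1$ a.s., hence injectivity, hence bijectivity. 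One must be slightly careful to phrase the surjectivity argument so that it applies on the event space rather than pointwise, and to handle the transfer of events under $\theta_f$ using that $\theta_f$-invariance gives equality of probabilities for events of the form $\{[\bs G,\bs o]\in A\}$ with $A$ a measurable subset of $\mathcal G_*$.
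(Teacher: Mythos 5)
Your proof is correct. The ``if'' direction is the same as the paper's: both apply the mass transport principle to $g[G,v,w]=\indic{f_G(v)=w}\,h[G,w]$ to obtain $\omid{h(\theta_f[\bs G,\bs o])}=\omid{h[\bs G,\bs o]\,\card{f_{\bs G}^{-1}(\bs o)}}$ and conclude when the preimage count is identically one. In the converse direction the two arguments diverge in how they extract $\card{f_{\bs G}^{-1}(\bs o)}=1$ a.s.\ from invariance. The paper stays inside the single identity $\omid{h}=\omid{h\,\card{f_{\bs G}^{-1}(\bs o)}}$ and takes $h$ to be the positive and negative parts of $1-\card{f_{\bs G}^{-1}(\bs o)}$, yielding $\omid{(1-\card{f_{\bs G}^{-1}(\bs o)})^2}=0$ in one stroke. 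You split the work: the mass transport principle with $h\equiv 1$ gives $\omid{\card{f_{\bs G}^{-1}(\bs o)}}=1$ (no invariance needed), and invariance is used only to transport the event $\{\card{f_{\bs G}^{-1}(\bs o)}=0\}$ through $\theta_f$, where it has probability zero because $f_{\bs G}(\bs o)$ always has $\bs o$ as a preimage; then ``mean one plus at least one a.s.''\ pins the count to one. Both routes finish identically, via Lemma~\ref{lemma:happensAtRoot} applied to the covariant subset $\{v:\card{f_{\bs G}^{-1}(v)}\neq 1\}$. Your version is slightly more transparent in that it isolates the probabilistic content of invariance as a.s.\ surjectivity at the root, while the paper's is more compact; note that the paper's identity with $h=\indic{\card{f_{\bs G}^{-1}(\bs o)}=0}$ would also recover your surjectivity step directly. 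One cosmetic caveat: the opening sentence of your converse, ``bijectivity is equivalent to surjectivity,'' is false for self-maps of infinite sets. It does no harm, since what you actually prove is that a.s.\ every vertex has exactly one preimage (which is genuinely equivalent to bijectivity), but that clause should be reworded.
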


        For an instance of a bijective vertex-shift, see Example~\ref{ex:royal}.
	
	\begin{proof}
		Let $h:\mathcal G_*\rightarrow\mathbb R^{\geq 0}$ be a measurable function.
		By defining $g[G, o, s]:= \identity{\{f_G(o)=s\}}h[G,s]$, 
		\eqref{eq:unimodular} gives
		\begin{eqnarray*}
			\omid{h(\theta_f[\bs G, \bs o])} &=& \omid{\sum_{v\in V(\bs G)}g[\bs G, \bs o, v]}\\
			&=& \omid{\sum_{v\in V(\bs G)}g[\bs G, v, \bs o]}
			= \omid{h[\bs G, \bs o]\card{f_{\bs G}^{-1}(\bs o)}}.
		\end{eqnarray*}
		If $f_{\bs G}$ is a.s. bijective, then it follows
		that $\theta_f[\bs G, \bs o]$ has the same distribution as $[\bs G, \bs o]$.
		Conversely, assume $\theta_f$ preserves the distribution of $[\bs G, \bs o]$. 
                So the LHS is just $\omid{h[\bs G, \bs o]}$.
		By letting $h[\bs G, \bs o]$ be the positive and negative parts of $(1-\card{f_{\bs G}^{-1}(\bs o)})$
		in the above equation, one gets 
                $\omid{(1-\card{f_{\bs G}^{-1}(\bs o)})^2}=0$, thus, $\card{f_{\bs G}^{-1}(\bs o)}=1$ a.s.
		This in turn implies that $f_{\bs G}$ is bijective a.s.
		by Lemma~\ref{lemma:happensAtRoot}. 
	\end{proof}

	\begin{example}
		\label{ex:royal} 
		In the Canopy Tree of Example~\ref{ex:distinguished-end}, assume the vertices have marks
		and that a total order on $F^{-1}(v)$ is obtained from the marks
		for every vertex $v$. The \textit{Royal Line of Succession order}
		(see e.g. \cite{foliation}) provides a total order on the vertices (isomorphic to the order
		on $\mathbb Z$) using the DFS (depth-first search) algorithm. Thus, one obtains a
		vertex-shift by mapping each vertex to its next vertex in this
		order. This vertex-shift is bijective
		and the orbit of each vertex coincides with the set of all vertices.
	\end{example}

	\subsection{Foliation Associated with a Vertex-Shift}
	Let $G$ be a network and $f$
	be an arbitrary vertex-shift. When there is no ambiguity, 
	the symbol $f$ will also be used for the function
	$f_G:V(G)\rightarrow V(G)$.
	The following definitions are adapted from~\cite{foliation}.
	Let $\sif$ be the equivalence relation on $V(G)$ defined by 
	\[x\sif y\Leftrightarrow \exists n\in \mathbb N;\ff^n(x) = \ff^n(y).\]
	{The symbol $\sif$ should not be confused with $\sim$ used for adjacency of vertices and edges.}
	\begin{definition}
		\label{def:foliation}
		Each equivalence class of $\sif$ is called a \textbf{foil}.
		The partition of $V(G)$ generated by the foils
		is called the \textbf{$\ff$-foliation of $G$} and is
		denoted by
		$\L [G]\ff$. For $x\in V(G)$, denote by  $L^\ff(x)$ the 
		foil which contains $x$.

		The (not-necessarily simple) graph $G^\ff = (V(G),E^\ff(G))$ with set of vertices
		$V(G)$ and set of directed edges $E^\ff(G)=\{(x,\ff(x)), x\in V(G)\}$
		will be called the \textbf{$\ff$-graph} of $G$.
		Directed paths and directed cycles in $G^\ff$ are called
		\textbf{$f$-paths} and \textbf{$f$-cycles}, respectively.
		For $x\in V(G)$, denote by  $C^\ff(x)$ the (undirected)
		connected component of $G^\ff$ which contains $x$;
		that is, 
		\[
			C^\ff(x):=\{y\in V(G): \exists m,n: \ff^m(x) = \ff^n(y) \}.
		\]
		The set of connected components of $G^\ff$ will be denoted
		by ${\mathcal C}^\ff_G$.
		If $x\sif y$, then $x$ and $y$ are in the same connected
		component of $C^\ff(x)$. In other words, the foliation 
		$\mathcal{L}^\ff_G$ is
		a refinement of ${\mathcal C}^\ff_G$. 
		
		For an acyclic connected component $C$ of $G^f$, one can put a
		total order on the foils in $C$: each foil $L$ is \textbf{older}
		than $f^{-n}(L)$ (which is a foil if it is non-empty)
		for any $n>0$. The justification
		of this order will be discussed in Section~\ref{sec:eternal}.
		With this terminology, $C$ does not have an \textit{oldest} foil,
		but it may have a \textit{youngest} foil; that is, a foil $L$
		such that $f^{-1}(L)=\emptyset$.
		The order on the foils of $C$ is hence similar to that
		of  $\mathbb Z$ or $\mathbb N$.

		For $x\in V(G)$ and $n{\geq 0}$, let
		\begin{eqnarray*}
			D_n(x):=\ff^{-n}(x)=\{y\in V(G): \ff^n(y)=x\},\qquad
			d_n(x):= \card{D_n(x)}.
		\end{eqnarray*}
		Similarly, define
		\begin{eqnarray*}
			D(x):=\bigcup_{n=1}^\infty D_n(x) = \{y\in V(G): \exists~ n\ge 0:
			\ff^n(y)=x\},\qquad
			d(x):= \card{D(x)}.
		\end{eqnarray*}

		\noindent
		The sequence of sets $\ff^n(V(G))$ is decreasing in $n$.
		Its limit (which may be the empty set) is denoted by $\ff^\infty(V(G))$. 
		One can define $\ff^\infty(C)$ for a connected component $C$ of $G^f$
		similarly; that is,
		\[
		\ff^\infty(C) = \{x\in C: \forall n\geq 0: D_n(x)\neq\emptyset\}.
		\]
	\end{definition}	
	\subsection{Unimodular Classification Theorem}
	\label{sec:classification}
	
	\begin{theorem}[Foil Classification in Unimodular Networks]
		\label{thm:classification}
		Let $[\bs G,\bs o]$ be a unimodular network and $f$ be a vertex-shift.
		Almost surely, every vertex has finite degree in the graph $\bs G^f$.
		In addition, each component $C$ of $\bs G^f$ has at most two ends
		and it belongs to one of the following three classes:

		\begin{enumerate}[(i)]
			\item Class $\mathcal F/\mathcal F$: 
			$C$ and all its foils are finite.
			If $n=n(C)$ is the number of foils in $C$ ($1\leq n<\infty$), then
			\begin{itemize}
				\item 
				$C$ has a unique $f$-cycle and its length is $n$;
				\item $f^{\infty}_{\bs G}(C)$ is the set of vertices of the cycle;
				\item Each foil of $C$ contains exactly one vertex of the cycle.
			\end{itemize}
			
			\item 
			Class $\mathcal I/\mathcal F$:
			$C$ is infinite but all its foils are finite. In this case,
			\begin{itemize}
				\item The (undirected) $f$-graph on $C$ is a tree;
				\item There is a unique bi-infinite $f$-path in $C$,
				each foil in $C$ contain exactly one vertex of the path,
				and $f^{\infty}_{\bs G}(C)$ coincides with
				the set of vertices of the path;
				\item The order of the foils of $C$ is of type $\mathbb Z$;
				that is, there is no youngest foil in $C$.
			\end{itemize}
			
			\item Class $\mathcal I/\mathcal I$:
			$C$ and all foils of $C$ are infinite. In this case, 
			\begin{itemize}
				\item The (undirected) $f$-graph on $C$ is a tree;
				\item $C$ has one end, there is no bi-infinite $f$-path in $C$,
				$D(v)$ is finite for every vertex $v\in C$, 
				and $f^{\infty}_{\bs G}(C)=\emptyset$; 
				\item The order of the foils of $C$ is of type
				$\mathbb N$ or $\mathbb Z$, that is, there may or may 
				not be a youngest foil in $C$.
			\end{itemize}
		\end{enumerate}
	\end{theorem}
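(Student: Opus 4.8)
The plan is to drive the whole classification from one device, the No Infinite/Finite Inclusion Lemma (Lemma~\ref{lemma:finiteSelection}), applied a few times with $\Pi$ the covariant partition of $V(\bs G)$ into connected components of the $f$-graph $\bs G^f$ and with a suitably chosen covariant subset $S$ each time, supplemented by two identities from the mass transport principle~\eqref{eq:unimodular}. Before invoking probability I would record the deterministic structure of the $f$-graph of a fixed network $G$: every vertex has out-degree $1$, so each component contains at most one $f$-cycle, and a finite component contains exactly one; an acyclic component is a tree in which any two vertices have a common $f$-ancestor, so all forward $f$-orbits eventually coincide and determine one common end $\xi_C$, and $f$ becomes the ``parent'' map once the edges are oriented towards $\xi_C$. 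Finally, once in-degrees are known to be finite, König's lemma identifies $f^\infty(C)$ with the set of vertices of $C$ lying on an infinite backward $f$-path; this set is connected (the $f$-geodesic joining two of its vertices stays inside it) and forward-invariant.

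Two mass transports carry the probabilistic content. Applying~\eqref{eq:unimodular} to $g[G,o,s]:=\identity{\{f_G(s)=o\}}$ gives $\omid{d_1(\bs o)}=\omid{\card{f_{\bs G}^{-1}(\bs o)}}=1$, hence $d_1(\bs o)<\infty$ a.s.; since $v$ has a single outgoing $f$-edge, its degree in $\bs G^f$ is finite iff $d_1(v)<\infty$, so Lemma~\ref{lemma:happensAtRoot} applied to $\{v:d_1(v)=\infty\}$ yields finiteness of all degrees a.s. Applying~\eqref{eq:unimodular} instead to $g[G,o,s]:=\identity{\{o\in f^\infty_G\}}\identity{\{s\in f^\infty_G\}}\identity{\{f_G(s)=o\}}$, and using that $o\in f^\infty_G$ forces $f_G(o)\in f^\infty_G$ and (as $f_G^{-1}(o)$ is finite, by pigeonhole) forces $o$ to have a preimage inside $f^\infty_G$, one obtains $\omid{\identity{\{\bs o\in f^\infty\}}(d^\infty_1(\bs o)-1)}=0$, where $d^\infty_1(v)$ denotes the number of $f$-preimages of $v$ lying in $f^\infty_G$, a quantity that is $\geq 1$ on $\{v\in f^\infty\}$; therefore $d^\infty_1=1$ a.s.\ on $f^\infty$, and Lemma~\ref{lemma:happensAtRoot} promotes this to: a.s.\ every vertex of $f^\infty$ has exactly one preimage in $f^\infty$. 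Together with connectedness and forward-invariance this forces $f^\infty(C)$ to be, for every component $C$, either empty or a \emph{single} bi-infinite $f$-path. Finally, Lemma~\ref{lemma:finiteSelection} with $S_G$ the set of cycle vertices (those $v$ with $f_G^k(v)=v$ for some $k\geq1$) forbids an infinite component from meeting $S$ in its finite nonempty cycle, so a.s.\ the components carrying a cycle are exactly the finite ones; for a finite one with cycle of length $n$, a deterministic check (every forward orbit enters the cycle, and $x\sif y$ iff the two orbits reach the cycle in the same phase) gives exactly $n$ foils, each meeting the cycle once, with $f^\infty_{\bs G}(C)$ equal to the set of cycle vertices --- this is case~(i).

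Let $C$ now be an infinite component, a tree with distinguished end $\xi_C$. Using the deterministic fact that the children of any vertex $v$ form the single foil $f^{-1}(L^f(v))$, finiteness of a foil passes to every younger foil, so the finite foils of $C$ form an initial segment of the foil order (which has type $\mathbb N$ or $\mathbb Z$); were this segment proper and nonempty it would have a finite nonempty maximal element, and Lemma~\ref{lemma:finiteSelection} applied with $S_G$ that foil would contradict the infiniteness of $C$. Hence all foils of $C$ share one cardinality type. If they are all finite, Lemma~\ref{lemma:finiteSelection} with $S_G=\{v:f^{-1}(L^f_G(v))=\emptyset\}$ (the youngest foil) rules out a finite youngest foil, so the foil order has type $\mathbb Z$; since the union over $u$ in a fixed foil $L$ of $D_n(u)$ equals the $n$-th younger foil, nonempty for all $n$ while $L$ is finite, pigeonhole places a vertex of $L$ in $f^\infty(C)$, so $f^\infty(C)$ is a bi-infinite path $P$; one then checks that every forward orbit eventually runs along $P$, that each foil meets $P$ in exactly one vertex and is therefore finite (the subtrees hanging off $P$ being finite, since $f^\infty$ has no branch point), and that $C$ is two-ended --- this is case~(ii). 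If instead all foils of $C$ are infinite, then $f^\infty(C)=\emptyset$, for otherwise the analysis just given would force the foils to be finite; hence each $D(v)$ is finite, and $C$ has no bi-infinite geodesic (such a geodesic would place a vertex on an infinite backward path, hence in $f^\infty(C)$), so $C$ is one-ended with foil order of type $\mathbb N$ or $\mathbb Z$ --- this is case~(iii). In every case $C$ has at most two ends.

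The main obstacle is not the tree bookkeeping but making the three applications of Lemma~\ref{lemma:finiteSelection} bite: for each configuration to be excluded --- an infinite cyclic component, the coexistence of finite and infinite foils in one component, a finite youngest foil in an infinite component --- one must exhibit a covariant subset whose trace on the root's component is finite and nonempty, and similarly arrange the $f^\infty$ mass transport so that a non-negative integrand has zero mean. This is where unimodularity enters in an essential way; once those three exclusions and the identity $d^\infty_1=1$ on $f^\infty$ are in place, everything else follows from the deterministic structure of $f$-graphs and König's lemma.
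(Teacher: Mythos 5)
Your overall architecture matches the paper's: the same preliminary mass transport for finite in-degrees, Lemma~\ref{lemma:finiteSelection} with the component partition against the cycle vertices, against the oldest finite foil, and against the youngest foils, plus K\"onig's lemma for the deterministic bookkeeping. Your treatment of the bi-infinite path is a legitimate mild variant: where the paper proves uniqueness via the auxiliary vertex-shift equal to $f$ on the union of bi-infinite paths and Proposition~\ref{prop:bijective}, you run the mass transport directly on $f^\infty$ to get in-degree one there; both are the same idea in different clothing, and your version has the small bonus of identifying $f^\infty(C)$ with the path at the same time.

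However, there is a genuine gap in case (iii). You dispose of it by asserting that if all foils of $C$ are infinite then $f^\infty(C)=\emptyset$, ``for otherwise the analysis just given would force the foils to be finite.'' That analysis does not force this, and the implication is false as a deterministic statement: take a bi-infinite $f$-path $(p_i)_{i\in\mathbb Z}$ and, for each $m\geq 1$, hang off $p_m$ a chain of length $m$ directed towards $p_m$. Then $f^\infty$ is exactly the path (every hanging vertex has only finitely many descendants), each vertex has finite degree in the $f$-graph, yet the foil of $p_0$ contains the bottom vertex of every hanging chain and is infinite. The flaw in your parenthetical ``each foil meets $P$ once and is therefore finite, the subtrees hanging off $P$ being finite'' is that a single foil can pick up one vertex from infinitely many distinct hanging subtrees, one at each height, so finiteness of each subtree does not bound the foil. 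Excluding this configuration genuinely requires unimodularity, via a \emph{fourth} application of Lemma~\ref{lemma:finiteSelection} that is absent from your proof and from your closing inventory of ``three applications'': take the covariant partition to be the foliation $\mathcal L^f_{\bs G}$ itself (not the components) and $S$ the union of bi-infinite $f$-paths. Since each foil meets $S$ in at most one vertex (by the uniqueness of the path in a component and injectivity of $f$ along it), an infinite foil meeting $S$ would have finite non-empty intersection with $S$, which the lemma forbids; hence almost surely no infinite foil meets a bi-infinite path, and a component with infinite foils has $f^\infty(C)=\emptyset$. With that step inserted, the rest of your case (iii) (finiteness of $D(v)$, one end, order type $\mathbb N$ or $\mathbb Z$) goes through as written.
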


        Before going though the proof of this theorem, here are 
        examples of the different classes in Theorem~\ref{thm:classification}.
        In a unimodular network with finitely many vertices,
	all components of any vertex-shift are of class $\mathcal F/\mathcal F$.
	The same holds in Example~\ref{ex:smallestMark} 
	provided the marks are i.i.d. uniform random numbers in $[0,1]$.
	Subsection~\ref{sec:joining} provides a general example
	of class $\mathcal I/\mathcal F$.
	The unimodular Eternal Galton-Watson Tree (introduced in
	Subsection~\ref{sec:EGW}),
	the Canopy Tree (Example~\ref{ex:distinguished-end})
	and Proposition~\ref{prop:pruning} provide instances of class
	$\mathcal I/\mathcal I$. There is no youngest foil in the first one,
	and there is one in the two other instances.
	
	The proof of Theorem~\ref{thm:classification} requires a few preliminary results.
		
	\begin{proposition} \label{prop:E(d_n)=1}
		Under the assumptions of Theorem~\ref{thm:classification}, 
		almost surely, $d_n(x)$ is finite for all $n\geq 0$ and all $x\in V(\bs G)$.
		Moreover,
		\[
		\forall n\geq 0,\ \omid{d_n(\bs o)}=1.
		\]
		If in addition ${\bs G}^f$ is acyclic a.s., then
		$
		\omid{d(\bs o)}=\infty.
		$
	\end{proposition}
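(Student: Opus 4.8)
The plan is to run a mass transport argument for each fixed $n$. For the finiteness of $d_n(x)$, I would first observe that the set of vertices whose $n$-th preimage under $f$ is infinite is a covariant subset: put $S_G := \{ x \in V(G) : d_n(x) = \infty \}$, which is covariant and measurable since $d_n$ depends only on finite neighborhoods data in a measurable way. Now apply the no infinite/finite inclusion lemma (Lemma~\ref{lemma:finiteSelection}), or more directly a mass transport: send one unit of mass from each $y$ to $f^n(y)$. Concretely, set $g[G,o,s] := \identity{\{f_G^n(o) = s\}}$. Then $g^+_G(o) = 1$ for every $o$ (each vertex maps somewhere under $f^n$), while $g^-_G(o) = d_n(o)$. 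The mass transport principle~\eqref{eq:unimodular} gives $\omid{d_n(\bs o)} = \omid{g^-_{\bs G}(\bs o)} = \omid{g^+_{\bs G}(\bs o)} = 1$. This simultaneously yields $\omid{d_n(\bs o)} = 1$ and, since a random variable with finite expectation is finite a.s., that $d_n(\bs o) < \infty$ a.s. To upgrade ``a.s.\ at the root'' to ``a.s.\ for all $x \in V(\bs G)$'', apply Lemma~\ref{lemma:happensAtRoot} to the covariant subset $S_G$ above: $\myprob{\bs o \in S_{\bs G}} = 0$ forces $\myprob{S_{\bs G} \neq \emptyset} = 0$, so a.s.\ $d_n(x) < \infty$ for every vertex $x$.

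For the last assertion, suppose $\bs G^f$ is a.s.\ acyclic. Then $D(x) = \bigsqcup_{n \geq 0} D_n(x)$ is a genuine disjoint union: acyclicity guarantees that the sets $D_n(x)$ are pairwise disjoint (a vertex $y$ with $f^m(y) = x = f^n(y)$ for $m < n$ would, by following $f$, produce an $f$-cycle through $x$ — here one should be slightly careful and note that $x = f^n(y) = f^{n-m}(f^m(y)) = f^{n-m}(x)$ gives a cycle at $x$). Hence $d(x) = \sum_{n=0}^\infty d_n(x)$, and by monotone convergence $\omid{d(\bs o)} = \sum_{n=0}^\infty \omid{d_n(\bs o)} = \sum_{n=0}^\infty 1 = \infty$.

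The only genuinely delicate point is the disjointness of the $D_n$'s under acyclicity, and more precisely making sure the ``cycle'' one extracts is a directed $f$-cycle in the sense of Definition~\ref{def:foliation} (i.e.\ that $f^k(x) = x$ for some $k \geq 1$, which is exactly a directed cycle in $G^f$ since $x$ then has $f^{k-1}(x)$ as an in-neighbor along a closed directed walk). Everything else is a one-line application of~\eqref{eq:unimodular} with the transport $g[G,o,s] = \identity{\{f^n_G(o) = s\}}$, together with Lemma~\ref{lemma:happensAtRoot} to pass from the root to all vertices. I would also remark that the identity $\omid{d_n(\bs o)} = 1$ is the precise sense in which the associated branching-like process is critical, as advertised in the introduction.
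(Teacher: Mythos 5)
Your proof is correct and follows essentially the same route as the paper: the mass transport principle applied to $g[G,o,s]=\identity{\{f^n_G(o)=s\}}$ gives $\omid{d_n(\bs o)}=1$ and hence a.s.\ finiteness, and acyclicity makes $D(\bs o)$ the disjoint union of the $D_n(\bs o)$ so that $\omid{d(\bs o)}=\sum_n\omid{d_n(\bs o)}=\infty$. The only difference is that you spell out the upgrade from the root to all vertices via Lemma~\ref{lemma:happensAtRoot} and the disjointness argument, both of which the paper leaves implicit.
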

	
	\begin{proof}
		Define $g[G,x,y]:=\identity{\{y=f^n(x)\}}$, {which is well-defined and measurable.}
		Therefore, the mass transport principle~\eqref{eq:unimodular} holds
		for $g$ and the first claim follows.
		For the second claim, note that when {$\bs G^f$} is acyclic,
		$D(\bs o)$ is the disjoint union of $D_n(\bs o)$ for $n\geq 0$,
		and hence $\omid{d(\bs o)} = \sum_{n=0}^{\infty}\omid{d_n(\bs o)}=\infty$.
	\end{proof}
	
	\begin{proposition} 
			\label{prop:bijective}
			Let $[\bs G,\bs o]$ be a unimodular network and $g$ be a vertex-shift.
			\begin{enumerate}[(i)]
				\item \label{prop:bijective:1} If $g$ is injective a.s., then it is bijective a.s.
				\item \label{prop:bijective:2} If $g$ is surjective a.s., then it is bijective a.s.
			\end{enumerate}
		\end{proposition}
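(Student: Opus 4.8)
The plan is to reduce both parts to the single identity $\omid{\card{g_{\bs G}^{-1}(\bs o)}}=1$, which is the $n=1$ case of Proposition~\ref{prop:E(d_n)=1} applied to the vertex-shift $g$ (its hypotheses are exactly that $[\bs G,\bs o]$ is unimodular and $g$ is a vertex-shift, which is what we have). Set $d_1(v):=\card{g_G^{-1}(v)}$ for $v\in V(G)$. First I would record that $d_1(\cdot)$ is a measurable function of the rooted network: indeed $d_1(o)=\sum_{v\in V(G)}\identity{\{g_G(v)=o\}}$ is a ``$g^-$''-type sum of the function $[G,v,o]\mapsto\identity{\{g_G(v)=o\}}$, which is measurable on $\mathcal G_{**}$ by condition (ii) in the definition of a vertex-shift. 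Consequently $S^0_G:=\{v\in V(G):d_1(v)=0\}$ and $S^{\geq 2}_G:=\{v\in V(G):d_1(v)\geq 2\}$ are covariant subsets of the vertices, so Lemma~\ref{lemma:happensAtRoot} applies to them.

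For part~\eqref{prop:bijective:1}: if $g$ is injective a.s., then $d_1(\bs o)\in\{0,1\}$ a.s.; since $\omid{d_1(\bs o)}=1$, a nonnegative integer random variable bounded above by $1$ with mean $1$ is a.s.\ equal to $1$, so $d_1(\bs o)=1$ a.s. In particular $\myprob{\bs o\in S^0_{\bs G}}=0$, hence by Lemma~\ref{lemma:happensAtRoot} one has $\myprob{S^0_{\bs G}\neq\emptyset}=0$, i.e.\ $g$ is surjective a.s. Being a.s.\ injective and a.s.\ surjective, $g$ is a.s.\ bijective. For part~\eqref{prop:bijective:2}: if $g$ is surjective a.s., then $d_1(\bs o)\geq 1$ a.s.; since $\omid{d_1(\bs o)}=1$, a nonnegative random variable bounded below by $1$ with mean $1$ is a.s.\ equal to $1$, so again $d_1(\bs o)=1$ a.s. Then $\myprob{\bs o\in S^{\geq 2}_{\bs G}}=0$, so Lemma~\ref{lemma:happensAtRoot} gives $\myprob{S^{\geq 2}_{\bs G}\neq\emptyset}=0$, i.e.\ $g$ is injective a.s., hence a.s.\ bijective.

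The only genuinely delicate point is the passage from ``the root is well-behaved almost surely'' to ``every vertex is well-behaved almost surely'', which is precisely where unimodularity enters through Lemma~\ref{lemma:happensAtRoot}; without it the statement is false, since on a non-unimodular infinite network an injective (resp.\ surjective) self-map need not be bijective. The remaining ingredients are routine: the mean-one bookkeeping for $d_1(\bs o)$, and the measurability check for $S^0$ and $S^{\geq 2}$ noted above. So I expect no real obstacle beyond making sure Proposition~\ref{prop:E(d_n)=1} is invoked with $n=1$ and that the two auxiliary sets are legitimately covariant subsets.
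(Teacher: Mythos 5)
Your proof is correct and follows essentially the same route as the paper: the paper obtains $\omid{\card{g_{\bs G}^{-1}(\bs o)}}=1$ by the same one-line mass transport that underlies Proposition~\ref{prop:E(d_n)=1} with $n=1$, deduces $\card{g_{\bs G}^{-1}(\bs o)}=1$ a.s.\ from the pointwise bound in each case, and then upgrades from the root to all vertices via Lemma~\ref{lemma:happensAtRoot} exactly as you do. Your explicit measurability check for $S^0$ and $S^{\geq 2}$ is a detail the paper leaves implicit, but it is the same argument.
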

		
		\begin{proof}
			Define $h[G,o,s]:=\identity{\{g_G(o)=s\}}$.
			By~\eqref{eq:unimodular}, $\omid{\card{ g_{\bs G}^{-1}(\bs o)}}= \omid{1} =1$.
			
			\eqref{prop:bijective:1}. One has $\card{ g_{\bs G}^{-1}(\bs o)}\leq 1$.
			So $\card{ g_{\bs G}^{-1}(\bs o)}=1$ a.s.
			By Lemma~\ref{lemma:happensAtRoot}, it follows that
			$\{x: \card{ g_{\bs G}^{-1}(x)}\neq 1\}=\emptyset$ a.s.
			
			\eqref{prop:bijective:2}. One has $\card{ g_{\bs G}^{-1}(\bs o)}\geq 1$.
			So $\card{ g_{\bs G}^{-1}(\bs o)}=1$ a.s. The rest of the proof is as above.
		\end{proof}

	\begin{corollary}
		\label{cor:L(x)infinite->L(f(x))}
		Under the assumptions of Theorem~\ref{thm:classification}, almost surely, if $L(x)$ is infinite, then each $L(f^i(x))$, $i\ge 0$, is infinite too.
	\end{corollary}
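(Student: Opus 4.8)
The plan is to reduce the corollary to the finiteness of one‑step $f$‑preimages, which is already available from Proposition~\ref{prop:E(d_n)=1}. By induction on $i$ it suffices to treat the case $i=1$: almost surely, for every $x\in V(\bs G)$, if $L(x)$ is infinite then $L(f(x))$ is infinite. Applying this with $f^{i-1}(x)$ in place of $x$ then propagates infiniteness along the orbit, giving that every $L(f^i(x))$, $i\ge 0$, is infinite.

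The main ingredient is a purely deterministic identity, valid for any network $G$, any vertex‑shift $f$, and any $x\in V(G)$:
\[
f^{-1}\bigl(L(f(x))\bigr)=L(x).
\]
The inclusion ``$\subseteq$'' is the substantive direction: if $f(y)\sif f(x)$ then $f^m(f(y))=f^m(f(x))$ for some $m$, hence $f^{m+1}(y)=f^{m+1}(x)$, so $y\sif x$; the reverse inclusion is immediate since $y\sif x$ gives $f(y)\sif f(x)$ (one only has to note that $f^0(y)=f^0(x)$ already forces $f^1(y)=f^1(x)$, so the indices $n=0$ and $n\ge1$ in the definition of $\sif$ are interchangeable here). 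Since $f^{-1}(L(f(x)))$ is the disjoint union $\bigcup_{w\in L(f(x))}f^{-1}(w)$, the identity exhibits $L(x)$ as this union of fibres. Now assume $L(x)$ is infinite and work on the almost sure event, provided by Proposition~\ref{prop:E(d_n)=1} with $n=1$, on which $d_1(w)=\card{f^{-1}(w)}$ is finite for every vertex $w$. If $L(f(x))$ were finite, then $L(x)$ would be a finite union of finite sets, hence finite, contradicting the hypothesis. Hence $L(f(x))$ is infinite, and the induction above completes the proof.

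I do not expect a genuine obstacle here; the only delicate points are the bookkeeping between the cases $n=0$ and $n\ge1$ in the definition of $\sif$ when checking the foil identity, and the need to appeal to Proposition~\ref{prop:E(d_n)=1} rather than to Theorem~\ref{thm:classification}, since this corollary is one of the preliminary results feeding into the proof of that theorem. Alternatively, one can phrase the contradiction through Lemma~\ref{lemma:happensAtRoot}: if the statement failed with positive probability, there would with positive probability be a vertex of the finite foil $L(f(x))$ with infinitely many $f$‑preimages, forcing $\myprob{d_1(\bs o)=\infty}>0$ and contradicting Proposition~\ref{prop:E(d_n)=1}; but the direct argument above is shorter.
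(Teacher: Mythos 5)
Your proof is correct and follows essentially the same route as the paper's: the paper notes $f^i(L(x))\subseteq L(f^i(x))$ and invokes the a.s.\ finiteness of the degrees in $\bs G^f$ from Proposition~\ref{prop:E(d_n)=1}, which is exactly the finite-to-one property your preimage identity $f^{-1}(L(f(x)))=L(x)$ exploits in contrapositive form. The only difference is presentational (your equality versus the paper's inclusion, plus an explicit induction on $i$), so no further comment is needed.
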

	
	\begin{proof}
		Note that $f^i(L(x))\subseteq L(f^i(x))$ for all $i\geq 0$. Now, the claim follows from the fact that the degrees of all vertices in $\bs G^f$ are finite a.s. (Proposition~\ref{prop:E(d_n)=1}).
	\end{proof}
	
	\begin{proposition}
		\label{prop:acyclicComponent}
		Under the assumptions of Theorem~\ref{thm:classification}, almost surely, all infinite (undirected) components of ${\bs G}^f$ are acyclic (and hence are trees).
	\end{proposition}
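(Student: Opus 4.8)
The plan is to prove this by contradiction using the no infinite/finite inclusion lemma (Lemma~\ref{lemma:finiteSelection}). The first thing I would record is the elementary structural fact that makes the argument work: since $f_G$ gives every vertex exactly one outgoing edge in $\bs G^f$, every directed $f$-cycle is finite, and every (undirected) connected component $C$ of $\bs G^f$ contains at most one $f$-cycle. Indeed, if $x$ and $y$ both lie on $f$-cycles and lie in the same component, then $f^m(x)=f^n(y)$ for some $m,n\geq 0$; iterating $f$ on a vertex of a cycle keeps it on that same cycle, so the two cycles share a vertex and hence coincide. Consequently, for every component $C$ of $\bs G^f$, the set of vertices of $C$ that lie on an $f$-cycle is finite, and it is nonempty exactly when $C$ contains an $f$-cycle.

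Next I would introduce the covariant subset $S$ with $S_G:=\{x\in V(G):\exists n\geq 1,\ f_G^n(x)=x\}$ (the vertices lying on $f$-cycles) and the covariant partition $\Pi$ of $V(G)$ into the connected components of $G^f$; both are covariant under network isomorphisms and measurable in the sense of Definitions~\ref{def:covarsubset} and~\ref{def:partition}, because $f$ is a vertex-shift and the relevant conditions are countable combinations of conditions of the form $f^m(o)=f^n(s)$. Suppose, for contradiction, that with positive probability some \emph{infinite} component of $\bs G^f$ contains an $f$-cycle. By the structural fact above, this produces, with positive probability, an infinite element $E$ of $\Pi_{\bs G}$ for which $E\cap S_{\bs G}$ is finite and nonempty — exactly what Lemma~\ref{lemma:finiteSelection} forbids. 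Hence, almost surely, no infinite component of $\bs G^f$ contains an $f$-cycle.

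Finally I would upgrade the absence of \emph{directed} cycles to the absence of \emph{undirected} cycles, so as to conclude that such a component is a tree. The point is that in a directed graph in which every vertex has out-degree exactly one, an undirected cycle is automatically a directed cycle: along an undirected cycle there are as many edges as vertices, each vertex can be the source of at most one of its two incident cycle-edges (out-degree one), so each vertex is the source of exactly one, and orienting the cycle accordingly exhibits it as a directed $f$-cycle. Therefore an infinite component of $\bs G^f$ containing no $f$-cycle contains no undirected cycle either, i.e.\ it is a tree. I expect the contradiction step itself to be immediate once Lemma~\ref{lemma:finiteSelection} is available; the only points that genuinely need care are the two bookkeeping facts used to set it up — that every $f$-cycle is finite (so that Lemma~\ref{lemma:finiteSelection} is applicable at all) and that undirected acyclicity follows from directed acyclicity under the out-degree-one constraint.
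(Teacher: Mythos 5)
Your proof is correct and follows essentially the same route as the paper: both apply the no infinite/finite inclusion lemma (Lemma~\ref{lemma:finiteSelection}) to the covariant subset formed by the union of the $f$-cycles and the covariant partition into connected components of $\bs G^f$. The only difference is that you spell out the deterministic facts (uniqueness and finiteness of the $f$-cycle in a component, and that undirected acyclicity follows from directed acyclicity under the out-degree-one constraint) which the paper cites as classical without proof.
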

	
	\begin{proof}
		It is a classical (deterministic) fact that each component of $\bs G^f$ is either acyclic (and hence a tree) or it has a unique $f$-cycle. 
		
		Let $S$ be the union of the $f$-cycles, which is a covariant subset. The intersection of each component with $S$ is either empty or a unique $f$-cycle. Therefore, the claim follows by using Lemma~\ref{lemma:finiteSelection} for the partition $\mathcal C^f_{\bs G}$ consisting of the components of $\bs G^f$ and the covariant subset $S$.
	\end{proof}
	
	\begin{proposition}
		\label{prop:biinfinitePath}
		Under the assumptions of Theorem~\ref{thm:classification}, almost surely, each component of $\bs G^f$ contains at most one bi-infinite $f$-path.
	\end{proposition}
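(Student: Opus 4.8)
The plan is to reduce the statement, via mass transport, to the assertion that almost surely every vertex of $\bs G$ has at most one $f$-preimage lying in the set
$B_G:=f^\infty(V(G))=\{v\in V(G):D_n(v)\neq\emptyset\ \text{for all }n\geq 0\}$.
First note that $B_G$ is a covariant subset: $\{D_n(\cdot)\neq\emptyset\}$ is a $g^-$-type (hence measurable) condition, and $B_G$ is a countable intersection of such; likewise $A_G:=\{u\in V(G):\card{(f_G^{-1}(u)\cap B_G)}\geq 2\}$ is a covariant subset. I would then record two elementary facts valid for every network $G$. (a) If a component $C$ of $G^f$ contains two distinct bi-infinite $f$-paths $P$ and $Q$, then $A_G\neq\emptyset$: since $P,Q$ lie in the same component their forward orbits meet at some vertex $z$, and any bi-infinite $f$-path through $z$ is the union of the forward orbit of $z$ with its backward ray from $z$; as $P\neq Q$ these backward rays differ, so they split at a first vertex, producing $u\in C$ with two distinct $f$-preimages $w,w'$, one on $P$ and one on $Q$; and every vertex of a bi-infinite $f$-path lies in $B_G$ because the path exhibits preimages of it of all orders, so $w,w'\in B_G$ and $u\in A_G$. (b) $f(B_G)\subseteq B_G$: if $f^n(y)=v$ then $f^{n+1}(y)=f(v)$. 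By (a) and Lemma~\ref{lemma:happensAtRoot}, it then suffices to prove $\myprob{\bs o\in A_{\bs G}}=0$.

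The core step is that $f$ maps $B_{\bs G}$ \emph{onto} $B_{\bs G}$ almost surely. By Proposition~\ref{prop:E(d_n)=1}, almost surely $f_{\bs G}^{-1}(v)$ is finite for every vertex $v$. Fix such a $v\in B_G$. For any $w$, $D_n(w)\neq\emptyset$ implies $D_{n-1}(w)\neq\emptyset$ (replace $y$ by $f(y)$ in $f^n(y)=w$), so $\{n:D_n(w)\neq\emptyset\}$ is an initial segment of $\mathbb Z^{\geq 0}$; since $D_{n+1}(v)=\bigcup_{w\in f^{-1}(v)}D_n(w)$ is non-empty for every $n$ while $f^{-1}(v)$ is finite, some $w\in f^{-1}(v)$ has $D_n(w)\neq\emptyset$ for infinitely many, hence all, $n$, i.e.\ $w\in f^{-1}(v)\cap B_G$.

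Now apply~\eqref{eq:unimodular} to $g[G,x,y]:=\indic{x\in B_G}\indic{y\in B_G}\indic{f_G(y)=x}$, which is measurable on $\mathcal G_{**}$. Using $f(B_G)\subseteq B_G$, the outgoing mass at $x$ is $g^+_G(x)=\indic{x\in B_G}$ while the incoming mass is $g^-_G(x)=\indic{x\in B_G}\card{(f_G^{-1}(x)\cap B_G)}$, so
\[
\omid{\indic{\bs o\in B_{\bs G}}\,\card{(f_{\bs G}^{-1}(\bs o)\cap B_{\bs G})}}=\myprob{\bs o\in B_{\bs G}}<\infty .
\]
By the surjectivity just established, the left-hand integrand is at least $\indic{\bs o\in B_{\bs G}}$ almost surely, so equality of the (finite) expectations forces $\card{(f_{\bs G}^{-1}(\bs o)\cap B_{\bs G})}=1$ almost surely on $\{\bs o\in B_{\bs G}\}$; and on $\{\bs o\notin B_{\bs G}\}$ one has $f_{\bs G}^{-1}(\bs o)\cap B_{\bs G}=\emptyset$ by fact (b). Hence $\myprob{\bs o\in A_{\bs G}}=0$, so by Lemma~\ref{lemma:happensAtRoot} $A_{\bs G}=\emptyset$ almost surely, and fact (a) gives the proposition.

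The only genuinely delicate point is the surjectivity of $f$ on $B_{\bs G}$, which rests on the almost-sure finiteness of the $f$-in-degrees (Proposition~\ref{prop:E(d_n)=1}); everything else is bookkeeping with the measurability of $B_G$ and $A_G$ and with the finiteness of the mass-transport integrals, which legitimizes the cancellation step.
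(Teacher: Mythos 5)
Your proof is correct. It differs from the paper's in implementation, though both ultimately rest on the same cancellation: an a.s.\ surjective self-map on a unimodular structure has a.s.\ injective fibers, by comparing the two sides of the mass transport principle. The paper gets there more economically: it takes $S$ to be the union of the bi-infinite $f$-paths, defines the auxiliary vertex-shift $g$ equal to $f$ on $S$ and to the identity off $S$, notes that $g$ is surjective \emph{by construction} (every vertex of a bi-infinite path has a predecessor on that path), and invokes Proposition~\ref{prop:bijective} to conclude $g$ is injective a.s., so distinct bi-infinite paths cannot merge. You instead work with $B_G=f^\infty(V(G))$, prove surjectivity of $f$ on $B_{\bs G}$ via K\"onig's lemma and the a.s.\ finiteness of in-degrees from Proposition~\ref{prop:E(d_n)=1}, and run the mass transport by hand on $g[G,x,y]=\indic{x\in B_G}\indic{y\in B_G}\indic{f_G(y)=x}$. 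What the paper's route buys is brevity and independence from the finite in-degree fact at this step; what yours buys is that it simultaneously re-derives, for the components in question, the identification of $f^\infty(V(G))$ with the union of the bi-infinite $f$-paths, which the paper only establishes later inside the proof of Theorem~\ref{thm:classification}. One cosmetic slip: with the paper's conventions $g^+_G(x)=\sum_y g[G,x,y]=\indic{x\in B_G}\card{(f_G^{-1}(x)\cap B_G)}$ and $g^-_G(x)=\indic{x\in B_G}$, i.e.\ your labels for the outgoing and incoming masses are transposed; since~\eqref{eq:unimodular} equates the two expectations, the displayed identity and everything downstream are unaffected.
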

	\begin{proof}
		Let $S$ be the union of bi-infinite $f$-paths,
		which is a covariant subset. Define
		\[
		g_G(x):=\left\{
		\begin{array}{ll}
		f_G(x), & x\in S,\\
		x, & x\not\in S,
		\end{array}
		\right.
		\]
		which is a vertex-shift. By the definition of $S$, $g$ is surjective. Therefore, by Proposition~\ref{prop:bijective} $g$ is injective too a.s. It means that no two bi-infinite $f$-paths collide. Therefore, the bi-infinite paths are in different components a.s. and the claim is proved.
	\end{proof}
	
	\begin{proof}[Proof of Theorem~\ref{thm:classification}]
		It is again a classical deterministic fact that finite connected components satisfy the properties of class $\mathcal F/\mathcal F$, as shown by Lemma~5 in~\cite{foliation}. So it is enough to prove that each infinite component is of class $\mathcal I/\mathcal F$ or $\mathcal I/\mathcal I$ a.s.
		
		By Proposition~\ref{prop:acyclicComponent}, almost surely, each infinite component is acyclic and its $f$-graph is a tree. Since the degree of each vertex
		is finite a.s., K\"onig's infinity lemma implies that $v\in f_{\bs G}^{\infty}(V(\bs G))$ if and only if $v$ is in a bi-infinite $f$-path. So,
		$f_{\bs G}^{\infty}(V(\bs G))$ is the union of the bi-infinite $f$-paths a.s.
		
		Assume that, with positive probability, there exists a component of $\bs G^f$
		that has both finite and infinite foils.
		Let $P$ be the partition consisting of such components
		(and one more element which is the set of remaining vertices). By Corollary~\ref{cor:L(x)infinite->L(f(x))}, each component in $P$ has an {oldest} finite foil. 
		Define
		\[S:=\{v\in V(\bs G): \card{L(v)}<\infty, \card{L(f(v))}=\infty \},\] 
		which is a covariant subset. The intersection of $S$ with each component in $P$ is the oldest finite foil in the component. Now, one gets a contradiction by using Lemma~\ref{lemma:finiteSelection} for the covariant partition $P$ and the covariant subset $S$.
		
		In other words, almost surely, each connected component has either only finite foils or only infinite foils.
		
		Let $C$ be an infinite component containing some finite foils. As already said, almost surely, all foils in $C$ are finite. By Lemma~\ref{lemma:finiteSelection}, almost surely, $C$ does not have a youngest
		foil (use Lemma~\ref{lemma:finiteSelection} for the partition $\mathcal C^f_{\bs G}$ and the union of the youngest foils $\{L\in \mathcal L^f_{\bs G}: f^{-1}(L)=\emptyset\}$). So the order of foils in $C$
		is of type $\mathbb Z$ a.s. Let $L\subseteq C$ be an arbitrary foil
		and consider $\cup_{n=0}^{\infty} f^{-n}(L)$, which is the union
		of the foils younger than $L$. Since the degree of each vertex
		is finite a.s., K\"onig's infinity lemma implies that there is
		a bi-infinite path in $C$ a.s. By Proposition~\ref{prop:biinfinitePath}, $C$ contains
		exactly one bi-infinite $f$-path a.s. Now, $C$ satisfies all properties
		of class $\mathcal I/\mathcal F$.
		
		Finally, Let $C$ be a component containing some infinite foils. As mentioned above, all foils in $C$ are infinite a.s. It remains to prove that $C$ has no bi-infinite $f$-path a.s. 
		
		Let $S$ be the union of the bi-infinite $f$-paths. It can be seen that $S$ is a covariant subset. By Proposition~\ref{prop:biinfinitePath}, each foil includes 
		at most one vertex in $S$ almost surely. Therefore, Lemma~\ref{lemma:finiteSelection} implies that almost surely no
		infinite foil intersects $S$,
		
		which is the desired property.
		Thus, $C$ is of class $\mathcal I/\mathcal I$ a.s.
	\end{proof}
		
	\begin{remark}
		For a connected component $C$ of $G^\ff$, the vertex-shift $f$
                {is said to \textit{evaporate} $C$ if $\ff_{G}^\infty(C)=\emptyset$,
                that is, for all $x\in C$, there exists some $n>0$ such that $D_n(x)=\emptyset$
                \cite{foliation}}. Thus, under the assumptions of Theorem~\ref{thm:classification},
		almost surely, the vertex-shift $\ff$ evaporates $C$ if and only if
		$C$ is of class~${\mathcal I}/{\mathcal I}$.
	\end{remark}

\begin{example}
	Let $[\bs G, \bs o]$ be a unimodular graph equipped with i.i.d.
	uniform random marks in $[0,1]$ (Example~\ref{ex:iidMarks}).
	Consider the vertex-shift of Example~\ref{ex:degreeIncrease}.
	Assume that the function $h[\bs G, \cdot]$ takes infinitely many values a.s. 
        The following argument shows that all components of the $f$-graph
	are of class $\mathcal I/\mathcal I$ a.s. 
	On the event that $h$ takes infinitely many values, $f$ does not fix any vertex and increases
	the value of $h$ a.s. Hence the graph $\bs G^f$ is acyclic. Moreover, by
	Proposition~\ref{prop:E(d_n)=1}, $D(v)$ is finite for each vertex $v$.
	Therefore, by Theorem~\ref{thm:classification}, each connected
	component is of class $\mathcal I/\mathcal I$ almost surely.
\end{example}	
	
	\begin{example}[Stationary Drainage Networks]
        \label{ex:drain}
	Let $V=\{(x,y)\in \mathbb Z^2: x+y \mbox{ is even}\}$. 
        Define the following random directed graph $\bs G$ with vertex set $V$:
        for each point $(x,y)\in V$, add a directed edge from $(x,y)$ 
        to one of the two vertices $(x-1,y-1)$ and $(x+1,y-1)$ randomly.
        Assume the joint law of the choices is invariant under
        translations (i.e. is stationary, but not necessarily i.i.d.).
        Say $(x,y)$ flows into $(x',y')$ if there is a directed
        path from the former to the latter. It is shown below that
        Theorem~\ref{thm:classification} implies 
        the following: $\bs G$ is connected a.s. if and only if
        the number of points flowing into the origin is finite a.s. 
	By the stationarity assumption, the connected component
        of $\bs G$ containing the origin is unimodular
        (Example~\ref{ex:iidMarks}). In addition, the point $f(x,y)$ 
        chosen for $(x,y)$ as indicated above is a vertex-shift.
        Each foil is a set of consecutive points 
        in a horizontal line. It follows from Lemma~\ref{lemma:finiteSelection},
        that no foil is a half-line a.s.
        Therefore, almost surely, either there is a foil which
        is a full horizontal line, or all foils are finite horizontal intervals.
        It is easily seen that the former is equivalent to
        the connectedness of $\bs G$ and implies that all foils are
        full horizontal lines. Therefore, Theorem~\ref{thm:classification}
        implies that the former is equivalent to the condition that there is
        no bi-infinite directed path in $\bs G$.
        By Lemma~\ref{lemma:happensAtRoot}, one can show that this
        hold a.s. if and only if the number of points flowing into 
        the origin is finite a.s. So the claim is proved.

	In this example, one can replace $V$ with any stationary
        point process in $\mathbb Z^2$ and $f(x,y)$ by $(\tau(x,y),y-1)$. 
        One should assume that the joint law of $V$ and
        $\tau(\cdot,\cdot)$ is stationary and that $\tau$ is
        monotonic on each horizontal line.
	\end{example}

	The next two results are applications of the classification theorem.
        The proofs are only sketched. In particular, the technicalities about ends of trees
        are not discussed. These two results will not be used below and can hence been skipped
        at the first reading.

	Proposition~\ref{prop:ends} provides another proof, based on Theorem~\ref{thm:classification},
        of a result in~\cite{processes} in the special case of trees.

	\begin{proposition} \label{prop:ends}
		The number of ends of a unimodular random tree is almost surely either
		0, 1, 2 or uncountable. 
	\end{proposition}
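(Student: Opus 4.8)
The plan is to reduce the statement to the classification of vertex-shifts by exploiting the same dichotomy that governs the number of ends of a unimodular random tree: either the tree has a bounded (that is, finite) number of ends, or it has infinitely many, and in the latter case the set of ends carries no distinguished point, which forces it to be uncountable. First I would dispose of the finite case. If a unimodular random tree $[\bs T,\bs o]$ has, with positive probability, at least three ends but only finitely many, then there is a canonical nonempty finite covariant subset: the set of vertices of the (finite) "core" obtained by taking the union of the bi-infinite simple paths, or equivalently the convex hull of the ends — more precisely, the set of branch points together with the paths joining them, which is infinite as a subgraph but whose set of branch vertices (vertices of degree $\geq 3$ inside the core) can be shown to be finite and nonempty when the number of ends is finite and at least $3$. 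Applying Corollary~\ref{cor:infiniteSubset} (when $V(\bs T)$ is a.s.\ infinite) forces this finite covariant subset to be empty a.s., a contradiction; hence the number of ends, when finite, is $0$, $1$, or $2$ a.s.

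Next I would handle the infinite case, showing that "infinitely many ends" implies "uncountably many ends" a.s. Suppose on the contrary that with positive probability the tree has a countably infinite set of ends $\mathcal E(\bs T)$. A countable set with more than one point always has a "first" end in a suitable sense only if one can order it covariantly, which one cannot; the clean route is instead: a tree with countably many ends and at least three ends must contain at least one vertex $v$ such that removing $v$ disconnects $\mathcal E(\bs T)$ into at least three parts, at least two of which are "thin" (each associated to only finitely many ends — and since the whole set is countable, a minimality/compactness argument produces a branch carrying exactly one isolated end, i.e.\ an end that is a limit of no other end). The set of such isolated ends is nonempty (this is where I expect to lean on the countability: a countable compact-like set of ends has isolated points). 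Now define a vertex-shift $f$ by sending each vertex toward the nearest isolated end when the tree has countably many ends (with some covariant tie-break among isolated ends, using the tree structure and an exhaustion by balls), and $f = \mathrm{id}$ otherwise. The $f$-graph then has components that are one-ended sub-trees pointing at isolated ends; by Theorem~\ref{thm:classification} each such component is of class $\mathcal I/\mathcal I$ and therefore $f$-evaporates, i.e.\ $D(v)$ is finite for every $v$. But a component pointing at an isolated end of a tree with infinitely many ends contains a vertex with infinitely many $f$-preimages (all the branches carrying the other, non-isolated ends feed into the spine), contradicting finiteness of $D(v)$. This contradiction rules out countably infinite, leaving $0$, $1$, $2$, or uncountable.

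The main obstacle is making the notion "move toward the nearest isolated end" into a genuine, well-defined covariant vertex-shift: isolated ends need not be unique, the nearest one need not be unique, and ends are not vertices, so one must specify the map purely in terms of finite neighborhoods of the root. I would do this by the standard device: an end $\xi$ is coded by the nested sequence of components $(T \setminus B_n(\bs o))$ it meets; "isolated" is a tail condition on this combinatorial data and is therefore measurable; and among the (at most countably many) isolated ends one picks, for each vertex $v$, the unique neighbor of $v$ lying on the geodesic ray to the isolated end whose coding sequence is lexicographically smallest in the canonical DFS order on $T$ rooted at $v$ (as in the Royal Line construction of Example~\ref{ex:royal}). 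Once $f$ is seen to be a bona fide vertex-shift, the rest is a direct appeal to Theorem~\ref{thm:classification} and to Proposition~\ref{prop:E(d_n)=1}. Since the statement says the proofs are only sketched, I would present the finite-ends case in full via Corollary~\ref{cor:infiniteSubset}, and for the countable-versus-uncountable step simply indicate the evaporation argument above, deferring the measurability bookkeeping of ends to a remark.
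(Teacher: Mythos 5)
Your finite-ends case is fine and is a legitimate alternative to the paper's route: the set $S_T$ of vertices whose removal leaves at least three infinite components is a covariant subset which is non-empty and finite exactly when $3\le \#\mathrm{ends}(T)<\infty$, so Corollary~\ref{cor:infiniteSubset} kills that event. The paper does not separate this case out; it reduces the whole statement (via compactness and completeness of the space of ends) to the single claim that a tree with at least three ends almost surely has no \emph{isolated} end, which covers both the finite case and the countably-infinite case simultaneously.

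The genuine gap is in your countable-versus-uncountable step, and it is a gap of substance, not of bookkeeping. Your vertex-shift pushes each vertex \emph{toward} its nearest isolated end, and your contradiction is that some component must contain a vertex with infinitely many $f$-preimages. Neither half of this survives scrutiny. Consider the tree obtained by chaining countably many Canopy Trees $T_1 - T_2 - T_3-\cdots$ by single edges between leaves: it has countably many ends (one isolated end per $T_i$ plus the accumulation end along the chain), and ``move toward the nearest isolated end'' restricted to $T_i$ is just the parent map of the Canopy Tree. The components of your $f$-graph are then the $T_i$ themselves, each of which is a perfectly legal class $\mathcal I/\mathcal I$ component with $D(v)$ finite for every $v$ --- no vertex has infinitely many preimages, and nothing contradicts Theorem~\ref{thm:classification}. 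In other examples (a spine with one-ended branches) your $f$ does violate the classification, but via a different mechanism than the one you name; so the argument is not merely incompletely written, it is aimed at the wrong invariant. The paper's shift points in the \emph{opposite} direction: $f_T(v)=w$ when deleting the edge $vw$ leaves $v$'s component with exactly one end (so $v$ is pushed \emph{out} of the one-ended branch toward the part of the tree carrying the other ends). With at least three ends this $w$ is unique when it exists, every orbit stabilizes at a fixed point, and the presence of an isolated end forces some component of the $f$-graph to be infinite while containing a cycle of length one --- which Proposition~\ref{prop:acyclicComponent} forbids. That construction detects the chain-of-Canopy-Trees example; yours does not. A secondary issue: your tie-break ``lexicographically smallest in the canonical DFS order rooted at $v$'' is not available on an unmarked tree (cf.\ Remark~\ref{rem:automorphism}); you would first have to adjoin i.i.d.\ marks as in Example~\ref{ex:iidMarks}, whereas the paper's $f$ needs no tie-breaking at all.
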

	
	\begin{proof}
		For any given locally finite tree, there is a well-known metric on the set of ends of the tree that makes it a compact and complete metric space {(see~\cite{Di10}).} Therefore, it is enough to show that almost surely, if the tree has at least three ends, then it has no isolated end. Recall that there exists an isolated end if and only if by deleting a finite subset of the vertices, one can get a component of the remaining graph which has only one end.
		
Let $T$ be a tree with at least three ends.
Fix a vertex $v\in V(T)$. Call a neighbor $w$ of $v$ good if by deleting the edge $vw$,
the connected component containing $v$ has only one end.
It follows from the assumption of at least three ends
that there is at most one good neighbor.
Let $f_T(v):=w$ if $w$ is a good neighbor of $v$ and $f_T(v):=v$ if $v$ has no good neighbor. 
It can be seen that $f$ defines a vertex shift. By the assumption of at least three ends,
it can be seen that there is $n\in \mathbb N$ such that $f^n(v)=f^{n+1}(v)$;
i.e. each connected component of the $f$-graph $T^f$ has a cycle of length one.
Moreover, if $T$ has an isolated end, then there is a vertex $v$ such that $f(v)\neq v$. 
In this case, $f^{-n}(v)$ is a single vertex for each $n\geq 0$.
		
		Let $[\bs T, \bs o]$ be a unimodular random tree and assume that with positive probability, $\bs T$ has at least three ends and an isolated end. As stated above, on this event there is a vertex $v$ such that $f(v)\neq v$. Now, the connected component of $v$ in $\bs T^f$ is infinite and has a cycle, which contradicts the classification theorem (Theorem~\ref{thm:classification}). This completes the proof.
		
	\end{proof}

	\begin{theorem}
		\label{thm:selection}
		Let $[\bs T, \bs o]$ be a unimodular network whose underlying graph is a tree a.s. Then, on the event that the number of ends of $\bs T$ is not 2, there is no measurable way of selecting a non-empty and non-dense subset of the ends of $\bs T$.
	\end{theorem}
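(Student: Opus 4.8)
The plan is to argue by contradiction. Suppose there were a covariant measurable rule $S$ that, for every network $T$, produces a set $S_T\subseteq\partial T$ of ends of the underlying tree which, on the event $\{\bs T\text{ has}\ \neq 2\text{ ends}\}$, is almost surely non-empty and non-dense; here $\partial T$ carries the usual topology making it compact and complete, as recalled in the proof of Proposition~\ref{prop:ends}. First I would reduce to uncountably many ends: by Proposition~\ref{prop:ends}, on $\{\neq 2\text{ ends}\}$ the number of ends is almost surely $0$, $1$ or uncountable, and a space with at most one point has no non-empty non-dense subset; hence the events ``$0$ ends'' and ``$1$ end'' are null, so $\{\neq 2\text{ ends}\}$ agrees up to a null set with $E:=\{\bs T\text{ has uncountably many ends}\}$, which then has positive probability and carries an almost surely non-empty and non-dense $S_{\bs T}$. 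Recall that non-density of $S_T$ means precisely $\overline{S_T}\neq\partial T$. The argument now splits according to the covariant event $\{|\overline{S_{\bs T}}|=1\}$.

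Suppose $\{|\overline{S_{\bs T}}|=1\}$ has positive probability. On the corresponding sub-event of $E$, $S_{\bs T}=\{\xi\}$ is a single covariantly distinguished end, and I would take $f$ to be the vertex-shift of Example~\ref{ex:distinguished-end} sending each vertex to its neighbour on the ray toward $\xi$ (and $f=\mathrm{id}$ on networks with no distinguished end). Then $f$ has no fixed point, so $\bs T^f$ is acyclic; it is connected, since the $f$-orbits of any two vertices meet at their confluent toward $\xi$; hence, almost surely, its unique component is of class $\mathcal I/\mathcal F$ or $\mathcal I/\mathcal I$ by Theorem~\ref{thm:classification}, so $f^\infty_{\bs T}(V(\bs T))$ is either a single bi-infinite $f$-path (two ends) or empty. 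On the other hand, a vertex $v$ lies in $f^\infty_{\bs T}(V(\bs T))$ iff its set $D(v)$ of $f$-descendants is infinite (K\"onig's lemma, the degrees in $\bs T^f$ being finite by Proposition~\ref{prop:E(d_n)=1}), which happens iff $v$ lies on $\mathrm{geo}(\xi,\eta)$ for some end $\eta\neq\xi$; thus $f^\infty_{\bs T}(V(\bs T))=\bigcup_{\eta\neq\xi}\mathrm{geo}(\xi,\eta)$, a subtree whose set of ends is all of $\partial\bs T$ and hence uncountable -- a contradiction.

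Suppose instead $\{|\overline{S_{\bs T}}|=1\}$ is null, so almost surely on $E$ one has $|\overline{S_{\bs T}}|\geq 2$. I would then let $M_T$ be the convex hull of $\overline{S_T}$, i.e.\ the union of all geodesic lines between two ends of $\overline{S_T}$; this is a covariant subtree whose set of ends is exactly $\overline{S_T}$. Define the vertex-shift $f$ by $f_T(v):=v$ for $v\in M_T$ and $f_T(v):=$ the neighbour of $v$ one step closer to $M_T$ otherwise (and $f=\mathrm{id}$ outside this class of networks). On $E$, non-density yields an end $\eta\in\partial\bs T\setminus\overline{S_{\bs T}}=\partial\bs T\setminus\partial M_{\bs T}$, and the geodesic line from $\eta$ to any $\alpha\in\overline{S_{\bs T}}$ leaves $M_{\bs T}$ at a last vertex $v_0$ and then runs through vertices $v_1,v_2,\dots\notin M_{\bs T}$ with $f(v_i)=v_{i-1}$ for $i\geq 1$ and $f(v_0)=v_0$. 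Hence the component of $\bs T^f$ containing $v_0$ is infinite and has a cycle (the loop at $v_0$), contradicting Proposition~\ref{prop:acyclicComponent}.

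I expect the main obstacle to be not the dynamics -- which, given Theorem~\ref{thm:classification} and Proposition~\ref{prop:acyclicComponent}, is short -- but the topological and measurability bookkeeping the above glosses over: checking that $\overline{S_T}$, the event $\{|\overline{S_T}|=1\}$, the hull $M_T$ and ``the neighbour toward $M_T$'' all depend measurably on finite neighbourhoods of the root, and the standard facts about ends of locally finite trees used freely above (compactness of $\partial T$; that for a closed set $A\subseteq\partial T$ with $|A|\geq 2$ the convex hull is a subtree with end set $A$; and that $v\in f^\infty(V)$ iff $v$ has infinitely many $f$-descendants). As with the preceding results in this section, these technicalities would only be sketched.
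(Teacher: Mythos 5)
Your proposal is correct and follows essentially the same route as the paper's proof: pass to the closure of the selected set of ends, split on whether that closure is a singleton, and in each case define a vertex-shift pointing toward the selected ends (toward the single end, respectively toward the convex hull) whose $f$-graph violates Theorem~\ref{thm:classification} — a connected component with more than two ends in the first case, an infinite component with a cycle in the second. The paper packages both cases into one uniform definition of $f$ (send $v$ to the neighbour $w$ such that deleting $vw$ separates $v$ from all selected ends, else fix $v$) and, in the singleton case, contradicts the two-ends bound directly rather than via $f^\infty$, but these differences are cosmetic.
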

	\begin{proof}
		The proof is similar to the proof of Proposition~\ref{prop:ends}. Given a tree $T$, if $T$ has at most one end, the claim is trivial. So, by the assumption, assume $T$ has at least three ends. Let $D_T$ be the set of ends of $T$ and $S_{T}$ be the selected subset of $D_T$. By considering the closure of $S_T$, one may assume $S_T$ is closed from the beginning. Assume $\emptyset\neq S_T \neq D_T$. For $v\in V(T)$, let $f_T(v):=w$ if by deleting the edge $vw$, the component containing $v$ has none of the ends in $S_T$ and $f(v):=v$ if no such neighbor exists. It can be seen that $f$ is a well-defined vertex-shift and $f(v)\neq v$ for at least one vertex $v$.
		
		It can be seen that if $S_T$ has only one element, then the $f$-graph $T^f$ is connected and has at least three ends. If not, each connected component of $T^f$ has a cycle {with lenght one} and some of these components are infinite. By Theorem~\ref{thm:classification}, both cases happen with zero probability for $[\bs T, \bs o]$ and the claim is proved.
	\end{proof}

\subsection{Bibliographical Comments}
The notion of vertex-shift is an analogue of that of point-shifts 
for point processes \cite{Last}.
Theorem \ref{prop:Mecke} extends Mecke's point stationarity theorem \cite{HaLa06}.
The notion of foliation and 
Theorem \ref{thm:classification} extend 
results for point-shifts in \cite{foliation} to unimodular networks.
Proposition~\ref{prop:ends} is stated in~\cite{processes}
where it is proved for general unimodular random graphs.

The observations of Example \ref{ex:drain}
 can be applied to various examples of drainage networks in
the literature. In the case where the choices are i.i.d. with probability
$\frac 12$, one obtains the river model of~\cite{Ng90},
which is studied in various manners in the literature 
\cite{continuumIII}. If $V$ is replaced by a
Bernoulli point process and $\tau$ selects the closest point in
the line, one obtains the Howard model~\cite{RoSaSa16}.

	\section{Eternal Family Trees}
	\label{sec:eternal}

	Consider a random network $[\bs G, \bs o]$ and the directed graph
	$\bs G^f$ of a vertex-shift $f$. When regarding $f(x)$
	as the \textit{parent} of $x$ and $f^{-1}(x)$ as its \textit{children}
	for $x\in V(\bs G)$, one may regard an acyclic connected component of
        $\bs G^f$ as an extension of a branching process. 
    	A first difference with such a process
        is that, in a connected component of $\bs G^f$, there is no vertex which is
	an ancestor of all other vertices. 
	This is formalized in the definition of \textit{Eternal}
	Family Trees below, abbreviated as \eft{}s,
        which are objects of independent interest.
	In this context, the foils of $\bs G^f$ represent vertices
	of the same {\em generation}. 
        A second major difference is that there are no independence
        assumptions as in classical branching processes.

        Subsection~\ref{sec:ft} gives the definition and basic properties of \eft{s}.
	It is shown that if $[\bs G, \bs o]$ is a unimodular network,
        then, conditioned on being infinite, the connected component of $\bs G^f$ containing 
        $\bs o$ is a unimodular \eft.
        Proposition~\ref{prop:classification-EFT} classifies components in two classes,
        $\mathcal I/\mathcal I$ and $\mathcal I/\mathcal F$. Subsection~\ref{sec:typicalDescendant}
        introduces a method for constructing general unimodular \eft{s} which is more interesting for
        class $\mathcal I/\mathcal I$.
        Subsection~\ref{sec:joining} describes the structure of \eft{s} of class $\mathcal I/\mathcal F$
        and provides another general construction.
        Several examples stemming from branching processes will be discussed in forthcoming sections.

\subsection{Family Trees} 
\label{sec:ft}

A {\textbf{Family Tree} (abbreviated as {\ft{}}) is a {\em directed} tree $T$
in which the out-degree of each vertex is at most 1. 
For a vertex $v\in V(T)$ which has one outgoing edge $vw$, 
let $F(v):=w$, and call $F(v)$ the \textbf{parent} of $v$. 
Note that there may be vertices without parent.
When the out-degrees of all vertices are exactly 1,
$T$ is called an {\textbf{Eternal Family Tree}}.

It is straightforward that in a Family Tree,
(1) there is at most one vertex without parent and 
(2) for all pairs of vertices $(v,w)$, there exist $m,n\geq 0$
such that $F^n(v)=F^m(w)$.
This implies that Family Trees are in one-to-one correspondence
with undirected trees with one selected vertex (not to be confused with the root) or end
(the latter in the eternal case).

A \textbf{rooted Family Tree} is a pair $(T,o)$ in which $T$
is a {\ft{}} and $o$ is a distinguished vertex called the root.
The root may or may not have a parent, even if the graph has 
a vertex without parent.
	
A Family Tree $T$ can be considered as a network,
with the marks of each pair $(v,e)$ determined by
the directions of $e$.
Also note that when $F$ is defined for all vertices (i.e. in the eternal case),
$F$ is a covariant vertex-shift which is called 
\textbf{the parent vertex-shift}, and $T$ coincides with the $F$-graph $T^F$ 
(see Definition~\ref{def:foliation}).
	
In line with the definitions used for networks,
the neighbors of $x$, other than $F(x)$, are called its \textbf{children}.
An \textbf{ordered} Family Tree is a Family Tree with an underlying 
total order on the children of each vertex (note that this order 
can be obtained by putting marks as in the definition of networks).
The vertices $\left(F^n(x)\right)_{n=0}^{\infty}$ are called
the \textbf{ancestors} of $x$.
The set of \textbf{descendants of order $n$} of $x$ is
$D_n(x):=D_n(T,x):=\{y:F^{(n)}(y)=x\}$,
with $d_n(x):=d_n(T,x):=\card{D_n(x)}$.
The (tree of) \textbf{descendants} $D(x)$ of $x$
is the subtree with vertices $\cup_{n=0}^{\infty}D_n(x)$.

Let $l(\cdot,\cdot)=l_T(\cdot,\cdot)$ be the function
which assigns to each pair $(v,w)$ of vertices of a Family Tree $T$,
{the number of generations between $v$ and $w$ w.r.t. $F$}, which is defined by
\begin{equation}
\label{eq:l}
		l(v,v)=0, \qquad
		l(v,F(w))=l(v,w)-1 %, \quad \forall w\in V(T),
\end{equation} 
for all $v,w\in V(T)$.
Note that this function is invariant under isomorphisms.
One also has $l(w,v)=-l(v,w)$ and $l(v,w)+l(w,z)=l(v,z)$.
For a rooted Eternal Family Tree $[T,o]$, the level sets of $l(o,\cdot)$ are just the foils of $T$
for the parent vertex-shift $F$. The level set $\{w\in V(T):l(o,w)=n\}$
is called the \textbf{$n$-th generation} of $[T, o]$.

The results listed below and in Subsection~\ref{sec:typicalDescendant} 
are valid in both the ordered and the non-ordered cases.
	
\begin{example}
A tree with one distinguished end (for instance the Canopy Tree of
Example~\ref{ex:canopy}) can be regarded as an \eft{}
by directing the edges according to the vertex-shift in Example~\ref{ex:distinguished-end}.
\end{example}	
	
\begin{definition}
Let $\mathcal T$ denote the set of isomorphism classes of Family Trees,
and define $\mathcal T_*$ and $\mathcal T_{**}$ similarly
(they form closed subspaces of $\mathcal G_*$ and $\mathcal G_{**}$,
respectively for a suitable mark space).
A \textbf{random Family Tree} is a random network with values
in $\mathcal T_*$ almost surely. A \textbf{unimodular \ft{}}
is defined as in Definition~\ref{def:unimodular}. A \textbf{proper} random \ft{}
is a random \ft{} in which 
\[
	0<\omid{d_n(\bs o)}<\infty,
\] 
for all $n\geq 0$.
The same notation will be used for ordered Family Trees,
as there will be no possible confusion from the context. 
\end{definition}
	
\begin{remark}
All the results in this section and Section~\ref{sec:beyond} are also
valid for Family Trees in which the vertices and edges are equipped with marks; 
i.e. networks where the underlying graph is a Family Tree.
\end{remark}

\begin{proposition} 
	\label{prop:e(d_n)=1 unimodular}
	Let $[\bs T, \bs o]$ be a unimodular \ft{}.
	\begin{enumerate}[(i)]
		\item \label{prop:e(d_n)=1 unimodular:1} If $\bs T$ has infinitely many vertices a.s.,
		then it is eternal a.s. Moreover, $[\bs T, \bs o]$ is a proper random \eft{}, with 
		$\omid{d_n(\bs o)}=1$ for all $n\geq 0$, and
		$\omid{d(\bs o)}=\infty$.
		
		\item \label{prop:e(d_n)=1 unimodular:2} If $\bs T$ is finite with positive probability, then
		$\omid{d_n(\bs o)}<1$ for all $n\ge 0$.
	\end{enumerate}
\end{proposition}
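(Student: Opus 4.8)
The strategy is to first establish the mass-transport identity $\omid{d_n(\bs o)}=\probPalm{}{F^n \text{ is defined at } \bs o}$-type relation by a carefully chosen transport function, and then treat the eternal and non-eternal cases separately. For part (\ref{prop:e(d_n)=1 unimodular:1}), the plan is: suppose $\bs T$ is infinite a.s.; I want to show it is eternal a.s., i.e.\ that a.s.\ no vertex lacks a parent. In a Family Tree there is at most one such vertex (the unique ``selected vertex'' in the one-to-one correspondence recalled after the definition of \ft{}s). So the set of parentless vertices is a covariant subset $S$ with $\card{S_{\bs T}}\le 1$. Since $V(\bs T)$ is a.s.\ infinite, Corollary~\ref{cor:infiniteSubset} forces $\card{S_{\bs T}}\in\{0,\infty\}$, hence $S_{\bs T}=\emptyset$ a.s., which is precisely eternality. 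Once $\bs T$ is eternal a.s., $F$ is a genuine vertex-shift defined everywhere and $\bs T$ coincides with its $F$-graph, so Proposition~\ref{prop:E(d_n)=1} applies verbatim: $d_n(\bs o)$ is a.s.\ finite, $\omid{d_n(\bs o)}=1$ for all $n\ge 0$, hence $\bs T$ is proper, and since $\bs T$ is an infinite tree it is acyclic, so the same proposition gives $\omid{d(\bs o)}=\sum_{n\ge 0}\omid{d_n(\bs o)}=\infty$.

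For part (\ref{prop:e(d_n)=1 unimodular:2}), the plan is to run the mass transport argument directly without assuming eternality. Fix $n\ge 0$ and define $g[T,x,y]:=\identity{\{F^{(n)}(x)=y\}}$, where this is understood to be $0$ whenever $F^{(n)}(x)$ is undefined (i.e.\ when $x$ has fewer than $n$ ancestors). This $g$ is measurable on $\mathcal T_{**}$. Then $g^+_{\bs T}(\bs o)=\identity{\{F^{(n)}(\bs o)\text{ is defined}\}}\le 1$, while $g^-_{\bs T}(\bs o)=\card{\{x:F^{(n)}(x)=\bs o\}}=d_n(\bs o)$. The mass transport principle~\eqref{eq:unimodular} gives
\[
\omid{d_n(\bs o)}=\omid{\identity{\{F^{(n)}(\bs o)\text{ is defined}\}}}=\probPalm{}{F^{(n)}(\bs o)\text{ is defined}}.
\]
(Here $\probPalm{}{\cdot}$ just denotes $\mathbb P$; I can instead write $\myprob{\cdot}$.) So $\omid{d_n(\bs o)}\le 1$ always, with equality iff a.s.\ $\bs o$ has at least $n$ ancestors. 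The remaining point is to upgrade this to a strict inequality when $\myprob{\bs T\text{ finite}}>0$. On the event that $\bs T$ is finite, it has a (unique) parentless vertex $r$, and then $F^{(n)}$ is undefined at every vertex in the topmost $n$ generations above $r$ — in particular undefined at $r$ itself for every $n\ge 1$, and for $n=0$ the claim $\omid{d_0(\bs o)}=\omid{1}=1$ is false, so one must be slightly careful: actually $d_0(\bs o)=1$ always and $\omid{d_0(\bs o)}=1$, \emph{not} $<1$. Hence the statement is really about $n\ge 1$, or the convention ``$d_n$'' in the proper-\ft{} definition is being read with $n\ge 0$ but the parentless case only bites for $n\ge 1$; I would flag this and prove it for $n\ge 1$. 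For $n\ge 1$: conditioned on $\bs T$ finite, with positive probability $\bs o$ equals the parentless vertex $r$ (by Lemma~\ref{lemma:happensAtRoot} applied to the covariant subset of parentless vertices, which is nonempty on this event), and then $F^{(n)}(\bs o)$ is undefined; this contributes a set of positive probability on which the indicator above vanishes, so $\myprob{F^{(n)}(\bs o)\text{ defined}}<1$, giving $\omid{d_n(\bs o)}<1$.

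The main obstacle I anticipate is the bookkeeping around $n=0$ and the precise meaning of ``parentless vertex'' under the measurability/covariance constraints: I need the set of parentless vertices to be a legitimate covariant subset (it is — it is $\{v:[T,v]\in A\}$ for $A=\{[T,o]:o\text{ has out-degree }0\}$, an open condition on a neighborhood of the root) so that Corollary~\ref{cor:infiniteSubset} and Lemma~\ref{lemma:happensAtRoot} are applicable. Everything else is a direct application of~\eqref{eq:unimodular} to the transport function $\identity{\{F^{(n)}(x)=y\}}$, so no hard estimates are involved; the argument is essentially the finite-tree boundary effect made rigorous via unimodularity.
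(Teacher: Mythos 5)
Your proof is correct and follows essentially the same route as the paper's: eternality via the covariant subset of parentless vertices being finite hence a.s.\ empty in an infinite unimodular network (you invoke Corollary~\ref{cor:infiniteSubset}, the paper invokes Lemma~\ref{lemma:happensAtRoot} — the same mass-transport computation), then Proposition~\ref{prop:E(d_n)=1} for part (i), and for part (ii) the transport $g[T,x,y]=\identity{\{y=F^n(x)\}}$ whose outgoing mass is bounded by $\identity{\{\bs o\notin S_{\bs T}\}}$ together with $\myprob{\bs o\in S_{\bs T}}>0$. Your observation that the strict inequality in (ii) can only hold for $n\geq 1$ (since $d_0(\bs o)\equiv 1$) is a correct catch of an off-by-one in the stated range, which the paper's own proof also silently requires.
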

	
\begin{proof}
\eqref{prop:e(d_n)=1 unimodular:1}. For all Family Trees $T$, let $S_T$ be the set of vertices of $T$
without parent. The set $S_T$ has at most one element.
Now, $S_{\bs T}$ is a finite covariant subset of the
infinite unimodular network $[\bs T, \bs o]$.
Lemma~\ref{lemma:happensAtRoot} gives that $S_{\bs T}=\emptyset$ a.s.;
that is, $\bs T$ is an Eternal Family Tree a.s.
Now, the second part is a direct consequence of Proposition~\ref{prop:E(d_n)=1}.
		
\eqref{prop:e(d_n)=1 unimodular:2}. Using the definition of (i) above, the finiteness of $T$ implies $S_{\bs T}\neq \emptyset$
with positive probability. Therefore, $\myprob{\bs o\in S_{\bs T}}>0$
by Lemma~\ref{lemma:happensAtRoot}. Send unit mass from each vertex $v$
to $F^n(v)$ provided $F^n(v)$ exists; that is, 
$g[G, v, w]:=\identity{\{w=F^n(v)\}}$. By~\eqref{eq:unimodular},
		\[
			\omid{d_n(\bs o)}=\omid{g^-(\bs o)} = \omid{g^+(\bs o)} \leq \omid{\identity{\{\bs o\not\in S_{\bs T}\}}}<1.
		\]
\end{proof}

The subtree of descendants of the root of an \eft{} can be seen as some
generalized branching processes, where the generalization lies in the fact
that there are no independence assumptions.
A unimodular \eft{} is always critical in the sense that the mean number of children
of the root is 1 (Proposition~\ref{prop:e(d_n)=1 unimodular}) and
more generally, the number of children
of a typical descendant of any generation is 1 (Proposition~\ref{prop:sigma_1 unimodular-new} below).

\begin{proposition}[Classification of Unimodular \eft{}s]
\label{prop:classification-EFT}
For the parent vertex-shift, a unimodular \ft{} with infinitely many
vertices almost surely belongs to one of the following classes: 
\begin{enumerate}[(i)]
\item Class $\mathcal I/\mathcal I$: every generation is infinite,
each vertex has finitely many descendants and there is no bi-infinite $F$-path; 
i.e., the tree has only one end.
\item Class $\mathcal I/\mathcal F$: every generation is finite and
the set of vertices with infinitely many descendants form a unique bi-infinite $F$-path; i.e., the tree has two ends.
\end{enumerate}
\end{proposition}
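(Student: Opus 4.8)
The plan is to reduce Proposition~\ref{prop:classification-EFT} to the already-proved classification theorem (Theorem~\ref{thm:classification}) applied to the parent vertex-shift $F$. First I would observe that by Proposition~\ref{prop:e(d_n)=1 unimodular}(i), since $\bs T$ has infinitely many vertices a.s., it is eternal a.s., so $F$ is a genuine vertex-shift defined on all of $V(\bs T)$ and $\bs T$ coincides with the $F$-graph $\bs T^F$. Moreover $\bs T^F$ is acyclic (a tree is acyclic by hypothesis, and also $F$ has no cycles since a cycle in $\bs T^F$ would force the out-degree condition to produce a directed cycle in the tree, impossible). Hence the only classes available from Theorem~\ref{thm:classification} are $\mathcal I/\mathcal F$ and $\mathcal I/\mathcal I$, as class $\mathcal F/\mathcal F$ is excluded by infiniteness (the component containing $\bs o$ is all of $\bs T$ since a Family Tree is connected). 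This already gives the dichotomy; it remains to translate the conclusions of the two classes into the language of generations and descendants.

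Next I would match the terminology. The foils of $\bs T$ for the vertex-shift $F$ are exactly the generations, i.e. the level sets of $l(\bs o,\cdot)$, as noted in Subsection~\ref{sec:ft}. So ``all foils finite'' becomes ``every generation is finite'' and ``all foils infinite'' becomes ``every generation is infinite.'' For the descendants: $D(v)$ as defined for the \ft{} agrees with $D(v)=\bigcup_{n\geq 0}D_n(v)$ in Definition~\ref{def:foliation}, i.e. the $F$-preimages of all orders of $v$. In class $\mathcal I/\mathcal I$, Theorem~\ref{thm:classification}(iii) gives that $D(v)$ is finite for every $v$, there is no bi-infinite $F$-path, and $\bs T$ has one end; that is precisely statement (i). In class $\mathcal I/\mathcal F$, Theorem~\ref{thm:classification}(ii) gives a unique bi-infinite $F$-path, with $f^\infty_{\bs G}(C)$ equal to its vertex set; I would then note that $f^\infty(\bs T)=\{v:\forall n\geq 0,\ D_n(v)\neq\emptyset\}$ is exactly the set of vertices with infinitely many descendants (using König's infinity lemma together with local finiteness, which is available since the degree in $\bs T^F$ is a.s.\ finite by Theorem~\ref{thm:classification}). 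This identifies ``vertices with infinitely many descendants'' with the bi-infinite $F$-path, and two ends with the two-ended tree structure, yielding statement (ii).

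I do not expect a serious obstacle; the work is essentially bookkeeping to confirm that the hypotheses of Theorem~\ref{thm:classification} hold and that each stated property transfers. The one point requiring a line of care is the equivalence ``$v$ has infinitely many descendants $\iff v\in f^\infty(\bs T)$.'' The direction $v\in f^\infty(\bs T)\Rightarrow d(v)=\infty$ is immediate since each $D_n(v)\neq\emptyset$. For the converse, if $d(v)=\infty$ then the rooted subtree $D(v)$ is infinite and locally finite, so by König's lemma it contains an infinite ray away from $v$, which traced through $F$ shows $D_n(v)\neq\emptyset$ for all $n$, hence $v\in f^\infty(\bs T)$. With that lemma in hand, the two items of the proposition are exactly the restatements of items (ii) and (iii) of Theorem~\ref{thm:classification} for $C=\bs T$ and $f=F$, and the proof concludes.
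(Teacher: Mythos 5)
Your proposal is correct and follows exactly the paper's route: the paper's own proof is the one-line statement that the claim is a direct corollary of Proposition~\ref{prop:e(d_n)=1 unimodular} (which gives eternality, so that $\bs T$ coincides with the $F$-graph and class $\mathcal F/\mathcal F$ is excluded) and Theorem~\ref{thm:classification}. The extra bookkeeping you supply --- identifying foils with generations and using K\"onig's lemma to equate $f^\infty(\bs T)$ with the set of vertices having infinitely many descendants --- is exactly the translation the paper leaves implicit.
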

\begin{proof}
The claim is a direct corollary of Proposition~\ref{prop:e(d_n)=1 unimodular} and Theorem~\ref{thm:classification}.
\end{proof}

Proposition~\ref{prop:anyI/I} and part~\eqref{thm:stationarySeqUnimodular:<--}
of Theorem~\ref{thm:stationarySeqUnimodular} below prove existence of specific constructions for the classes $\mathcal I/\mathcal I$ and 
$\mathcal I/\mathcal F$.
Some examples of these classes are mentioned after Theorem~\ref{thm:classification}.

\begin{example}
\label{ex:connectedComponent}			
Let $[\bs G, \bs o]$ be a unimodular network and $f$ a vertex-shift.
Let $C_{(\bs G,\bs o)}$ be the connected component of the
$f$-graph $\bs G^f$ that contains $\bs o$. 
Then $[C_{(\bs G, \bs o)}, \bs o]$, conditioned on being infinite,
is a unimodular \eft{}. 
This can be proved by verifying~\eqref{eq:unimodular} directly and 
using the fact that $[G,o]\mapsto [C_{(G,o)},o]$ is a measurable map.
\end{example}

\subsection{Moving the Root to a Typical Far Descendant}
\label{sec:typicalDescendant}

This subsection introduces a method for constructing general unimodular \eft{}s.
The idea is to start with a (not necessarily unimodular) random \ft{} and
to move the root to a \textit{typical} $n$'th descendant
for large $n$ as defined below, provided the set of $n$-th descendants of the root is non-empty with positive probability.
Note that if the initial random \ft{} has a vertex without parent,
this vertex disappears in the limit and the limit is eternal. 	
This method turns out to boil down to iterations of an operator,
namely $\sigma$, that stabilizes any unimodular probability measure.
By showing a continuity-like property of $\sigma$, one may hope
that the limiting distribution is a fixed point of $\sigma$. In fact,
this method constructs a larger class of \eft{}s, namely 
offspring-invariant (i.e. $\sigma$-invariant)
\eft{}s, which will be studied in detail in Section~\ref{sec:beyond}.
The main focus of the present section is on the unimodular case.

\begin{definition} \label{def:typicalDescendant}
Let $[\bs T, \bs o]$ be a random (ordered or non-ordered)
Family Tree, not necessarily unimodular,
and let $n\in\mathbb Z^{\geq 0}$. Assume $0<\omid{d_n(\bs o)}<\infty$.
By \textbf{moving the root to a typical $n$-descendant},
one means considering the following measure on $\mathcal T_*$:
	\begin{equation} \label{eq:sigma_n}
		\mathcal P_n(A):=\frac 1{\omid{d_n(\bs o)}} \omid{\sum_{v\in D_n(\bs o)} \identity{A}[\bs T, v]}.
	\end{equation}
	
An equivalent definition for~\eqref{eq:sigma_n} is:
\textit{Bias the probability measure {by} 
$d_n(\bs o)$ and then move the root to a random uniform vertex in $D_n(\bs o)$.}

Whenever $(\mathcal P_n)$ converges weakly 
to some probability distribution, define
\[
\mathcal P_{\infty}:=\lim_{n\rightarrow\infty}\mathcal P_n.
\]
\end{definition}

The probability measure $\mathcal P_n$  defines a new random network. 
Below, expectation w.r.t. $\mathcal P$ and $\mathcal P_n$ are denoted by $\mathcal E$ and
$\mathcal E_n$ respectively.
The probability $\mathcal P_n$ 
defined in~\eqref{eq:sigma_n} is just the image of 
$\mathcal P$ by the operator $\sigma_n$ defined below.

\begin{definition}
\label{def:sigma-op}
Let $\sigma_n$ be the operator which associates to
any probability measure $\mathcal Q$ on ${\mathcal T}_*$ the probability measure
	$$\sigma_n\mathcal Q [A] =  
	\frac 1 {\int_{{\mathcal T}_*} d_n(T,o) \mathrm d\mathcal Q([T,o])}
	\int_{{\mathcal T}_*} 
	\sum_{v\in D_n(T,o)} \identity{A}[T, v] 
	\mathrm d\mathcal Q([T,o])
	$$
on ${\mathcal T}_*$, given that the denominator is positive and finite.
Let $\sigma:=\sigma_1$ and
$\sigma_{\infty}\mathcal Q$ be the weak
limit of $\sigma_n \mathcal Q$ as $n\rightarrow\infty$,
assuming the limit exists.
	
For a random Family Tree $[\bs T, \bs o]$ with distribution $\mathcal P$,
one has $\mathcal P_n=\sigma_n\mathcal P$ for each $n\leq\infty$.
Denote by $\sigma_n[\bs T, \bs o]$ a random \ft{}
with this distribution.
The random Family Tree $[\bs T, \bs o]$ is
called \textbf{offspring-invariant} if its distribution is invariant under $\sigma$.

\end{definition}

\begin{example}
Consider a semi-infinite path $v_0,v_1,\ldots$ and attach to each of its vertices
disjoint semi-infinite paths to form a deterministic tree $T$.
By letting $v_0$ be the vertex without father, one may regard $[T,v_0]$ as a \ft{}.
Then, since $d_n(v_0)$ is deterministic,
the probability measure $\mathcal P_n$ defined in~\eqref{eq:sigma_n}
is obtained by choosing a new root uniformly at random in $D_n(v_0)$.
Therefore, under $\mathcal P_n$, the distance of the root to the path $\{v: d_1(v)=2\}$
is uniformly at random in $\{0,1,\ldots,n\}$. It can be seen that
$\mathcal P_{\infty}$ exists here and is just the distribution
of a bi-infinite path rooted at an arbitrary vertex. 
In other words, the vertices having two children vanish in the limit.
\end{example}

Two other examples of $\mathcal P_{\infty}$ are described in
Subsections~\ref{sec:joining} and~\ref{sec:EGW} below. See Propositions~\ref{prop:joining-p_infinity} and~\ref{prop:EGW-limit}.

The following three results establish the basic results
of the operator $\sigma$ alluded to above.
The proofs are postponed to the end of the subsection.
Lemma~\ref{lemma:sigma-semigroup} shows that moving the root to
a typical $n$'th descendant is just the $n$-fold iteration of the
$\sigma$ operator. Proposition~\ref{prop:sigma_1 unimodular-new} 
shows that the distributions of unimodular \eft{}s are (a subset of the)
fixed points of $\sigma$. Finally, Theorem~\ref{thm:p_infty} is a
continuity-like result that studies when the limit $\mathcal P_{\infty}$
of iterates of $\sigma$ on $\mathcal P$ is offspring-invariant or unimodular
(see also Lemma~\ref{lemma:continuity} below).

\begin{lemma}
	\label{lemma:sigma-semigroup}
	The operators $(\sigma_n)_n$ form a semigroup on the space of proper
	probability measures on $\mathcal T_*$. In other words, 
        $\sigma_n\mathcal P = \sigma^{(n)}\mathcal P$,
        for every proper probability measure $\mathcal P$.
	More generally,
	\begin{equation}
	\label{eq:sigma-semigroup}
	\sigma_m \circ \sigma_n (\cdot) = \sigma_{m+n}(\cdot)
	\end{equation}
	whenever $\sigma_n(\cdot)$ and $\sigma_{m+n}(\cdot)$ are defined.
\end{lemma}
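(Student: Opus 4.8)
The plan is to prove the composition identity $\sigma_m \circ \sigma_n = \sigma_{m+n}$ directly from the defining formula of $\sigma_n$, and then to deduce the semigroup statement (and $\sigma_n \mathcal P = \sigma^{(n)}\mathcal P$) by iteration. The heart of the matter is a change-of-order-of-summation combined with the observation that a descendant of order $m$ of a vertex $v$, where $v$ is itself a descendant of order $n$ of $o$, is precisely a descendant of order $m+n$ of $o$, and that this correspondence is a bijection between $\bigsqcup_{v \in D_n(T,o)} D_m(T,v)$ and $D_{m+n}(T,o)$; this is just the relation $F^{(m+n)} = F^{(m)} \circ F^{(n)}$ on a Family Tree, together with the fact that $l(o,\cdot)$ takes each value on a disjoint set of vertices.

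Concretely, I would first record the normalization constants. For a proper probability measure $\mathcal P$ on $\mathcal T_*$, write $c_n := \int_{\mathcal T_*} d_n(T,o)\, \mathrm d\mathcal P([T,o]) = \mathcal E[d_n(\bs o)] \in (0,\infty)$. Applying the definition of $\sigma_n$ and then computing $\mathcal E_{\sigma_n \mathcal P}[d_m(\bs o)]$, one gets
\[
	\int_{\mathcal T_*} d_m(T,o)\, \mathrm d(\sigma_n\mathcal P)([T,o])
	= \frac{1}{c_n} \int_{\mathcal T_*} \sum_{v \in D_n(T,o)} d_m(T,v)\, \mathrm d\mathcal P([T,o])
	= \frac{c_{m+n}}{c_n},
\]
where the last equality uses the bijection $\bigsqcup_{v\in D_n(T,o)} D_m(T,v) = D_{m+n}(T,o)$ to rewrite $\sum_{v\in D_n(T,o)} d_m(T,v) = d_{m+n}(T,o)$. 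In particular this is positive and finite, so $\sigma_m(\sigma_n\mathcal P)$ is well-defined whenever $\sigma_n\mathcal P$ and $\sigma_{m+n}\mathcal P$ (equivalently $c_{m+n}<\infty$) are. Then, for a measurable $A \subseteq \mathcal T_*$,
\[
	(\sigma_m \circ \sigma_n)\mathcal P[A]
	= \frac{c_n}{c_{m+n}} \cdot \frac{1}{c_n} \int_{\mathcal T_*} \sum_{v\in D_n(T,o)} \sum_{w \in D_m(T,v)} \identity{A}[T,w]\, \mathrm d\mathcal P([T,o])
	= \frac{1}{c_{m+n}} \int_{\mathcal T_*} \sum_{w \in D_{m+n}(T,o)} \identity{A}[T,w]\, \mathrm d\mathcal P([T,o]),
\]
which is exactly $\sigma_{m+n}\mathcal P[A]$. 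All interchanges of sum and integral are justified by Tonelli's theorem since the integrands are non-negative. Finally, $\sigma_n\mathcal P = \sigma^{(n)}\mathcal P$ follows by induction on $n$: it is trivial for $n=0$ (where $\sigma_0$ is the identity) and $n=1$, and the inductive step is $\sigma^{(n)}\mathcal P = \sigma \circ \sigma^{(n-1)}\mathcal P = \sigma_1 \circ \sigma_{n-1}\mathcal P = \sigma_n \mathcal P$ by \eqref{eq:sigma-semigroup}; properness of all intermediate measures is guaranteed because $c_k \in (0,\infty)$ for all $k$ when $\mathcal P$ is proper, as the computation above shows that $\mathcal E_{\sigma^{(k)}\mathcal P}[d_j(\bs o)] = c_{j+k}/c_k$.

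The only genuine subtlety — and the step I would be most careful about — is the bookkeeping behind the bijection $\bigsqcup_{v\in D_n(T,o)} D_m(T,v) = D_{m+n}(T,o)$: one must check that the union over distinct $v \in D_n(T,o)$ is genuinely disjoint (true because $w \in D_m(T,v)$ forces $v = F^{(m)}(w)$, so $v$ is determined by $w$) and that every $w \in D_{m+n}(T,o)$ arises exactly once (take $v := F^{(m)}(w)$, which lies in $D_n(T,o)$ since $F^{(n)}(v) = F^{(n+m)}(w) = o$). Everything else is a routine Tonelli interchange and the induction. I would also note explicitly that when $m=0$ or $n=0$ the identity degenerates correctly, and that the phrase ``whenever $\sigma_n(\cdot)$ and $\sigma_{m+n}(\cdot)$ are defined'' in the statement is exactly the condition $0 < c_n, c_{m+n} < \infty$, under which the computation above shows $\sigma_m \circ \sigma_n(\cdot)$ is also defined.
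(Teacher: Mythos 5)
Your proposal is correct and follows essentially the same route as the paper: the key identity $\mathcal E_{\sigma_n\mathcal P}[d_m(\bs o)] = \mathcal E[d_{m+n}(\bs o)]/\mathcal E[d_n(\bs o)]$ (which the paper isolates as a separate lemma, proved via the same disjoint-union decomposition $D_{m+n}(T,o)=\bigsqcup_{v\in D_n(T,o)}D_m(T,v)$) followed by the same two-step expansion of $(\sigma_m\circ\sigma_n)\mathcal P$ and collapse of the double sum. Your extra care about the bijection, the Tonelli interchange, and the induction for $\sigma_n=\sigma^{(n)}$ only makes explicit what the paper leaves implicit.
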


\begin{proposition}
\label{prop:sigma_1 unimodular-new}
A random Family Tree $[\bs T, \bs o]$ is a unimodular \eft{} a.s. if and only
if it is offspring-invariant and $\omid{d_1(\bs o)}=1$.
In this case, the classification of Proposition~\ref{prop:classification-EFT} holds.
\end{proposition}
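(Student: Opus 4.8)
The plan is to prove the two directions separately. For the ``only if'' direction, suppose $[\bs T,\bs o]$ is a unimodular \eft{} a.s. Then $\omid{d_1(\bs o)}=1$ is exactly Proposition~\ref{prop:e(d_n)=1 unimodular}\eqref{prop:e(d_n)=1 unimodular:1} (with $n=1$; note that for $n=0$ this is trivial, and finiteness of $d_1(\bs o)$ comes from the finite-degree conclusion of Proposition~\ref{prop:E(d_n)=1}). For offspring-invariance, I must show $\sigma\mathcal P=\mathcal P$, i.e. for every measurable $A\subseteq\mathcal T_*$,
\[
\mathcal P(A)=\frac{1}{\omid{d_1(\bs o)}}\omid{\sum_{v\in D_1(\bs o)}\identity{A}[\bs T,v]}=\omid{\sum_{v\in D_1(\bs o)}\identity{A}[\bs T,v]}.
\]
Here $D_1(\bs o)=F_{\bs T}^{-1}(\bs o)$ is the set of children of the root. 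The natural move is the mass transport principle~\eqref{eq:unimodular}: set $g[T,x,y]:=\identity{\{x=F_T(y)\}}\,\identity{A}[T,x]$. Then $g^+_{\bs T}(\bs o)=\sum_{y:\,F(y)=\bs o}\identity{A}[\bs T,\bs o]=d_1(\bs o)\,\identity{A}[\bs T,\bs o]$, while $g^-_{\bs T}(\bs o)=\sum_{x:\,F(\bs o)=x}\identity{A}[\bs T,x]=\identity{A}[\bs T,F(\bs o)]$, using that $\bs T$ is eternal so $F(\bs o)$ exists and is the unique such $x$. This gives $\omid{d_1(\bs o)\,\identity{A}[\bs T,\bs o]}=\omid{\identity{A}[\bs T,F(\bs o)]}$, which is the right identity but with the root biased by $d_1$ on the left-hand side. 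To recover $\sigma\mathcal P(A)=\mathcal P(A)$ cleanly, I instead apply~\eqref{eq:unimodular} with $g[T,x,y]:=\identity{\{y=F_T(x)\}}\,\identity{A}[T,y]$: now $g^+_{\bs T}(\bs o)=\identity{A}[\bs T,F(\bs o)]$ and $g^-_{\bs T}(\bs o)=\sum_{x:\,F(x)=\bs o}\identity{A}[\bs T,\bs o]=d_1(\bs o)\,\identity{A}[\bs T,\bs o]$, so $\omid{\identity{A}[\bs T,F(\bs o)]}=\omid{d_1(\bs o)\,\identity{A}[\bs T,\bs o]}$. Swapping the roles and using eternality ($F(x)$ always exists), the map $x\mapsto F(x)$ and summing children recovers exactly $\sigma\mathcal P(A)=\frac{1}{\omid{d_1(\bs o)}}\omid{\sum_{v\in D_1(\bs o)}\identity{A}[\bs T,v]}=\mathcal P(A)$ once we note $\omid{\sum_{v\in D_1(\bs o)}\identity{A}[\bs T,v]}=\omid{\identity{\{\bs o\text{ has a parent}\}}\identity{A}[\bs T,\bs o]}=\mathcal P(A)$ because every vertex has a parent a.s. This last step is precisely Mecke's theorem (Proposition~\ref{prop:Mecke}) in disguise: $\theta_F$ preserves the law iff $F$ is bijective, and a unimodular \eft{} has $F$ surjective by eternality and injective by Proposition~\ref{prop:bijective}\eqref{prop:bijective:2}, hence $\theta_F$-invariant, which is the cleanest route.

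So for the ``only if'' direction I would actually organize it as: unimodular \eft{} $\Rightarrow$ $F$ surjective (eternality) $\Rightarrow$ $F$ bijective a.s. (Proposition~\ref{prop:bijective}\eqref{prop:bijective:2}) $\Rightarrow$ $\theta_F$ preserves the law (Proposition~\ref{prop:Mecke}); then unwinding $\sigma=\sigma_1$ against $\theta_F$-invariance gives $\sigma\mathcal P=\mathcal P$, and $\omid{d_1(\bs o)}=1$ is Proposition~\ref{prop:e(d_n)=1 unimodular}.

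For the ``if'' direction, assume $[\bs T,\bs o]$ is offspring-invariant with $\omid{d_1(\bs o)}=1$. First, offspring-invariance forces $\bs T$ to be eternal a.s.: if $S_{\bs T}$ denotes the (at most one) parentless vertex and $\myprob{S_{\bs T}\neq\emptyset}>0$, one checks that applying $\sigma$ strictly decreases the probability of being rooted at a parentless vertex — indeed under $\sigma\mathcal P$ the root is a child of the old root, hence always has a parent — so $\sigma\mathcal P=\mathcal P$ would give $\myprob{\bs o\in S_{\bs T}}=0$, and then Lemma~\ref{lemma:happensAtRoot} (or rather its argument, since $\bs T$ need not be unimodular yet) would have to be replaced by a direct argument: $\sigma\mathcal P(\{\text{root has no parent}\})=0$ always, so the fixed-point equation forces $\mathcal P(\{\text{root has no parent}\})=0$. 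Combined with the fact that a \ft{} with a parentless vertex is exactly a non-eternal one having finitely or infinitely many vertices, and that $d_n(\bs o)>0$ would need checking — but $\omid{d_1(\bs o)}=1>0$ already gives properness issues only mildly — we get $\bs T$ eternal a.s. Next, $\sigma$-invariance gives $\sigma_n\mathcal P=\mathcal P$ for all $n$ by the semigroup property (Lemma~\ref{lemma:sigma-semigroup}), hence $\omid{d_n(\bs o)}=1$ for all $n$ (compute $\mathcal E_n[1]=1$ against the definition~\eqref{eq:sigma_n}, which forces the normalizing constant $\omid{d_n(\bs o)}$ to equal... one needs $\omid{d_n(\bs o)}=\omid{d_1(\bs o)}^{?}$ — more carefully, from $\sigma_n\mathcal P=\mathcal P$ and testing against the function $d_1$ one gets $\omid{d_1(\bs o)}=\frac{1}{\omid{d_n(\bs o)}}\omid{\sum_{v\in D_n(\bs o)}d_1(v)}=\frac{\omid{d_{n+1}(\bs o)}}{\omid{d_n(\bs o)}}$, so by induction $\omid{d_n(\bs o)}=1$ for all $n$). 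Finally I verify the mass transport principle~\eqref{eq:unimodular} for $[\bs T,\bs o]$. Given a measurable $g:\mathcal T_{**}\to\mathbb R^{\geq0}$, decompose the transport from $\bs o$ to $v$ according to the unique $m,n\geq0$ with $F^n(\bs o)=F^m(v)=:z$ (unique because $\bs T$ is a tree and now eternal); write $g^+_{\bs T}(\bs o)=\sum_{z}\sum_{m,n}\sum_{v:\,F^m(v)=z,\,F^n(\bs o)=z}g[\bs T,\bs o,v]$ and similarly $g^-$. The offspring-invariance $\sigma_k\mathcal P=\mathcal P$, rewritten via~\eqref{eq:sigma_n} and the semigroup relation, says precisely that the ``law of $\bs T$ seen from a uniform $k$-th descendant, size-biased by $d_k$'' equals the law of $\bs T$; iterating this along ancestors up and descendants down, the double sum for $g^+$ matches that for $g^-$ term by term. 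Concretely: $\omid{g^+_{\bs T}(\bs o)}=\sum_{m,n\geq0}\omid{\sum_{z\in D_n(\bs o)}\ \sum_{v\in D_m(z)} g[\bs T,\bs o,v]}$, and applying $\sigma_n\mathcal P=\mathcal P$ to the inner expression rewrites it with the root moved to $z$, then $\sigma_m$ moves it to $v$; the symmetric computation for $g^-$ (swap the roles of $m,n$) yields the same quantity, establishing~\eqref{eq:unimodular}. Once unimodularity is in hand, infiniteness of $\bs T$ a.s. (from eternality, since an eternal \ft{} is infinite) lets us invoke Proposition~\ref{prop:classification-EFT} for the last sentence.

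The main obstacle I anticipate is the ``if'' direction's verification of~\eqref{eq:unimodular}: matching the two double sums over $(m,n)$ requires care that the bookkeeping of which ancestor/descendant is being summed is genuinely symmetric, and that all the rearrangements of the (possibly infinite, nonnegative) sums are legitimate — this is where one really uses that $\bs T$ is a tree (so the ``meeting point'' $z=F^n(\bs o)=F^m(v)$ with minimal $m+n$ is unique) and that offspring-invariance holds for every iterate $\sigma_k$, not just $\sigma_1$. A secondary subtlety is proving eternality a.s. from offspring-invariance without circularly invoking unimodularity; the clean statement is that $\sigma\mathcal P$ always assigns zero mass to $\{[T,o]:o\text{ has no parent}\}$, so any fixed point does too.
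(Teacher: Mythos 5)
Your ``only if'' direction contains a genuine error in the route you finally settle on. You claim that a unimodular \eft{} has $F$ surjective ``by eternality'' and hence bijective by Proposition~\ref{prop:bijective}, so that $\theta_F$ preserves the law by Mecke's theorem. But eternality only says that $F$ is \emph{everywhere defined} (every vertex has a parent); surjectivity of $F$ would mean every vertex \emph{has a child}, which is false in general. The Canopy Tree is a unimodular \eft{} in which the leaves have no children and interior vertices have $d\geq 2$ children, so $F$ is neither surjective nor injective, and $\theta_F$ visibly does not preserve its law (the image of the root never lies in $L_0$). The correct argument is the one you wrote down and then abandoned: apply the mass transport principle to $g[T,x,y]:=\identity{\{x=F_T(y)\}}\identity{A}[T,y]$, which gives $\omid{\sum_{v\in D_1(\bs o)}\identity{A}[\bs T,v]}=\omid{\identity{A}[\bs T,\bs o]\identity{\{\bs o \text{ has a parent}\}}}=\mathcal P(A)$ directly; this is exactly the paper's proof, and it needs no appeal to Mecke or to bijectivity. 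Your other two choices of transport function only yield $\omid{\identity{A}[\bs T,F(\bs o)]}=\omid{d_1(\bs o)\identity{A}[\bs T,\bs o]}$, which is a different (though true) identity.

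For the ``if'' direction, your plan diverges from the paper's and is substantially heavier. The paper invokes involution invariance (Proposition~2.2 of \cite{processes}): it suffices to verify \eqref{eq:unimodular} for $g$ supported on \emph{adjacent} pairs, and then the computation reduces to splitting $\sum_{v\sim\bs o}$ into the parent term and the children term and using $\mathcal P_1=\mathcal P$ once. Your proposal to verify \eqref{eq:unimodular} for arbitrary $g$ by decomposing over meeting points can be made rigorous --- it is essentially the proof of Proposition~\ref{prop:mtp-sigma-inv} specialized to $m=1$, which fixes $i=l(\bs o,v)$, truncates by $\identity{\{F^n(\bs o)=F^{n+i}(v)\}}$, applies $\sigma_n$- and $\sigma_{n+i}$-invariance, and passes to the limit --- but as written it is only a sketch, and the pair $(m,n)$ with $F^n(\bs o)=F^m(v)$ is not unique (only the minimal pair, equivalently the difference, is), so the bookkeeping you flag as the main obstacle is a real one. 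Separately, your eternality argument only establishes that the \emph{root} has a parent a.s.; to conclude that every vertex does, you must iterate, using $\sigma_n$-invariance for all $n$ (via Lemma~\ref{lemma:sigma-semigroup}) to get that $F^n(\bs o)$ exists a.s. for all $n$, which rules out a parentless vertex since in a Family Tree any parentless vertex is an ancestor of the root.
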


\begin{theorem}
	\label{thm:p_infty}
	Let $[\bs T, \bs o]$ be a random \ft{}. Assume that $\mathcal P_{\infty}$ exists 
	and that
	\begin{equation}
		\label{eq:thm:p_infty}
		\limsup_{n\rightarrow\infty}\frac{\omid{d_{n+1}(\bs o)}}{\omid{d_n(\bs o)}}>0.
	\end{equation}
	Then
	\begin{enumerate}[(i)]
		\item  \label{thm:p_infty:2}
		The sequence ${\omid{d_{n+1}(\bs o)}}/{\omid{d_n(\bs o)}}$ is convergent and
                its limit $\mathcal E_{\infty}[d_1(\bs o)]$ is positive and finite.
		\item \label{thm:p_infty:3} 
			$\mathcal P_{\infty}$ is offspring-invariant, and hence proper;
		\item \label{thm:p_infty:4}
		$\mathcal P_{\infty}$ is unimodular if and only if 
                $\lim_{n\rightarrow\infty}\omid{d_{n+1}(\bs o)}/{\omid{d_n(\bs o)}}=1.$
	\end{enumerate}
\end{theorem}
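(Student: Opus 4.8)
The plan is to study the ratios $r_n := \omid{d_{n+1}(\bs o)}/\omid{d_n(\bs o)}$ directly and to identify their limit with $\mathcal E_\infty[d_1(\bs o)]$, the mean offspring number under $\mathcal P_\infty$. The starting observation is that, by the semigroup property (Lemma~\ref{lemma:sigma-semigroup}), $\mathcal P_{n+1}=\sigma_1\mathcal P_n=\sigma\mathcal P_n$, and from the explicit formula for $\sigma$ one reads off $\mathcal E_{n+1}[h] = \mathcal E_n\bigl[\sum_{v\in D_1(\bs o)} h[\bs T,v]\bigr]/\mathcal E_n[d_1(\bs o)]$ for bounded measurable $h$. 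Taking $h\equiv 1$ gives the key identity
\begin{equation*}
\mathcal E_n[d_1(\bs o)] = \frac{\omid{d_{n+1}(\bs o)}}{\omid{d_n(\bs o)}} = r_n,
\end{equation*}
because applying $\sigma_n$ then one more step amounts to $\sigma_{n+1}$, whose normalizing constant is $\omid{d_{n+1}(\bs o)}$ relative to $\omid{d_n(\bs o)}$. So $r_n$ is exactly the $\mathcal P_n$-expected number of children of the root.

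For part~(i), the issue is convergence of $r_n=\mathcal E_n[d_1(\bs o)]$. Weak convergence $\mathcal P_n\to\mathcal P_\infty$ does not by itself pass to the unbounded functional $d_1(\bs o)$, so this is the main obstacle. I would handle it by a uniform integrability / truncation argument: write $d_1^{(k)}:=\min(d_1,k)$, which is bounded and continuous on $\mathcal T_*$, so $\mathcal E_n[d_1^{(k)}]\to\mathcal E_\infty[d_1^{(k)}]$ for each fixed $k$. It remains to control the tails $\mathcal E_n[d_1\,\identity{\{d_1>k\}}]$ uniformly in $n$. Here I would use hypothesis~\eqref{eq:thm:p_infty}: along a subsequence $r_{n_j}\to c>0$, and a second application of the $\sigma$-identity expresses $\mathcal E_n[d_1(\bs o)^2]$ (or a truncated second moment) in terms of $\omid{\cdot}$ at level $n$ and $n+1$, combined with the fact that $\sum_v d_1(v)$ over $v\in D_1(\bs o)$ equals $d_2(\bs o)$; chasing through, $\mathcal E_n$ of a truncated second moment of $d_1$ stays bounded because $r_n$ cannot blow up — indeed if $\limsup r_n>0$ one shows $\limsup r_n<\infty$ too, since otherwise the biasing by $d_n$ degenerates. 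This yields uniform integrability of $d_1(\bs o)$ under $(\mathcal P_n)$, hence $r_n=\mathcal E_n[d_1]\to\mathcal E_\infty[d_1]$, and in particular the full sequence converges; positivity is immediate from $\limsup>0$ plus convergence, and finiteness from the uniform-integrability bound.

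For part~(ii), offspring-invariance of $\mathcal P_\infty$: since $\sigma=\sigma_1$ is, by its definition, a ``shift'' that fits into the weak-continuity framework announced in Lemma~\ref{lemma:continuity}, and $\mathcal P_{n+1}=\sigma\mathcal P_n$, one wants to pass to the limit in $\mathcal P_{n+1}=\sigma\mathcal P_n$ to get $\mathcal P_\infty=\sigma\mathcal P_\infty$. The normalizing denominators $r_n$ converge to the positive finite limit just found, so the only delicate point is again the interchange of the weak limit with the sum over $D_1(\bs o)$ in the numerator of $\sigma$; the uniform integrability of $d_1(\bs o)$ under $(\mathcal P_n)$ established above is exactly what licenses this. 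Once $\mathcal P_\infty$ is a fixed point of $\sigma$, properness follows from Definition's requirement $0<\mathcal E_\infty[d_n(\bs o)]<\infty$, which holds because $\sigma$-invariance forces $\mathcal E_\infty[d_n(\bs o)] = \mathcal E_\infty[d_1(\bs o)]^n$ via the semigroup identity, and $\mathcal E_\infty[d_1(\bs o)]=\lim r_n\in(0,\infty)$.

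For part~(iii), the unimodularity criterion: by Proposition~\ref{prop:sigma_1 unimodular-new}, $\mathcal P_\infty$ is a unimodular \eft{} if and only if it is offspring-invariant and $\mathcal E_\infty[d_1(\bs o)]=1$. Offspring-invariance was shown in~(ii) and holds unconditionally, so $\mathcal P_\infty$ is unimodular precisely when $\mathcal E_\infty[d_1(\bs o)]=1$, i.e., when $\lim_{n\to\infty} \omid{d_{n+1}(\bs o)}/\omid{d_n(\bs o)}=1$ by part~(i). This closes the proof. The one genuinely technical lemma I would want in hand is the uniform integrability of $d_1(\bs o)$ along $(\mathcal P_n)$ under hypothesis~\eqref{eq:thm:p_infty}; everything else is bookkeeping with the $\sigma$-formula and the semigroup relation~\eqref{eq:sigma-semigroup}.
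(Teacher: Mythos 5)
Your reduction of the theorem to the uniform integrability of $d_1(\bs o)$ under $(\mathcal P_n)$ correctly isolates the difficulty, but the argument you sketch for that lemma does not work, so there is a genuine gap. You propose to control tails via a second moment of $d_1(\bs o)$ under $\mathcal P_n$, using the $\sigma$-identity and the relation $\sum_{v\in D_1(\bs o)}d_1(v)=d_2(\bs o)$. However, $\mathcal E_n\bigl[\sum_{v\in D_1(\bs o)}d_1(v)\bigr]=\mathcal E_n[d_2(\bs o)]=r_nr_{n+1}$ is a first moment of $d_2$, not a second moment of $d_1$; the actual second moment satisfies $\mathcal E_n[d_1(\bs o)^2]=r_n\,\mathcal E_{n+1}[b(\bs o)]$, where $b(v):=d_1(F(v))$, and there is no reason for it to be finite: already for an Eternal Galton--Watson tree whose offspring distribution has finite mean but infinite variance one has $\mathcal E_n[d_1(\bs o)^2]=\infty$ for all $n$, yet the theorem applies. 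Truncated second moments are trivially bounded by $k^2$ and give no tail control. Note also that for nonnegative variables, convergence in distribution together with convergence of the means to the finite mean of the limit is \emph{equivalent} to uniform integrability, so the lemma you defer is essentially the statement $\mathcal E_n[d_1(\bs o)]\to\mathcal E_\infty[d_1(\bs o)]<\infty$ that you are trying to prove; the reduction leaves the real work undone.

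The paper's proof avoids moments and uniform integrability altogether via a size-biasing device built on the sibling count $b(v)=d_1(F(v))$: the semigroup relation gives
\[
\mathcal P_{n+1}[b(\bs o)=k]=\frac{\omid{d_n(\bs o)}}{\omid{d_{n+1}(\bs o)}}\,k\,\mathcal P_n[d_1(\bs o)=k],
\]
so the law of $b(\bs o)$ under $\mathcal P_{n+1}$ is the size-biased version of the law of $d_1(\bs o)$ under $\mathcal P_n$. The indicators $\identity{\{b(o)=k\}}$ and $\identity{\{d_1(o)=k\}}$ are locally determined, hence bounded continuous on $\mathcal T_*$, so both sides converge for each fixed $k$; choosing $k$ with $\mathcal P_\infty[b(\bs o)=k]>0$ and invoking hypothesis~\eqref{eq:thm:p_infty} forces convergence of the ratios, and summing the limiting identity over $k$ (the left side sums to $1$) yields $0<\mathcal E_\infty[d_1(\bs o)]<\infty$ with no integrability input. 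The same truncation by $\{b(\bs o)\le k\}$ followed by monotone convergence is what justifies passing to the limit in $\mathcal P_{n+1}=\sigma\mathcal P_n$ for part~(ii), where you again invoke the unproven uniform integrability. If you want to salvage your outline, this identity is exactly the missing ingredient: tightness of the probability distributions $k\mapsto\mathcal P_{n+1}[b(\bs o)=k]$, which comes for free from their pointwise convergence to a probability distribution, is the uniform integrability of $d_1(\bs o)$ under $(\mathcal P_n)$ in disguise. Your part~(iii) is fine and matches the paper.
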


Note that the existence of $\mathcal P_{\infty}$ is assumed
in Theorem~\ref{thm:p_infty}. Therefore, to use this result in practice,
one should first prove the existence of the limit.
This will be done in the forthcoming examples.

\begin{remark}
If $\mathcal P_{\infty}$ exists but~\eqref{eq:thm:p_infty} fails,
one can still obtain a result similar to (though a bit weaker than) Theorem~\ref{thm:p_infty}.
In this case, under $\mathcal P_{\infty}$, the root is a.s. in
the last generation, thus $\mathcal P_{\infty}$ is not proper, but the mass transport principle holds
\textit{along the generation of the root}. 
See Proposition~\ref{prop:mtp-sigma-inv}
for a precise definition of this property. 
As an example, let $\bs T$ be a binary tree with a random depth and 
let $\bs o$ be the vertex at generation 0. 
For $n\geq 1$, let $p_n:=\myprob{d_n(\bs o)\neq 0}$.
Now, $\mathcal P_n$ is obtained by conditioning on $d_n(\bs o)\neq 0$ 
and moving the root to a vertex in generation $n$. In particular, under $\mathcal P_n$,
the probability that the root is in the last generation is
$1-\frac{p_{n+1}}{p_n}$. This easily implies that if $\frac{p_{n+1}}{p_n}\rightarrow 0$,
then $\mathcal P_{\infty}$ exists and is obtained by choosing the root of the Canopy Tree
in its last generation.
\end{remark}

\begin{remark}
Condition~(\ref{eq:thm:p_infty}) in Theorem~\ref{thm:p_infty} 
is equivalent to the condition that $\sigma\mathcal P_{\infty}$
is defined. This can be proved with a small change in the proof
of Theorem~\ref{thm:p_infty}.	
\end{remark}

The following lemma is a more general continuity-like property of the operator $\sigma$. Its proof is similar to that of Theorem~\ref{thm:p_infty} and is skipped here.
\begin{lemma}
	\label{lemma:continuity}
Let $P_1, P_2, \ldots$ be an arbitrary sequence of probability measures on $\mathcal T_*$
that converges weakly to a probability measure $P$. Assume $\sigma P_n$ is defined for each $n$.
If the sequence $\sigma P_n$ converges weakly to a probability measure $Q$ and $\sigma P$ is defined, then $\sigma P=Q$.
\end{lemma}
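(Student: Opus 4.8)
The plan is to prove Lemma~\ref{lemma:continuity} by adapting the argument of Theorem~\ref{thm:p_infty}, isolating the part that shows a weak limit of $\sigma$-iterates is $\sigma$-invariant, and replacing the single measure $\mathcal P$ by the sequence $P_n$. First I would record what $\sigma P$ being defined means: writing $I(Q):=\int_{\mathcal T_*} d_1(T,o)\,\mathrm d Q([T,o])$, one needs $0<I(P)<\infty$, and similarly $0<I(P_n)<\infty$ for $\sigma P_n$ to be defined. The key point is a continuity/semicontinuity statement for the functional $Q\mapsto \int d_1\,\mathrm dQ$ and, more importantly, for the ``mass-transport'' identities that $\sigma$ encodes, under weak convergence $P_n\Rightarrow P$ together with $\sigma P_n\Rightarrow Q$.

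The central computation is the following. For a bounded continuous $h:\mathcal T_*\to\mathbb R^{\ge 0}$ and a measure $R$ on $\mathcal T_*$ with $\sigma R$ defined, the defining formula gives
\begin{equation*}
	I(R)\int h\,\mathrm d(\sigma R) = \int \Big(\sum_{v\in D_1(T,o)} h[T,v]\Big)\,\mathrm dR([T,o]).
\end{equation*}
Apply this with $R=P_n$. The left side is $I(P_n)\int h\,\mathrm d(\sigma P_n)$; since $\sigma P_n\Rightarrow Q$ and $h$ is bounded continuous, $\int h\,\mathrm d(\sigma P_n)\to\int h\,\mathrm dQ$, and one must show $I(P_n)\to I(P)=:\mu\in(0,\infty)$. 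The right side is $\int \Phi_h[T,o]\,\mathrm dP_n([T,o])$ where $\Phi_h[T,o]:=\sum_{v\in D_1(T,o)}h[T,v]=\sum_{v\sim o,\,F(v)=o}h[T,v]$. This $\Phi_h$ is a local functional (it sees only the root, its children, and finite neighbourhoods of the children through $h$), hence continuous on $\mathcal T_*$; but it is \emph{unbounded}, so weak convergence alone does not pass the limit inside $\int \Phi_h\,\mathrm dP_n$. The standard fix is truncation: set $\Phi_h^{(M)}:=\Phi_h\wedge M$, which is bounded continuous, so $\int \Phi_h^{(M)}\,\mathrm dP_n\to\int\Phi_h^{(M)}\,\mathrm dP$; then one needs uniform integrability of $\Phi_h$ (equivalently of $d_1(\bs o)$, since $h$ is bounded) under the family $\{P_n\}$ to let $M\to\infty$. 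This uniform-integrability/tightness step is where the hypothesis that $\sigma P_n$ is defined for every $n$ \emph{and} that $\sigma P_n$ converges must do its work: taking $h\equiv 1$ gives $I(P_n)=\int d_1\,\mathrm dP_n$, and one argues (as in the proof of Theorem~\ref{thm:p_infty}, via the two-step ``bias then re-root'' description of $\sigma$ and a second application of the identity to control the mass that $\sigma P_n$ puts near infinity) that $\sup_n I(P_n)<\infty$ and that no mass escapes, giving $I(P_n)\to I(P)$ and the needed uniform integrability. Combining, the right side converges to $\int\Phi_h\,\mathrm dP=\mu\int h\,\mathrm d(\sigma P)$, and equating with the limit of the left side yields $\mu\int h\,\mathrm dQ=\mu\int h\,\mathrm d(\sigma P)$; since $\mu>0$ and $h$ ranges over bounded continuous nonnegative functions, $Q=\sigma P$.

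I expect the main obstacle to be exactly the uniform-integrability step, i.e.\ ruling out that $d_1(\bs o)$ carries escaping mass along $P_n$. Weak convergence $P_n\Rightarrow P$ controls bounded continuous test functions, but $\sigma$ is built from the unbounded functional $d_1$, so the argument genuinely needs the assumption that the \emph{images} $\sigma P_n$ also converge: that convergence pins down $\lim_n I(P_n)$ (one cannot have $I(P_n)\to\infty$ or oscillate, else $\sigma P_n$ could not converge to a probability measure with total mass $1$) and prevents mass of the re-rooted trees from drifting to infinity, which is precisely what is used in Theorem~\ref{thm:p_infty} to get assertions~\eqref{thm:p_infty:2} and~\eqref{thm:p_infty:3}. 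Since the paper says the proof is "similar to that of Theorem~\ref{thm:p_infty} and is skipped", I would present the above as the natural two-to-three-paragraph adaptation: (1) the algebraic identity for $\sigma$ tested against bounded continuous $h$; (2) truncation of $\Phi_h$ plus uniform integrability of $d_1$ under $\{P_n\}$, extracted from the joint convergence hypothesis exactly as in Theorem~\ref{thm:p_infty}; (3) passing to the limit on both sides and concluding $\sigma P=Q$ by density of bounded continuous functions.
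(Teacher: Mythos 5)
Your overall strategy is the right one and is essentially what the paper intends: the paper explicitly skips the proof, saying it is similar to that of Theorem~\ref{thm:p_infty}, and your outline is the natural adaptation (test $\sigma$ against bounded continuous $h$, handle the unbounded functional $\sum_{v\in D_1(\bs o)}h[\bs T,v]$ by truncation, and use the convergence of the images $\sigma P_n$ to control $I(P_n)$ and the tail of $d_1$). One justification you give, however, is not valid as stated: you write that $I(P_n)$ cannot blow up or oscillate ``else $\sigma P_n$ could not converge to a probability measure with total mass $1$.'' Since $\sigma$ normalizes by $I(P_n)$, each $\sigma P_n$ has total mass $1$ no matter what $I(P_n)$ does, so total mass gives no information. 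The correct mechanism (and the one actually used in part~\eqref{thm:p_infty:2} of Theorem~\ref{thm:p_infty}) is that the law of the sibling count $b(\bs o)=d_1(F(\bs o))$ under $\sigma P_n$ is the size-biased version $k\mapsto k\,P_n[d_1(\bs o)=k]/I(P_n)$ of the law of $d_1(\bs o)$ under $P_n$; since $b(\bs o)$ is locally determined and integer-valued, weak convergence $\sigma P_n\Rightarrow Q$ gives termwise convergence of these size-biased laws to a proper distribution on $\mathbb Z^{\geq 0}$ (properness coming from local finiteness of $Q$-a.e.\ tree), which together with $P_n[d_1(\bs o)=k]\to P[d_1(\bs o)=k]$ and Fatou pins down $I(P_n)\to I(P)\in(0,\infty)$ and yields (by a Scheff\'e-type argument) exactly the uniform integrability of $d_1(\bs o)$ under $\{P_n\}$ that your truncation step needs. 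Alternatively you can avoid uniform integrability altogether, as the paper does in part~\eqref{thm:p_infty:3}, by inserting the matched cutoffs $\identity{\{b(\bs o)\le k\}}$ on the image side and $\identity{\{d_1(\bs o)\le k\}}$ on the source side, so that both integrands are bounded continuous, and then letting $k\to\infty$ by monotone convergence only after passing to the limit in $n$.
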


	Here is a result on the construction of
unimodular {\eft}s of class $\mathcal I/\mathcal I$.
	\begin{proposition} \label{prop:anyI/I}
Any offspring-invariant random Family Tree $[\bs T, \bs o]$ can be constructed by
applying $\sigma_{\infty}$ to the (non-eternal) Family Tree
$[D(\bs o), \bs o]$ of the descendants of $\bs o$ in $\bs T$; i.e.
			$
			[\bs T, \bs o] \sim \sigma_{\infty}[D(\bs o), \bs o].
			$
In particular, any unimodular \eft{} of class $\mathcal I/\mathcal I$
can be constructed by applying $\sigma_{\infty}$ to a random finite \ft{}.
\end{proposition}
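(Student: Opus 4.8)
The plan is to put $\mathcal P$ equal to the law of $[\bs T,\bs o]$ and first extract from offspring-invariance everything needed about the moments $\omid{d_n(\bs o)}$. Since $\sigma\mathcal P$ is defined one has $0<\omid{d_1(\bs o)}<\infty$; feeding the nonnegative function $[T,o]\mapsto d_n(T,o)$ into the identity $\int h\,\mathrm d(\sigma\mathcal P)=\int h\,\mathrm d\mathcal P$ and using $\sum_{v\in D_1(\bs o)}d_n(v)=d_{n+1}(\bs o)$ gives $\omid{d_{n+1}(\bs o)}=\omid{d_1(\bs o)}\,\omid{d_n(\bs o)}$, so by induction $\omid{d_n(\bs o)}=\omid{d_1(\bs o)}^{\,n}\in(0,\infty)$ and $\mathcal P$ is proper. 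Lemma~\ref{lemma:sigma-semigroup} then yields $\sigma_n\mathcal P=\sigma^{(n)}\mathcal P=\mathcal P$ for all $n$. In parallel, I would observe that the descendant tree $[D(\bs o),\bs o]$ (via the measurable map $[T,o]\mapsto[D(T,o),o]$) has a law $\mathcal Q$ whose root has the \emph{same} number $d_n(\bs o)$ of $n$-th descendants as under $\mathcal P$, so $\mathcal Q$ is proper and $\sigma_n\mathcal Q$ is defined for all $n$.

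The geometric core of the proof is that moving the root far down makes $D(\bs o)$ indistinguishable from $\bs T$ near the new root: for every $k\ge 0$ and every $v\in D_n(\bs o)$ with $n\ge k$, one has $N_k(D(\bs o),v)=N_k(\bs T,v)$ as rooted networks. Indeed, if $d_{\bs T}(u,v)\le k$ then the geodesic from $v$ to $u$ in $\bs T$ first ascends $a$ steps to the common ancestor $F^{a}(v)$ and then descends $b$ steps, with $a+b\le k\le n$, so every vertex of this geodesic is an ancestor of $v$ of order $\le n$ or a descendant of such a vertex, hence lies in $D(\bs o)$; thus the ball $N_k(\bs T,v)$, together with its induced edges, marks and internal distances, already lives inside $D(\bs o)$. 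Consequently, for every event $A\subseteq\mathcal T_*$ that is measurable with respect to the radius-$k$ ball of the root and every $n\ge k$, the indicators $\indic{[D(\bs o),v]\in A}$ and $\indic{[\bs T,v]\in A}$ agree on $D_n(\bs o)$, whence
\[
	\sigma_n\mathcal Q(A)\;=\;\frac{1}{\omid{d_n(\bs o)}}\,\omid{\sum_{v\in D_n(\bs o)}\indic{[\bs T,v]\in A}}\;=\;\sigma_n\mathcal P(A)\;=\;\mathcal P(A).
\]
In other words, the image of $\sigma_n\mathcal Q$ under $[G,o]\mapsto N_k(G,o)$ equals that of $\mathcal P$ as soon as $n\ge k$.

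The remaining task is to upgrade this eventual equality of all finite-radius projections to genuine weak convergence $\sigma_n\mathcal Q\to\mathcal P$, which simultaneously establishes that $\sigma_\infty\mathcal Q$ exists and equals $\mathcal P$, proving the first assertion. I expect this to be the main obstacle, because on the non-compact space $\mathcal T_*$ the radius-$k$ $\sigma$-fields are not automatically convergence-determining, so one cannot merely invoke ``convergence of projections''. The way around it is tightness of $\{\sigma_n\mathcal Q\}_n$: for each $k$ the pushforwards $(N_k)_*\sigma_n\mathcal Q$ coincide with the single (hence tight) measure $(N_k)_*\mathcal P$ for all $n\ge k$, and only finitely many further tight measures occur for $n<k$, so choosing for each $k$ a compact set of radius-$k$ balls of mass $\ge 1-\varepsilon 2^{-k}$ and intersecting their preimages produces a compact subset of $\mathcal T_*$ carrying mass $\ge 1-\varepsilon$ uniformly in $n$. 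Prokhorov's theorem then gives relative compactness, and since $N_k$ is continuous every weak limit point $\nu$ satisfies $(N_k)_*\nu=(N_k)_*\mathcal P$ for all $k$, forcing $\nu=\mathcal P$; hence $\sigma_n\mathcal Q\to\mathcal P$.

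Finally, for the ``in particular'' clause I would invoke Proposition~\ref{prop:sigma_1 unimodular-new} to note that a unimodular \eft{} is offspring-invariant, and Proposition~\ref{prop:classification-EFT} to note that in class $\mathcal I/\mathcal I$ every vertex has finitely many descendants, so $[D(\bs o),\bs o]$ is a random \emph{finite} \ft{}; the first part then applies verbatim. (One should also watch the mark-handling in the geodesic-containment argument and the measurability of $[T,o]\mapsto[D(T,o),o]$, but these are routine.)
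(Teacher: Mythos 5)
Your proof is correct, and its geometric core --- that for $v\in D_n(\bs o)$ the radius-$k$ ball of $v$ in $D(\bs o)$ coincides with its ball in $\bs T$ once $n\geq k$, so that $\sigma_n$ applied to the descendant tree agrees with $\mathcal P$ on all radius-$k$ events --- is exactly the observation underlying the paper's proof (phrased there as ``the trees $D(F^n(\bs o))$ cover any neighborhood of $\bs o$''). Where you diverge is in the passage to the limit. The paper exploits the identity $[D(\bs o),v]=[D(F^n(v)),v]$ for $v\in D_n(\bs o)$ to realize $\sigma_n[D(\bs o),\bs o]$ as the law of $[D(F^n(\bs o)),\bs o]$ under $\mathcal P$ itself (using offspring-invariance, i.e. $\sigma_n\mathcal P=\mathcal P$); since $[D(F^n(\bs o)),\bs o]\rightarrow[\bs T,\bs o]$ \emph{almost surely} in the local metric, weak convergence of $\sigma_n[D(\bs o),\bs o]$ to $\mathcal P$ is immediate, with no tightness or Prokhorov argument and no appeal to the characterization of compact subsets of $\mathcal T_*$. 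You instead establish equality of all finite-radius marginals for $n\geq k$ and then upgrade this via uniform tightness and identification of limit points; this works (the radius-$k$ events form a generating $\pi$-system, and the intersection of preimages of compacta is indeed compact in the local topology), but it imports the standard and somewhat delicate compactness machinery of $\mathcal G_*$ that the paper's coupling makes unnecessary --- note that your own marginal identity already furnishes, for each $n$, a coupling of $\sigma_n\mathcal Q$ and $\mathcal P$ at distance at most $2^{-n}$, which would shortcut your last paragraph. Your treatment of properness (recovering $\omid{d_n(\bs o)}=m^n$, which is Lemma~\ref{lemma:m^n}) and of the ``in particular'' clause via Propositions~\ref{prop:sigma_1 unimodular-new} and~\ref{prop:classification-EFT} matches the paper.
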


The following lemma is needed to prove the above results.

\begin{lemma}
	\label{lemma:e_n(d_m)}
	Under the assumptions of Definition~\ref{def:typicalDescendant}, for all $n\ge 0$,
	\begin{equation} \label{eq:e_n(d_m)}
		\mathcal E_n[d_m(\bs o)]  = \frac {\mathbb E[{d_{m+n}(\bs o)}]}{\mathbb E[{d_n(\bs o)}]},
	\end{equation}
	where in the left hand side $\bs o$ denotes the root of a random \ft{} with distribution $\mathcal P_n$.
\end{lemma}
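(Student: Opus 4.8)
The plan is to reduce \eqref{eq:e_n(d_m)} to a purely combinatorial identity valid in \emph{every} Family Tree, and then to read off the statement from the definition of $\mathcal P_n$. So the proof will have two steps: a deterministic identity about descendant sets, and an application of~\eqref{eq:sigma_n} to the non-negative measurable function $[T,o]\mapsto d_m(T,o)$.

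\emph{Step 1 (a deterministic identity).} First I would check that for every Family Tree $T$, every vertex $o\in V(T)$, and all integers $m,n\ge 0$,
\[
\sum_{v\in D_n(T,o)} d_m(T,v)\;=\;d_{m+n}(T,o).
\]
To prove this, observe that the sets $D_m(T,v)$, for $v$ ranging over $D_n(T,o)$, are pairwise disjoint, because for any vertex $y$ the ancestor $F^{(m)}(y)$, when defined, is unique; and that their union is precisely $D_{m+n}(T,o)$. Indeed, if $y\in D_m(T,v)$ for some $v\in D_n(T,o)$, then $F^{(m)}(y)=v$ exists and $F^{(m+n)}(y)=F^{(n)}(v)=o$, so $y\in D_{m+n}(T,o)$; conversely, if $y\in D_{m+n}(T,o)$ then $F^{(m+n)}(y)=o$ is defined, hence $F^{(j)}(y)$ is defined for every $j\le m+n$, and in particular $v:=F^{(m)}(y)$ satisfies $F^{(n)}(v)=o$, so that $v\in D_n(T,o)$ and $y\in D_m(T,v)$. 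Taking cardinalities yields the displayed identity. The edge cases $m=0$ and $n=0$ are immediate and consistent with $d_0\equiv 1$ and $\mathcal P_0=\mathcal P$.

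\emph{Step 2 (applying the definition of $\mathcal P_n$).} Since $[T,o]\mapsto d_m(T,o)$ is a non-negative measurable function on $\mathcal T_*$, the defining formula~\eqref{eq:sigma_n} extends from indicator functions to it by the usual approximation by simple functions together with monotone convergence (everything being non-negative), giving
\[
\mathcal E_n[d_m(\bs o)]\;=\;\frac1{\omid{d_n(\bs o)}}\,\omid{\sum_{v\in D_n(\bs o)} d_m(\bs T,v)}.
\]
Substituting the identity of Step 1 into the inner sum replaces it by $d_{m+n}(\bs T,\bs o)$, so that
\[
\mathcal E_n[d_m(\bs o)]\;=\;\frac{\omid{d_{m+n}(\bs o)}}{\omid{d_n(\bs o)}},
\]
which is exactly~\eqref{eq:e_n(d_m)}. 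The hypothesis $0<\omid{d_n(\bs o)}<\infty$ of Definition~\ref{def:typicalDescendant} makes the normalization and the ratio meaningful; the numerator is allowed to equal $+\infty$, which causes no difficulty.

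\emph{Main obstacle.} The only delicate point — and it is a minor one — is the bookkeeping in Step 1 about the partiality of $F$: in a (non-eternal) Family Tree the parent map need not be everywhere defined, so before assigning $y\in D_{m+n}(T,o)$ to the block indexed by $v=F^{(m)}(y)$ one must know that $F^{(m)}(y)$ exists. This is guaranteed by the very definition $D_{m+n}(T,o)=\{y:F^{(m+n)}(y)=o\}$, since $F^{(m+n)}(y)$ being defined forces $F^{(j)}(y)$ to be defined for every $j\le m+n$.
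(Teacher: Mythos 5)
Your proof is correct and takes essentially the same route as the paper's: the paper also writes $\mathcal E_n[d_m(\bs o)]$ via the definition of $\mathcal P_n$ and uses the fact that the $(m+n)$-descendants of the root are the disjoint union of the $m$-descendants of its $n$-descendants. You have merely spelled out the disjoint-union identity and the extension from indicators to non-negative functions in more detail than the paper does.
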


\begin{proof}
	Using the fact that the $(m+n)$-descendants of the root are just the disjoint union of the $m$-descendants of the $n$-descendants of the root, one gets that
	\begin{equation*} 
	\mathcal E_n[d_m(\bs o)] = \frac 1{\mathbb E[{d_n(\bs o)}]}\mathbb E\left[{\sum_{v\in D_n(\bs o)}d_m(v)}\right] = \frac {\mathbb E[{d_{m+n}(\bs o)}]}{\mathbb E[{d_n(\bs o)}]},
	\end{equation*}
	which proves the claim.
\end{proof}

\begin{proof}[{Proof of Lemma~\ref{lemma:sigma-semigroup}}]
	Let $\mathcal P$ be a probability distribution on $\mathcal T_*$ such that $\sigma_n\mathcal P$ and $\sigma_{m+n}\mathcal P$ are defined. By~\eqref{eq:e_n(d_m)}, one obtains that $\sigma_m\circ \sigma_n\mathcal P$ is also defined.
	Denote the expectation operators w.r.t. $\mathcal P$ and w.r.t.
        $\sigma_m\circ \sigma_n\mathcal P$ by $\mathcal E$ and $\mathcal E'$ respectively. 
        For all measurable functions $h:\mathcal T_*\rightarrow R^{\geq 0}$, 
	\begin{eqnarray*}
		\mathcal E'[h] &=& \frac 1{\mathcal E_n[d_m(\bs o)]}\mathcal E_n\left[\sum_{y\in D_m(\bs o)} h(\bs T, y) \right]\\
		&=& \frac 1{\mathcal E_n[d_m(\bs o)]\mathcal E\left[{d_n(\bs o)}\right]}\mathcal E\left[{\sum_{x\in D_n(\bs o)}\sum_{y\in D_m(x)} h(\bs T, y)}\right]\\
		&=& \frac 1{\mathcal E[{d_{m+n}(\bs o)}]}\mathcal E\left[{\sum_{y\in D_{m+n}(\bs o)} h(\bs T, y)}\right]= \mathcal E_{m+n}[h],
	\end{eqnarray*}
where the first two equations use~\eqref{eq:sigma_n} and the third one uses~\eqref{eq:e_n(d_m)}.
This proves the claim.
\end{proof}

\begin{proof}[Proof of Proposition~\ref{prop:sigma_1 unimodular-new}]
	Let $\mathcal P$ be the distribution of $[\bs T, \bs o]$.
	
	First, assume $[\bs T, \bs o]$ is a unimodular \eft. Therefore, Proposition~\ref{prop:e(d_n)=1 unimodular} gives $\omid{d_1(\bs o)}=1$.
	By applying the mass transport principle~\eqref{eq:unimodular} to the function 
	$[G,o,x]\mapsto \identity{\{x\in D_1(o)\}}\identity{A}([G,x])$, one gets $\mathcal P_1[A]=\mathcal P[A]$, which proves that $\sigma\mathcal P=\mathcal P$.
	
	Conversely, assume $\sigma\mathcal P=\mathcal P$ and $\omid{d_1(\bs o)}=1$. 
It is easy to see that $\bs T$ is eternal a.s. 
(see also Lemma~\ref{lemma:m^n}). To prove unimodularity,
	by Proposition~2.2 in~\cite{processes}, it is enough to prove 
	that $\mathcal P$ is \textit{involution invariant};
	that is,~\eqref{eq:unimodular} holds for all functions
	supported on doubly-rooted networks in which the two roots are neighbors. 
	
	Note that invariance under $\sigma$ implies that $\bs o$ has a parent a.s. 
        Now, by the definition of $\mathcal E_1$ and $\omid{d_1(\bs o)}=1$ one gets
	that for all measurable functions $g: {\mathcal G}_{**}\to \mathbb R^{\geq 0}$,
	\begin{eqnarray*}
		\omid{\sum_{v\sim \bs o}g(\bs T,\bs o, v)} &=& \omid{\sum_{v\in D_1(\bs o)}g(\bs T, \bs o, v)} + \omid{g(\bs T, \bs o, F(\bs o))}\\
		&=& \mathcal E_1\left[g(\bs T, F(\bs o), \bs o)\right] + \omid{g(\bs T, \bs o, F(\bs o))},
	\end{eqnarray*}
        where the symbol $\sim$ means adjacency of vertices.
	Similarly, 
	\begin{eqnarray*}
		\omid{\sum_{v\sim \bs o}g(\bs T,v, \bs o)} &=&  \mathcal E_1\left[g(\bs T, \bs o, F(\bs o))\right] + \omid{g(\bs T, F(\bs o), \bs o)}.
	\end{eqnarray*}
	Now, $\mathcal P_1=\mathcal P$ implies that the right-hand-sides of the above two equations are equal. Therefore, so are the left hand sides and thus $\mathcal P$ is involution invariant. This proves that $[\bs T, \bs o]$ is unimodular. 
\end{proof}

\begin{proof}[Proof of Theorem~\ref{thm:p_infty}]
	Let $b(v):=b_T(v):=d_1(F(v))$ be the number of siblings of $v$ including $v$ itself, which is defined whenever $v$ has a parent. The key point for  proving properness is that the distribution of $b(\bs o)$ under $\mathcal P_{n+1}$ is the size-biased version of the distribution of $d_1(\bs o)$ under $\mathcal P_{n}$ (see~\eqref{eq:thm:p_infty:1}).  
	Convergence of the former implies properness of the limit of the latter (see also
        Lemma~4.3 in~\cite{conceptual}). This is discussed in the following in detail.
	
	\eqref{thm:p_infty:2}. 
	Fix $k\in\mathbb Z^{\geq 0}$. {By~\eqref{eq:sigma-semigroup} (for $m:=1$)},
	\begin{eqnarray}
	\label{eq:thm:p_infty:1}
        \hspace{1.7cm}
	\mathcal P_{n+1}[b(\bs o)=k]=
	\frac 
        {\mathcal E_n\left[\sum_{v\in D_1(\bs o)}\identity{\{b(v)=k\}}\right]}
        {\mathcal E_n[d_1(\bs o)]}
	= 
	\frac {\omid{d_{n}(\bs o)}}{\omid{d_{n+1}(\bs o)}} k\mathcal P_n\left[d_1(\bs o)=k\right],
	\end{eqnarray}
        where~\eqref{eq:e_n(d_m)} was used (again for $m:=1$) to get the last expression.
	The {indicator} functions 
        $[T,o]\mapsto \identity{\{b(o)=k\}}$ and $[T,o]\mapsto \identity{\{d_1(o)=k\}}$
        are bounded continuous functions on $\mathcal T_*$. Therefore, one has 
	\begin{eqnarray}
	\label{eq:thm:p_infty:2}
	\left\{\begin{array}{lcl}
	\lim_{n\rightarrow\infty} \mathcal P_{n+1}[b(\bs o)=k] &=& \mathcal P_{\infty}[b(\bs o)=k],\\
	\lim_{n\rightarrow\infty} \mathcal P_{n}[d_1(\bs o)=k] &=& \mathcal P_{\infty}[d_1(\bs o)=k].
	\end{array}\right.
	\end{eqnarray}
	Choose $k>0$ such that $\mathcal P_{\infty}[b_1(\bs o)=k]>0$.
        By taking $\liminf$ in~\eqref{eq:thm:p_infty:1}, and using~\eqref{eq:thm:p_infty:2},
        one gets $\mathcal P_{\infty}[d_1(\bs o)=k]>0$ 
        (note that $\liminf_{n\rightarrow\infty}\frac {\omid{d_{n}(\bs o)}}{\omid{d_{n+1}(\bs o)}}<\infty$
        by assumption). Now, by~\eqref{eq:thm:p_infty:1} and~\eqref{eq:thm:p_infty:2} again,
	one gets that the limit
	\[
	c:=\lim_{n\rightarrow\infty}\frac {\omid{d_{n}(\bs o)}}{\omid{d_{n+1}(\bs o)}}
	\]
	exists and for each $k\geq 0$, one has
	$
	\mathcal P_{\infty}[b(\bs o)=k] = 
	c k\mathcal P_{\infty}\left[d_1(\bs o)=k\right].
	$
	In other words, under $\mathcal P_{\infty}$, the distribution of $b(\bs o)$ is
        the size-biased version of that of $d_1(\bs o)$.
	By summing over $k$, one gets $0<c<\infty$ and
	\[
	\mathcal E_{\infty}[d_1(\bs o)] = \frac 1c = \lim_{n\rightarrow\infty}\frac {\omid{d_{n+1}(\bs o)}}{\omid{d_{n}(\bs o)}}.
	\]
	\textit{\eqref{thm:p_infty:3}}. By the previous part, one has $\mathcal E_{\infty}[d_1(\bs o)]\in (0,\infty)$, thus, $\sigma\mathcal P_{\infty}$ is defined. Let $g:\mathcal T_*\rightarrow\mathbb R^{\geq 0}$ be a bounded continuous function and $k\geq 0$ be arbitrary.  
	By~\eqref{eq:sigma-semigroup}, one has $\mathcal P_{n+1}=\sigma\mathcal P_n$. Therefore,
        by \eqref{eq:sigma_n} and~\eqref{eq:e_n(d_m)} (for $m:=1$), one gets
	\begin{eqnarray*}
	\mathcal E_{n+1}\left[\identity{\{b(\bs o)\leq k\}}g[\bs T, \bs o] \right] 
	&=& \frac 1{\mathcal E_n[d_1(\bs o)]} \mathcal E_n\left[\sum_{v\in D_1(\bs o)} \identity{\{b(v)\leq k\}} g[\bs T, v] \right]\\
	&=& \frac {\omid{d_{n}(\bs o)}}{\omid{d_{n+1}(\bs o)}} \mathcal E_n\left[ \identity{\{d_1(\bs o)\leq k\}} \sum_{v\in D_1(\bs o)} g[\bs T, v] \right].
	\end{eqnarray*}
In both sides, the functions under the expectation operator are bounded and continuous. Therefore,
	\[
	\mathcal E_{\infty}\left[\identity{\{b(\bs o)\leq k\}}g[\bs T, \bs o] \right] = \frac {1}{\mathcal E_{\infty}[d_{1}(\bs o)]} \mathcal E_{\infty}\left[ \identity{\{d_1(\bs o)\leq k\}} \sum_{v\in D_1(\bs o)} g[\bs T, v] \right].
	\]
	By letting $k\rightarrow\infty$, monotone convergence and $g\geq 0$ imply
	\[
	\mathcal E_{\infty}\left[g[\bs T, \bs o] \right] = \frac {1}{\mathcal E_{\infty}[d_{1}(\bs o)]} \mathcal E_{\infty}\left[  \sum_{v\in D_1(\bs o)} g[\bs T, v] \right].
	\]
	This means that $\sigma\mathcal P_{\infty} = \mathcal P_{\infty}$; i.e.
        $\mathcal P_{\infty}$ is offspring-invariant.
        In particular, $\sigma^n\mathcal P_{\infty}$ is defined for each $n>0$, 
        which implies that $\mathcal P_{\infty}$ is proper.
	
	\textit{\eqref{thm:p_infty:4}}. The claim is a direct consequence of parts \eqref{thm:p_infty:2} and \eqref{thm:p_infty:3} and Proposition~\ref{prop:sigma_1 unimodular-new}.
\end{proof}

\begin{remark}
	If one assumes a uniform (a.s.) bound on the degrees of vertices in $\bs T$,
        the arguments to prove Theorem~\ref{thm:p_infty} become much simpler as
        the functions $d_1(\bs o)$ and $\sum_{v\in D_1(\bs o) g[\bs T,v]}$ are then continuous
        and a.s. bounded functions on $\mathcal T_*$. There is no need to make use of $b(\bs o)$ in this case.
\end{remark}

\begin{proof}[Proof of Proposition~\ref{prop:anyI/I}]
For any proper random \ft{} $[\bs T, \bs o]$, let $[\bs T_n, \bs o_n]$ be a random 
\ft{} with the same distribution as $\sigma_n[\bs T, \bs o]$.
Using ~\eqref{eq:sigma_n} directly, one obtains that
$\sigma_n[D(\bs o), \bs o]$ has the same distribution as
$[D(F^n(\bs o_n)), \bs o_n]$.
By assuming offspring-invariance of $[\bs T, \bs o]$, one gets
that $\sigma_n[D(\bs o), \bs o]$ has the same distribution
as $[D(F^n(\bs o)), \bs o]$. Since the trees $D(F^n(\bs o))$
cover any neighborhood of $\bs o$,
the distribution of $[D(F^n(\bs o)), \bs o]$
tends weakly to that of $[\bs T, \bs o]$. Therefore, the
distribution of $\sigma_n[D(\bs o), \bs o]$ also
tends to that of $[\bs T, \bs o]$ and the claim is proved.
The last claim is a corollary of the the first one and Proposition~\ref{prop:sigma_1 unimodular-new}. 
\end{proof}

\subsection{Joining a Stationary Sequence of Trees}
\label{sec:joining}	
Consider a stationary sequence of
random rooted trees $([\bs T_i, \bs o_i])_{i=-\infty}^{\infty}$ 
defined on a common probability space.
One may regard each $[\bs T_i, \bs o_i]$ as a (non-ordered) Family Tree
by directing the edges towards $\bs o_i$.
Add a directed edge $\bs o_i \bs o_{i+1}$ for each $i\in\mathbb Z$.
By letting $\bs o:=\bs o_0$,
the resulting random rooted \eft{}, denoted by
$[\bs T,\bs o]$, will be referred to as the {\em {joining}} of the sequence
$([\bs T_i, \bs o_i])_{i=-\infty}^{\infty}$. 

If $\omid{\card{V(\bs T_0)}}<\infty$, then one can move the root of
$\bs T$ to a \textit{typical vertex of $\bs T_0$} as in Definition~\ref{def:typicalDescendant}.
More precisely, consider the following measure on $\mathcal T_*$
associated with $[\bs T,\bs o]$:
\begin{equation}
\label{eq:stationarySeq}
\mathcal P'[A]:= \frac 1{\omid{\card{V(\bs T_0)}}}\omid{\sum_{v\in V(\bs T_0)}\identity{A}([{\bs T},v])}.
\end{equation}
It is easy to see that $\mathcal P'$ is a probability measure.

\begin{theorem}
\label{thm:stationarySeqUnimodular}
Let $[\bs T,\bs o]$ be the joining of a stationary sequence of trees
$([\bs T_i, \bs o_i])_{i=-\infty}^{\infty}$ such that 
$\omid{\card{V(\bs T_0)}}<\infty$, as defined above.
Let $[\bs T',\bs o']$ be a random rooted \eft{} with distribution $\mathcal P'$
defined by~\eqref{eq:stationarySeq}.
\begin{enumerate}[(i)]
\item 
		\label{thm:stationarySeqUnimodular:-->}
		$[\bs T', \bs o']$ is a unimodular \eft{} and it is of class $\mathcal I/\mathcal F$ a.s.
		As a result, all generations of $\bs T$ and $\bs T'$ are finite a.s.
\item
		\label{thm:stationarySeqUnimodular:<--}
		Any unimodular non-ordered \eft{} of class $\mathcal I/\mathcal F$ can be constructed by joining a stationary sequence of trees as in the previous part.
\end{enumerate}
\end{theorem}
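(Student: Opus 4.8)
The plan is to handle the two parts separately, using the operator $\sigma_\infty$ and the earlier structural results.

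For part~\eqref{thm:stationarySeqUnimodular:-->}, I would first observe that $\mathcal P'$ defined in~\eqref{eq:stationarySeq} is precisely the result of moving the root of $[\bs T,\bs o]$ to a typical vertex of $\bs T_0$ in the sense of a mass-transport reweighting (size-bias by $\card{V(\bs T_0)}$, then pick a uniform vertex in $\bs T_0$). So the first task is to verify unimodularity of $[\bs T',\bs o']$ directly from the definition~\eqref{eq:unimodular}: given a test function $g:\mathcal G_{**}\to\mathbb R^{\geq 0}$, one expands $\mathcal E'\big[\sum_w g[\bs T',\bs o',w]\big]$ using~\eqref{eq:stationarySeq}, turning it into $\frac1{\omid{\card{V(\bs T_0)}}}\omid{\sum_{v\in V(\bs T_0)}\sum_{w\in V(\bs T)} g[\bs T,v,w]}$; then one uses the stationarity of the sequence $([\bs T_i,\bs o_i])_i$ to reindex the ``block'' $\bs T_0$ and exploit shift-invariance, reducing the claim to a mass conservation that holds blockwise. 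Concretely, the mass sent out of $\bs T_0$ to the right (into $\bs T_1,\bs T_2,\dots$) is matched, under the stationary shift, by the mass sent into $\bs T_0$ from the left, and within each finite tree $\bs T_i$ the reversal of the directed edges is trivially mass-preserving. This is the standard ``Palm calculus for a stationary sequence'' computation. Once unimodularity holds, Proposition~\ref{prop:classification-EFT} applies: since $\bs o'$ lies in the bi-infinite path $\{\bs o_i\}_i$ with positive probability (in fact the spine $(\bs o_i)$ is a bi-infinite $F$-path in $\bs T$), the tree has at least two ends, so it cannot be class $\mathcal I/\mathcal I$; hence it is class $\mathcal I/\mathcal F$, and in particular all generations are finite a.s. Finiteness of generations of $\bs T$ itself then follows since $\bs T$ and $\bs T'$ have the same ``unrooted'' law up to the reweighting, which does not change whether a generation is finite.

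For part~\eqref{thm:stationarySeqUnimodular:<--}, I start with an arbitrary unimodular non-ordered \eft{} $[\bs T',\bs o']$ of class $\mathcal I/\mathcal F$. By Proposition~\ref{prop:classification-EFT}, $\bs T'$ has a unique bi-infinite $F$-path; call its vertices $(\bs s_i)_{i\in\mathbb Z}$, ordered so that $F(\bs s_i)=\bs s_{i+1}$ (this selection is covariant, hence measurable, since the spine is canonically defined). Deleting the spine edges decomposes $\bs T'$ into a sequence of finite rooted trees $[\bs T_i,\bs o_i]$ with $\bs o_i:=\bs s_i$, where $\bs T_i$ is the component of $\bs s_i$ after removing the two spine edges at $\bs s_i$, rooted at $\bs s_i$ and with edges directed toward the root — so that joining $([\bs T_i,\bs o_i])_i$ reconstructs $\bs T'$. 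The two things to check are: (a) the sequence $([\bs T_i,\bs o_i])_i$ is stationary, and (b) $\omid{\card{V(\bs T_0)}}<\infty$ after an appropriate re-rooting. For (a), one uses that the shift along the spine, $\bs s_i\mapsto\bs s_{i+1}$, is a bijective vertex-shift on the spine; stationarity of the induced sequence of trees under this shift follows from Mecke's point stationarity theorem (Proposition~\ref{prop:Mecke}), applied to the sub-network obtained by conditioning on $\bs o'$ being on the spine — more precisely, from the fact that $\theta_F$ preserves the law of the spine-rooted network. For (b): starting from $[\bs T',\bs o']$ one must pass to the ``stationary-sequence'' root, which is $\bs o'$ conditioned appropriately; since generations of $\bs T'$ are finite and the sum $\sum_i \card{V(\bs T_i)}$ restricted to a window corresponds to a finite set of vertices, the mass transport principle (Lemma~\ref{lemma:happensAtRoot} and Proposition~\ref{prop:neveu}) forces $\omid{\card{V(\bs T_0)}}<\infty$ — indeed, the intensity computation shows $\omid{\card{V(\bs T_0)}}$ equals the reciprocal of the probability that $\bs o'$ lies on the spine, which is positive. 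Finally one checks that joining this sequence, then applying the reweighting~\eqref{eq:stationarySeq}, returns exactly $[\bs T',\bs o']$; this is again a direct mass-transport identity, essentially the inverse of the computation in part~(i).

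The main obstacle I expect is the bookkeeping in step~(b) of part~(ii): making the re-rooting from the unimodular root $\bs o'$ to the ``stationary-sequence'' root precise, and proving that the joining-then-reweighting operation is genuinely inverse to the spine-decomposition at the level of distributions (not just for a fixed realization). This requires carefully tracking the Radon--Nikodym factors $\card{V(\bs T_0)}$ and the conditioning on $\{\bs o'\in\text{spine}\}$, and invoking the exchange formula (Proposition~\ref{prop:neveu}) between the covariant subnetwork ``spine'' and the whole network to identify the two Palm-type measures. The technicalities about ends of trees (e.g.\ that the spine is the unique bi-infinite $F$-path and is measurably selected) are, per the paper's convention, only sketched.
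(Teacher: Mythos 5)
Your proposal is correct and follows essentially the same route as the paper: part (i) by expanding the mass transport sums blockwise and using stationarity to reindex (the identity $\hat g(0,j)=\hat g(-j,0)$), then invoking the existence of the bi-infinite spine together with Proposition~\ref{prop:classification-EFT}; part (ii) by the same spine decomposition $D(\bs o_i)\setminus D(\bs o_{i-1})$, stationarity via Mecke's theorem applied to the bijective vertex-shift that acts as $F$ on the spine and as the identity elsewhere, and the exchange formula between the spine subnetwork and the whole tree to show $\omid{\card{V(\bs T_0)}}=1/\myprob{\bs o'\in S}<\infty$ and that joining-then-reweighting recovers the original law.
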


\begin{remark}
Every ordered \eft{} of class $\mathcal I/\mathcal F$
can also be constructed in a way similar to that
of Part~\eqref{thm:stationarySeqUnimodular:<--}
of Theorem~\ref{thm:stationarySeqUnimodular}. 
Here, only the non-ordered case was discussed for simplicity.
\end{remark}

\begin{proof}[Proof of Theorem~\ref{thm:stationarySeqUnimodular}]
	
\eqref{thm:stationarySeqUnimodular:-->}
Let $m:=\omid{\card{V(\bs T_0)}}$ and $g:\mathcal G_{**}\rightarrow\mathbb R^{\geq 0}$
be a measurable function. One has
\begin{eqnarray*}
			\omid{\sum_{w\in V(\bs T')}g[\bs T', \bs o', w]} 
			&\hspace{-.3cm}  =&\hspace{-.3cm} \frac 1m \omid{\sum_{v\in V(\bs T_0)}\sum_{w\in V({\bs T})} g[{\bs T}, v, w]}\\
			&\hspace{-.3cm}  =& \hspace{-.3cm} \frac 1m \omid{\sum_{j=-\infty}^{\infty}\sum_{v\in V(\bs T_0)}\sum_{w\in V(\bs T_j)} g[{\bs T}, v, w]}
		\hspace{-.1cm}	 = \hspace{-.1cm}\frac 1m \sum_{j=-\infty}^{\infty} \hat g(0,j),
\end{eqnarray*}
where $\hat g(i,j):=\omid{\sum_{v\in V(\bs T_i)}\sum_{w\in V(\bs T_j)} g[\bs T, v, w]}$.
Similarly, one obtains
\[
		\omid{\sum_{w\in V(\bs T')}g[\bs T', w, \bs o']}
		= \frac 1m \sum_{j=-\infty}^{\infty} \hat g(j,0).
\]
The stationarity of the sequence $\bs T_i$ implies that $\hat g(0,j)=\hat g(-j, 0)$.
Therefore, the right-hand-sides of the above equations are equal. 
This implies that $[\bs T', \bs o']$ is unimodular. 
		
Now, since there is a bi-infinite path in $\bs T$ a.s.,
the same holds for $\bs T'$ almost surely. Therefore,
Proposition~\ref{prop:classification-EFT} implies that
$[\bs T', \bs o']$ is of class $\mathcal I/\mathcal F$ and thus
all generations of $\bs T'$ are finite a.s.
This implies that all generations of $\bs T$ are finite a.s. too.
	
\eqref{thm:stationarySeqUnimodular:<--}
Let $[\tilde{\bs T}, \tilde{\bs o}]$ be a non-ordered unimodular \eft{}
of class $\mathcal I/\mathcal F$. For all Eternal Family Trees $T$ with
a unique bi-infinite $F$-path, let $S_T$ be the bi-infinite $F$-path.
So $S_{\tilde{\bs T}}$ is almost surely defined.
It is easy to see that $S$ is a covariant subset of vertices.
For simplicity, denote by $[\bs T, \bs o]$ the random \eft{} obtained
by conditioning on $\bs o\in S_{\tilde{\bs T}}$.
Denote the bi-infinite $F$-path in $\bs T$ by $(\bs o_i)_{i=-\infty}^{\infty}$,
where $\bs o_0=\bs o$ and $F(\bs o_i)=\bs o_{i+1}$.
Let $\bs T_i$ be the Family Tree {spanned}
by $D(\bs o_i)\setminus D(\bs o_{i-1})$.
It will be shown below that the sequence $\bs T_i$ is the desired random \ft{} sequence.
		
The first step consists in proving that $(\bs T_i)_i$ is a stationary sequence,
that is, its distribution is invariant under the shifts $i\mapsto i+1$ and $i\mapsto i-1$.
Only invariance w.r.t. the first shift is proved. The other one can be proved similarly.
For this, it is enough to show that the distribution of
$[\bs T, \bs o]$ (which is not unimodular) is invariant under the map
$\theta_F[T,o]:=[T,F(o)]$. Note that $[\bs T,\bs o]\mapsto ([\bs T_i, \bs o_i])_i$
is a measurable bijective map. Define
		\[
		f_T(v):=\left\{\begin{array}{ll}
		F(v),& v\in S_T\\
		v, & v\not\in S_T
		\end{array}
		\right.
		.
		\]
Note that $f_T$ is bijective. So, by Proposition~\ref{prop:Mecke},
$\theta_f$ preserves the distribution of $[\tilde{\bs T}, \tilde{\bs o}]$.
Since, on $[\bs T, \bs o]$ $f$ and $F$ almost surely agree on $\bs o$,
it follows that $\theta_F$ preserves the distribution of $[\bs T, \bs o]$,
and the first claim is proved.
		
Next, one proves that for the stationary sequence $\bs T_i$,
the construction~\eqref{eq:stationarySeq} gives back the
distribution of $[\tilde{\bs T}, \tilde{\bs o}]$. 
First, note that by joining the roots of $[\bs T_i, \bs o_i]$,
one obtains the same $[\bs T, \bs o]$ here.
For all $v\in V(T)$, let $\tau_T(v)$ be the first ancestor of $v$ in $S_T$.
For a given event $A$, define
		\[
		g[T, v, w]:= \identity{A}([T,v]) \identity{\{w=\tau_T(v)\}}.
		\]
By the exchange formula (Proposition~\ref{prop:neveu})
for the subnetworks $S_T$ and $T$ itself,
by letting $\mathbb P_{S}$ be the distribution of $[\bs T, \bs o]$ conditionally on $\bs o\in S_{\bs T}$, one gets
		\[
		\myprob{\tilde{\bs o}\in S_{\tilde{\bs T}}} \cdot \mathbb E_{S}\left[\sum_{v\in V(\bs T_0)} \identity{A}[\bs T, v] \right] = \mathbb P\left[[\tilde{\bs T}, \tilde{\bs o}] \in A \right].
		\]
By letting $A:=\mathcal T_*$, one obtains
$\myprob{\tilde{\bs o}\in S_{\tilde{\bs T}}} \cdot \mathbb E_S\left[\card{V(\bs T_0)}\right] = 1$.
By substituting this in the above equation,
it follows that the distribution constructed
in~\eqref{eq:stationarySeq} coincides with the distribution 
of $[\tilde{\bs T}, \tilde{\bs o}]$ and the claim is proved.
\end{proof}

\begin{lemma}
	\label{lemma:T0 vs L0}
Let $\bs T$ be the joining of a stationary sequence of trees
$([\bs T_i, \bs o_i])_{i=-\infty}^{\infty}$ rooted at $\bs o:=\bs o_0$. Then
$\bs T$ satisfies the following properties:
\begin{enumerate}[(i)]
\item \label{lemma:T0 vs L0-1}
For all measurable functions $h:\mathcal T_*\rightarrow\mathbb R^{\geq 0}$, 
\[
\omid{\sum_{v\in L(\bs o)}h[\bs T, v]} = \omid{\sum_{v\in V(\bs T_0)}h[\bs T, v]}.
\]
\item \label{lemma:T0 vs L0-2}
By moving the root of $\bs T$ to a typical vertex in $\bs L(\bs o)$ (an 
operation defined as in~\eqref{eq:stationarySeq}) one gets the same
same distribution as by moving the root of $\bs T$ to a typical vertex in $\bs T_0$.
\end{enumerate}
\end{lemma}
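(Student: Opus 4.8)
The plan is to establish part~\eqref{lemma:T0 vs L0-1} by identifying the foil $L(\bs o)$ of the root with the vertex set of $\bs T_0$ together with the contributions of the finite trees hanging off the bi-infinite path at the same level. The crucial observation is that, in the joining $\bs T$, the bi-infinite $F$-path is exactly $(\bs o_i)_{i\in\mathbb Z}$, so $F^k(\bs o_0)=\bs o_k$ for $k\geq 0$; hence the foil $L(\bs o)$ — i.e.\ the set of $w$ with $l_{\bs T}(\bs o,w)=0$ — is precisely the set of vertices at the same generation as $\bs o$. Within $\bs T_0$ (the tree spanned by $D(\bs o_0)\setminus D(\bs o_{-1})$) the vertices at generation $0$ relative to $\bs o_0$ form a subset of $L(\bs o)$, but $L(\bs o)$ also receives vertices from $\bs T_j$ for $j>0$ that sit at tree-distance $|j|$ below $\bs o_j$. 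The clean way to handle this is a mass-transport argument: send mass $h[\bs T,v]$ from each $v\in L(\bs o)$ to $\tau_{\bs T}(v)$, its first ancestor on the path $S_{\bs T}=\{\bs o_i\}$, which lands in exactly one $\bs o_j$; conversely each $\bs o_j$ collects the $h$-mass of those vertices of $L(\bs o)$ lying in the subtree $\bs T_j$. By stationarity of the sequence $(\bs T_i)_i$ (equivalently, invariance of the law of $[\bs T,\bs o]$ under $\theta_F$, which was proved in the course of Theorem~\ref{thm:stationarySeqUnimodular}), the expected $h$-mass that $\bs o_0$ collects from level-$0$ vertices of $\bs T_0$ equals the expected $h$-mass that $\bs o_j$ collects from the level-$0$ vertices of $L(\bs o)$ in $\bs T_j$, summed appropriately; resumming over $j$ and over generations inside each $\bs T_j$ rebuilds $\sum_{v\in V(\bs T_0)}h[\bs T,v]$. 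This gives the asserted identity.

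Concretely, I would write $L(\bs o)=\bigsqcup_{j\geq 0}\{v\in V(\bs T_j): l_{\bs T}(\bs o_j,v)=j\}$ (the vertices of $\bs T_j$ at tree-depth $j$ below $\bs o_j$, which is the $j$-th generation of the subtree $\bs T_j$ rooted at $\bs o_j$, with the convention that $\bs T_0$ contributes its generation-$0$ vertex $\bs o_0$ and possibly more if $\bs T_0$ itself branches). Then
\[
\omid{\sum_{v\in L(\bs o)}h[\bs T,v]}
=\sum_{j\geq 0}\omid{\sum_{v\in V(\bs T_j),\, l_{\bs T}(\bs o_j,v)=j}h[\bs T,v]}
=\sum_{j\geq 0}\omid{\sum_{v\in V(\bs T_0),\, l_{\bs T}(\bs o_0,v)=j}h[\bs T,v]},
\]
where the second equality uses the shift-invariance of the law of $(\bs T_i,\bs o_i)_i$ (one must be slightly careful: applying $\theta_F^{\,j}$ moves $\bs o_0$ to $\bs o_j$ and carries the marked network $[\bs T,\bs o_0]$ to $[\bs T,\bs o_j]$, so the inner expectations agree term by term). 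Interchanging the sum over $j$ with the expectation (legitimate since $h\geq 0$, by Tonelli) and noting that $\{v\in V(\bs T_0): l_{\bs T}(\bs o_0,v)=j\}$ over $j\geq 0$ partitions $V(\bs T_0)$ — because $\bs T_0=D(\bs o_0)\setminus D(\bs o_{-1})$ consists exactly of descendants of $\bs o_0$ not passing through $\bs o_{-1}$, hence all at nonnegative generation w.r.t.\ $\bs o_0$ — yields $\omid{\sum_{v\in V(\bs T_0)}h[\bs T,v]}$, which is~\eqref{lemma:T0 vs L0-1}.

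Part~\eqref{lemma:T0 vs L0-2} is then immediate: by definition, moving the root of $\bs T$ to a typical vertex of $\bs L(\bs o)$ means biasing by $\card{L(\bs o)}$ and rerooting uniformly in $L(\bs o)$, i.e.\ considering the measure $A\mapsto \frac{1}{\omid{\card{L(\bs o)}}}\omid{\sum_{v\in L(\bs o)}\identity{A}[\bs T,v]}$, and the analogous operation for $\bs T_0$ gives $\frac{1}{\omid{\card{V(\bs T_0)}}}\omid{\sum_{v\in V(\bs T_0)}\identity{A}[\bs T,v]}=\mathcal P'[A]$. Applying~\eqref{lemma:T0 vs L0-1} once with $h[\bs T,v]:=\identity{A}[\bs T,v]$ and once with $h\equiv 1$ (to equate the normalizing constants $\omid{\card{L(\bs o)}}=\omid{\card{V(\bs T_0)}}$) shows the two measures coincide.

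The main obstacle is the bookkeeping in part~\eqref{lemma:T0 vs L0-1}: one must correctly identify which vertices of which subtree $\bs T_j$ land in the foil $L(\bs o)$, and verify that the re-rooting map $\theta_F^{\,j}$ really does send the level-$j$ vertices of $\bs T_0$ (seen inside $\bs T$) onto the level-$j$ vertices of $\bs T_j$ in a law-preserving way. Once the right indexing is fixed, the identity is a resummation plus stationarity, with no analytic subtlety beyond Tonelli's theorem for the nonnegative summands.
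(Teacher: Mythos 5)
Your proof is correct and follows essentially the same route as the paper's: decompose $L(\bs o)$ as the disjoint union over $j\geq 0$ of the depth-$j$ vertices of $\bs T_j$ (i.e.\ $D_j(\bs T_j,\bs o_j)$), apply stationarity of the sequence termwise, and resum over $j$ to recover $V(\bs T_0)$; part (ii) then follows by taking $h=\identity{A}$ and $h\equiv 1$. The only slip is the parenthetical claim that $\bs T_0$ may contribute more than $\bs o_0$ to $L(\bs o)$ if it branches --- generation $0$ of $\bs T_0$ relative to $\bs o_0$ is always just $\{\bs o_0\}$ --- but this does not affect your displayed decomposition or the argument.
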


\begin{proof}
\eqref{lemma:T0 vs L0-1}. The key point is that $L(\bs o_0)$ is the disjoint union 
of $D_i(\bs T_i,\bs o_i)$ for $i\geq 0$. Therefore, 
\begin{eqnarray*}
\mathrm{LHS} = \sum_{i=0}^{\infty} \omid{ \sum_{v\in D_i(\bs T_i, \bs o_i)}h[\bs T, v]}
= \sum_{i=0}^{\infty} \omid{ \sum_{v\in D_i(\bs T_0, \bs o_0)}h[\bs T, v]} = \mathrm{RHS},
\end{eqnarray*}
where the second equality is implied by stationarity of the sequence $\bs T_i$.
	
\eqref{lemma:T0 vs L0-2}. 
%Note that moving the root to a typical vertex of $L(\bs o)$ is defined similarly to~\eqref{eq:stationarySeq}. 
By \eqref{lemma:T0 vs L0-1}, one gets
$
\omid{\card{L(\bs o_0)}} = \omid{\card{V(\bs T_0)}}.
$
This, together with \eqref{lemma:T0 vs L0-1} readily imply the claim.
\end{proof}

The next proposition shows that the distribution obtained by joining a
stationary sequence of trees is a special case 
of moving the root to a typical far descendant.

\begin{proposition}
\label{prop:joining-p_infinity}
Let $[\bs T, \bs o_0]$ be the joining defined in Theorem \ref{thm:stationarySeqUnimodular} 
and $\mathcal P$ be its distribution. Then the distribution
$\mathcal P_{\infty}$ (defined in Subsection~\ref{sec:typicalDescendant}) exists and
$\mathcal P_{\infty}=\mathcal P'$ {(defined in (\ref{eq:stationarySeq}))}.
\end{proposition}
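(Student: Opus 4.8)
The plan is to identify the measure $\mathcal P_n$ obtained by moving the root of the joining $[\bs T,\bs o_0]$ to a typical $n$-descendant, and then to show it converges weakly to $\mathcal P'$. First I would compute $d_n(\bs o_0)$ and $D_n(\bs o_0)$ explicitly in terms of the underlying sequence $(\bs T_i)$: a vertex $v$ is an $n$-descendant of $\bs o_0$ precisely when $v$ lies in some $\bs T_i$ (for $i\le 0$), with $l_{\bs T}(\bs o_0,v)=n$, i.e. when $v$ is at depth $n-i$ below $\bs o_i$ inside the subtree $\bs T_i$ (for $i$ ranging over $0,-1,-2,\ldots$ and $n-i\ge 0$, but only finitely many $i$ contribute since $n-i$ must be a valid depth in $\bs T_i$ — actually all $i\le 0$ with $n-i\ge 0$, that is $i\in\{0,-1,\dots,-n\}$, hmm, corrected: $i\ge -n$). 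Concretely $D_n(\bs o_0)=\bigsqcup_{i=-n}^{0} D_{n-i}(\bs T_i,\bs o_i)$ and $d_n(\bs o_0)=\sum_{i=-n}^0 d_{n-i}(\bs T_i,\bs o_i)$.

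Next I would plug this into the definition~\eqref{eq:sigma_n} of $\mathcal P_n$ and use the stationarity of $(\bs T_i)_i$ to re-index: each term $\omid{\sum_{v\in D_{n-i}(\bs T_i,\bs o_i)}\identity{A}[\bs T,v]}$ equals, after shifting by $-i$, a term involving $\bs T_0$, and the normalizing constant $\omid{d_n(\bs o_0)}=\sum_{i=-n}^0\omid{d_{n-i}(\bs T_0,\bs o_0)}=\sum_{k=0}^{n}\omid{d_k(\bs T_0,\bs o_0)}=\omid{\card{\{v\in V(\bs T_0):l(\bs o_0,v)\le n\}}}$, which increases to $\omid{\card{V(\bs T_0)}}=m<\infty$ as $n\to\infty$ by monotone convergence. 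Thus $\mathcal P_n(A)=\frac{1}{\omid{d_n(\bs o_0)}}\omid{\sum_{v\in V(\bs T_0),\,l(\bs o_0,v)\le n}\identity{A}[\bs T,v]}$, after absorbing the shift using that $[\bs T,v]$ for $v$ at depth $n-i$ in $\bs T_i$ has the same law (via stationarity) as $[\bs T,v']$ for the corresponding $v'$ at depth $n-i$ in $\bs T_0$ together with its ambient joined tree; here I would be careful that the shift of the whole bi-infinite joining by $i$ steps is the operation under which the law of $(\bs T_j)_j$ is invariant, so the ambient network around the selected vertex is correctly distributed.

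Finally, letting $n\to\infty$: the denominator tends to $m$, and in the numerator the restriction $l(\bs o_0,v)\le n$ disappears in the limit (again monotone convergence, since $\identity{A}$ can be replaced by an arbitrary bounded continuous test function $h\ge 0$ and $\sum_{v\in V(\bs T_0),\,l(\bs o_0,v)\le n}h[\bs T,v]\uparrow\sum_{v\in V(\bs T_0)}h[\bs T,v]$, which is integrable because $\omid{\card{V(\bs T_0)}}<\infty$ bounds it after $h$ is bounded). Hence $\mathcal E_n[h]\to\frac1m\omid{\sum_{v\in V(\bs T_0)}h[\bs T,v]}$, which is exactly $\int h\,\mathrm d\mathcal P'$ by~\eqref{eq:stationarySeq}. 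This shows $\mathcal P_n\Rightarrow\mathcal P'$, so $\mathcal P_\infty$ exists and equals $\mathcal P'$. The main obstacle I anticipate is purely bookkeeping: verifying that the re-indexing via stationarity genuinely produces the ambient joined tree with the correct law around the new root (not just the right finite subtree $\bs T_0$), which requires spelling out that the map sending $(\bs T_j)_j$ to the joined network intertwines the index-shift with the vertex-shift $\theta_F$ on the bi-infinite path — a point already used in the proof of Theorem~\ref{thm:stationarySeqUnimodular}. One can also shortcut the whole computation by invoking Lemma~\ref{lemma:T0 vs L0}\eqref{lemma:T0 vs L0-2} to replace ``typical vertex of $\bs T_0$'' by ``typical vertex of $L(\bs o_0)$'' and then recognizing the latter as the $n\to\infty$ limit of ``typical vertex among the first $n$ generations of $\bs o_0$,'' which is manifestly $\lim_n\mathcal P_n$.
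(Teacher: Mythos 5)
Your proposal is correct and follows essentially the same route as the paper's proof: decompose $D_n(\bs o_0)$ as the disjoint union of the descendant sets $D_{n+i}(\bs T_i,\bs o_i)$ for $-n\le i\le 0$, use stationarity of the sequence to reduce every term to $\bs T_0$, identify $\mathcal P_n$ as uniform selection over the $n$-neighborhood $N_n(\bs T_0,\bs o_0)$, and pass to the limit by monotone convergence. The only blemish is a sign slip in your depth bookkeeping (you write depth $n-i$ where it should be $n+i$ for $i\le 0$), which self-corrects in your final re-indexed sum $\sum_{k=0}^n\omid{d_k(\bs T_0,\bs o_0)}$ and does not affect the argument.
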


\begin{proof}%{Proof of Proposition~\ref{prop:joining-p_infinity}}
	Note that $D_n(\bs T, \bs o_0)$ is the disjoint union of $D_{n-i}(\bs T_{-i}, \bs o_{-i})$ for $0\leq i\leq n$. Therefore,
	\[
		\omid{d_n(\bs o_0)} = \sum_{i=0}^n\omid{d_{n-i}(\bs T_{-i}, \bs o_{-i})} = \sum_{i=0}^n\omid{d_{n-i}(\bs T_0, \bs o_0)} = \omid{\card{N_n(\bs T_0, \bs o_0)}},
	\]
	where $N_n(\bs T_0, \bs o_0)$ is the $n$-neighborhood of $\bs o_0$ in $T_0$.
	Similarly, for any bounded continuous function $h:\mathcal T_*\rightarrow\mathbb R^{\geq 0}$,
	\begin{eqnarray*}
		\mathcal E_n[h] &=& \frac 1{\omid{d_n(\bs o_0)}} \omid{\sum_{v\in D_n(\bs T, \bs o_0)} h([\bs T, v])}\\
		&=& \frac 1{\omid{d_n(\bs o_0)}} \sum_{i=0}^n \omid{\sum_{v\in D_{n-i}(\bs T_{-i}, \bs o_{-i})} h([\bs T, v])}\\
		&=& \frac 1{\omid{d_n(\bs o_0)}} \sum_{i=0}^n \omid{\sum_{v\in D_{n-i}(\bs T_{0}, \bs o_{0})} h([\bs T, v])}\\
		 & = & \frac 1{\omid{\card{N_n(\bs T_0, \bs o_0)}}} \omid{\sum_{v\in N_n(\bs T_0, \bs o_0)} h([\bs T, v])}.
	\end{eqnarray*}
	
	By monotone convergence, both the numerator and the denominator converge and 
	\[
		\lim_{n\rightarrow\infty} \mathcal E_n[h]= \frac 1{\omid{\card{V(\bs T_0)}}} \omid{\sum_{v\in V(\bs T_0)} h([\bs T, v])} = \mathcal E'[h].
	\]
	This proves the claim.
\end{proof}

\subsection{Bibliographical Comments}
\label{sec:bibEFT}

The notion of \eft,
the general definition of $\mathcal P_{\infty}$ 
and the other constructions of \eft{}s given
in the present section are new to the best of the authors' knowledge.
Special cases were considered
in~\cite{fringe} and~\cite{Ka77}. The former is discussed below
and the latter is postponed to Subsection~\ref{sec:bibEGW}.

An operator similar to $\sigma$ is defined in~\cite{fringe}
and its invariant probability distributions is studied therein.
In the language of the present work,
this operator can be rephrased as follows.
A \textit{Sin-Tree} is a rooted \eft{} with only one end
(equivalently, the number of children of each vertex is finite).
The kernel $Q^{\infty}$ is defined by
	$
		Q^{\infty}([T,o],A)= \sum_{v\in D_1(o)} \identity{A}[T,v],
	$
where $[T,o]$ is a rooted Sin-Tree and $A$ is an event.
This kernel acts on the space of measures on the
set of (isomorphism classes of) rooted Sin-Trees. 
An \textit{invariant Sin-Tree} is a random rooted Sin-Tree
whose distribution is invariant under the action of $Q^{\infty}$.

In the setting of the present paper, 
for probability measures with an average number of children equal to one,
the above action is identical to that of the operator $\sigma$.
It can be seen that invariant Sin-Trees are precisely unimodular
\eft{}s of class $\mathcal I/\mathcal I$. More generally,
(the distributions of) offspring-invariant \eft{}s with the property that $D(v)$
is finite for all vertices $v$, are precisely the probability measures
that are eigenvectors of $Q^{\infty}$ corresponding to non-zero eigenvalues. 

The statement of Lemma~\ref{lemma:continuity} is similar to
Lemma~4.3 in~\cite{conceptual}, in which $P_n$ and $\sigma P_n$ should
be replaced by a probability measure on $\mathbb R^{\geq 0}$ and its
\textit{size-biased} version (i.e., the probability measure
$A\mapsto\frac 1c \int_A xdP_n(x)$) respectively.

\section{Trees and Networks Beyond Unimodularity}
\label{sec:beyond}

The offspring-invariant setting introduced in Section~\ref{sec:eternal}
relaxes the unimodularity assumption  
and is hence, in this sense, a generalization of the unimodular setting. 
A simple instance of non-unimodular offspring-invariant \eft{}s is the $d$-regular tree with
one distinguished end of Example~\ref{ex:regular}, when $d>2$.
Subsection~\ref{sec:sigma-inv} focuses on offspring-invariant \eft{}s. In particular, it is shown that
they satisfy a modified version of the mass transport principle and a cardinality classification
similar to Theorem~\ref{thm:classification}. The offspring-invariant setting is also
extended to general networks in Subsection~\ref{sec:sigma-inv-network}.

\subsection{On Offspring-Invariant Random \eft{}s}
\label{sec:sigma-inv}

\subsubsection{Offspring-Invariant Mass Transport and Classification}
\label{sec:sigmaMTP}

Here are general properties of offspring-invariant random \eft{}s
which will be used later.

\begin{proposition}
	\label{prop:descendantsEnough}
	The distribution of an offspring-invariant random Family Tree $[\bs T, \bs o]$ 
	is uniquely determined by that of $[D(\bs o), \bs o]$.
\end{proposition}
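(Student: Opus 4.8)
The plan is to reconstruct the full tree $[\bs T,\bs o]$, up to distribution, from the descendant tree $[D(\bs o),\bs o]$ by ``growing upward one generation at a time'' and then taking a limit, using offspring-invariance to identify each step. The key observation is that offspring-invariance, i.e. $\sigma\mathcal P=\mathcal P$, together with the semigroup property (Lemma~\ref{lemma:sigma-semigroup}), gives $\sigma_n\mathcal P=\mathcal P$ for all $n\geq 0$, so that moving the root to a typical $n$-descendant returns the same distribution. Combined with formula~\eqref{eq:sigma_n}, this says: for any event $A$,
\[
\mathcal P[A] = \frac{1}{\omid{d_n(\bs o)}}\,\omid{\sum_{v\in D_n(\bs o)}\identity{A}[\bs T,v]}.
\]
First I would note that $[\bs T, F^n(\bs o)]$, viewed as a rooted Family Tree, is determined by the pair consisting of the descendant tree $[D(F^n(\bs o)), F^n(\bs o)]$ together with the distinguished vertex $\bs o$ inside it at depth $n$; equivalently, by the tree $D(F^n(\bs o))$ rooted at $F^n(\bs o)$ with $\bs o$ marked. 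So it suffices to show that the joint law of $\bigl(D(F^n(\bs o)), F^n(\bs o), \bs o\bigr)$ is determined by the law of $[D(\bs o),\bs o]$ for each fixed $n$, since $D(F^n(\bs o))$ exhausts every finite neighborhood of $\bs o$ as $n\to\infty$ (using that $\bs T$ is eternal — which holds by offspring-invariance, cf. Lemma~\ref{lemma:m^n} referenced in the excerpt) and hence the law of $[\bs T,\bs o]$ is the weak limit of the laws of $[D(F^n(\bs o)),\bs o]$ rooted back at $\bs o$.

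The heart of the argument is a ``reversed'' application of $\sigma_n$: the law of $[D(F^n(\bs o)),\bs o]$ (rooted at the original root $\bs o$, but carrying the data of the full depth-$n$-ancestor descendant tree) is obtained from the law of $[D(\bs o'),\bs o']$ — the plain descendant tree — by the following recipe. Take a random Family Tree with the law of $[D(\bs o),\bs o]$ but \emph{size-biased appropriately}, then re-root: concretely, I would verify directly from~\eqref{eq:sigma_n} that if $[\bs S,\bs r]$ has the law of $\sigma_n[\bs T,\bs o]=[\bs T,\bs o]$ (by offspring-invariance), then $[D(F^n(\bs r)), F^n(\bs r), \bs r]$ has the same law as $[D(\bs o), \bs o]$ with an extra uniformly chosen marked vertex among $D_n(\bs o)$, after biasing $\mathcal P$ by $d_n(\bs o)$. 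This is exactly the content of the identity above read in reverse: the operation $[T,o]\mapsto (D(F^n(o)), F^n(o), o)$ applied to $\sigma_n\mathcal P$ produces, for the $o$-coordinate forgotten, precisely $\mathcal P$ biased by $d_n$ — and the conditional law of $o$ given $D(F^n(o))$ is uniform on generation $n$. Hence the map runs the other way: from the law of $[D(\bs o),\bs o]$ one recovers the law of the biased-and-marked triple, which is the law of $[D(F^n(\bs o)), F^n(\bs o), \bs o]$, which determines the law of $[D(F^n(\bs o)),\bs o]$.

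The main obstacle I anticipate is making the ``uniform marked vertex'' / biasing manipulation fully rigorous at the level of measures on $\mathcal T_*$ (or on the space of pairs of a rooted tree and a marked vertex): one must check that $(D(\bs o),\bs o)$ together with the structure of how a generation-$n$ descendant sits inside it really does encode $[D(F^n(\bs o)), F^n(\bs o),\bs o]$ \emph{measurably and bijectively}, and that the biasing by $d_n(\bs o)$ — an $\mathcal P$-integrable quantity since $[\bs T,\bs o]$ is proper by Proposition~\ref{prop:sigma_1 unimodular-new}'s analogue for offspring-invariant trees (properness follows from $\sigma_n\mathcal P$ being defined for all $n$) — is a function of $[D(\bs o),\bs o]$ alone. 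Granting that, the weak-convergence step is routine: the laws of $[D(F^n(\bs o)),\bs o]$ are consistent (they are the marginals of the single law of $[\bs T,\bs o]$ restricted to larger and larger neighborhoods), they are each determined by the law of $[D(\bs o),\bs o]$, and their limit is the law of $[\bs T,\bs o]$, so the latter is determined as claimed.
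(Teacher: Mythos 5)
Your argument is correct and is essentially the paper's: the paper derives this as a direct corollary of Proposition~\ref{prop:anyI/I} ($[\bs T,\bs o]\sim\sigma_\infty[D(\bs o),\bs o]$), and your "bias by $d_n$, mark a uniform $n$-descendant, re-root, take the weak limit" recipe is precisely the proof of that proposition, namely that $\sigma_n$ applied to the law of $[D(\bs o),\bs o]$ yields the law of $[D(F^n(\bs o)),\bs o]$, which converges locally to $[\bs T,\bs o]$. You have simply inlined the proof of Proposition~\ref{prop:anyI/I} rather than citing it.
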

\begin{proof}
This proposition is a direct corollary of Proposition~\ref{prop:anyI/I}.
\end{proof}

\begin{lemma}
	\label{lemma:m^n}
	Let $[\bs T, \bs o]$ be an offspring-invariant random \ft{}.
	\begin{enumerate}[(i)]
		\item $\bs T$ is eternal almost surely.
		\item By letting $m:=\omid{d_1(\bs o)}$, one has for all $n\geq 0$,
		\begin{eqnarray}
			\label{eq:m^n}
			\omid{d_n(\bs o)}&=&m^n.			
		\end{eqnarray}
		In particular,
		\[
		\omid{d(\bs o)}=
		\left\{ \begin{array}{ll}
		\frac 1{1-m}, &  m<1\\
		\infty, & m\geq 1
		\end{array}
		.
		\right.
		\]
	\end{enumerate}
\end{lemma}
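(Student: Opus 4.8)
The plan is to reduce everything to the defining property $\sigma\mathcal P = \mathcal P$ together with the key identity~\eqref{eq:e_n(d_m)} already established in Lemma~\ref{lemma:e_n(d_m)}. For part~(i), I would argue exactly as in the proof of Proposition~\ref{prop:sigma_1 unimodular-new}: offspring-invariance forces every vertex to have a parent almost surely. Indeed, let $S_T$ be the set of vertices of $T$ without parent; this is a covariant subset with at most one element. If $\mathbb P[\bs o \in S_{\bs T}]>0$, then after applying $\sigma$ the root is moved to a $1$-descendant, which necessarily has a parent, so $\sigma\mathcal P[\bs o \in S_{\bs T}]=0$, contradicting $\sigma\mathcal P=\mathcal P$. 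Hence $\mathbb P[\bs o\in S_{\bs T}]=0$, and since $\bs T$ is connected this means $\bs T$ has no parentless vertex a.s., i.e. $F$ is defined everywhere and $\bs T$ is eternal a.s.

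For part~(ii), the point is that offspring-invariance means $\mathcal P_1 = \sigma\mathcal P = \mathcal P$, so by Lemma~\ref{lemma:sigma-semigroup} (the semigroup property) we get $\mathcal P_n = \sigma_n\mathcal P = \sigma^{(n)}\mathcal P = \mathcal P$ for all $n\geq 0$ — i.e. moving the root to a typical $n$-descendant returns the same distribution. Now apply Lemma~\ref{lemma:e_n(d_m)} with $m:=1$ and the index $n$ there replaced by $n$: since $\mathcal P_n=\mathcal P$, the left-hand side $\mathcal E_n[d_1(\bs o)]$ equals $\omid{d_1(\bs o)} = m$. The right-hand side is $\omid{d_{n+1}(\bs o)}/\omid{d_n(\bs o)}$. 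Hence $\omid{d_{n+1}(\bs o)} = m\,\omid{d_n(\bs o)}$ for every $n\geq 0$, and since $d_0(\bs o)=1$ so $\omid{d_0(\bs o)}=1$, induction gives $\omid{d_n(\bs o)} = m^n$. (One should note in passing that properness of $[\bs T,\bs o]$ — part of the definition of offspring-invariant, via $\sigma$ being defined — guarantees $0<\omid{d_n(\bs o)}<\infty$, so all these ratios are legitimate.)

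Finally, since $\bs T$ is eternal a.s., $D(\bs o)$ is the disjoint union of the $D_n(\bs o)$ over $n\geq 0$, so $\omid{d(\bs o)} = \sum_{n=0}^{\infty}\omid{d_n(\bs o)} = \sum_{n=0}^{\infty} m^n$, which is $\frac{1}{1-m}$ when $m<1$ and $\infty$ when $m\geq 1$. The computation is entirely routine; the only thing to be slightly careful about is invoking Lemma~\ref{lemma:e_n(d_m)} in the right direction and making sure the semigroup property of Lemma~\ref{lemma:sigma-semigroup} applies (which it does, since $\mathcal P$ is proper, so all the $\sigma_n\mathcal P$ are defined). I do not anticipate a genuine obstacle here — the lemma is essentially a bookkeeping consequence of the two preceding lemmas — so the main care is just in citing the earlier results correctly rather than in any real difficulty.
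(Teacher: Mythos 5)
Your part~(ii) is exactly the paper's argument: offspring-invariance plus the semigroup property gives $\mathcal P_n=\mathcal P$, then \eqref{eq:e_n(d_m)} yields $\omid{d_{n+1}(\bs o)}=m\,\omid{d_n(\bs o)}$, and induction (which also propagates the finiteness/positivity needed to keep applying the lemma) gives \eqref{eq:m^n}; summing the geometric series over the disjoint sets $D_n(\bs o)$ finishes the claim. No issue there.

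Part~(i), however, has a genuine gap at the last step. You correctly show $\myprob{\bs o\in S_{\bs T}}=0$, i.e.\ the root has a parent a.s., but the inference ``since $\bs T$ is connected this means $\bs T$ has no parentless vertex a.s.'' is not valid: connectivity does not let you transfer an almost-sure property of the root to all vertices. That transfer is exactly the content of Lemma~\ref{lemma:happensAtRoot}, which requires unimodularity; the offspring-invariant analogue (Lemma~\ref{lemma:sigmaHappensAtRoot}) is proved \emph{after} this lemma via Proposition~\ref{prop:mtp-sigma-inv}, which itself uses \eqref{eq:m^n} and eternality, so invoking it here would be circular. The repair is the one the paper uses and that is already implicit in your setup: since $\sigma_n\mathcal P=\mathcal P$ for every $n$ (semigroup property) and under $\sigma_n\mathcal P$ the root is an $n$-descendant, $F^n(\bs o)$ is defined a.s.\ for every $n$, so the root has infinitely many ancestors a.s. On the other hand, in any Family Tree a parentless vertex $u$ is necessarily an ancestor of every vertex (for the pair $(v,u)$ one has $F^n(v)=F^m(u)$ with $F^m(u)$ defined only for $m=0$), so every vertex, in particular the root, would then have only finitely many ancestors. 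Hence no parentless vertex exists a.s., and $\bs T$ is eternal. Your one-step argument ($n=1$ only) does not suffice; the iteration over all $n$ is essential.
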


\begin{proof}
		By~\eqref{eq:sigma-semigroup}, $\sigma_n$ preserves the distribution of $[\bs T, \bs o]$ for each $n$. Now, \eqref{eq:sigma_n} easily implies that $F^n(\bs o)$ is defined a.s. This proves that $\bs T$ is eternal a.s.
		
		By~\eqref{eq:e_n(d_m)}, one gets $\omid{d_{n+1}(\bs o)} = m\omid{d_n(\bs o)}$. This implies the second claim by induction on $n$.
\end{proof}

	\begin{proposition}[{Offspring-Invariant} Mass Transport Principle]
		\label{prop:mtp-sigma-inv}
Let $[\bs T, \bs o]$ be an offspring-invariant random \eft{}.
Then, for all measurable functions $g:\mathcal T_{**}\rightarrow\mathbb R^{\geq 0}$,
		\begin{equation}
		\label{eq:mtp-sigma-inv}
		\omid{\sum_{v\in V(\bs T)}g[\bs T,\bs o,v]}=
		\omid{\sum_{v\in V(\bs T)} m^{l(v,\bs o)} g[\bs T,v,\bs o]},
		\end{equation}
		{where ${l(v,\bs o)=l_{\bs T}(v,\bs o)}$ is defined in~\eqref{eq:l}.}
		In particular,
		the mass transport principle holds along the generation of the root in the sense that 
		\begin{equation}
		\label{eq:mtp-sigma-inv-2}
		\omid{\sum_{v\in L(\bs o)}g[\bs T,\bs o,v]}=
		\omid{\sum_{v\in L(\bs o)}g[\bs T,v,\bs o]}.
		\end{equation}
	\end{proposition}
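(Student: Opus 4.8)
The plan is to derive the offspring-invariant mass transport principle~\eqref{eq:mtp-sigma-inv} from the iterated offspring-invariance $\sigma_n \mathcal P = \mathcal P$ (which holds by Lemma~\ref{lemma:sigma-semigroup}, since $\sigma$-invariance implies $\sigma_n$-invariance for all $n$). The key observation is that, because $\bs T$ is eternal a.s.\ (Lemma~\ref{lemma:m^n}), every pair of vertices $v, w$ in $\bs T$ has a common ancestor; so any mass transport $g[\bs T, \bs o, v]$ can be reorganized according to where $v$ sits relative to $\bs o$ in the generational order, i.e.\ according to the value of $l(\bs o, v)$. First I would reduce to the case where $g$ is supported on pairs with a \emph{fixed} generational offset: fix $k \in \mathbb Z$ and let $g_k[\bs T, \bs o, v] := g[\bs T, \bs o, v]\,\identity{\{l(\bs o, v) = k\}}$, so that $g = \sum_{k\in\mathbb Z} g_k$ and by monotone convergence it suffices to prove~\eqref{eq:mtp-sigma-inv} for each $g_k$ separately.

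For a fixed offset $k$, I would split into the two sub-cases according to the sign of $k$. When $k \le 0$, the vertices $v$ with $l(\bs o, v) = k$ are exactly the elements of $D_{-k}(F^{-k}(\bs o)) $ that are ``cousins'' of $\bs o$ at distance $-k$ up — more cleanly, using $m := \omid{d_1(\bs o)}$ and Lemma~\ref{lemma:e_n(d_m)} (equation~\eqref{eq:e_n(d_m)}), I would write both sides of~\eqref{eq:mtp-sigma-inv} for $g_k$ as expectations under $\mathcal P$ and move the root via $\sigma_n$ for a suitable $n$ (namely $n = -k$ for the ``ancestor side'' and the corresponding shift for the ``descendant side''), using $\sigma_n \mathcal P = \mathcal P$ to identify the two. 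Concretely, sending mass from $\bs o$ to an ancestor $F^{n}(\bs o)$ and then redistributing is exactly the content of $\sigma_n$-invariance, while the factor $m^{l(v, \bs o)} = m^{-k}$ appears precisely as the normalizing constant $\omid{d_n(\bs o)} = m^n$ from~\eqref{eq:m^n} when one changes from summing over descendants to summing over ancestors. The case $k > 0$ is symmetric (it is the same identity read from the other root). Assembling the two sub-cases and summing over $k$ gives~\eqref{eq:mtp-sigma-inv}.

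The second statement~\eqref{eq:mtp-sigma-inv-2} is then immediate: if $g$ is supported on pairs in the same foil, i.e.\ $g[\bs T, \bs o, v] = 0$ unless $v \in L(\bs o)$, then $l(v, \bs o) = 0$ on the support, so $m^{l(v,\bs o)} = 1$ and~\eqref{eq:mtp-sigma-inv} reduces to~\eqref{eq:mtp-sigma-inv-2}.

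The main obstacle I anticipate is bookkeeping the generational index correctly so that the power of $m$ comes out with the right sign — in particular being careful that $l(v, \bs o) = -l(\bs o, v)$ and that the normalization constants from $\sigma_n$ (which are the $\omid{d_n(\bs o)} = m^n$) land on the correct side of the identity. A secondary point requiring care is justifying the interchange of the (possibly infinite) sum over $k \in \mathbb Z$ with the expectation; this is handled by the nonnegativity of $g$ together with Tonelli's theorem, after noting that for each fixed $\bs T$ and $\bs o$ the sets $\{v : l(\bs o, v) = k\}$ partition $V(\bs T)$.
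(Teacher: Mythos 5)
Your overall strategy --- decompose $g$ according to the generational offset $l(\bs o,v)=k$, convert each piece into a statement about the iterates $\sigma_n$ and the normalizations $\omid{d_n(\bs o)}=m^n$, and deduce \eqref{eq:mtp-sigma-inv-2} from the case $k=0$ of \eqref{eq:mtp-sigma-inv} --- is the same engine the paper uses. But there is a genuine gap in your treatment of a fixed offset $k$. The level set $\{v: l(\bs o,v)=k\}$ is not $D_{-k}(F^{-k}(\bs o))$: that set consists of the vertices $v$ with $F^{-k}(v)=F^{-k}(\bs o)$, which lie in the foil of $\bs o$ (offset $0$), not at offset $k$. The correct description is the increasing union $\bigcup_{n\geq \max(0,-k)} D_{n+k}(F^n(\bs o))$, indexed by the depth $n$ of the common ancestor of $\bs o$ and $v$, and this depth is unbounded over the level set. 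Consequently a single application of $\sigma_{-k}$-invariance, as you propose, only accounts for the lone vertex $F^{-k}(\bs o)$ rather than all the ``cousins'' at that level; worse, for $k=0$ it degenerates to the identity operator and yields nothing, and $k=0$ is precisely the case needed for the foil identity \eqref{eq:mtp-sigma-inv-2}.

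The missing ingredient is a second truncation, in the common-ancestor depth, followed by a limit. The paper's proof fixes the offset $i$, sets $g_n(T,o,v):=g(T,o,v)\identity{\{F^n(o)=F^{n+i}(v)\}}$, and applies $\sigma_n$-invariance to $\omid{g_n^+[\bs T,\bs o]}$ and $\sigma_{n+i}$-invariance to $\omid{g_n^-[\bs T,\bs o]}$. Both reduce, via \eqref{eq:sigma_n} and \eqref{eq:m^n}, to the same double sum $\omid{\sum_{w\in D_n(\bs o)}\sum_{v\in D_{n+i}(\bs o)}g(\bs T,w,v)}$ with prefactors $m^{-n}$ and $m^{-(n+i)}$ respectively, whence $\omid{g_n^+[\bs T,\bs o]}=m^{i}\,\omid{g_n^-[\bs T,\bs o]}$ for every $n$; since $g_n\uparrow g$ on the pairs at offset $i$, monotone convergence finishes the argument. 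Your plan needs this extra truncation-and-limit step (or an equivalent further decomposition over the common-ancestor depth) to go through.
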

	
	\begin{proof}
		By additivity of both sides w.r.t. $g$, one can assume that for all $v,w\in V(G)$, $g(G,v,w)$ is zero except when $l(v,w)=i$ for a given $i\in\mathbb Z$. In this case, \eqref{eq:mtp-sigma-inv} can be written as $\omid{g^+[\bs G, \bs o]} = m^i \omid{g^-[\bs T, \bs o]}$. Let $g_n(T,o,v):=g(T,o,v)\identity{\{F^n(o)=F^{n+i}(v)\}}$. By~\eqref{eq:sigma_n} and~\eqref{eq:m^n}, one gets
		\begin{eqnarray*}
			\omid{g_n^+[\bs T,\bs o]}=&\mathcal E_n\left[g_n^+ \right]& =\frac 1{m^n}\omid{\sum_{w\in D_n(\bs o)}\sum_{v\in D_{n+i}(\bs o)}g(\bs T,w,v)},\\
			\omid{g_n^-[\bs T,\bs o]}=&\mathcal E_{n+i}\left[g_n^- \right]& =\frac 1{m^{n+i}}\omid{\sum_{w\in D_{n+i}(\bs o)}\sum_{v\in D_{n}(\bs o)}g(\bs T,v,w)}.
		\end{eqnarray*}
		Therefore, $\omid{g_n^+[\bs T,\bs o]} = \omid{{m^{i}} g_n^-[\bs T,\bs o]}$.
		Since $g_n\uparrow g$, Fatou's lemma implies that~\eqref{eq:mtp-sigma-inv} holds for $g$ and the claim is proved.
	\end{proof}

Proposition~\ref{prop:mtp-sigma-inv} can be stated in the language
of Borel equivalence relations as below (see Example~9.9 of~\cite{processes}
for the connections of this subject with random networks).
The following definitions are translated from~\cite{FeMo77} to the current setting.

\begin{definition}
	\label{def:delta}
Call a random rooted network $[\bs G, \bs o]$ \textbf{quasi-invariant} when
the measures $\mathcal P^{(r)}(A):=\omid{\sum_{v} \identity{A}[\bs G, \bs o, v]}$
and $\mathcal P^{(l)}(A):=\omid{\sum_{v} \identity{A}[\bs G, v, \bs o]}$
on $\mathcal G_{**}$ are mutually absolutely continuous. 
In this case, let $\Delta$ be a version of the Radon-Nikodym derivative
\[\Delta(o,v):=\Delta[G,o,v]:=\frac {d\mathcal P^{(r)}}{d\mathcal P^{(l)}}[G,o,v]\]
and call it the \textbf{Radon-Nikodym cocycle} of $[\bs G, \bs o]$. 
\end{definition}

The Radon-Nikodym cocycle almost surely satisfies 
$\Delta[\bs G, v,v]=1$ for all $v\in V(\bs G)$, and 
$\Delta[\bs G, v, w]\Delta[\bs G, w, z] = \Delta[\bs G, v, z]$
for all triples of vertices $v,w,z\in V(\bs G)$.

Moreover, the network is unimodular if and only if
$\Delta[\bs G, \cdot, \cdot]\equiv 1$ almost surely.
Quasi-invariance heuristically means that all vertices are likely
(but not necessarily equally likely) to be the root. 
In this case, a version of the mass transport principle holds
similar to~\eqref{eq:mtp-sigma-inv} when replacing $m^{l(v,\bs o)}$ by $\Delta(\bs T,v,\bs o)$.
Therefore, Proposition~\ref{prop:mtp-sigma-inv} can be restated as follows:
\begin{corollary}
\label{cor:quasi-inv}
Any offspring-invariant \eft{} is quasi-invariant
with Radon-Nikodym cocycle $\Delta[T,o,v]=m^{l(o,v)}$.
\end{corollary}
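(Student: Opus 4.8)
The plan is to obtain the corollary as a direct reformulation of Proposition~\ref{prop:mtp-sigma-inv}, the only work being to line up the offspring-invariant mass transport identity with the definitions of quasi-invariance and of the Radon--Nikodym cocycle. First I would fix an offspring-invariant random \eft{} $[\bs T,\bs o]$; by Lemma~\ref{lemma:m^n} it is eternal almost surely, and $m:=\omid{d_1(\bs o)}\in(0,\infty)$, the positivity and finiteness being forced by the requirement that $\sigma$ be defined on the distribution of $[\bs T,\bs o]$. Set $\psi:\mathcal T_{**}\to(0,\infty)$ by $\psi[T,x,y]:=m^{l_T(x,y)}$; this is well defined and measurable since $l$ is isomorphism-invariant and is determined by the ancestral paths of $x$ and $y$.

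Next I would apply~\eqref{eq:mtp-sigma-inv} with $g[T,o,v]:=\identity{A}([T,o,v])$ for an arbitrary measurable $A\subseteq\mathcal T_{**}$, and rewrite the two sides using the Campbell-type measures of Definition~\ref{def:delta}. The left side of~\eqref{eq:mtp-sigma-inv} is then exactly $\mathcal P^{(r)}(A)=\omid{\sum_v\identity{A}[\bs T,\bs o,v]}$, while the right side $\omid{\sum_v m^{l(v,\bs o)}\identity{A}[\bs T,v,\bs o]}$ equals $\int_{\mathcal T_{**}}\psi\cdot\identity{A}\,d\mathcal P^{(l)}=\int_A m^{l(x,y)}\,d\mathcal P^{(l)}$, once one observes that in each summand the relevant doubly-rooted tree is $[\bs T,v,\bs o]$ and that $m^{l(v,\bs o)}=\psi[\bs T,v,\bs o]$ depends measurably on it. Hence for every measurable $A\subseteq\mathcal T_{**}$,
\[
\mathcal P^{(r)}(A)=\int_A m^{l(x,y)}\,d\mathcal P^{(l)}([T,x,y]).
\]

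From this identity the corollary follows at once. Since the integrand is a nonnegative measurable function, $\mathcal P^{(r)}\ll\mathcal P^{(l)}$ and $[T,x,y]\mapsto m^{l(x,y)}$ is \emph{a} version of $d\mathcal P^{(r)}/d\mathcal P^{(l)}$; and since $m>0$ makes the integrand strictly positive everywhere, $\mathcal P^{(l)}(A)=0$ whenever $\mathcal P^{(r)}(A)=0$, so the two measures are mutually absolutely continuous and $[\bs T,\bs o]$ is quasi-invariant. Evaluating the derivative at the point $[T,o,v]$ gives $\Delta[T,o,v]=m^{l(o,v)}$, as claimed; the cocycle relations $\Delta[T,v,v]=1$ and $\Delta[T,v,w]\Delta[T,w,z]=\Delta[T,v,z]$ are then visibly consistent with $l(v,v)=0$ and the additivity $l(v,w)+l(w,z)=l(v,z)$.

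There is essentially no hard step here; the only points that need care are (a) keeping straight which argument of $g$ is the ``first'' and which the ``second'' root, so that the factor $m^{l(v,\bs o)}$ gets attached to the doubly-rooted tree $[\bs T,v,\bs o]$ appearing in $\mathcal P^{(l)}$, and (b) recording that $m\in(0,\infty)$, which is exactly what upgrades the one-sided absolute continuity delivered by~\eqref{eq:mtp-sigma-inv} to the mutual absolute continuity demanded by Definition~\ref{def:delta}. In particular no $\sigma$-finiteness discussion is needed, since Definition~\ref{def:delta} only asks for a version of the Radon--Nikodym derivative and the displayed identity exhibits one explicitly.
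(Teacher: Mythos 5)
Your proposal is correct and follows exactly the route the paper intends: the paper states this corollary as a direct restatement of Proposition~\ref{prop:mtp-sigma-inv} in the language of Definition~\ref{def:delta}, which is precisely what you carry out (with the right bookkeeping of which root is first, so that the factor $m^{l(v,\bs o)}$ becomes $m^{l(x,y)}$ evaluated at the doubly-rooted tree $[\bs T,v,\bs o]$, and with the observation that strict positivity of the density yields mutual absolute continuity). Nothing further is needed.
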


The following is analogous to Lemma~\ref{lemma:happensAtRoot}
for offspring-invariant \eft{}s. Indeed, it holds for all quasi-invariant
random networks with the same proof.
\begin{lemma}
\label{lemma:sigmaHappensAtRoot}
Let $[\bs T, \bs o]$ be an offspring-invariant \eft{} and $S$ a covariant subset 
as in Definition~\ref{def:covarsubset}. Then $\mathbb P[S_{\bs T}\neq \emptyset]>0$
if and only if $\mathbb P[\bs o \in S_{\bs T}]>0$.
\end{lemma}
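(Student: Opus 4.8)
The plan is to mirror the proof of Lemma~\ref{lemma:happensAtRoot}, replacing the mass transport principle~\eqref{eq:unimodular} with the offspring-invariant version~\eqref{eq:mtp-sigma-inv} (equivalently, the quasi-invariance from Corollary~\ref{cor:quasi-inv}). First I would set $g[T,o,s]:=\identity{\{s\in S_T\}}$, which is well-defined and measurable by the definition of a covariant subset. The converse direction ($\mathbb P[\bs o\in S_{\bs T}]>0 \Rightarrow \mathbb P[S_{\bs T}\neq\emptyset]>0$) is trivial, so the content is the forward direction.

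Assuming $\mathbb P[S_{\bs T}\neq\emptyset]>0$, I would argue there is some deterministic $i\in\mathbb Z$ and a positive-probability event on which $S_{\bs T}$ contains a vertex $v$ with $l_{\bs T}(\bs o,v)=i$ (since the generations exhaust $V(\bs T)$, some generation relative to $\bs o$ must meet $S_{\bs T}$ with positive probability). Apply~\eqref{eq:mtp-sigma-inv} to $g$: the left side $\omid{\sum_{v\in V(\bs T)}\identity{\{v\in S_{\bs T}\}}}=\omid{\card{S_{\bs T}}}$ is positive (in fact may be $+\infty$, which causes no difficulty since the identity allows infinite values), while the right side equals $\omid{\sum_{v\in V(\bs T)} m^{l(v,\bs o)}\identity{\{\bs o\in S_{\bs T}\}}}=\omid{\identity{\{\bs o\in S_{\bs T}\}}\sum_{v\in V(\bs T)} m^{l(v,\bs o)}}$. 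For this to be positive, one needs $\mathbb P[\bs o\in S_{\bs T}]>0$, because the inner sum $\sum_{v} m^{l(v,\bs o)}$ is a strictly positive (possibly infinite) quantity whenever $m>0$, which holds automatically since $m=\omid{d_1(\bs o)}>0$ for a proper random \eft{} (an offspring-invariant \eft{} is proper by Definition~\ref{def:sigma-op} together with Lemma~\ref{lemma:m^n}).

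Alternatively, and perhaps more cleanly, I would invoke quasi-invariance directly: by Corollary~\ref{cor:quasi-inv}, the measures $\mathcal P^{(r)}$ and $\mathcal P^{(l)}$ on $\mathcal T_{**}$ are mutually absolutely continuous. The event $\{o\in S_T\}\subseteq \mathcal G_*$ pulls back to the event $B:=\{[T,o,v]: o\in S_T\}\subseteq\mathcal T_{**}$, and $\mathcal P^{(l)}(B)=\omid{\sum_v\identity{\{\bs o\in S_{\bs T}\}}}=\omid{\identity{\{\bs o\in S_{\bs T}\}}\card{V(\bs T)}}$, which is positive iff $\mathbb P[\bs o\in S_{\bs T}]>0$. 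Meanwhile $\mathcal P^{(r)}(\{[T,o,v]: v\in S_T\})=\omid{\card{S_{\bs T}}}>0$ by hypothesis, and since $\{o\in S_T\}$ and $\{v\in S_T\}$ correspond under swapping the two roots — and swapping roots sends $\mathcal P^{(r)}$ to $\mathcal P^{(l)}$ — mutual absolute continuity forces $\mathbb P[\bs o\in S_{\bs T}]>0$.

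I do not expect a serious obstacle here: the lemma is a routine adaptation, and the text itself signals as much (``with the same proof''). The only point requiring a little care is the handling of infinite expectations — one should phrase the argument so that $\omid{\card{S_{\bs T}}}=+\infty$ is harmless, exactly as in the parenthetical remark in the proof of Lemma~\ref{lemma:happensAtRoot} — and the observation that $m>0$, so that the cocycle $m^{l(o,v)}$ is genuinely positive and the transport of positive mass cannot vanish. I would write it in three or four lines, following the template of Lemma~\ref{lemma:happensAtRoot} verbatim but citing Proposition~\ref{prop:mtp-sigma-inv} (or Corollary~\ref{cor:quasi-inv}) in place of the mass transport principle.
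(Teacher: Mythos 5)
Your proposal is correct and follows essentially the same route as the paper: the paper's proof applies \eqref{eq:mtp-sigma-inv} to $g[T,o,v]:=\identity{\{v\in S_T\}}$ to obtain $\omid{\card{S_{\bs T}}}=\omid{\identity{\{\bs o\in S_{\bs T}\}}\sum_{v}m^{l(v,\bs o)}}$ and concludes exactly as you do, noting the inner sum is strictly positive. Your preliminary step about locating a deterministic generation $i$ meeting $S_{\bs T}$ is unnecessary (the identity handles infinite values directly), but it does no harm.
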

\begin{proof}
Let $g[T,o,v]:=\identity{\{v\in S_T \}}$. By \eqref{eq:mtp-sigma-inv}, one gets
\begin{eqnarray*}
		\omid{\card{S_{\bs T}}} & =  &\omid{\sum_{v\in V(\bs{T})} g[\bs T, \bs o, v]}
		\\ & = & \omid{\sum_{v\in V(\bs{T})} m^{l(v,\bs o)} g[{\bs T}, v, \bs o]} =
		\omid{\identity{\{\bs o\in S_{\bs T}\}} \sum_{v\in V(\bs T)} {m^{l(v,\bs o)}}}.
\end{eqnarray*}
The LHS is nonzero if and only if $S_{\bs T}\neq \emptyset$ with positive probability.
On the other hand, the RHS is nonzero if and only $\myprob{\bs o\in S_{\bs T}}>0$.
\end{proof}

\begin{remark}
For an analogous version of Corollary~\ref{cor:infiniteSubset} for offspring-invariant \eft{}s,
one should assume $\sum_{v\in V(\bs T)}{m^{-l(\bs o, v)}}=\infty$ a.s. instead of infiniteness of $\bs T$.
The proof is similar by using~\eqref{eq:mtp-sigma-inv}.
This condition always holds when $m\geq 1$, but may fail when $m<1$.
\end{remark}

Lemma~\ref{lemma:sigmaClassification} and Proposition~\ref{prop:sigmaClassification}
below provide a classification of offspring-invariant \eft{s}
which is analogous to Theorem~\ref{thm:classification}.

	\begin{lemma}
		\label{lemma:sigmaClassification}
		Let $[\bs T, \bs o]$ be an offspring-invariant random \eft{} and $m:=\omid{d_1(\bs o)}$. 
		Then, the generation $L(\bs o)$ of the root is infinite a.s. if and only if
		\begin{equation}
		\label{eq:prop:sigmaClass}
		\lim_{n\rightarrow\infty} \frac{\myprob{d_n(\bs o)>0}}{m^n}=0.
		\end{equation}
		
	\end{lemma}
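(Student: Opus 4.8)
The plan is to realize the generation $L(\bs o)$ as an increasing union of descendant sets and then to test the offspring-invariant symmetry against the single test function $x\mapsto 1/x$, which makes both implications fall out of one monotone convergence argument.

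First I would record the elementary identity $L(\bs o)=\bigcup_{n\ge 0} D_n(F^n(\bs o))$. Indeed, a vertex $v$ lies in the same generation as $\bs o$ (i.e. $v\sif\bs o$) exactly when $F^n(v)=F^n(\bs o)$ for some $n$, and the sets $D_n(F^n(\bs o))=\{v:F^n(v)=F^n(\bs o)\}$ increase in $n$ since $F^n(v)=F^n(\bs o)$ forces $F^{n+1}(v)=F^{n+1}(\bs o)$. Writing $X_n:=d_n(F^n(\bs o))$ and $Y_n:=d_n(\bs o)$, this exhibits a nondecreasing sequence $X_n\uparrow \card{L(\bs o)}$ with $X_n\ge 1$ (because $\bs o\in D_n(F^n(\bs o))$). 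By Lemma~\ref{lemma:m^n} one has $\omid{Y_n}=m^n\in(0,\infty)$, so $Y_n<\infty$ a.s.; moreover Lemma~\ref{lemma:sigmaHappensAtRoot}, applied to the covariant subset $\{v:d_n(v)=\infty\}$, shows that a.s. $d_n(v)<\infty$ for every vertex, whence $X_n<\infty$ a.s.

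The key step is to identify the law of $X_n$. Since $[\bs T,\bs o]$ is offspring-invariant, Lemma~\ref{lemma:sigma-semigroup} gives $\mathcal P_n=\sigma_n\mathcal P=\sigma^{(n)}\mathcal P=\mathcal P$. Hence for every bounded measurable $\phi$, using the definition~\eqref{eq:sigma_n} of $\mathcal E_n$,
\[
\omid{\phi(X_n)}=\mathcal E_n\left[\phi(d_n(F^n(\bs o)))\right]=\frac{1}{\omid{Y_n}}\,\omid{\sum_{v\in D_n(\bs o)}\phi(d_n(F^n(v)))}=\frac{\omid{Y_n\,\phi(Y_n)}}{m^n},
\]
where the last equality uses that $F^n(v)=\bs o$ for $v\in D_n(\bs o)$, so $d_n(F^n(v))=Y_n$ and the inner sum collapses to $Y_n\,\phi(Y_n)$. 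In other words, $X_n$ is distributed as the size-biased version of $Y_n$. Taking $\phi(x)=1/x$ (legitimate since $X_n\ge 1$ a.s.) gives the clean formula
\[
\omid{\frac{1}{X_n}}=\frac{\omid{\indic{Y_n\ge 1}}}{m^n}=\frac{\myprob{d_n(\bs o)>0}}{m^n}.
\]

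Finally I would pass to the limit. Because $X_n$ is nondecreasing with limit $\card{L(\bs o)}\in\{1,2,\dots\}\cup\{\infty\}$, the sequence $1/X_n$ decreases to $1/\card{L(\bs o)}$ (convention $1/\infty:=0$) and is bounded by $1$, so dominated convergence yields
\[
\lim_{n\to\infty}\frac{\myprob{d_n(\bs o)>0}}{m^n}=\lim_{n\to\infty}\omid{\frac{1}{X_n}}=\omid{\frac{1}{\card{L(\bs o)}}};
\]
in particular the limit on the left always exists, the sequence being nonincreasing. The right-hand side is the expectation of a nonnegative quantity, hence it vanishes if and only if $1/\card{L(\bs o)}=0$ almost surely, i.e. if and only if $L(\bs o)$ is infinite a.s. This is precisely the asserted equivalence~\eqref{eq:prop:sigmaClass}. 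The only delicate point—and the real obstacle—is justifying the size-biasing identity together with the a.s. finiteness of $X_n$; once that is in place, the decisive trick is that testing offspring-invariance against $1/x$ turns the survival ratio $\myprob{d_n(\bs o)>0}/m^n$ into $\omid{1/X_n}$, so that the monotonicity of $X_n$ delivers both directions at once.
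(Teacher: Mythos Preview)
Your proof is correct and follows essentially the same route as the paper: both identify $L(\bs o)$ as the increasing union of the sets $L_n(\bs o)=D_n(F^n(\bs o))$, use offspring-invariance (in your phrasing, the size-biasing relation between $X_n$ and $Y_n$) to obtain $\omid{1/\card{L_n(\bs o)}}=\myprob{d_n(\bs o)>0}/m^n$, and then pass to the limit by monotone convergence to reach $\omid{1/\card{L(\bs o)}}=\lim_n \myprob{d_n(\bs o)>0}/m^n$. Your added justification of the a.s.\ finiteness of $X_n$ is correct but not strictly needed, since $1/X_n\in[0,1]$ regardless.
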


	\begin{proof}
	One has $L(\bs o)=\bigcup_{n=1}^{\infty} L_n(\bs o)$, where 
	$L_n(\bs o):=F^{-n}(F^n(\bs o))$.
	By offspring-invariance, ~\eqref{eq:sigma_n} and~\eqref{eq:m^n}, one gets 
$$\omid{\frac 1{\card{L_n(\bs o)}}} = \frac 1{m^n} \omid{\sum_{v\in D_n(\bs o)}\frac 1{\card{L_n(v)}}}
			=\frac 1{m^n}\omid{\identity{\{d_n(\bs o)>0\}}}
			= \frac 1{m^n}\myprob{d_n(\bs o)>0}.$$
		By monotone convergence, one obtains
		\[
		\omid{\frac 1{\card{L(\bs o)}}} = \lim_{n\rightarrow\infty}\frac{\myprob{d_n(\bs o)>0}}{m^n}.
		\]
		On the other hand, $L(\bs o)$ is infinite a.s. if and only if the LHS is zero. This proves the claim.
	\end{proof}

\begin{proposition}[Cardinality Classification of Offspring-Invariant \eft{}s]
	\label{prop:sigmaClassification}
	Let $[\bs T, \bs o]$ be an offspring-invariant random \eft{}
        and $m:=\omid{d_1(\bs o)}$. Then almost surely, either all
        generations of $\bs T$ are finite or all are infinite. 
        Moreover, almost surely,
	\begin{enumerate}[(i)]
		\item \label{prop:sigmaClassification:1}
		If $m>1$, then all generations of $\bs T$ are infinite. 
                Moreover, $\bs T$ has either one or infinitely many ends. 
		\item \label{prop:sigmaClassification:2}
		If $m=1$, then the following are equivalent.
		\begin{itemize}
			\item All generations of $\bs T$ are finite 
			(resp. infinite).
			\item There is a (resp. no) vertex $v$ such that $D(v)$ is infinite.
			\item $\bs T$ has two ends (resp. one end).
		\end{itemize}
		\item \label{prop:sigmaClassification:3}
		If $m<1$, then $D(v)$ is finite for each $v\in V(\bs T)$
                and $\bs T$ has one end. Moreover, each generation is 
                infinite a.s. if and only if~\eqref{eq:prop:sigmaClass} holds.
	\end{enumerate}
\end{proposition}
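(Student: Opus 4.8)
The plan is to show first that, almost surely, either all generations of $\bs T$ are finite or all are infinite, and then to treat the three regimes $m<1$, $m=1$ and $m>1$ separately, reducing the critical case $m=1$ to the classification of unimodular \eft{}s already in hand. For the dichotomy I would use two facts. First, almost surely every vertex of $\bs T$ has finite degree: the set $\{v:d_1(v)=\infty\}$ is a covariant subset, and the root lies in it with probability $\myprob{d_1(\bs o)=\infty}=0$ (since $\omid{d_1(\bs o)}=m<\infty$), so it is a.s.\ empty by Lemma~\ref{lemma:sigmaHappensAtRoot}. Second, I would prove the offspring-invariant analogue of Lemma~\ref{lemma:finiteSelection} \emph{restricted to the foil partition}: for any covariant subset $S$, almost surely no infinite foil $E$ satisfies $0<\card{(E\cap S)}<\infty$. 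Its proof copies that of Lemma~\ref{lemma:finiteSelection} verbatim, the only new point being that the transport function $g[T,v,w]:=\frac{1}{\card{(L(v)\cap S)}}\identity{\{w\in L(v)\cap S,\ 0<\card{(L(v)\cap S)}<\infty\}}$ is supported on pairs of vertices lying in a common foil, so the plain mass transport~\eqref{eq:mtp-sigma-inv-2} along the generation of the root applies and yields $\omid{g^-(\bs o)}=\infty$ against $g^+\le 1$. With these in hand: if a foil $L$ is infinite then $F(L)$ is infinite (otherwise some vertex of $F(L)$ would have infinitely many children), so an infinite generation forces all older generations to be infinite; and applying the foil version of Lemma~\ref{lemma:finiteSelection} to $S=\{v:d_1(v)\ge 1\}$ shows that an infinite foil which is not the youngest has an infinite child-foil. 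Hence the set of $n$ for which the $n$-th generation of $\bs T$ is infinite is, if non-empty, all of $\mathbb Z$, which is the dichotomy.

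If $m=1$, then by Proposition~\ref{prop:sigma_1 unimodular-new} the random \ft{} $[\bs T,\bs o]$ is a.s.\ a unimodular \eft{}, so Proposition~\ref{prop:classification-EFT} applies: its class $\mathcal I/\mathcal F$ is exactly the event ``all generations finite'', the bi-infinite $F$-path appearing in that class is exactly $\{v:\card{D(v)}=\infty\}$, and that class has two ends, while class $\mathcal I/\mathcal I$ is the complementary ``all generations infinite'' case with $\card{D(v)}<\infty$ for every $v$ and one end; rewriting this yields the three equivalences of part~\eqref{prop:sigmaClassification:2}. If $m<1$, then Lemma~\ref{lemma:m^n} gives $\omid{d(\bs o)}=\frac{1}{1-m}<\infty$, so $D(\bs o)$ is finite a.s.\ and hence, by Lemma~\ref{lemma:sigmaHappensAtRoot} applied to $S=\{v:\card{D(v)}=\infty\}$, every $D(v)$ is finite a.s.; then $\bs T$ has no infinite descending $F$-path, so a ray issued from $\bs o$ can never turn towards its descendants (once it does, it must descend forever), whence $(\bs o,F(\bs o),F^2(\bs o),\dots)$ is the unique ray from $\bs o$ and $\bs T$ has one end. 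The remaining assertion of part~\eqref{prop:sigmaClassification:3} is Lemma~\ref{lemma:sigmaClassification} combined with the dichotomy (which upgrades ``$L(\bs o)$ is infinite a.s.'' to ``all generations are infinite a.s.'').

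If $m>1$, then the hypothesis~\eqref{eq:prop:sigmaClass} of Lemma~\ref{lemma:sigmaClassification} holds automatically, because $\myprob{d_n(\bs o)>0}\le 1$ while $m^n\to\infty$; so the generation of the root, hence every generation, is infinite a.s. It then remains to exclude every finite number of ends greater than one (the tree being eternal, it has at least one end). To rule out exactly two ends I would use that on that event the set $S:=\{v:\card{D(v)}=\infty\}$ is, deterministically, a single bi-infinite $F$-path (each such $v$ has, by finiteness of the degree, exactly one child with infinite descendant-set, and one checks that $S$ is connected with no branching vertex), so that $S'':=S$ on this event and $S'':=\emptyset$ otherwise is a covariant subset and $g_T(v):=F(v)$ for $v\in S''_{T}$, $g_T(v):=v$ otherwise, is a vertex-shift. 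Feeding $[T,o,v]\mapsto\identity{\{g_T(o)=v\}}$ into the offspring-invariant mass transport~\eqref{eq:mtp-sigma-inv} and observing that $g_T^{-1}(\bs o)$ is the single $S''$-child of $\bs o$, at $l$-distance $-1$, when $\bs o\in S''$ and is $\{\bs o\}$ otherwise, one obtains $1=(1-q)+m^{-1}q$ with $q:=\myprob{\bs o\in S''}$, i.e.\ $q(m-1)=0$, hence $q=0$ and, by Lemma~\ref{lemma:sigmaHappensAtRoot}, $\myprob{S''\ne\emptyset}=0$. To rule out exactly $k$ ends for $3\le k<\infty$ I would use that on that event the set $S^{*}$ of vertices having at least two children with infinite descendant-set is the finite, non-empty set of branching vertices of the finitely-ended subtree $\{v:\card{D(v)}=\infty\}$; but since $m\ge 1$ one has $\sum_{v}m^{-l(\bs o,v)}=\infty$ a.s., so by the offspring-invariant analogue of Corollary~\ref{cor:infiniteSubset} (the remark following Lemma~\ref{lemma:sigmaHappensAtRoot}) every covariant subset is a.s.\ empty or infinite, contradicting $0<\card{S^{*}}<\infty$ on a set of positive probability. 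Therefore a.s.\ the number of ends is one or infinite, which completes part~\eqref{prop:sigmaClassification:1}.

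The step I expect to be the real obstacle is the ``ends'' part of the case $m>1$: making the exclusion of exactly two ends fully rigorous --- namely the measurability of the number of ends as a function on $\mathcal T_*$, and the purely deterministic claim that on that event $\{v:\card{D(v)}=\infty\}$ is a single bi-infinite $F$-path, which is exactly what makes the vertex-shift $g$ above well defined --- together with the analogous branching-vertex description underpinning the case $3\le k<\infty$. Everything else is routine bookkeeping with the (offspring-invariant) mass transport principles already established.
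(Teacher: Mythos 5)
Your proof is correct, and parts (ii) and (iii) coincide with the paper's argument (unimodularity via Proposition~\ref{prop:sigma_1 unimodular-new} plus Proposition~\ref{prop:classification-EFT} for $m=1$; finiteness of $\omid{d(\bs o)}$ plus Lemma~\ref{lemma:sigmaHappensAtRoot} for $m<1$). The two remaining components, however, take genuinely different routes. For the finite/infinite dichotomy, the paper notes that finite degrees force the existence of a \emph{first} (oldest) finite generation whenever both kinds coexist, makes that generation a covariant subset $S$, and transports unit mass from each vertex of $S$ (of cardinality $<N$ on an event of positive probability) to every vertex of the infinite generation just above it, getting $\omid{h^+(\bs o)}=\infty$ against $h^-\le N$ in~\eqref{eq:mtp-sigma-inv}; you instead isolate a foil-restricted no-infinite/finite-inclusion lemma proved via~\eqref{eq:mtp-sigma-inv-2} and apply it to $S=\{v:d_1(v)\ge 1\}$ to propagate infiniteness to younger generations. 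Both are sound; the paper's is shorter, while yours extracts a reusable lemma which is in fact exactly the tool the paper then uses for the ends statement: in case $m>1$ the paper applies that same generation-restricted argument to $S=\{v:d(v)=\infty\}$ to conclude $\card{S\cap L(\bs o)}\in\{0,\infty\}$ in one stroke, whereas you exclude each finite $k\ge 2$ separately --- $k=2$ by a Mecke-type identity $1=(1-q)+q/m$ on the unique bi-infinite $F$-path (an offspring-invariant analogue of Proposition~\ref{prop:biinfinitePath}), and $3\le k<\infty$ via the finite nonempty set of branch vertices and the empty-or-infinite dichotomy from the remark following Lemma~\ref{lemma:sigmaHappensAtRoot}. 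Your route is longer and leans on the deterministic structure of finitely-ended subtrees, but the points you flag as obstacles (measurability of the number of ends, the single-path structure of $\{v:\card{D(v)}=\infty\}$ on the two-ends event) are true and of the same nature as constructions the paper itself performs without comment, so they do not constitute a gap.
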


The case $m=1$ above is just
Proposition~\ref{prop:classification-EFT}, which is restated here.
        	
	Before presenting the proof, 
        here are examples for the different cases in Proposition~\ref{prop:sigmaClassification}.
	In part~\eqref{prop:sigmaClassification:1}, both cases of one end or infinitely many ends are possible.
	The biased Canopy Tree for $d_1>d$ (Example~\ref{ex:canopy-biased} below)
        is an example with one end.
	Super-critical Eternal Galton-Watson Trees introduced in
        Subsection~\ref{sec:EGW}
	are examples of having infinitely many ends (see Proposition~\ref{prop:EGW-classification}). 
        Moreover, offspring-invariant \eft{s} can have countably many ends,
        in contrast with unimodular trees (Proposition~\ref{prop:ends}).
	This is illustrated by Example~\ref{ex:comb} below.
	In part~\eqref{prop:sigmaClassification:3}, the generations (i.e. foils)
        of $\bs T$ can be all finite or all infinite.
	The biased Canopy Tree for $d_1<d$ (Example~\ref{ex:canopy-biased} below)
        is an example of the infinite case.
	Subcritical Eternal Galton-Watson Trees introduced in 
        Subsection~\ref{sec:EGW} provide examples of the finite case
        (see Proposition~\ref{prop:EGW-classification}).

\begin{proof}[Proof of Proposition~\ref{prop:sigmaClassification}]
	Note that all networks are assumed to have all their vertices with finite degrees.
        This implies that if the $k$-th generation is infinite,
	then so are the $k'$-th generations for all $k'<k$. Therefore, if there are both finite
        and infinite generations in an \eft{}, then there is a \textit{first} finite generation.
	
\iffalse
	\ali{WRONG PROOF:} Now, let $K\in\mathbb Z\cup\{\pm \infty\}$ be the maximum of $k\in \mathbb Z$
	such that the $k$-th generation is infinite.
	For $i>0$, by~\eqref{eq:sigma_n} and the assumption of  offspring-invariance, one obtains
	\begin{eqnarray*}
		\myprob{K\geq -i} &=& 
		\myprob{\card{L(F^i(\bs o))}=\infty}\\
		&=&\frac 1{\omid{d_i(\bs o)}}\omid{\sum_{v\in D_i(\bs o)}\identity{\{\card{L(F^i(v))}=\infty\}}}\\
		&\ali{=}& \mar{\ali{This argument is wrong.}} \ali{\myprob{\card{L(\bs o)}=\infty}}\\
		&=& \myprob{K\geq 0}.
	\end{eqnarray*}
	By the inclusion of events, one gets that almost surely,
	$K\geq -i$ if and only if $K\geq 0$. The same holds for $i<0$ similarly.
	Therefore, $K\in \{\pm \infty\}$ a.s. and the first claim follows.
\fi
	
Assume that with positive probability, there are both finite and infinite generations.
In this case, let $S_{\bs T}$ be the first finite generation (let it be empty otherwise).
$S$ is a covariant subset (Definition~\ref{def:covarsubset}). 
Therefore, Lemma~\ref{lemma:sigmaHappensAtRoot} implies that $\myprob{\bs o\in S_{\bs T}}>0$.
Since $S_{\bs T}$ is finite, there is $N<\infty$ such that
	\[
		\myprob{\bs o\in S_{\bs T}, \card{S_{\bs T}}<N}>0.
	\]
	Let $S'$ be the last generation before $S_T$, which is infinite whenever $S_T\neq\emptyset$.
	Let $h[T,o,v]:=1$ if $o\in S_T, v\in S'_T$ and $\card{S_T<N}$ and let it be 0 otherwise.
        By the above inequality and infiniteness of $S'_{\bs T}$ (when nonempty), one gets
	$
		\omid{h_{\bs T}^+(\bs o)}=\infty.
	$
	On the other hand, $h_{\bs T}^-(\bs o)\leq N$ and thus,
        $\omid{h_{\bs T}^-(\bs o)}\leq N$. This contradicts~\eqref{eq:mtp-sigma-inv} 
        (note that here, only the vertices with ${l(\bs o, v)}=1$ matter in the RHS of~\eqref{eq:mtp-sigma-inv}).
        Therefore, almost surely, either all generations are finite or all are infinite.

	\eqref{prop:sigmaClassification:1}
		Equation~\eqref{eq:prop:sigmaClass} holds trivially.
                Therefore, by Lemma~\ref{lemma:sigmaClassification}, 
                $L(\bs o)$ is infinite a.s. So, the argument at the beginning of the proof 
                shows that all generations are infinite a.s.
                For the second claim, let 
		\[
		S_T:=\{v\in V(T): d(v)=\infty\}.
		\]
		Similarly to the proof of Corollary~\ref{cor:infiniteSubset},
                by using the mass transport principle along the generation of
                the root~\eqref{eq:mtp-sigma-inv-2}, one can show that
                $\card{S_{\bs T}\cap L(\bs o)}$ is in $\{0,\infty\}$  a.s.
                When it is zero, $\bs T$ has one end and when it is $\infty$,
                $\bs T$ has infinitely many ends. This proves the claim.

	\eqref{prop:sigmaClassification:2}
        Proposition~\ref{prop:sigma_1 unimodular-new} implies
        that $[\bs T, \bs o]$ is unimodular.
	Now, the claim follows by Proposition~\ref{prop:classification-EFT}.
	
	\eqref{prop:sigmaClassification:3} 
	The second claims is proved in Lemma~\ref{lemma:sigmaClassification}.
        For the first claim, Lemma~\ref{lemma:m^n} shows
        that $D(\bs o)$ is finite a.s. 	
        By Lemma~\ref{lemma:sigmaHappensAtRoot} for the covariant subset $S$ defined above,
        one gets that almost surely, $D(v)$ is finite for all vertices $v$.	
        This implies that $\bs T$ has one end a.s. and the claim is proved.
\end{proof}

\subsubsection{Sub-\eft{}s}
\label{sec:subEFT}
This subsection shows two construction methods regarding offspring-invariant \eft{}s. In particular, the construction by {covariant} sub-\eft{}s in Proposition~\ref{prop:subEFT}, which 
is analogous to that in Remark~\ref{remark-sub} in the unimodular case,
is used in the examples of Subsection~\ref{sec:sigmaExamples}.

\begin{proposition}
\label{prop:subEFT}
Let $[\bs T, \bs o]$ be an offspring-invariant \eft{} and $S$
be a covariant subset of the vertices. If the subgraph induced by 
$S_{\bs T}$ is a sub-\eft{} of $\bs T$ a.s.,
then $[S_{\bs T}, \bs o]$, conditioned on $\bs o\in S_{\bs T}$,
is an offspring-invariant \eft{} with the same mean number
of children as $[\bs T, \bs o]$.
\end{proposition}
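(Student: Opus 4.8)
The plan is to derive the whole statement from a single, well-chosen application of the offspring-invariant mass transport principle (Proposition~\ref{prop:mtp-sigma-inv}, equivalently Corollary~\ref{cor:quasi-inv}) to $[\bs T,\bs o]$. Write $\lambda:=\myprob{\bs o\in S_{\bs T}}$ and $m:=\omid{d_1(\bs T,\bs o)}\in(0,\infty)$ (finite and positive since $[\bs T,\bs o]$ is offspring-invariant). If $\lambda=0$ then $S_{\bs T}=\emptyset$ a.s. by Lemma~\ref{lemma:sigmaHappensAtRoot} and there is nothing to prove, so assume $\lambda>0$, set $\mathbb P_S:=\mathbb P[\,\cdot\mid\bs o\in S_{\bs T}]$ and let $\bs S:=S_{\bs T}$ carry the \eft{} structure induced from $\bs T$. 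The one delicate structural observation is the content of the hypothesis: for the induced subgraph on $S_{\bs T}$ to be an \eft{} (rather than merely a forest), one needs, on an a.s. event, $F_{\bs T}(v)\in S_{\bs T}$ for every $v\in S_{\bs T}$; consequently the parent vertex-shift of $\bs S$ is $F_{\bs T}$ restricted to $S_{\bs T}$, the $\bs S$-children of a vertex are exactly its $\bs T$-children lying in $S_{\bs T}$, and $l_{\bs S}=l_{\bs T}$ on $S_{\bs T}\times S_{\bs T}$. This is exactly what makes the Radon--Nikodym factor $m^{l_{\bs T}(\,\cdot\,,\bs o)}$ in the mass transport principle collapse to $m$ on the terms that matter.

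Concretely, for an arbitrary measurable $h:\mathcal T_*\to\mathbb R^{\geq0}$ I would apply the identity
\[
\omid{\sum_{v\in V(\bs T)}g[\bs T,\bs o,v]}=\omid{\sum_{v\in V(\bs T)}m^{l_{\bs T}(v,\bs o)}\,g[\bs T,v,\bs o]}
\]
of Corollary~\ref{cor:quasi-inv} to the transport
\[
g[T,o,v]:=\identity{\{o\in S_T\}}\,\identity{\{v\in S_T\}}\,\identity{\{F_T(v)=o\}}\;h[S_T,v],
\]
which is isomorphism-covariant and measurable because $S$ is a covariant subset and $[T,v]\mapsto[S_T,v]$ is a measurable map into $\mathcal T_*$ on $\{v\in S_T\}$. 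On the a.s. event above, the left-hand side becomes $\omid{\identity{\{\bs o\in S_{\bs T}\}}\sum_{v\in D_1(\bs S,\bs o)}h[\bs S,v]}=\lambda\,\mathbb E_S\!\left[\sum_{v\in D_1(\bs S,\bs o)}h(\bs S,v)\right]$, since for $v\in S_{\bs T}$ the condition $F_{\bs T}(v)=\bs o$ says precisely that $v$ is an $\bs S$-child of $\bs o$. The right-hand side has at most one nonzero summand, namely $v=F_{\bs T}(\bs o)$ when $\bs o\in S_{\bs T}$ (and then $F_{\bs T}(\bs o)\in S_{\bs T}$ a.s.); since $l_{\bs T}(F_{\bs T}(\bs o),\bs o)=1$, it equals $\omid{\identity{\{\bs o\in S_{\bs T}\}}\,m\,h[\bs S,\bs o]}=\lambda m\,\mathbb E_S[h(\bs S,\bs o)]$. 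Dividing by $\lambda>0$ gives, for every measurable $h\geq0$,
\[
\mathbb E_S\!\left[\sum_{v\in D_1(\bs S,\bs o)}h(\bs S,v)\right]=m\,\mathbb E_S\big[h(\bs S,\bs o)\big].
\]

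To finish, take $h\equiv1$ in the last identity: $\mathbb E_S[d_1(\bs S,\bs o)]=m\in(0,\infty)$, which is the asserted equality of mean numbers of children and also ensures $\sigma$ is defined on the law of $[\bs S,\bs o]$ under $\mathbb P_S$. Dividing that identity by $\mathbb E_S[d_1(\bs S,\bs o)]$ and specializing to $h=\identity{A}$ yields exactly the relation defining invariance under $\sigma$ (Definition~\ref{def:sigma-op}); hence $[\bs S,\bs o]$ under $\mathbb P_S$ is offspring-invariant, and it is eternal by construction (or by Lemma~\ref{lemma:m^n}). I expect the main obstacle to lie not in any computation but in the two structural verifications feeding it: (i) that "sub-\eft{}" indeed forces $F_{\bs T}(S_{\bs T})\subseteq S_{\bs T}$ --- exactly the point where the hypothesis is stronger than "$S_{\bs T}$ is a connected covariant subset", and what prevents a nontrivial gap factor $m^{k}$ with $k>1$ from surviving --- and (ii) the measurability and covariance of $g$, in particular of $[T,v]\mapsto[S_T,v]$, which I would justify from Definition~\ref{def:covarsubset} and the discussion of covariant subnetworks around Definition~\ref{def:covarnet}.
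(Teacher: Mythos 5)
Your proof is correct and essentially coincides with the paper's: both reduce the claim to the identity $\omidCond{\sum_{v\in D_1(\bs o)\cap S_{\bs T}}h[S_{\bs T},v]}{\bs o\in S_{\bs T}}=m\,\omidCond{h[S_{\bs T},\bs o]}{\bs o\in S_{\bs T}}$, using the same structural observation that the sub-\eft{} hypothesis forces $F(v)\in S_{\bs T}$ for $v\in S_{\bs T}$, and then conclude by taking $h\equiv 1$ and normalizing. The only cosmetic difference is that you route through the offspring-invariant mass transport principle (Proposition~\ref{prop:mtp-sigma-inv}) while the paper applies the defining relation $\omid{\sum_{v\in D_1(\bs o)}\hat g[\bs T,v]}=m\,\omid{\hat g[\bs T,\bs o]}$ directly to $\hat g[T,v]:=g[S_T,v]\identity{S_T}(v)$; for a transport supported on parent--child pairs these are the same statement.
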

Here, the notation $S_{\bs T}$ is used both for a 
subset of the vertices and for the sub-\eft{} induced by the subset. 

	\begin{proof}[Proof of Proposition~\ref{prop:subEFT}]
Let $g:\mathcal T_{*}\rightarrow\mathbb R^{\geq 0}$ be a measurable function.
Define 
		\[
		\hat g[T,v]:= {g[S_T,v]\identity{S_{T}}(v) =} g[S_T,v]\identity{S_{T}}(v)\identity{S_{T}}(F(v)),
		\]
where the last equation used that $v\in S_{\bs T}$ implies $F(v)\in S_{\bs T}$ almost surely.
It is easy to see that $\hat g$ is measurable.
One has
		\begin{eqnarray*}
			\omid{\identity{S_{\bs T}}(\bs o)\sum_{v\in D_1(\bs o)\cap S_{\bs T}}g[S_{\bs T},v]} &=& \omid{\sum_{v\in D_1(\bs o)}\hat g[\bs T, v]}\\
			&=& m\omid{\hat g[\bs T, \bs o]}
			= m\omid{g[S_{\bs T},\bs o]\identity{S_{\bs T}}(\bs o)}.
		\end{eqnarray*}
		
		Therefore,
		\[
		\omidCond{\sum_{v\in D_1(\bs o)\cap S_{\bs T}}g[S_{\bs T},v]}{\bs o \in S_{\bs T}} = m \omidCond{g[S_{\bs T},\bs o]}{\bs o\in S_{\bs T}}.
		\]
		By letting $g\equiv 1$, one gets $m=\omidCond{\card{(D_1(\bs o)\cap S_{\bs T})}}{\bs o\in S_{\bs T}}$.
                Now the claim is obtained by substituting this value of $m$ in the above equation and using~\eqref{eq:sigma_n}.
	\end{proof}

The next construction method is pruning.
\begin{definition}
	\label{def:pruning}
	Let $(T,o)$ be a rooted \ft{}. The \textbf{pruning} of $(T,o)$
	from generation $z\geq 0$ is the rooted Family Tree $K(T,o,z)$
	which is the restriction of $T$ to the set 
	$\{v\in V(T): {l(o, v)}\leq z\}$ rooted at $o$.
\end{definition}

It is easily seen that pruning induces a measurable map
$K:\mathcal T_*\times\mathbb Z^{\geq 0}\rightarrow \mathcal T_*$.

\begin{proposition}[Pruning] \label{prop:pruning}
	Let $[\bs T, \bs o]$ be an offspring-invariant random \eft{}.
	Assume $m:=\omid{d_1(\bs o)}>1$.
	{Let $Z$ be a random variable such that $Z+1$
        is geometric with parameter $1-\frac 1m$,}
	independent of $[\bs T, \bs o]$.
	Then, by pruning $[\bs T, \bs o]$ from generation $Z$,
	one gets a unimodular \eft{} of class $\mathcal I/\mathcal I$.
\end{proposition}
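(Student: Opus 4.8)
The plan is to show that the pruned rooted tree $[\bs T',\bs o]:=[K(\bs T,\bs o,Z),\bs o]$ is offspring-invariant and satisfies $\omid{d_1(\bs o)}=1$, invoke Proposition~\ref{prop:sigma_1 unimodular-new} to conclude it is a unimodular \eft{} a.s., and then pin down its class via Proposition~\ref{prop:classification-EFT}. First observe that, since $l_{\bs T}(\bs o,F(v))=l_{\bs T}(\bs o,v)-1$, the vertex set $\{v:l_{\bs T}(\bs o,v)\le Z\}$ of $\bs T'$ is closed under taking parents, so $\bs T'$ is an Eternal Family Tree a.s.; it is infinite because it contains the ancestral ray $(F^n(\bs o))_{n\ge 0}$, which is well defined and infinite as $\bs T$ is eternal (Lemma~\ref{lemma:m^n}). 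A child $v$ of $\bs o$ in $\bs T$ survives the pruning iff $l_{\bs T}(\bs o,v)=1\le Z$, hence $d_1^{\bs T'}(\bs o)=d_1^{\bs T}(\bs o)\identity{\{Z\ge1\}}$, and by independence of $Z$ from $[\bs T,\bs o]$ together with $\myprob{Z\ge1}=\tfrac1m$ one gets $\omid{d_1^{\bs T'}(\bs o)}=m\cdot\tfrac1m=1$; in particular $\sigma$ is defined on the law $\mathcal P'$ of $[\bs T',\bs o]$.

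The crux is offspring-invariance, $\sigma\mathcal P'=\mathcal P'$; since $\omid{d_1^{\bs T'}(\bs o)}=1$ this amounts to the identity $\omid{\sum_{v\in D_1^{\bs T'}(\bs o)}\phi[\bs T',v]}=\omid{\phi[\bs T',\bs o]}$ for all measurable $\phi\ge0$. The key structural fact is that if $v$ is a child of $\bs o$ in $\bs T$ then $l_{\bs T}(v,\cdot)=l_{\bs T}(\bs o,\cdot)-1$, so that as rooted trees $[K(\bs T,\bs o,z),v]=[K(\bs T,v,z-1),v]$ for every $z\ge1$: the pruned tree seen from a child is again a pruned tree, with the cutoff lowered by one. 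I would then condition on $Z=z$ (so that $v$ ranges over all of $D_1^{\bs T}(\bs o)$ when $z\ge1$), apply the offspring-invariance of $\mathcal P$ to the measurable function $[\bs T,v]\mapsto\phi(K([\bs T,v],z-1))$ — which produces a factor $m=\omid{d_1^{\bs T}(\bs o)}$ — and finally use the geometric identity $\myprob{Z=z}=\tfrac1m\myprob{Z=z-1}$ for $z\ge1$, so that the left-hand side collapses to $\sum_{w\ge0}\myprob{Z=w}\,\mathbb E_{\mathcal P}\!\left[\phi[K(\bs T,\bs o,w),\bs o]\right]=\omid{\phi[\bs T',\bs o]}$, the right-hand side. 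I expect this bookkeeping — obtaining the cancellation of $m$ against the geometric ratio, via the ``seen from a child'' identity — to be the main obstacle; the rest is routine.

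By Proposition~\ref{prop:sigma_1 unimodular-new}, $[\bs T',\bs o]$ is then a unimodular \eft{} a.s., and since it is infinite, Proposition~\ref{prop:classification-EFT} places it in class $\mathcal I/\mathcal I$ or $\mathcal I/\mathcal F$. To exclude $\mathcal I/\mathcal F$ I would check that no vertex of $\bs T'$ has infinitely many descendants in $\bs T'$: for $v\in V(\bs T')$ with $k:=l_{\bs T}(\bs o,v)$, its $\bs T'$-descendants are exactly $\bigcup_{0\le j\le(Z-k)^+}D_j^{\bs T}(v)$, a finite union (as $Z<\infty$ a.s.) of a.s.\ finite sets — finiteness of $d_j^{\bs T}(v)$ for all $v$ simultaneously following from Lemma~\ref{lemma:sigmaHappensAtRoot} applied to the covariant subset $\{u:d_j^{\bs T}(u)=\infty\}$ together with $\omid{d_j^{\bs T}(\bs o)}=m^j<\infty$ (Lemma~\ref{lemma:m^n}). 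Hence $\bs T'$ has no vertex with infinitely many descendants, so by Proposition~\ref{prop:classification-EFT} it cannot be of class $\mathcal I/\mathcal F$, and therefore it is of class $\mathcal I/\mathcal I$, which completes the argument.
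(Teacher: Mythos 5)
Your proposal is correct and follows essentially the same route as the paper's proof: compute $\omid{d_1(\bs T',\bs o)}=1$, establish offspring-invariance via the identity $[K(\bs T,\bs o,z),v]=[K(\bs T,v,z-1),v]$ for children $v$ of the root combined with the memorylessness/geometric-ratio property of $Z$, and conclude by Proposition~\ref{prop:sigma_1 unimodular-new}. The only (immaterial) difference is at the end, where the paper excludes class $\mathcal I/\mathcal F$ by noting that $\bs T'$ has a youngest foil, while you exclude it by noting that every vertex of $\bs T'$ has finitely many descendants; both are immediate from Proposition~\ref{prop:classification-EFT}.
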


\begin{proof}
	Denote $K(\bs T, \bs o, Z)$ by $[\bs T', \bs o]$, which is a random \eft{}. First, note that 
	\[
	\omid{d_1(\bs T',\bs o)}= \omid{d_1(\bs T, \bs o) \identity{\{Z>0\}}} = \omid{d_1(\bs T, \bs o)}\myprob{Z>0} = 1.
	\]
	The next step is to show that the distribution $\mathcal P'$ of $[\bs T', \bs o]$
	is offspring-invariant. For a vertex $v$ in $K(T,o,z)$, {let $(K(T,o,z),v)$ 
		be the tree obtained from $K(T,o,z)$ by considering $v$ as the root.}
	One has
	\begin{eqnarray*}
		\sigma\mathcal P'[A] &=& \omid{\sum_{v\in D_1(\bs T',\bs o)} \identity{A}[\bs T', v]}
		= \omid{\identity{\{Z>0\}} \sum_{v\in D_1(\bs T,\bs o)} \identity{A}[K(\bs T, o, Z),v]}\\
		&=& \omid{\identity{\{Z>0\}} \sum_{v\in D_1(\bs T,\bs o)} \identity{A}[K(\bs T, v, Z-1)]}\\
		&=& \frac 1m \omid{\sum_{v\in D_1(\bs T,\bs o)} \identity{A}(K(\bs T, v, Z))}
		= \myprob{K(\bs T, \bs o, Z)\in A},
	\end{eqnarray*}
	where the fourth equation used the fact that $Z$ is independent from $[\bs T, \bs o]$ and $Z-1$ conditioned on $Z>0$ has
	the same distribution as $Z$ and the last equation holds because of offspring-invariance of $[\bs T, \bs o]$.
	This shows that $\sigma\mathcal P'=\mathcal P'$. Now, unimodularity
	of $[\bs T', \bs o]$ follows by Proposition~\ref{prop:sigma_1 unimodular-new}.
	Since there is a youngest foil in $\bs T'$, Proposition~\ref{prop:classification-EFT} implies that
	$[\bs T', \bs o]$ is of class $\mathcal I/\mathcal I$.
\end{proof}

\subsubsection{Examples}
\label{sec:sigmaExamples}
Here are examples of offspring-invariant \eft{}s, some of which are obtained from the
results of the last subsection. More elaborate examples are provided
by the Eternal Galton-Watson Tree and its multi-type version which will
be introduced in Subsections~\ref{sec:EGW} and~\ref{sec:EMGW}.

\begin{example}[Canopy Tree]
	\label{ex:canopy-cutting}
        As already mentioned, the $d$-regular tree with one distinguished 
	end is offspring-invariant. For $d>2$, one can prune it according to
	Proposition~\ref{prop:pruning}. The resulting random \eft{}
	is just the Canopy Tree with offspring cardinality $d-1$.
\end{example}

\begin{example}[Biased Canopy Tree]
	\label{ex:canopy-biased}
	Consider the Canopy Tree in Example~\ref{ex:canopy} with offspring cardinality $d>1$, 
        and choose the root such that $\mathbb P[\bs o\in L_i]$ is proportional 
        to ${\tilde d}^{-i}$ for an arbitrary $\tilde d>1$.
	It is not difficult to check that this gives an offspring-invariant random \eft{}
        with $\omid{d_1(\bs o)}=\frac d{\tilde d}$. Therefore, this \eft\ is unimodular
        if and only if $\tilde d=d>1$ (by Proposition~\ref{prop:e(d_n)=1 unimodular}).
        Moreover, for $\tilde d>d$, by pruning the biased Canopy Tree as in Proposition~\ref{prop:pruning},
        one obtains the usual (unimodular) Canopy Tree with offspring cardinality $d-1$.
\end{example}

See also Example~\ref{ex:EMGW:pruning} below for another example of pruning.

\begin{example} \label{ex:isolatedEnd}
	Let $d>2$ and $[\bs T, \bs o]$ be the $d$-regular tree with
	one distinguished end (Example~\ref{ex:regular}).
	Attach to each vertex $v\in V(\bs T)$ a path 
	of additional vertices $g_0(v), g_1(v),g_2(v),\ldots$, where $g_0(v)=v$
	to obtain an Eternal Family Tree $\bs T'$. 
	The additional vertices associated with different $v\in V(\bs T)$
	are assumed disjoint and the edges of each such path are directed towards the $d$-regular tree.
	Given $[\bs T, \bs o]$, Let $\bs o':=g_{Z-1}(\bs o)$, where $Z$ is a geometric
	random variable with parameter $1-\frac 1m$, independent of $[\bs T, \bs o]$ and where $m=d-1$. 
	Then, it can be seen that $[\bs T', \bs o']$ is an offspring-invariant
	\eft{} with mean number of children $m$ (see also Example~\ref{ex:EMGW} below).
	
	It can be seen that in this example, one can replace $[\bs T, \bs o]$ by any
	offspring-invariant \eft{} with mean number of children $m>1$. 
        Moreover, one can deduce from Proposition~\ref{prop:subEFT} that the condition $m>1$ is necessary.
\end{example}

	The following example shows that offspring-invariant \eft{}s can have countably many ends
        in contrast with unimodular trees (Proposition~\ref{prop:ends}). 
	\begin{example}[Offspring-Invariant Comb]
		\label{ex:comb}
		A \textbf{comb} is an Eternal Family Tree $T$ with the following property: 
		For every vertex $v\in V(T)$ with $d_1(v)=k$, one has 
		\[\{d_1(w): w\in D_1(v)\} = \{1,2,\ldots,k\}.\]
		
		The name `comb' is chosen because when $d_1(v)=2$, the descendant subtree $D(v)$ of $v$ looks like a comb.
		A rooted comb $[T, o]$ is uniquely characterized by the sequence $c_i:=d_1(F^i(o))$ for $i\geq 0$. 
		Choose a random sequence $(C_i)_{i\geq 0}$ such that the sequence $(C_0, C_1-C_0+1, C_2-C_1+1,\ldots)$ is i.i.d. with a geometric distribution. 
		Its corresponding random rooted comb $[\bs T, \bs o]$
		is called an \textbf{offspring-invariant comb}. It can be shown that it is indeed offspring-invariant (see also Example~\ref{ex:EMGW} below).
		
		Here, except the end of the path $\bs o, F(\bs o), F^2(\bs o),\ldots$, each end is realized by a path of vertices with constant degree. It follows that $\bs T$ has countably many ends.
	\end{example}

\subsection{Generalization of Offspring-Invariance to Random Networks}
\label{sec:sigma-inv-network}

In this subsection, the framework of offspring-invariant \eft{}s is generalized to networks,
where the parent vertex-shift of \eft{}s is replaced by an arbitrary vertex-shift on networks. 
An example will be discussed in Subsection~\ref{sec:DL}.

\begin{definition}
	Let $f$ be a given vertex-shift and $[\bs G, \bs o]$ be a random rooted network with distribution $\mathcal P$. Then, $[\bs G, \bs o]$ is called \textbf{offspring-invariant w.r.t. $f$} or just \textbf{$f$-offspring-invariant} if $\sigma^{(f)}\mathcal P=\mathcal P$, where
	\begin{equation}
		\label{eq:sigma-inv-network}
		(\sigma^{(f)}\mathcal P)(A):=\frac 1m \omid{\sum_{v\in D_1(\bs o)} \identity{A}[\bs G, v]},\ \forall A
	\end{equation}
	and $m:={\omid{d_1(\bs o)}}$ (see the notation in Definition~\ref{def:foliation}).
\end{definition}

Note that the RHS of~\eqref{eq:sigma-inv-network} is identical to that of~\eqref{eq:sigma_n} for $n=1$. This suggests that some of the results in Subsection~\ref{sec:typicalDescendant} hold in the new setting. 
The $n$-fold iteration of $\sigma^{(f)}$ on $\mathcal P$, whenever define, has a similar equation to~\eqref{eq:sigma_n} with the same proof as Lemma~\ref{lemma:sigma-semigroup}. 
Also, the $f$-offspring-invariance of $\mathcal P_{\infty}$ (defined similarly) holds with the same conditions as in Theorem~\ref{thm:p_infty}. Below, some properties of offspring-invariant random networks are discussed beyond those for \eft{}s.

\begin{lemma}
	A unimodular network is offspring-invariant w.r.t. any vertex-shift and always $m=1$. Conversely, if $[\bs G, \bs o]$ is $f$-offspring-invariant with $m=1$, then the connected component of $\bs G^f$ containing $\bs o$ is unimodular.
\end{lemma}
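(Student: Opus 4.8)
The plan is to prove the two directions separately: the forward implication is an immediate consequence of the mass transport principle~\eqref{eq:unimodular}, and the converse is the network analogue of the converse part of Proposition~\ref{prop:sigma_1 unimodular-new}.

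For the forward direction, assume $[\bs G,\bs o]$ is unimodular and let $f$ be an arbitrary vertex-shift. Applying~\eqref{eq:unimodular} to $g[G,o,s]:=\identity{\{f_G(o)=s\}}$ gives $1=\omid{g^+_{\bs G}(\bs o)}=\omid{g^-_{\bs G}(\bs o)}=\omid{d_1(\bs o)}=m$, so $m=1$ always. Next, for measurable $A\subseteq\mathcal G_*$, apply~\eqref{eq:unimodular} to $\hat g[G,o,s]:=\identity{\{f_G(s)=o\}}\identity{A}[G,s]$: the outgoing mass from $\bs o$ is $\sum_{v\in D_1(\bs o)}\identity{A}[\bs G,v]$ and the incoming mass is $\identity{A}[\bs G,\bs o]$ (the only $s$ with $f(\bs o)=s$), so $\omid{\sum_{v\in D_1(\bs o)}\identity{A}[\bs G,v]}=\myprob{[\bs G,\bs o]\in A}$, which is precisely $\sigma^{(f)}\mathcal P=\mathcal P$ since $m=1$. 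Hence $[\bs G,\bs o]$ is $f$-offspring-invariant.

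For the converse, write $C:=C^f(\bs o)$ and let $\mathcal Q$ be the distribution of $[C,\bs o]$; recall that $[G,o]\mapsto[C^f(o),o]$ is measurable (cf. Example~\ref{ex:connectedComponent}) and that $C^f(v)=C$ and $D_1(v)$ is the same computed in $\bs G$ or in $C$, for every $v\in C$. First I would check that $C$ is locally finite a.s.: from $\omid{d_1(\bs o)}=1$ one gets $\myprob{d_1(\bs o)=\infty}=0$; iterating offspring-invariance (the network analogue of Lemma~\ref{lemma:sigma-semigroup}, valid since $m^n=1$) and feeding it the hereditary events ``some descendant of $\bs o$ has infinitely many $f$-preimages'' and ``some vertex on the ancestral chain of $\bs o$ has a descendant with infinitely many $f$-preimages'' shows that a.s. no vertex of $C$ has infinitely many $f$-preimages, so $[C,\bs o]\in\mathcal G_*$ a.s. Then, by Proposition~2.2 of~\cite{processes}, it suffices to show that $\mathcal Q$ is involution invariant, i.e. that~\eqref{eq:unimodular} holds on $C$ for functions supported on doubly-rooted networks whose roots are neighbors. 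In $C$ (viewed as a multigraph, so an $f$-fixed point produces a loop and an $f$-$2$-cycle a double edge), the edges at $\bs o$ are the outgoing edge $\{\bs o,f(\bs o)\}$ together with one edge $\{\bs o,v\}$ for each $v\in D_1(\bs o)$, so for measurable $g:\mathcal G_{**}\to\mathbb R^{\geq 0}$,
\[
\omid{\sum_{v\sim_C\bs o}g[C,\bs o,v]}=\omid{g[C,\bs o,f(\bs o)]}+\omid{\sum_{v\in D_1(\bs o)}g[C,\bs o,v]}.
\]
For $v\in D_1(\bs o)$ one has $\bs o=f(v)$ and $C^f(v)=C$, so putting $h[G,o]:=g[C^f(o),f_G(o),o]$ we get $h[\bs G,v]=g[C,\bs o,v]$ and $h[\bs G,\bs o]=g[C,f(\bs o),\bs o]$; offspring-invariance with $m=1$ then yields $\omid{\sum_{v\in D_1(\bs o)}g[C,\bs o,v]}=\omid{h[\bs G,\bs o]}=\omid{g[C,f(\bs o),\bs o]}$, hence
\[
\omid{\sum_{v\sim_C\bs o}g[C,\bs o,v]}=\omid{g[C,\bs o,f(\bs o)]}+\omid{g[C,f(\bs o),\bs o]}.
\]
Running the same computation with $g$ replaced by its transpose $(u,v)\mapsto g[\cdot,v,u]$ gives the same value for $\omid{\sum_{v\sim_C\bs o}g[C,v,\bs o]}$, which is the desired involution invariance; hence $[C,\bs o]$ is unimodular.

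The step I expect to be the main obstacle is the local finiteness of $C$: offspring-invariance only allows one to ``move down'' to descendants, so reaching all vertices of $C$—in particular the ancestors of $\bs o$ and their other descendants—requires combining the descendant relation with the hereditary closure of the bad set $\{v:d_1(v)=\infty\}$ as indicated above. By contrast, once the multigraph bookkeeping at $f$-cycles is set up, the involution-invariance computation is essentially the one already carried out in Proposition~\ref{prop:sigma_1 unimodular-new}.
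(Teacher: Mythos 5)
Your proof is correct and follows essentially the same route as the paper: the forward direction is the mass transport principle applied to $g[G,o,v]:=\identity{A}[G,o]\identity{\{v=f_G(o)\}}$ (you use its transpose), and the converse is the involution-invariance argument of Proposition~\ref{prop:sigma_1 unimodular-new} transported to the component $C^f(\bs o)$, which is exactly what the paper's one-line indication intends. You additionally supply two details the paper leaves implicit -- the a.s. local finiteness of $\bs G^f$ on the component (via the inductive hereditary-event argument, which is sound) and the multigraph bookkeeping at $f$-cycles -- both of which are welcome rather than deviations.
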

\begin{proof}
For the first claim, apply~\eqref{eq:unimodular} to $g[G,o,v]:=\identity{A}[G,o]\identity{\{v=f_G(o)\}}$.
The second claim can also be proved similar to Proposition~\ref{prop:sigma_1 unimodular-new}.
\end{proof}

According to this lemma, the focus is on the case $m\neq 1$ from now on.
Note that in the converse of the lemma, one cannot deduce that $[\bs G, \bs o]$ is unimodular.
A simple counter example can be constructed when $f^{-1}(\bs o)=\{\bs o\}$ almost surely,
which ensure $[\bs G, \bs o]$ is $f$-offspring-invariant.
The point is that $f$-offspring-invariance gives little information on the vertices outside the connected component. 

\begin{lemma}
	If $[\bs G, \bs o]$ is $f$-offspring-invariant and $m\neq 1$, then the connected component of $\bs G^f$ containing $\bs o$ is acyclic a.s. and is an offspring-invariant \eft{}.
\end{lemma}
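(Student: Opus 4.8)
The plan is to establish the two assertions --- acyclicity and offspring-invariance of $C^f(\bs o)$ --- separately, where $\mathcal P$ denotes the law of $[\bs G,\bs o]$ (so that $\sigma^{(f)}\mathcal P=\mathcal P$ and $m=\omid{d_1(\bs o)}\in(0,\infty)$, $m\neq 1$) and $C^f(\bs o)$ is the connected component of $\bs o$ in $\bs G^f$. Recall the classical fact (used in the proof of Proposition~\ref{prop:acyclicComponent}) that, since $f$ is a function, every vertex of $\bs G^f$ has out-degree $1$ and each component is either a tree or contains a unique $f$-cycle; hence acyclicity of $C^f(\bs o)$ just means there is no $f$-cycle in it, and once this is shown, $C^f(\bs o)$ --- a tree all of whose out-degrees equal $1$ --- is automatically an \eft{}.

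First I would show that $\bs o$ a.s.\ lies on no $f$-cycle. Let $S_G$ be the covariant subset of vertices lying on some $f$-cycle of $G$. The deterministic observation is that, for any $o$, the set $D_1(o)\cap S_G$ consists of the single cyclic predecessor of $o$ when $o\in S_G$ and is empty otherwise, so $\card{(D_1(o)\cap S_G)}=\identity{\{o\in S_G\}}$. Applying $\sigma^{(f)}\mathcal P=\mathcal P$ to the covariant event $\{o\in S_G\}$ gives $\myprob{\bs o\in S_{\bs G}}=\frac1m\,\omid{\card{(D_1(\bs o)\cap S_{\bs G})}}=\frac1m\,\myprob{\bs o\in S_{\bs G}}$, hence $\myprob{\bs o\in S_{\bs G}}=0$ because $m\neq 1$. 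Next I would bootstrap this to acyclicity of the whole component: let $n(\bs o)\in\{0,1,2,\ldots\}\cup\{\infty\}$ be the least $k$ with $f^k(\bs o)$ on an $f$-cycle (and $n(\bs o)=\infty$ when $C^f(\bs o)$ is acyclic), a measurable function on $\mathcal G_*$, and set $p_j:=\myprob{n(\bs o)=j}$. Then $p_0=\myprob{\bs o\in S_{\bs G}}=0$. Since a.s.\ $\bs o$ is not on a cycle, a.s.\ no $v\in D_1(\bs o)$ is on a cycle either, so $n(v)=n(\bs o)+1$ for every such $v$; thus $\sum_{v\in D_1(\bs o)}\identity{\{n(v)=j\}}=d_1(\bs o)\,\identity{\{n(\bs o)=j-1\}}$ a.s., and feeding this into $\sigma^{(f)}\mathcal P=\mathcal P$ for the event $\{n(o)=j\}$ yields $p_j=\frac1m\,\omid{d_1(\bs o)\,\identity{\{n(\bs o)=j-1\}}}$ for $j\geq 1$. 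Therefore $p_{j-1}=0$ forces $p_j=0$, so by induction $p_j=0$ for all finite $j$, i.e.\ $\myprob{n(\bs o)<\infty}=0$ and $C^f(\bs o)$ is a.s.\ an \eft{}.

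For offspring-invariance, let $\phi\colon[G,o]\mapsto[C^f(o),o]$ (measurable, as noted in Example~\ref{ex:connectedComponent}) and $\mathcal Q:=\phi_*\mathcal P$, the law of $[C^f(\bs o),\bs o]$. Since $D_1(o)\subseteq C^f(o)$, the function $d_1$ is unchanged by $\phi$ --- so $\mathcal E_{\mathcal Q}[d_1(\bs o)]=m\in(0,\infty)$ and $\sigma\mathcal Q$ is defined --- and $[C^f(o),v]=\phi[G,v]$ for every $v\in C^f(o)$. Hence for any event $A\subseteq\mathcal T_*$ one has $\sigma\mathcal Q(A)=\frac1m\,\omid{\sum_{v\in D_1(\bs o)}\identity{A}(\phi[\bs G,v])}=(\sigma^{(f)}\mathcal P)(\phi^{-1}(A))=\mathcal P(\phi^{-1}(A))=\mathcal Q(A)$, so $\mathcal Q$ is offspring-invariant (and then proper and eternal by Lemma~\ref{lemma:m^n}). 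I expect the only genuine obstacle to be the first step: there is no ``happens at the root'' lemma available here, because $f$-offspring-invariance controls only the component of $\bs o$, so one cannot argue directly that ``$f$-cycles occur somewhere'' implies ``$\bs o$ lies on an $f$-cycle''; the self-similar identity $\card{(D_1(\bs o)\cap S_{\bs G})}=\identity{\{\bs o\in S_{\bs G}\}}$ together with the distance-to-cycle induction are what substitute for it, and they use $m\neq 1$ crucially --- indeed for $m=1$ the component may be a finite cyclic $\mathcal F/\mathcal F$ graph and the statement fails.
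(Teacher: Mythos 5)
Your proof is correct and follows essentially the same route as the paper's: the identity $\card{(D_1(\bs o)\cap S_{\bs G})}=\identity{\{\bs o\in S_{\bs G}\}}$ combined with $m\neq 1$ to kill cycles at the root, then an induction pushing this to $f^n(\bs o)$ for all $n$ (the paper phrases it via $\myprob{f^{n+1}(\bs o)\in C}=\frac1m\omid{\card{D_1(\bs o)\cap f^{-n-1}(C)}}$ rather than your distance-to-cycle function, but it is the same computation). Your explicit pushforward verification of offspring-invariance of the component is a step the paper leaves implicit, and it is correct.
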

Before the proof, it is good to mention that if $[\bs G, \bs o]$ is in addition quasi-invariant, then all connected components of $\bs G^f$ are acyclic (see Lemma~\ref{lemma:sigmaHappensAtRoot}).
\begin{proof}
	Let $C=C_G$ be the union of the $f$-cycles of $G^f$. By~\eqref{eq:sigma-inv-network}, one gets
	\begin{eqnarray*}
		\sigma^{(f)}P[\bs o \in C] = \frac 1 m \omid{\card{D_1(\bs o)\cap C}} = \frac 1 m \myprob{\bs o\in C},
	\end{eqnarray*}
	where the last equality holds because $\card{D_1(\bs o)\cap C}$ is $\{0,1\}$-valued depending on whether $\bs o\in C$ or not. Therefore, by $f$-offspring-invariance and $m\neq 1$, one gets $\myprob{\bs o\in C}=0$. Assume it is proved that $\myprob{f^n(\bs o)\in C}=0$ for some $n\geq 0$. By~\eqref{eq:sigma-inv-network} again, one has
	\begin{eqnarray*}
		\myprob{f^{n+1}(\bs o)\in C} &=& \sigma^{(f)}\mathcal P[\bs o\in f^{-n-1}(C)]
		= \frac 1 m \omid{\card{D(\bs o)\cap f^{-n-1}(C)}}\\
		&=& \frac 1m \omid{\identity{\{\bs o \in f^{-n}(C)\}}\card{D(\bs o)\cap f^{-n-1}(C)}}
		= 0,
	\end{eqnarray*}
	where the last two equations use the fact that $\card{D(\bs o)\cap f^{-n-1}(C)}$ is zero whenever $\bs o\not\in f^{-n}(C)$ and the latter happens with probability one. 
	
	Inductively, this proves that $\myprob{f^{n}(\bs o)\in C}=0$ for all $n\geq 0$.
\end{proof}

The following proposition gives a criterion for verifying $f$-offspring-invariance.
See Definition~\ref{def:delta}. 

\begin{proposition}
	\label{prop:sigma-inv-Network}
Let $f$ be a vertex-shift and $[\bs G, \bs o]$ be a quasi-invariant random network
with Radon-Nikodym cocycle $\Delta$. Then $[\bs G, \bs o]$ is $f$-offspring-invariant
if and only if there is a constant $c$ such that $\Delta(\bs o, f(\bs o))=c$ a.s.
Moreover, if this holds, then $c = \frac 1m$, where $m:=\omid{d_1(\bs o)}$.
\end{proposition}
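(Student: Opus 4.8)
The plan is to rewrite both quasi-invariance and $f$-offspring-invariance as mass-transport identities and then to match them. Unfolding the definition of the Radon--Nikodym cocycle, quasi-invariance of $[\bs G,\bs o]$ says precisely that
\[
\omid{\sum_{v\in V(\bs G)} g[\bs G,\bs o,v]} \;=\; \omid{\sum_{v\in V(\bs G)} \Delta[\bs G,v,\bs o]\, g[\bs G,v,\bs o]}
\]
for every measurable $g:\mathcal G_{**}\to\mathbb R^{\geq 0}$ (it is the identity $\int g\,\mathrm d\mathcal P^{(r)}=\int g\,\Delta\,\mathrm d\mathcal P^{(l)}$ written out through the definitions of $\mathcal P^{(r)}$ and $\mathcal P^{(l)}$). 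I would then apply this with $g[G,o,v]:=\identity{\{f_G(v)=o\}}\,h[G,v]$ for an arbitrary measurable $h:\mathcal G_*\to\mathbb R^{\geq 0}$. On the left this gives $\omid{\sum_{v\in D_1(\bs o)}h[\bs G,v]}$; on the right only the term $v=f(\bs o)$ survives, so it equals $\omid{\Delta[\bs G,f(\bs o),\bs o]\,h[\bs G,\bs o]}$. Since $\sigma^{(f)}\mathcal P$ is, by~\eqref{eq:sigma-inv-network}, the measure $A\mapsto\frac 1m\,\omid{\sum_{v\in D_1(\bs o)}\identity{A}[\bs G,v]}$, this produces the key identity
\begin{equation}
\label{eq:plan-key}
\int_{\mathcal G_*} h\,\mathrm d\big(\sigma^{(f)}\mathcal P\big) \;=\; \frac 1m\,\omid{\Delta[\bs G,f(\bs o),\bs o]\,h[\bs G,\bs o]},\qquad h:\mathcal G_*\to\mathbb R^{\geq 0}\text{ measurable}.
\end{equation}

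Both implications then fall out of~\eqref{eq:plan-key}. For the ``if'' direction I would assume $\Delta[\bs G,\bs o,f(\bs o)]=c$ a.s.; the cocycle relations give $\Delta[\bs G,f(\bs o),\bs o]=1/c$ a.s., and $c>0$ since $\Delta$ is a.e. strictly positive. Taking $h\equiv 1$ in~\eqref{eq:plan-key} then yields $m=\omid{d_1(\bs o)}=\omid{\Delta[\bs G,f(\bs o),\bs o]}=1/c$, so $c=1/m$, and substituting $\Delta[\bs G,f(\bs o),\bs o]=m$ back into~\eqref{eq:plan-key} gives $\int_{\mathcal G_*}h\,\mathrm d(\sigma^{(f)}\mathcal P)=\omid{h[\bs G,\bs o]}$ for all $h\geq 0$, i.e. $\sigma^{(f)}\mathcal P=\mathcal P$. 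For the ``only if'' direction I would assume $\sigma^{(f)}\mathcal P=\mathcal P$; then the left side of~\eqref{eq:plan-key} equals $\omid{h[\bs G,\bs o]}$, so $\omid{\big(\frac 1m\Delta[\bs G,f(\bs o),\bs o]-1\big)h[\bs G,\bs o]}=0$ for every measurable $h\geq 0$. Testing with $h$ the indicator of the event $\{\Delta[\bs G,f(\bs o),\bs o]>m\}$ and then of $\{\Delta[\bs G,f(\bs o),\bs o]<m\}$ forces $\Delta[\bs G,f(\bs o),\bs o]=m$ a.s., equivalently $\Delta[\bs G,\bs o,f(\bs o)]=1/m$ a.s.; this exhibits the constant and simultaneously yields $c=1/m$.

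The computations above are routine, so the only delicate point — and the nearest thing to an obstacle — is the measure-theoretic bookkeeping for $\Delta$, which is specified only up to $\mathcal P^{(r)}$-null sets on $\mathcal G_{**}$. One must observe that the law of $[\bs G,\bs o,f(\bs o)]$ under $\mathbb P$ is dominated by $\mathcal P^{(r)}$ (being a single summand of it), so that $\Delta[\bs G,f(\bs o),\bs o]$ is an unambiguously defined, a.s. strictly positive, measurable function of $[\bs G,\bs o]$, and that the cocycle identities $\Delta[\bs G,v,v]=1$ and $\Delta[\bs G,v,w]\Delta[\bs G,w,z]=\Delta[\bs G,v,z]$ recalled after Definition~\ref{def:delta} may be used pointwise almost surely. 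It is also implicit throughout that $f$-offspring-invariance presupposes $0<m<\infty$ so that $\sigma^{(f)}\mathcal P$ is a probability measure; in the ``if'' direction this finiteness is supplied by the identity $m=1/c$ obtained above.
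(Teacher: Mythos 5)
Your proposal is correct and follows essentially the same route as the paper: the identity you label \eqref{eq:plan-key} is exactly the paper's equation~\eqref{eq:delta of f(o)} (divided by $m$), which the paper derives directly ``by the definition of $\Delta$'' and then uses for both implications via the same substitutions ($h\equiv 1$ to get $c=1/m$, and arbitrary test functions/events for the equivalence). Your additional remarks on the well-definedness of $\Delta[\bs G, f(\bs o),\bs o]$ up to null sets and on the use of the cocycle relation to pass between $\Delta(\bs o, f(\bs o))$ and $\Delta(f(\bs o),\bs o)$ are points the paper leaves implicit, but they do not change the argument.
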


\begin{proof}
Note that the latter condition is equivalent to $\Delta(f(\bs o),\bs o)=m$ a.s.
Also, by the definition of $\Delta$, one gets
	\begin{equation}
		\label{eq:delta of f(o)}
		\omid{\sum_{v\in D_1(\bs o)}\identity{A}[\bs G,v]}=
		\omid{\identity{A}[\bs G,\bs o]\Delta(f(\bs o),\bs o)},
	\end{equation}
	for all events $A\subseteq\mathcal G_*$.
	First, suppose $\Delta(f(\bs o),\bs o)=c$ a.s. By substituting this in~\eqref{eq:delta of f(o)},
        \eqref{eq:sigma-inv-network} gives that $[\bs G, \bs o]$ is $f$-offspring-invariant and $c=m$.
        Conversely, assume the latter holds. Since Equations~\eqref{eq:sigma-inv-network}
        and~\eqref{eq:delta of f(o)} hold for any event $A$, one gets $\Delta(f(\bs o), \bs o)=m$ a.s. and the claim is proved.
\end{proof}

\subsection{Bibliographical Comments} 
\label{sec:bibsigma-inv}
{\quad}

There is a different definition of the Radon-Nikodym cocycle for random graphs in~\cite{BeCu12} which is tailored for \textit{stationary} random rooted graphs. The latter are random rooted graphs whose distributions are invariant under the simple random walk. The Radon-Nikodym cocycle in~\cite{BeCu12} is defined in such a way that it is trivial if and only if the graph is stationary and \textit{reversible}. In our language, it is equal to the inverse of $\Delta(o,v)\mathrm{deg(v)}/\mathrm{deg}(o)$.

By biasing the distribution of a unimodular graph by the degree of the root, one obtains a stationary graph. However, the authors could not find a general connection between offspring-invariant \eft{}s and stationarity.

\section{Eternal Branching Processes}
\label{sec:etbra}
This section introduces special cases of Eternal Family Trees satisfying certain 
mutual independence assumptions and pertaining hence to branching processes.
The main object is the Eternal Galton Watson Tree introduced in Subsection~\ref{sec:EGW}.
Such trees have connections with many objects in the literature which are reviewed in detail in Subsection \ref{sec:bibEGW}.
These connections show how general properties established for \eft{}s unify several concepts
and results previously known. For instance, the classification of offspring-invariant \eft{}s
under the independence assumptions boils down (but with a new non-analytic proof)
to classical results on branching processes with immigration (Proposition~\ref{prop:EGW-classification}). 
Although the critical case has been defined previously, some of the results are new.
In particular, Theorem~\ref{thm:EGW-characterization} provides a characterization of
Eternal Galton-Watson Trees as offspring-invariant \eft{}s satisfying a specific
independence property. Also, the multi-type version in Subsection~\ref{sec:EMGW} appears to be new.
In addition, the Eternal Galton-Watson Trees are used to define a generalization 
of the Diestel-Leader graph in Subsection~\ref{sec:DL}.

\subsection{Eternal Galton-Watson Trees}
\label{sec:EGW}

This subsection introduces \textit{Eternal Galton-Watson Trees},
a special class of offspring-invariant \eft{}s where, roughly speaking,
the vertices act independently as in ordinary Galton-Watson Trees.
Such trees can be seen as ordinary Galton-Watson Trees seen from a typical far descendant
(Proposition~\ref{prop:EGW-limit}).

Recall that the ordinary \textbf{Galton-Watson Tree} (abbreviated as \gwt{})
is a rooted tree defined by a branching process: starting from a single vertex,
each vertex $v$ gives birth to a random number $d_1(v)$ of new vertices,
where $d_1(v)$ has distribution $\pi$ and the random variables
$d_1(v)$ are independent across vertices.
One obtains a Family Tree
by connecting the children of any vertex to the latter.
The distribution $\pi$ is called the \textit{offspring distribution}.
Denote by $\mathcal P_{\mathrm{GW}}$ the law of the \gwt{}.

Let $\pi$ be a probability distribution on $\mathbb Z^{\geq 0}$ and $\widehat{\pi}$ be its size-biased version; that is, $\widehat{\pi}(k):=\frac {k\pi(k)}m$, where $m$ is the expected value of $\pi$, assuming $0<m<\infty$.

\begin{definition}
\label{def:egwt}
	The \textbf{(ordered) Eternal Galton-Watson Tree} (\egwt) with offspring
        distribution $\pi$ (abbreviated as $\egw{}(\pi)$) is
        a random \eft{} constructed as follows. Start from a {path} $(\bs o_i)_{i=0}^{\infty}$. 
	For each $i>0$, regard $\bs o_{i}$ as the parent of $\bs o_{i-1}$.
	Then, choose an independent random number $z_i$ with the size-biased
	distribution $\hat{\pi}$ and add $z_i-1$ new vertices as children
	of $\bs o_i$ (so that $\bs o_i$ has a total of $z_i$ children).
	Choose a uniform random order between the children of $\bs o_i$.
        For $\bs o_0$ and each new vertex, generate their descendants
        as in an ordinary Galton-Watson Tree with offspring distribution $\pi$.
        Finally, add a directed edge from each vertex to its parent and let 
        $\bs o:=\bs o_0$ be its root. Denote by $\mathcal P_{\mathrm{EGW}{}}$
        the law of the \egwt{}. The \textbf{non-ordered} \egwt{} is
        obtained by forgetting the order of the vertices in the ordered \egwt{}. 
        The same symbols will be used for the ordered and non-ordered cases.
\end{definition}

\begin{remark}
	The arguments in this section are valid for both ordered and non-ordered cases except when explicitly mentioned. 
\end{remark}

\begin{example}
	\label{ex:regularEGW}
The $d$-regular tree with one distinguished end (Example~\ref{ex:regular})
is an example of \egwt{}. For this, $\pi$ is concentrated on $d-1$.
\end{example}

The proofs of the following results are postponed to the end of the subsection.

\begin{proposition}
	\label{prop:EGW-unimodular}
	The Eternal Family Tree $\egw{}(\pi)$,
	\begin{enumerate}[(i)]
		\item  is offspring-invariant.
		\item  is a unimodular \eft{} if and only if it is critical,
		that is, $m=1$.
	\end{enumerate}	
\end{proposition}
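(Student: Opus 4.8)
The plan is to verify offspring-invariance directly from the construction, and then to use Proposition~\ref{prop:sigma_1 unimodular-new} to obtain the unimodularity characterization. For part (i), the key observation is that the construction of $\egw(\pi)$ has a self-similar structure with respect to $\sigma$. Recall that $\sigma$ biases by $d_1(\bs o)$ and moves the root to a uniformly chosen child. Under $\egw(\pi)$, the spine $(\bs o_i)_{i\geq 0}$ is the unique infinite ray; the root $\bs o=\bs o_0$ has $d_1(\bs o_0)=z_0-1$ worth of children whose descendant subtrees are i.i.d.\ ordinary $\gwt(\pi)$'s\dots\ wait---more carefully, $\bs o_0$ has $z_0$ children where $z_0 \sim \hat\pi$, one is the spine continues \emph{downwards}? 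No: the spine goes \emph{up}, $\bs o_i$ is the parent of $\bs o_{i-1}$, so $\bs o_0$ is the bottom of the spine. Thus $\bs o_0$'s children are all ``fresh'' vertices, and their count $d_1(\bs o_0)$ has distribution $\pi$ (not $\hat\pi$), each spawning an independent $\gwt(\pi)$. The first step is therefore to describe the law of $[D(\bs o),\bs o]$ under $\egw(\pi)$ and the law of $[\bs T, v]$ for $v \in D_1(\bs o)$.

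First I would compute, for a bounded measurable $g:\mathcal T_*\to\mathbb R^{\geq 0}$, the quantity $\omid{\sum_{v\in D_1(\bs o)} g[\bs T, v]}$ under $\mathcal P_{\mathrm{EGW}}$. Condition on the spine data and the value $k:=d_1(\bs o_0)$, which has law $\pi$. Each child $v$ of $\bs o_0$ sees, when re-rooted at $v$: its own descendant tree (a $\gwt(\pi)$), its parent $\bs o_0$ which now has $k$ children including the spine direction, then the rest of the spine $\bs o_1,\bs o_2,\ldots$ with their extra i.i.d.\ $\hat\pi$-biased bushes. So after re-rooting at $v$, the new spine is $v,\bs o_0,\bs o_1,\ldots$; the number of children of the new first spine vertex above the old root, namely $\bs o_0$, is exactly $k$, which is $\pi$-distributed, and given that it has the ``right'' number of siblings. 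Summing $g[\bs T,v]$ over the $k$ children and taking expectation produces a factor $\sum_k k\,\pi(k)(\cdots) = m\sum_k \hat\pi(k)(\cdots)$, i.e.\ exactly the size-biased law $\hat\pi$ appears for the number of children of $\bs o_0$ as seen from $v$. One then checks this matches the $\egw(\pi)$ description of $\sigma_1$: the biasing by $d_1(\bs o)$ turns $\pi$ into $m\hat\pi$, and the resulting rooted tree is again an $\egw(\pi)$ with root one step up the spine. This gives $\sigma\mathcal P_{\mathrm{EGW}} = \mathcal P_{\mathrm{EGW}}$; in the ordered case one additionally uses that the uniform order on children is preserved under picking a uniform child and recording its position, and that the sibling-order of the spine vertex is again uniform. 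This is the main obstacle: bookkeeping the independence structure and the $\pi$-versus-$\hat\pi$ swap carefully so that the re-rooted law is \emph{exactly} $\egw(\pi)$ and not merely offspring-invariant with the same mean.

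For part (ii), once (i) is in hand this is immediate from Proposition~\ref{prop:sigma_1 unimodular-new}: a random Family Tree is a unimodular \eft{} a.s.\ if and only if it is offspring-invariant and $\omid{d_1(\bs o)}=1$. Since $\egw(\pi)$ is offspring-invariant by (i), and since $\omid{d_1(\bs o)}=m$ under $\mathcal P_{\mathrm{EGW}}$ (the root $\bs o_0$ has offspring distribution $\pi$ with mean $m$), the \egwt{} is unimodular if and only if $m=1$, which is the definition of criticality. I would also remark that Lemma~\ref{lemma:m^n} then gives $\omid{d_n(\bs o)}=m^n$ for all $n$, consistent with the construction, and that Proposition~\ref{prop:classification-EFT} applies in the critical case.
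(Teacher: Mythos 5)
Your proposal is correct and follows essentially the same route as the paper: part (i) is proved by directly verifying $\sigma\mathcal P_{\mathrm{EGW}}=\mathcal P_{\mathrm{EGW}}$ from the construction, the key point being that biasing by $d_1(\bs o)$ converts the $\pi$-distributed offspring count of the old root into $\hat\pi$ and that re-rooting at a uniformly chosen child reproduces the \egw{} description (the paper organizes this computation via events $B=\{d_1(F(o))=k,\ o=c_j(F(o))\}$, which is just a bookkeeping device for the same self-similarity you describe). Part (ii) then follows from Proposition~\ref{prop:sigma_1 unimodular-new} together with $\omid{d_1(\bs o)}=m$, exactly as you say.
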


\begin{proposition}
	\label{prop:EGW-limit}
	If $[\bs T, \bs o]$ is an ordinary $\gw(\pi)$ tree,
	then $\sigma_{\infty}[\bs T, \bs o]$ is well defined and is an
	\egw($\pi$) tree. In other words, \egwt{}s are
	obtained from ordinary Galton-Watson Trees by moving the root to a typical far descendant.
	
	\iffalse
	Old:
}
The distribution of the \egwt{} 
$\egw{}(\pi)$ is the weak limit of
$\sigma_n\mathcal P_{\mathrm{GW}}$ as $n\rightarrow\infty$.
\[
\lim_{n\rightarrow\infty} \sigma_n\mathcal P_{\mathrm{GW}} = \mathcal P_{\mathrm{EGW}}.
\]
In other words, for $\mathcal P:=\mathcal P_{\mathrm{GW}(\pi)}$, one has $\mathcal P_{\infty}=\mathcal P_{\mathrm{EGW}(\pi)}$.
\fi
\end{proposition}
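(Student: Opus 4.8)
The plan is to avoid redoing a spine (size-biasing) decomposition from scratch and instead to read the statement off the general ``moving the root to a typical far descendant'' results of Subsection~\ref{sec:typicalDescendant}, applied to the Eternal Galton--Watson Tree itself. The crucial observation is that, directly from Definition~\ref{def:egwt}, the descendant tree of the root of an $\egw(\pi)$ tree \emph{is} an ordinary $\gw(\pi)$ tree: in that construction $\bs o=\bs o_0$ and each newly added vertex spawn their descendants exactly as in an ordinary Galton--Watson branching process with offspring law $\pi$ (with the induced ordering in the ordered case), and the subtree of $\bs o_0$ is built independently of the spine $(\bs o_i)_i$ and of the extra children added along it. Hence, for $[\bs T,\bs o]$ an $\egw(\pi)$ tree, $[D(\bs o),\bs o]$ has law $\mathcal P_{\mathrm{GW}}$.

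First I would check the hypotheses of Proposition~\ref{prop:anyI/I} for the $\egw(\pi)$ tree. The standing assumption $0<m<\infty$ (under which $\widehat\pi$ is defined) gives $\omid{d_1(\bs o)}=m\in(0,\infty)$, and Proposition~\ref{prop:EGW-unimodular}\,(i) asserts that the $\egw(\pi)$ tree is offspring-invariant; by Lemma~\ref{lemma:m^n} it is then automatically proper, with $\omid{d_n(\bs o)}=m^n$. Here one must ensure there is no circularity: the proof of offspring-invariance in Proposition~\ref{prop:EGW-unimodular}\,(i) should be the direct computation (apply $\sigma$ to $\mathcal P_{\mathrm{EGW}}$ and match the independence structure), not an appeal to Proposition~\ref{prop:EGW-limit}; and Proposition~\ref{prop:anyI/I} lives in Section~\ref{sec:eternal} and is proved independently of Section~\ref{sec:etbra}.

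Then the argument is short. Proposition~\ref{prop:anyI/I}, applied to $[\bs T,\bs o]$ equal to the $\egw(\pi)$ tree, states that $\sigma_n[D(\bs o),\bs o]$ converges weakly to the law of $[\bs T,\bs o]$; in particular $\sigma_\infty[D(\bs o),\bs o]$ is well defined and equals $\mathcal P_{\mathrm{EGW}}$. Substituting the identification of the first paragraph, $[D(\bs o),\bs o]\sim\mathcal P_{\mathrm{GW}}$, this reads: if $[\bs T',\bs o']$ is an ordinary $\gw(\pi)$ tree, then $\sigma_n[\bs T',\bs o']$ converges weakly, i.e. $\sigma_\infty[\bs T',\bs o']$ is well defined, with law $\mathcal P_{\mathrm{EGW}}$. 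The reasoning is verbatim the same in the ordered and non-ordered settings.

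I expect the only real difficulty to be bookkeeping: confirming the absence of circularity with Proposition~\ref{prop:EGW-unimodular}\,(i), and writing the otherwise tautological-looking identification $[D(\bs o),\bs o]\sim\mathcal P_{\mathrm{GW}}$ carefully enough that it is clear no conditioning is hidden in Definition~\ref{def:egwt} that would perturb the law of the descendant subtree of $\bs o_0$. If a self-contained proof is preferred, the fallback is the direct route: using $\sigma_n=\sigma^{(n)}$ (Lemma~\ref{lemma:sigma-semigroup}) and induction on $n$, identify $\sigma_n\mathcal P_{\mathrm{GW}}$ as the law of a finite-spine tree --- a path $\bs o_0,\dots,\bs o_n$ with $\bs o_n$ parentless, each $\bs o_i$ ($1\le i\le n$) having a $\widehat\pi$-distributed number of children including $\bs o_{i-1}$ in a uniformly chosen position, and $\bs o_0$ together with every extra child spawning an independent $\gw(\pi)$ subtree --- and then observe that for $n\ge r+1$ the radius-$r$ neighbourhood of $\bs o_0$ already has the $\egw(\pi)$ law, which forces $\sigma_n\mathcal P_{\mathrm{GW}}\to\mathcal P_{\mathrm{EGW}}$ since the mark space of Family Trees is effectively discrete (only finitely many rooted types within any radius).
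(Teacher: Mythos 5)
Your proposal is correct and is essentially identical to the paper's own proof: the authors also note that $[D(\bs o),\bs o]$ of an \egw($\pi$) tree is an ordinary $\gw(\pi)$ tree and then invoke Propositions~\ref{prop:anyI/I} and~\ref{prop:EGW-unimodular}. Your extra remarks on non-circularity and the fallback finite-spine computation are sound but not needed beyond what the paper does.
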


The next theorem provides a characterization of \egwt{}s.
\begin{theorem}[{Characterization of \egw{}}]
	\label{thm:EGW-characterization}
	A random Family Tree $[\bs T, \bs o]$ is an \egwt{}
	if and only if
	\begin{enumerate}[(i)]
		\item it is offspring-invariant, and
		\item the number of children $d_1(\bs o)$ of the root is independent
			of the non-descendants $D^c(\bs o)$ of the root, namely, the subtree
			induced by $\{\bs o\} \cup (V(\bs T)\setminus D(\bs o))$ rooted at $\bs o$.
	\end{enumerate}
\end{theorem}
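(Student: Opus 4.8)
The plan is to establish both directions, treating the "only if" direction as the warm-up and the "if" direction as the substantial content. For the forward direction, suppose $[\bs T,\bs o]$ is an $\egwt(\pi)$. Offspring-invariance is Proposition~\ref{prop:EGW-unimodular}(i), so I only need the independence property. By construction, the root $\bs o = \bs o_0$ either lies on the spine $(\bs o_i)_{i\geq 0}$ in the "generated as an ordinary GW tree" role or — more precisely, in the construction the root is always $\bs o_0$ and its number of children has distribution $\pi$, generated independently of everything built along and above the spine. The non-descendant subtree $D^c(\bs o)$ is determined by the spine $(\bs o_i)_{i\geq 1}$, the size-biased counts $z_i$, the chosen orders, and the ordinary-GW descendant subtrees hung off the extra children of each $\bs o_i$; none of these use $d_1(\bs o_0)$, which is an independent $\pi$-draw. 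Hence $d_1(\bs o) \perp D^c(\bs o)$, and in fact $d_1(\bs o)\sim\pi$ is independent of $D^c(\bs o)$.

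For the converse, assume $[\bs T,\bs o]$ is offspring-invariant and $d_1(\bs o)$ is independent of $D^c(\bs o)$. First, offspring-invariance forces $\bs o$ to have a parent a.s., so $F(\bs o)$ is well-defined, and more generally $F^n(\bs o)$ is defined for all $n$ (Lemma~\ref{lemma:m^n}), giving the spine $\bs o_i := F^i(\bs o)$. Let $\pi$ be the law of $d_1(\bs o)$ and $m := \mathbb E[d_1(\bs o)]$. The key structural step is to decode what offspring-invariance says about the picture around the spine. Applying the relation $\sigma\mathcal P = \mathcal P$ — i.e. $\mathcal P_1 = \mathcal P$ with $\mathcal P_1$ given by~\eqref{eq:sigma_n} for $n=1$ — repeatedly, I would show by induction that: (a) conditionally on $d_1(\bs o)=k$, the descendant subtrees $D(v)$ of the $k$ children $v$ of $\bs o$ are i.i.d. Galton-Watson-$\pi$ trees; (b) the number of siblings of $\bs o$ on its own level (i.e. $d_1(F(\bs o))$) has the size-biased law $\widehat\pi$, with $\bs o$ uniformly one of those siblings, and the remaining siblings carry i.i.d. GW-$\pi$ descendant subtrees; and (c) this structure propagates up the spine independently at each level. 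Claim (a) is essentially the statement that "$[D(\bs o),\bs o]$ under an offspring-invariant law with the stated independence is the GW tree," which one extracts from $\sigma$-invariance together with the independence hypothesis: the independence of $d_1(\bs o)$ from $D^c(\bs o)$, pushed through $\sigma$, propagates to independence of the subtree shapes of successive generations, and a standard disintegration identifies the offspring law at each generation as $\pi$. Claim (b) comes from~\eqref{eq:thm:p_infty:1}-style size-biasing: under an offspring-invariant law, the law of $b(\bs o) = d_1(F(\bs o))$ is the size-biased version of the law of $d_1(\bs o)$, which is exactly $\widehat\pi$, and $\bs o$ is a size-biased-uniform pick among the children of $F(\bs o)$.

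Once (a)--(c) are in hand, Proposition~\ref{prop:descendantsEnough} finishes the job: an offspring-invariant random Family Tree is determined by the law of $[D(\bs o),\bs o]$, and the construction in Definition~\ref{def:egwt} is itself offspring-invariant (Proposition~\ref{prop:EGW-unimodular}(i)) with the same descendant law $[D(\bs o),\bs o] \sim \gw(\pi)$; so the two laws coincide. Alternatively, and perhaps more cleanly, one invokes Proposition~\ref{prop:anyI/I}: $[\bs T,\bs o] \sim \sigma_\infty[D(\bs o),\bs o]$, and by step (a) $[D(\bs o),\bs o]$ is a GW-$\pi$ tree, so $[\bs T,\bs o] \sim \sigma_\infty \mathcal P_{\gw(\pi)} = \mathcal P_{\egw(\pi)}$ by Proposition~\ref{prop:EGW-limit}.

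The main obstacle I anticipate is step (a): proving that the independence hypothesis, which is stated only for the \emph{root's} offspring count versus the root's \emph{non}-descendants, actually forces the full Galton-Watson branching structure inside $D(\bs o)$. The leverage is that $\sigma$-invariance lets one "re-root" at a child of $\bs o$, turning a statement about generation $1$ into a statement about generation $0$ of the shifted tree, while the child's non-descendant subtree now \emph{contains} $\bs o$ together with $\bs o$'s other children — so the root-independence hypothesis for the shifted tree yields independence of a deeper offspring count from a larger non-descendant region. Iterating and carefully tracking the conditional laws is where the real work lies; I would organize it as an induction on the generation, at each stage using $\mathcal P_1 = \mathcal P$ to transfer the inductive hypothesis one level deeper, and a monotone-class / disintegration argument to pin down that the conditional offspring law is $\pi$ independently at every vertex.
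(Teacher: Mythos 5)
Your proposal is correct and follows essentially the same route as the paper: the forward direction from the construction plus Proposition~\ref{prop:EGW-unimodular}, and for the converse the key mechanism you identify --- re-rooting at a child via $\sigma$-invariance so that the old root and its other children fall into the new root's non-descendants, then inducting on generations to show $D(\bs o)$ is an ordinary $\gw(\pi)$ tree and concluding via Proposition~\ref{prop:descendantsEnough} --- is exactly the paper's argument (its product formula~\eqref{eq:EGWcharacterization:0} proved by induction on the depth of the test events). The only remarks are that your steps (b) and (c) about the spine are redundant once (a) is established, and that the inductive step you defer as ``the real work'' is precisely what the paper carries out by peeling off the children's descendant subtrees one at a time.
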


\iffalse
	\begin{remark}
		\label{rem:characterization}
		\mar{\ali{I think this is not correct and should be deleted. See the margin in the proof}}
The independence condition in Theorem~\ref{thm:EGW-characterization}
can be weakened to the independence of $d_1(\bs o)$ and the subtree induced by
$\{\bs o\}\cup(D(F(\bs o))\setminus D(\bs o))$.
The same proof works by letting $A':=\mathcal T_*$ in the proof.
	\end{remark}
\fi

\begin{remark}
The strong assumption of offspring-invariance in Theorem~\ref{thm:EGW-characterization} 
can be somewhat relaxed. The more general result is that a random \eft{} is an \egwt{}
if and only if it is quasi-invariant
and $d_1(\bs o)$ is independent (jointly) of
$D^c(\bs o)$ and $\Delta[\bs T, \bs o, F(\bs o)]$ (see Definition~\ref{def:delta} and Corollary~\ref{cor:quasi-inv}).
The same proof works except for the terms of Equation~\eqref{eq:EGWcharacterization:a}
which should be adapted with the new assumptions.

In this claim, the independence from the Radon-Nikodym cocycle cannot be dropped.
A counter example is the case where $\bs T$ is the deterministic \eft{} of the
semi-infinite path and $\bs o$ is a (non-uniform) random vertex in it.
This tree is not necessarily an \egwt{}, but is quasi-invariant if every vertex
has a positive probability to be the root. In this case, $D^c(\bs o)$ is always constant,
hence, independent from $d_1(\bs o)$. 
\end{remark}

The following proposition classifies \egwt{}s
beyond the classification of Theorem~\ref{thm:classification}.

\begin{proposition}[{Foil Classification of \egwt{}s}]
	\label{prop:EGW-classification}
	The \egwt{} almost surely satisfies the following properties:
	\begin{enumerate}[(i)]
		\item \label{prop:EGW-classification:1}
		When $m=1$, in the non-degenerate case (that is, 
		when $\pi(1)\neq 1$), the \egwt{} is a unimodular \eft{} of
		class $\mathcal I/\mathcal I$ and there is no youngest generation. 
		In the degenerate case (when $\pi(1)=1$),
		it is of class $\mathcal I/\mathcal F$ and each generation (i.e. foil) has only one vertex.
		\item \label{prop:EGW-classification:2}
		When $m>1$, all generations are infinite and there is no youngest generation.
		Moreover, $\myprob{\card{D(\bs o)}=\infty}>0$ and there are
		uncountably many bi-infinite $F$-paths; i.e., the tree has uncountably many ends.
		\item \label{prop:EGW-classification:3}
		When $m<1$, the descendant tree of each vertex is finite; i.e., 
                the tree has only one end. Moreover, all generations are finite 
                (resp. infinite) if and only if $\sum_{k\geq 1} (k\log{k})\pi_k<\infty$ 
		(resp. the sum is infinite). Also, $\card{L(\bs o)}$ has finite mean
                if and only if $\pi$ has finite variance.
	\end{enumerate}
\end{proposition}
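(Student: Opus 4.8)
The plan is to reduce everything to the behaviour of the descendant tree $D(\bs o)$, which under the \egwt{} is just an ordinary $\gw(\pi)$ tree, together with the structure of the spine. Recall from Definition~\ref{def:egwt} that the \egwt{} consists of a spine $(\bs o_i)_{i\geq 0}$ with $\bs o=\bs o_0$, where each $\bs o_i$ has a size-biased number $z_i\sim\widehat\pi$ of children, one of which is $\bs o_{i-1}$, and where each off-spine child (and $\bs o_0$ itself) roots an independent ordinary $\gw(\pi)$ tree. The $n$-th generation $L_n(\bs o)$ of the \egwt{} is the disjoint union of $D_n(\bs o_0)$ and $D_{n-i}\big(\text{off-spine subtrees hanging at }\bs o_i\big)$ for $1\le i\le n$, i.e. a countable collection of independent generation-sizes of $\gw(\pi)$ trees started at $n$, $n-1,\dots$. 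This representation makes all three cases accessible.

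For part~\eqref{prop:EGW-classification:1} ($m=1$), unimodularity is Proposition~\ref{prop:EGW-unimodular}. In the non-degenerate case, $d_1$ of the off-spine vertices is not a.s.\ $1$, so $D(v)$ for an off-spine $v$ is a critical $\gw(\pi)$ tree, which is a.s.\ finite; hence every vertex has a.s.\ finitely many descendants, there is no bi-infinite $F$-path other than the spine would-be path — but the spine itself need not be the unique bi-infinite path, and in fact with $d_1$ of the root $=1$ impossible to extend — so by Proposition~\ref{prop:classification-EFT} the tree is of class $\mathcal I/\mathcal I$; the absence of a youngest generation follows because $z_i\ge 1$ always and with positive probability the spine gains extra children, more carefully because under $\mathcal P_{\mathrm{EGW}}$ the root a.s.\ has a parent whose subtree above is again distributed like an \egwt{} (offspring-invariance), so there is no youngest foil. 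The degenerate case $\pi(1)=1$ gives the deterministic bi-infinite path, class $\mathcal I/\mathcal F$.

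For part~\eqref{prop:EGW-classification:2} ($m>1$), since $m>1$ Proposition~\ref{prop:sigmaClassification}\eqref{prop:sigmaClassification:1} already gives that all generations are infinite and $\bs T$ has one or infinitely many ends; since the off-spine subtrees are supercritical $\gw(\pi)$ trees, each survives with positive probability, so $\myprob{\card{D(v)}=\infty}>0$ for every vertex, in particular $\myprob{\card{D(\bs o)}=\infty}>0$, and a surviving supercritical $\gw$ tree has uncountably many ends a.s.\ on survival; thus $\bs T$ has uncountably many ends, ruling out the one-end alternative; the absence of a youngest generation is as in part (i). For part~\eqref{prop:EGW-classification:3} ($m<1$), each off-spine $D(v)$ and $D(\bs o)$ is a subcritical $\gw(\pi)$ tree, a.s.\ finite, so every vertex has finitely many descendants and by Proposition~\ref{prop:sigmaClassification}\eqref{prop:sigmaClassification:3} the tree has one end and all generations are finite or all infinite. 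By Lemma~\ref{lemma:sigmaClassification}, all generations are infinite iff $\myprob{d_n(\bs o)>0}/m^n\to 0$; but $\myprob{d_n(\bs o)>0}$ is exactly the survival probability to generation $n$ of a $\gw(\pi)$ tree, whose decay rate relative to $m^n$ is governed by the classical Kesten--Stigum / Heathcote--Seneta dichotomy: the ratio tends to a positive constant iff $\sum_k (k\log k)\pi_k<\infty$ and tends to $0$ iff the sum diverges. Feeding this into Lemma~\ref{lemma:sigmaClassification} gives the stated criterion, and since $\card{L(\bs o)}$ is (via the spine decomposition above) a geometric-type sum of independent $\gw$-generation-sizes whose means form the geometric series $\sum m^k=1/(1-m)<\infty$, the finiteness of $\omid{\card{L(\bs o)}}$ reduces — using that the size-biased spine multiplicities $z_i$ have finite mean iff $\pi$ has finite second moment — to finiteness of the variance of $\pi$.

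The main obstacle is the last equivalence in~\eqref{prop:EGW-classification:3}: proving that all generations are infinite a.s.\ iff $\sum_k (k\log k)\pi_k=\infty$. This is precisely the $x\log x$ condition, and the cleanest route is to recognize $\myprob{d_n(\bs o)>0}=\myprob{Z_n>0}$ for $Z_n$ the $n$-th generation size of $\gw(\pi)$, invoke the Heathcote--Seneta theorem that $m^{-n}\myprob{Z_n>0}$ converges to a positive limit iff $\sum k\log k\,\pi_k<\infty$ (and to $0$ otherwise), and then apply Lemma~\ref{lemma:sigmaClassification}. One must also handle the measure-zero degenerate sub-cases ($\pi(0)=0$, etc.) separately. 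The finite-mean-of-$\card{L(\bs o)}$ claim is a routine second-moment computation on the spine decomposition once the $z_i$ are controlled.
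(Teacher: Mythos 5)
Your overall strategy matches the paper's: reduce everything to the spine decomposition, in which $D(\bs o)$ and the off-spine subtrees are independent ordinary $\gw(\pi)$ trees, then invoke Propositions~\ref{prop:EGW-unimodular} and~\ref{prop:sigmaClassification}, Lemma~\ref{lemma:sigmaClassification}, and the Heathcote--Seneta--Vere-Jones dichotomy for the $x\log x$ criterion. Two of your deviations are legitimate and arguably more direct. For $m>1$ you obtain uncountably many ends by exhibiting, a.s., a vertex $v$ with $D(v)$ an infinite supercritical $\gwt{}$, which has uncountably many ends on survival; the paper instead shows that a.s. no vertex has a one-ended descendant tree and concludes via completeness of the space of ends. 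For $\omid{\card{L(\bs o)}}$ you sum $\omid{z_i-1}\,m^{i-1}$ along the spine, which gives $1+c/(m(1-m))$ with $c=\sum_k k(k-1)\pi_k$, the same value the paper derives from the identity $\omid{\card{L_n(\bs o)}}=m^{-n}\omid{d_n(\bs o)^2}$; both reduce finiteness to the mean of $\widehat{\pi}$, i.e. the second moment of $\pi$, being finite.

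The genuine gap is your justification of ``no youngest generation.'' You argue that ``the root a.s. has a parent whose subtree above is again an \egwt{},'' but a youngest foil is a foil $L$ with $f^{-1}(L)=\emptyset$: the issue is whether the generations in the \emph{descendant} direction remain non-empty for all $n$, whereas the existence of arbitrarily old generations is automatic for any \eft{}. Since in the critical non-degenerate case every individual $D(v)$ is a.s. finite, no single descendant subtree settles the matter; one must use the infinitude of generation $0$. Conditionally on the pruned tree $K(\bs T,\bs o,0)$, the subtrees $D(v)$ for $v\in L(\bs o)$ are i.i.d. $\gw(\pi)$ trees, each reaching depth $n$ with positive probability, so the second Borel--Cantelli lemma yields that a.s. infinitely many of them reach depth $n$, for every $n$; hence no foil is youngest. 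This is exactly the paper's argument, and the same repair covers your ``as in part (i)'' in the case $m>1$, where it is even easier since a.s. some $D(v)$, $v\in L(\bs o)$, is infinite. With that step corrected, the proposal is sound.
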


It follows from part~\eqref{prop:EGW-classification:1} of Proposition~\ref{prop:EGW-classification} 
that
in a non-degenerate critical \egw{}, almost surely
the number of vertices in each generation is infinite, {which is a known result (see Subsection~\ref{sec:bibEGW}).}

\begin{remark}
The existence of a youngest foil can be interpreted as
the extinction of the \eft{} and is an analogue of extinction in branching processes.
Therefore, by Proposition~\ref{prop:EGW-classification}, if $m\geq 1$, 
then the eternal branching process almost surely does not suffer of extinction,
although the descendants of the root may suffer of extinction. 
For $m<1$, extinction is equivalent to $\omid{d_1(\bs o)\log d_1(\bs o)}<\infty$.
\end{remark}

\begin{proof}[Proof of Proposition~\ref{prop:EGW-unimodular}]
Let $m:=\omid{d_1(\bs o)}$. It is enough to prove the claim in the ordered case.
The non-ordered case is obtained by forgetting the order. To prove offspring-invariance
of the \egwt{}, one should prove that for all $A$,
	\begin{equation}
		\label{eq:prop:EGW-unimodular}
		\myprob{A} = \frac 1m\omid{\sum_{v\in D_1(\bs o)}\identity{A}[\bs T, v]}.
	\end{equation}
For this, it is enough to prove the last result when $A$ is a subset of
$B:=\{[T,o]: d_1(F(o))=k, o=c_j(F(o))\}$, where $c_j(v)$ denotes the $j$-th child 
of $v$ using the assumed order on the children and $1\leq j\leq k$ are arbitrary. One has
	\begin{eqnarray*}
		\myprob{A} = \myprob{B} \probCond{A}{B} = \frac {k\pi_k}m \cdot \frac 1k \probCond{A}{B}.
	\end{eqnarray*}
On the other hand, by the definition of $A$, the summation in~\eqref{eq:prop:EGW-unimodular} is non-zero 
only when $d_1(\bs o)=k$ and only the summand for $v=c_j(\bs o)$ can be non-zero. Hence
	\begin{eqnarray*}
\frac 1m
{\mathbf E}
\hspace{-.2cm}
\sum_{v\in D_1(\bs o)}
\hspace{-.2cm}
\identity{A}[\bs T, v] \hspace{-.1cm}= \hspace{-.1cm}
\frac 1m \myprob{d_1(\bs o)=k, [T,c_j(\bs o)]\in A} \hspace{-.1cm}
= \hspace{-.1cm} \frac{\pi_k}m \probCond{[T,c_j(\bs o)]\in A}{d_1(\bs o)=k}. 
	\end{eqnarray*}
The explicit construction of \egwt{} implies that $[\bs T, \bs o]$
conditioned on $B$ has the same distribution as $[\bs T, c_j(\bs o)]$
conditioned on $d_1(\bs o)=k$. Therefore, the above equations imply
that~\eqref{eq:prop:EGW-unimodular} holds and the claim is proved.
\end{proof}

\begin{proof}[Proof of Proposition~\ref{prop:EGW-limit}]
The descendants $[D(\bs o), \bs o]$ of the root form an ordinary $\gw(\pi)$ tree.
Thus, the claim follows from Propositions~\ref{prop:anyI/I} and~\ref{prop:EGW-unimodular}.

\end{proof}

\begin{proof}[Proof of Theorem~\ref{thm:EGW-characterization}]
If $[\bs T, \bs o]$ is an \egwt{}, the claim follows by
Proposition~\ref{prop:EGW-unimodular} and the construction of \egwt{}.
Conversely, suppose $[\bs T, \bs o]$ is offspring-invariant and it has the
mentioned independence property. If $[\bs T, \bs o]$ is non-ordered, add a
uniformly random order on the children of each vertex independently.
It is not difficult to verify that the resulting ordered \eft{} is also offspring-invariant.
Therefore, one may assume $[\bs T, \bs o]$ is an ordered \eft{} without loss of generality.
	
Let $m:=\omid{d_1(\bs o)}$. For a vertex $v$, let $c_i(v)$ be the $i$-th
child of $v$ for $1\leq i\leq d_1(v)$ using the assumed order on the children.
Let $A', A_1, A_2,\ldots A_k$ be a sequence of events in $\mathcal T_*$
and let $A=A(A';A_1,\ldots,A_k)$ be the event that
$d_1(o)=k$, $D^c(o)\in A'$ and $D(c_i(o))\in A_i$ for all $i$. 
The next step consists in showing that	
		\begin{equation}
		\label{eq:EGWcharacterization:0}
		\myprob{A} = \myprob{d_1(\bs o)=k} \left(\prod_{j=1}^k \myprob{D(\bs o)\in A_j}\right) \myprob{D^c(\bs o)\in A'}.
		\end{equation}
		First, assume~\eqref{eq:EGWcharacterization:0} holds. 
		By letting $A':=\mathcal T_*$, it follows that conditional on $d_1(\bs o)=k$,
                the subtrees $D(c_1(\bs o))$, \ldots, $D(c_k(\bs o))$ are i.i.d. with
                the same distribution as $D(\bs o)$. This means that $D(\bs o)$ is an
                ordinary (ordered) Galton-Watson Tree. Therefore, 
                Propositions~\ref{prop:descendantsEnough} and~\ref{prop:EGW-unimodular} 
                imply that $[\bs T, \bs o]$ is an \egwt{}, which is the desired property.
		
		To prove~\eqref{eq:EGWcharacterization:0}, it is enough to assume that
                the events $D(o)\in A_i$ for $1\leq i\leq k$ depend only on the generations
                $0,1,2,\ldots, n$ of $D(o)$, which is the subtree induced by $D_0(o)\cup\cdots \cup D_n(o)$. 
                The proof is by induction on $n$. For $n=0$, \eqref{eq:EGWcharacterization:0}
                follows by the independence of $d_1(\bs o)$ and $D^c(\bs o)$.
		For $n\geq 1$, assume the claim holds for $n-1$.
                Let $j$ be arbitrary such that $1\leq j\leq k$ and let $h_j[T, v]:=\identity{A}[T,F(v)] \identity{\{v=c_j(F(v))\}}$.
                One can write $\identity{A}[T,o] = \sum_{v\in D_1(o)} h_j[T,v]$ (note that only the summand for $v:=c_j(o)$ can be non-zero). Therefore,
	\begin{eqnarray*}
		\myprob{A} = \omid{\sum_{v\in D_1(\bs o)} h_j[T,v]}.
	\end{eqnarray*}
	
	Now, one can use this equation for $j=1$ together with invariance under $\sigma$ and~\eqref{eq:sigma_n} to obtain
	\begin{eqnarray}
	\label{eq:EGWcharacterization:a}
	\nonumber \myprob{A} &=&  m \omid{h_1(\bs o)}\\ 
	\nonumber &=& m \myprob{ \bs o = c_1(F(\bs o)), [\bs T, F(\bs o)]\in A}\\
	&=& m\myprob{D(\bs o)\in A_1,\bs o = c_1(F(\bs o)),[\bs T, F(\bs o)]\in A(A'; \mathcal T_*, A_2,\ldots, A_k)}.
	\end{eqnarray}
	The induction hypothesis in \eqref{eq:EGWcharacterization:0} implies that 
        the event $D(o)\in A_1$ is independent of $D^c(o)$ (note that $D(o)\in A_1$ depends 
        on one less generation of $D(o)$ than the event $[T,o]\in A$). Therefore,~\eqref{eq:EGWcharacterization:a} and the definition
        of the event $A$ imply
	\begin{eqnarray}
	\nonumber
	\myprob{A} &=& m \myprob{D(\bs o)\in A_1}\cdot \myprob{\bs o = c_1(F(\bs o)),[\bs T, F(\bs o)]\in A(A';\mathcal T_*, A_2,\ldots, A_k)}\\
	\label{eq:EGWcharacterization:b}
	&=& \myprob{D(\bs o)\in A_1} \cdot \myprob{A(A';\mathcal T_*, A_2,\ldots, A_k)},
	\end{eqnarray}
	where in the last equation, \eqref{eq:EGWcharacterization:a} is used again for the event $A(A';\mathcal T_*, A_2,\ldots, A_k)$. By applying the same argument as above for $j=2$ and the event $A(A';\mathcal T_*, A_2,\ldots, A_k)$, one gets
	\[
	\myprob{A}= \myprob{D(\bs o)\in A_1} \cdot \myprob{D(\bs o)\in A_2} \cdot \myprob{A(A';\mathcal T_*, \mathcal T_*,  A_3,\ldots, A_k)}.
	\]
	Continuing inductively, one obtains
	\begin{eqnarray}
	\nonumber
	\myprob{A} &=& \left(\prod_{j=1}^k \myprob{D(\bs o)\in A_j}\right) \myprob{A(A';\mathcal T_*, \ldots, \mathcal T_*)}\\
	\nonumber
	&=& \left(\prod_{j=1}^k \myprob{D(\bs o)\in A_j}\right) \myprob{d_1(\bs o)=k, D^c(\bs o)\in A'}\\
	\nonumber
	&=& \left(\prod_{j=1}^k \myprob{D(\bs o)\in A_j}\right) \myprob{d_1(\bs o)=k} \myprob{D^c(\bs o)\in A'},
	\end{eqnarray}
	where in the last equation, the independence assumption is used. 
        Therefore, \eqref{eq:EGWcharacterization:0} is proved and the proof is complete.
\end{proof}

\begin{proof}[Proof of Proposition~\ref{prop:EGW-classification}]
	Let $[\bs T, \bs o]$ be the  \egwt{}. 
	The event $\card{D(\bs o)}=\infty$ is the event of non-extinction in a Galton-Watson process. Therefore, it happens with positive probability if and only if $m>1$ or $\pi(1)=1$.
	
	\eqref{prop:EGW-classification:1}
	As mentioned above,
	$D(\bs o)$ is finite a.s. Therefore, Proposition~\ref{prop:sigmaClassification}
        implies that $[\bs T, \bs o]$ is of class $\mathcal I/\mathcal I$. 
	For the second claim, consider $K(\bs T, \bs o, 0)$ which is obtained by 
        pruning $[\bs T, \bs o]$ from generation 0. Since $\bs T$ has class
        $\mathcal{I}/\mathcal I$, the generation $L(\bs o)$ is infinite a.s. 
        Conditioned on $K(\bs T, \bs o, 0)$, the descendants $D(v)$ for $v\in L(\bs o)$
        are i.i.d. \gwt{}s. Now the claim follows by the Borel-Cantelli lemma.
	
	\eqref{prop:EGW-classification:2}
	Propositions~\ref{prop:sigmaClassification} and~\ref{prop:EGW-unimodular}
        show that the generations are infinite a.s. Like the previous case, conditioned
        on $K(\bs T, \bs o, 0)$, the descendants $D(v)$ for $v\in L(\bs o)$ are i.i.d.
        supercritical \gwt{}s, each of them is infinite with positive probability.
        Now, by the Borel-Cantelli lemma, one obtains that there is no youngest generation
        and there is more than one end. Therefore, Proposition~\ref{prop:sigmaClassification}
        implies that there are infinitely many ends.
		
	It is easy to see that the probability that $D(\bs o)$,
        which is an ordinary \gwt{}, has exactly one end, is zero. Therefore, 
        by Lemma~\ref{lemma:sigmaHappensAtRoot} and Proposition~\ref{prop:EGW-unimodular},
        almost surely there is no vertex $v\in V(\bs T)$ such that
        $D(v)$ has exactly one end.
        This implies that $\bs T$ has no isolated end a.s.
        Since the space of ends of a tree is complete (\cite{Di10}),
        this implies that $\bs T$ has uncountably many ends a.s.

	\eqref{prop:EGW-classification:3}
	By Propositions~\ref{prop:sigmaClassification} and~\ref{prop:EGW-unimodular},
        $\bs T$ has only one end, and each generation is finite (resp. infinite) if and only 
        if~\eqref{eq:prop:sigmaClass} holds (resp. doesn't hold).  It is proved in \cite{HeSeVe67}
        that for subcritical \gwt{}s, \eqref{eq:prop:sigmaClass}
        is equivalent to $(\sum k\log k) \pi_k<\infty$. 
	
	Consider now the expectation of $\card{L(\bs o)}$.
	For $v\in D_n(\bs o)$, one has $L_n(v)=D_n(\bs o)$. By invariance under $\sigma_n$ and~\eqref{eq:m^n}, one gets
	\[
	\omid{\card{L_n(\bs o)}} =\frac 1{m^n}
	\omid{\sum_{v\in D_n(\bs o)}\card{L_n(v)}} = \frac 1{m^n}
	\omid{{d_n(\bs o)^2}}.
	\]
	
	By classical properties of ordinary \gwt{}s (see~\cite{Soren}), one has
	\[
	\omid{d_n(\bs o)^2}= m^n + 
	c\left({m^{n-1}+m^n}+\cdots + m^{2(n-1)}\right),
	\]
	where $c:=\sum_k k(k-1)\pi_k$.
	Therefore,
	\begin{eqnarray*}
		\omid{\card{L(\bs o)}} &=& \lim_{n\rightarrow\infty}\omid{\card{L_n(\bs o)}}
		= \lim_{n\rightarrow\infty}\frac 1{m^n}\omid{d_n(\bs o)^2}\\
		&=& 1+ \frac cm(1+m+m^2+\cdots)
		= 1+\frac c{m(1-m)}.
	\end{eqnarray*}
	Therefore, $\omid{\card{L(\bs o)}}$ is finite if and only if 
        $c<\infty$, which is equivalent to the finiteness of the variance of $\pi$.
	
\end{proof}

\subsection{Eternal Multi-Type Galton-Watson Trees}
\label{sec:EMGW}

In this subsection, the \egwt{}s constructed in Subsection~\ref{sec:EGW}
are extended to multi-type branching processes. These are \eft{}s where each vertex
is equipped with a mark called its \textbf{type}. Heuristically, the type of each vertex
determines the distribution of the cardinality and types of its children independently
from the other vertices. The idea is to start with an ordinary multi-type Galton-Watson
Tree and as in Subsection~\ref{sec:typicalDescendant}, to move the root to a typical far descendant.

First, the definition and notation of (ordinary) multi-type Galton-Watson
Trees are recalled using the notation in~\cite{KuLyPePe97}. In the construction, an initial
vertex is considered and some new vertices are added as its children such
that the distribution of their cardinality and types depends on the type of
the initial vertex. Then, the same process is repeated for each newly added
vertex independently (depending only on the type of the vertex).

Let $J$ be a finite or countable set describing the possible types 
of vertices and $t(v)$ denote the type of vertex $v$.
For $j\in J$, let $d_1^{(j)}(v)$ denote the number of children of $v$
that have type $j$. For each $i\in J$, let $\pi^{(i)}$ be a probability distribution on
$(\mathbb Z^{\geq 0})^J$, which represents
the joint distribution of $(d_1^{(j)}(v))_{j\in J}$, for each vertex $v$ of type $i$.
{So, the notation $\pi^{(i)}(k)$ for $k=(k_j)_{j\in J}$ expresses the probability that a given vertex of type $i$ has $k_j$ children of type $j$ for each $j\in J$.}
 
Assume the means 
$m_{i,j}:=\omidCond{d_1^{(j)}(v)}{t(v)=i}$
and $m_i:=\omidCond{d_1(v)}{t(v)=i}=\sum_{j\in J}m_{i,j}$
are finite for each $i,j\in J$.

\begin{definition}[\emgwt{}]
	\label{def:EMGW}
	The \textbf{Eternal Multi-Type Galton-Watson Tree} (\emgwt{}) is
        a random rooted \eft{} equipped with marks defined as follows.
	For $i,j\in J$ such that $m_{i,j}>0$, let $\hat {\pi}^{(i,j)}$
        be the version of ${\pi}^{(i)}$ biased by the number of children of type $j$, which is defined by
	$\hat {\pi}^{(i,j)}(k):=\frac {k_j}{m_{i,j}}{\pi}^{(i)}(k)$
	for $k \in (\mathbb Z^{\geq 0})^J$. Assume the matrix $M:=(m_{i,j})$ has a non-negative 
	left-eigenvector $b$ with eigenvalue $\rho>0$ and assume $\sum_i b_i=1$, i.e.
	\[
	\sum_{j\in J} b_j m_{j,i} = \rho b_i, \quad, \forall i\in J.
	\]
	Start with 
	a path $(\bs o_n)_{n=0}^{\infty}$. Let $\bs o:=\bs o_0$ and $F(\bs o_n):=\bs o_{n+1}$ for $n\geq 0$.
	Choose the type of $\bs o$ such that 
	$\myprob{t(\bs o)=i}=b_i$
	for each $i\in J$. Inductively for $n\geq 0$,
        choose the type of $\bs o_{n+1}$ such that 
	$\myprob{t(\bs o_{n+1})=j\mid t(\bs o_n)=i }= \frac 1{\rho b_i}{b_j m_{j,i}}$
	for each $j\in J$. Then, 
	given $t(\bs o_{n+1})=j$, add some children to $\bs o_{n+1}$ by choosing their
	cardinality and types (including the child $\bs o_n$) with the biased distribution $\hat {\pi}^{(j,i)}$.
	For $\bs o_0$ and all newly added vertices, sample their descendants as i.i.d.
	ordinary multi-type \gwt{}s with offspring distribution ${\pi}^{(\cdot)}$.
	The choices in each step are independent of the previous steps. 
\end{definition}

\begin{remark}
	In Definition~\ref{def:EMGW}, it is neither assumed that $J$ is finite, nor that $M$ is positive regular
	(i.e. that $M^n$ is positive for some $n\in\mathbb N$), nor that $\rho$ is the largest eigenvalue. 
\end{remark}

Here are some properties of \emgwt{}s.  The proofs are similar to, but lengthier than
the ones for \egwt{}s in Subsection~\ref{sec:EGW} and are skipped for brevity.

\begin{proposition}
	\label{prop:EMGW:sigma-inv}
	The \emgwt{} constructed in Definition~\ref{def:EMGW} is offspring-invariant {with $\omid{d_1(\bs o)}=\rho$.}
\end{proposition}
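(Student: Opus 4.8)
The plan is to verify offspring-invariance directly from the definition, namely that for every measurable $A\subseteq\mathcal T_*$ (with marks),
\[
\mathbb P[A] = \frac 1\rho \omid{\sum_{v\in D_1(\bs o)}\identity{A}[\bs T, v]},
\]
and to identify $\omid{d_1(\bs o)}$ with $\rho$ as a byproduct. As in the proof of Proposition~\ref{prop:EGW-unimodular}, it suffices to treat the ordered case (attach a uniform order to the children of each vertex) and to check the identity for $A$ contained in an event of the form $B=\{[T,o]: t(F(o))=i,\ d_1(F(o))=k,\ d_1^{(\cdot)}(F(o))=\bar k,\ o=c_\ell(F(o))\}$, where $\bar k=(\bar k_j)_j$ is a fixed offspring vector with $\sum_j\bar k_j=k$, and $c_\ell(v)$ is the $\ell$-th child of $v$ under the assumed order, with $o$ of type $j_0$ (so $\bar k_{j_0}\geq 1$).

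First I would compute $\mathbb P[B]$ from the construction in Definition~\ref{def:EMGW}. The stationary choice of types along the ancestral path $(\bs o_n)$ gives $\myprob{t(\bs o_1)=i}=b_i$ (since $b$ is stationary for the backward chain with kernel $\frac{b_j m_{j,i}}{\rho b_i}$ by the eigenvector relation $\sum_j b_j m_{j,i}=\rho b_i$), and then $\myprob{t(\bs o_0)=j_0\mid t(\bs o_1)=i}=\frac{b_{j_0}m_{j_0,i}}{\rho b_i}$. Given $t(\bs o_1)=i$ and $t(\bs o_0)=j_0$, the offspring vector of $\bs o_1$ is $\hat\pi^{(i,j_0)}$, so $\myprob{d_1^{(\cdot)}(\bs o_1)=\bar k}=\hat\pi^{(i,j_0)}(\bar k)=\frac{\bar k_{j_0}}{m_{i,j_0}}\pi^{(i)}(\bar k)$; and the distinguished child $\bs o_0$ is uniform among the $\bar k_{j_0}$ type-$j_0$ children, contributing a factor $\frac{1}{\bar k_{j_0}}\cdot(\text{combinatorial order factor that will cancel})$. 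Multiplying, the factors $\bar k_{j_0}$ cancel and one gets
\[
\mathbb P[B\cap A] = \frac{b_{j_0}m_{j_0,i}}{\rho\, b_i}\cdot b_i\cdot\frac{1}{m_{i,j_0}}\pi^{(i)}(\bar k)\cdot(\text{order})\cdot\mathbb P[A\mid B]
= \frac 1\rho\, b_{j_0}\,\pi^{(i)}(\bar k)\cdot(\text{order})\cdot\mathbb P[A\mid B],
\]
using $m_{j_0,i}/m_{i,j_0}$... wait—here one must be careful: the relevant cancellation uses $m_{j_0,i}$ from the type-transition probability against $m_{i,j_0}$ from the bias; these are \emph{not} equal in general, so instead the bookkeeping should be organized so that the bias $\bar k_{j_0}/m_{i,j_0}$ cancels against the uniform-child factor $1/\bar k_{j_0}$ and a counting factor, leaving $\pi^{(i)}(\bar k)$, while $m_{j_0,i}$ stays. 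The correct target is $\mathbb P[B\cap A]=\frac{1}{\rho}\,\frac{b_{j_0}m_{j_0,i}}{m_{i,j_0}}\pi^{(i)}(\bar k)\cdot(\text{order})\cdot\mathbb P[A\mid B]$.

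Next I would compute the right-hand side. By construction, $[\bs T,\bs o]$ conditioned on $B$ has the same law as $[\bs T,c_\ell(\bs o)]$ conditioned on $\{t(\bs o)=i,\ d_1^{(\cdot)}(\bs o)=\bar k,\ t(c_\ell(\bs o))=j_0\}$: indeed, in the construction the descendant tree of each non-distinguished child (and of $\bs o_0$) is a fresh ordinary multi-type \gwt{} rooted at that child, and the shifted ancestral path of $c_\ell(\bs o)$ is again a backward-stationary path. The summand for $v=c_\ell(\bs o)$ picks out exactly the events $\{t(\bs o)=i\}$, $\{d_1^{(\cdot)}(\bs o)=\bar k\}$, $\{t(c_\ell(\bs o))=j_0\}$; the probability of the root having type $i$ and offspring vector $\bar k$ is $b_i\pi^{(i)}(\bar k)$, and of the $\ell$-th child having type $j_0$ is $\bar k_{j_0}/k$ (order) or the analogous combinatorial factor. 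Matching the order/counting factors on both sides, the identity reduces to the single scalar relation $\frac{b_{j_0}m_{j_0,i}}{m_{i,j_0}}\pi^{(i)}(\bar k)=b_i\,\pi^{(i)}(\bar k)\cdot\frac{m_{i,j_0}/m_{i,j_0}\ \cdots}{\cdots}$, which after cleanup is exactly the eigenvector identity $\sum_i b_i m_{i,j_0}=\rho b_{j_0}$ summed appropriately—here, since we have conditioned on the \emph{parent type} $i$ as well, the per-$i$ version $b_{j_0}m_{j_0,i}=b_i m_{i,j_0}$ is \emph{not} available, so the cancellation must instead be carried out after summing over the parent type $i$, i.e. one should take $A\subseteq\{o=c_\ell(F(o)),\ d_1^{(\cdot)}(F(o))=\bar k\}$ \emph{without} fixing $i$, sum the contributions over $i$, and then invoke $\sum_i b_i m_{i,j_0}=\rho b_{j_0}$ to get the factor $\rho$. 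Finally, $\omid{d_1(\bs o)}=\sum_i \myprob{t(\bs o)=i}m_i$; but by offspring-invariance and $\sum_i b_i m_{i,j}=\rho b_j$ one computes $\sum_j(\text{expected number of type-}j\text{ children of }\bs o)$ via the same transport, giving $\omid{d_1(\bs o)}=\rho$.

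The main obstacle is precisely the bookkeeping just flagged: one must choose the conditioning event at the right level of granularity so that the size-bias factors and the uniform-choice-of-distinguished-child factors cancel cleanly, and so that the remaining linear-algebra identity that survives is exactly the left-eigenvector equation $bM=\rho b$ (summed over the parent type), rather than a false per-coordinate statement. I would therefore organize the proof by fixing the \emph{child-side} data (the type $j_0$ of $\bs o$, the offspring vector $\bar k$ of its parent, and the position $\ell$) but summing over the \emph{parent type} $i$, mirroring the structure of the proof of Proposition~\ref{prop:EGW-unimodular} but with the scalar $m$ replaced by the matrix $M$ and its eigenpair $(\rho,b)$; once the identity $\mathbb P[A]=\frac1\rho\omid{\sum_{v\in D_1(\bs o)}\identity A[\bs T,v]}$ is established for such generating events it extends to all measurable $A$ by a monotone class argument.
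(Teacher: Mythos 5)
Your overall architecture is the right one and is what the paper intends (the paper skips this proof, noting only that it is ``similar to, but lengthier than'' the proof of Proposition~\ref{prop:EGW-unimodular}): restrict to generating events fixing the parent's type $i$, the child's type $j_0$, the offspring vector $\bar k$ of the parent and the position $\ell$ of the distinguished child, compute both sides, and use the distributional identity between $[\bs T,\bs o]$ conditioned on $B$ and $[\bs T,c_\ell(\bs o)]$ conditioned on the corresponding root-side event. However, your bookkeeping contains a genuine error which then propagates into a spurious obstruction and an unworkable fix. Definition~\ref{def:EMGW} specifies the type kernel in the direction \emph{child to parent}: $\myprob{t(\bs o_{n+1})=j\mid t(\bs o_n)=i}=\frac{b_j m_{j,i}}{\rho b_i}$, where $\bs o_{n+1}=F(\bs o_n)$ is the parent. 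You apply this formula in the reverse direction, writing $\myprob{t(\bs o_0)=j_0\mid t(\bs o_1)=i}=\frac{b_{j_0}m_{j_0,i}}{\rho b_i}$, and you also assert $\myprob{t(\bs o_1)=i}=b_i$ on the grounds that $b$ is stationary for the backward kernel; neither is correct. (One checks $\sum_i b_i\cdot\frac{b_j m_{j,i}}{\rho b_i}=\frac{b_j m_j}{\rho}$, which equals $b_j$ only if $m_j=\rho$ for all $j$; so $b$ is not stationary for that kernel, and in fact $\myprob{t(\bs o_1)=i}=\frac{b_i m_i}{\rho}$.) The correct joint law, read directly from the definition, is
\[
\myprob{t(\bs o_0)=j_0,\ t(\bs o_1)=i}=b_{j_0}\cdot\frac{b_i m_{i,j_0}}{\rho b_{j_0}}=\frac{b_i m_{i,j_0}}{\rho},
\]
and here $m_{i,j_0}$ cancels \emph{exactly} against the denominator of the bias $\hat\pi^{(i,j_0)}(\bar k)=\frac{\bar k_{j_0}}{m_{i,j_0}}\pi^{(i)}(\bar k)$. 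With the uniform-order factor $\frac 1k$ (where $k=\sum_j\bar k_j$), one gets $\myprob{B}=\frac{b_i\,\bar k_{j_0}\,\pi^{(i)}(\bar k)}{\rho\, k}$, which matches the right-hand side contribution $\frac1\rho\cdot b_i\cdot\pi^{(i)}(\bar k)\cdot\frac{\bar k_{j_0}}{k}$ term by term, for each fixed pair $(i,j_0)$. The identity therefore holds at the same level of granularity as in the single-type proof, with no summation over the parent type.

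The ``fix'' you propose, summing over the parent type $i$ and invoking $\sum_i b_i m_{i,j_0}=\rho b_{j_0}$, cannot work even in principle: the conditional law $\probCond{A}{B_i}$ depends on $i$ (the event $A$ may pin down the parent's type), so it cannot be pulled out of the sum, and by taking $A$ supported on a single $B_i$ the summed identity forces the per-$i$ identity anyway. The resolution is not a different level of granularity but the correct joint law above. With that correction, the remaining steps of your plan do go through: the distributional identity between the two conditioned trees (via the Markov property of the ancestral type chain and the independence built into the construction), the monotone class extension, and $\omid{d_1(\bs o)}=\sum_i b_i m_i=\sum_j\sum_i b_i m_{i,j}=\sum_j\rho b_j=\rho$. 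Note that the eigenvector relation is used only to make the backward kernel a probability kernel and to evaluate $\omid{d_1(\bs o)}$; it effects no cancellation in the mass-transport identity itself.
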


\begin{proposition}
Let $\mathcal P$ be the distribution of the ordinary multi-type \gwt{} with the parameters described above.
If the distribution of the type of the initial vertex is a left-eigenvector of $M$ as above,
then $\mathcal P_{\infty}$ exists and is the distribution of the \emgwt{} constructed in Definition~\ref{def:EMGW}. 
	
\end{proposition}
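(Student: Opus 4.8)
The plan is to follow the route of the proof of Proposition~\ref{prop:EGW-limit}: combine the offspring-invariance of the \emgwt{} (Proposition~\ref{prop:EMGW:sigma-inv}) with Proposition~\ref{prop:anyI/I}, the latter applied in the marked setting, since all statements on Family Trees carry over to networks whose underlying graph is a Family Tree (here the vertex mark records the type). Concretely, one identifies the tree of descendants of the root of the \emgwt{} with an ordinary multi-type \gwt{} whose initial type has distribution $b$, and then invokes $[\bs T,\bs o]\sim\sigma_{\infty}[D(\bs o),\bs o]$.

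First I would check that $\mathcal P$, the law of the ordinary multi-type \gwt{} with root-type distribution $b$, is proper. Let $\mu_n$ be the measure on $J$ whose value at $j\in J$ is the $\mathcal P$-expected number of type-$j$ vertices in generation $n$. The branching property gives $\mu_0=b$ and $\mu_{n+1}=\mu_n M$, whence $\mu_n=bM^n=\rho^n b$ by the eigenvector relation $bM=\rho b$; summing over $j$ and using $\sum_j b_j=1$ gives $\omid{d_n(\bs o)}=\rho^n$ for all $n\ge0$. In particular $\rho=\omid{d_1(\bs o)}\in(0,\infty)$ (finiteness of $\rho$ itself follows from the eigenvector relation together with Tonelli), so $0<\omid{d_n(\bs o)}<\infty$ for every $n$ and all iterates $\sigma_n\mathcal P$ are defined. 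Next, directly from the construction in Definition~\ref{def:EMGW}, the type of $\bs o=\bs o_0$ has distribution $b$ and the descendants of $\bs o_0$ are sampled as i.i.d.\ ordinary multi-type Galton-Watson subtrees with offspring laws $(\pi^{(i)})_{i\in J}$, independently of the part of the \emgwt{} lying outside $D(\bs o)$; hence the marked Family Tree $[D(\bs o),\bs o]$ of descendants of the root of the \emgwt{} has exactly the distribution $\mathcal P$.

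Finally, since the \emgwt{} is offspring-invariant and proper (again $\omid{d_1(\bs o)}=\rho\in(0,\infty)$), the marked version of Proposition~\ref{prop:anyI/I} yields that $\sigma_{\infty}[D(\bs o),\bs o]$ is well defined and has the law of the \emgwt{}. Combined with the identification of $[D(\bs o),\bs o]$ above, this says precisely that $\mathcal P_{\infty}=\lim_n\sigma_n\mathcal P$ exists and equals the law of the \emgwt{}, which is the claim. The only point requiring genuine care — and hence the main, if mild, obstacle — is verifying that Proposition~\ref{prop:anyI/I} and the definition of $\sigma_{\infty}$ transfer verbatim to Family Trees carrying marks; this is exactly what the remark following the definition of unimodular \ft{}s asserts, a multi-type Family Tree being simply such a marked network. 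Alternatively one could give a direct computation of $\sigma_n\mathcal P$, as in the proof of Proposition~\ref{prop:joining-p_infinity}, but the argument through Proposition~\ref{prop:anyI/I} is considerably shorter.
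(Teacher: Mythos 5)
Your argument is correct and follows exactly the route the paper indicates: the paper omits the proof, stating only that it is analogous to the single-type case, and the proof of Proposition~\ref{prop:EGW-limit} there is precisely your combination of the identification of $[D(\bs o),\bs o]$ with the ordinary (here multi-type) \gwt{}, offspring-invariance (Proposition~\ref{prop:EMGW:sigma-inv}), and Proposition~\ref{prop:anyI/I} in the marked setting. Your preliminary check that $\omid{d_n(\bs o)}=\rho^n$ via $bM^n=\rho^n b$ is a sensible addition ensuring the iterates $\sigma_n\mathcal P$ are defined.
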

In other words, \emgwt{} are obtained form ordinary multi-type \gwt{}s by moving the root to a typical far descendant.

\begin{proposition}
	A random Family Tree $[\bs T, \bs o]$ equipped with types is an \emgwt{} 
	if and only if 
		(i) it is offspring-invariant, and 
		(ii) conditional to the type of the root, the cardinality and types of the children of the root is independent
		of the non-descendants of the root.
\end{proposition}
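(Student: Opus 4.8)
The plan is to mirror, almost line by line, the proof of Theorem~\ref{thm:EGW-characterization}, keeping track of the extra bookkeeping imposed by the types. The ``only if'' direction is immediate: Proposition~\ref{prop:EMGW:sigma-inv} gives offspring-invariance, and the explicit construction in Definition~\ref{def:EMGW} makes the conditional independence (ii) transparent, since given $t(\bs o)$ the children of $\bs o$ are sampled from $\hat\pi$-type distributions and the non-descendant part $D^c(\bs o)$ is generated from the independent backbone randomness $(\bs o_n)_{n\geq 1}$ together with i.i.d. ordinary multi-type \gwt{}s hanging off the backbone. For the ``if'' direction, I would first reduce to the ordered case exactly as in the proof of Theorem~\ref{thm:EGW-characterization}: add a uniform independent order on the children of each vertex, check the resulting ordered \eft{} is still offspring-invariant, and note the conditional independence in (ii) is preserved.

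Then I would set $m:=\omid{d_1(\bs o)}$, write $c_i(v)$ for the $i$-th child of $v$, and, fixing a type $t_0\in J$, a cardinality-and-type vector $k=(k_j)_{j\in J}$ with $\sum_j k_j = |k|$, events $A',A_1,\dots,A_{|k|}$ on marked trees together with a prescribed type label for each $c_i(\bs o)$, define the event $A$ that $t(\bs o)=t_0$, the children profile of $\bs o$ equals $k$ (with the prescribed ordering of types among $c_1(\bs o),\dots,c_{|k|}(\bs o)$), $D^c(\bs o)\in A'$ and $D(c_i(\bs o))\in A_i$ for all $i$. The goal is to show
\[
\myprob{A} = \myprob{t(\bs o)=t_0,\ (d_1^{(j)}(\bs o))_j = k}\,\Big(\prod_{i=1}^{|k|}\myprob{D(\bs o)\in A_i \mid t(\bs o)=t(c_i(\bs o))}\Big)\,\myprob{D^c(\bs o)\in A' \mid t(\bs o)=t_0},
\]
which says precisely that, conditionally on the type of a vertex, its descendant subtrees split off as independent copies of the type-conditioned descendant law — i.e. $D(\bs o)$ is an ordinary multi-type \gwt{}. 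Once that identity is in hand, Proposition~\ref{prop:descendantsEnough} (the distribution of an offspring-invariant \ft{} is determined by that of $[D(\bs o),\bs o]$) together with Proposition~\ref{prop:EMGW:sigma-inv} forces $[\bs T,\bs o]$ to be the \emgwt{}.

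The identity itself is proved by the same double induction as for \egwt{}s: reduce to events $A_i$ depending only on generations $0,\dots,n$ of the descendant tree, induct on $n$ (the base case $n=0$ is the hypothesis (ii)), and at the inductive step peel off the subtrees $D(c_1(\bs o)),D(c_2(\bs o)),\dots$ one at a time by applying offspring-invariance via $h_j[T,v]:=\identity{A}[T,F(v)]\identity{\{v=c_j(F(v))\}}$, exactly as in Equations~\eqref{eq:EGWcharacterization:a}--\eqref{eq:EGWcharacterization:b}. The one genuinely new subtlety — and the step I expect to be the main obstacle — is that when I move the root from $\bs o$ to its parent $F(\bs o)$ under $\sigma$, the \emph{type} of $F(\bs o)$ becomes a random variable, and every conditional probability that appeared as an unconditional one in the single-type argument now has to be carried with the correct type-conditioning; concretely, one must check that the size-biasing built into $\hat\pi^{(i,j)}$ together with the eigenvector weights $b_j m_{j,i}/(\rho b_i)$ is exactly what makes the telescoping go through, i.e. that $\myprob{t(c_j(\bs o))=i\mid t(\bs o)=j,\dots}$ matches the backbone transition probabilities after one application of $\sigma$. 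This is the multi-type analogue of the fact that $\widehat\pi$ is the size-biased $\pi$, and verifying it cleanly — keeping the joint law of $\big(t(F(\bs o)),t(\bs o),(d_1^{(j)}(F(\bs o)))_j\big)$ straight through the $\sigma$-move — is where the real work lies; the rest is the routine induction already carried out in the single-type case.
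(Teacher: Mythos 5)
Your approach is exactly the one the paper intends: the paper gives no proof of this proposition at all, stating only that ``the proofs are similar to, but lengthier than, the ones for \egwt{}s\dots and are skipped for brevity,'' so adapting Theorem~\ref{thm:EGW-characterization} step by step is the right route, and your factorization identity is the correct multi-type analogue of~\eqref{eq:EGWcharacterization:0}. That said, what you have written is an outline rather than a proof: the induction and the telescoping via $h_j$ are asserted to go through ``as in the single-type case,'' and you explicitly defer the one step you identify as the real work (tracking the type of $F(\bs o)$ through the $\sigma$-move). Two points you should make explicit to close the argument. First, the telescoping step~\eqref{eq:EGWcharacterization:a} itself needs no modification, because offspring-invariance is a statement about the untyped operator $\sigma$ and the events $A$ simply carry the type labels along; what changes is only that the induction hypothesis now yields conditional independence of $\{D(\bs o)\in A_1\}$ from $D^c(\bs o)$ given $t(\bs o)$, and the event $\{\bs o=c_1(F(\bs o)),[\bs T,F(\bs o)]\in A(\cdots)\}$ fixes $t(\bs o)$ to the prescribed type of $c_1$, so the factor $\myprob{D(\bs o)\in A_1\mid t(\bs o)=t_1}$ peels off cleanly. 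Second, before you can invoke Proposition~\ref{prop:descendantsEnough} together with Proposition~\ref{prop:EMGW:sigma-inv} to identify the limit as an \emgwt{}, you must verify that the law $b_i:=\myprob{t(\bs o)=i}$ is a left eigenvector of the mean matrix $M$ with eigenvalue $m$, since Definition~\ref{def:EMGW} requires this; this is not an extra hypothesis but a consequence of offspring-invariance applied to the events $\{t(\bs o)=i\}$, namely
\[
b_i=\sigma\mathcal P[t(\bs o)=i]=\frac 1m\,\omid{d_1^{(i)}(\bs o)}=\frac 1m\sum_{j\in J}b_j m_{j,i},
\]
so $\sum_j b_j m_{j,i}=m\,b_i$. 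With these two points spelled out, your plan does yield a complete proof.
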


The following example shows that some of the \eft{}s constructed
in the previous examples are instances of \emgwt{}s (of course, by forgetting the types).
Therefore, Proposition~\ref{prop:EMGW:sigma-inv} implies that all of them are offspring-invariant.
\begin{example}
\label{ex:EMGW}
	Consider Definition~\ref{def:EMGW} with the following sets of parameters:
	
\noindent
(i)
Let $J:=\mathbb Z^{\geq 0}$ and $d\geq 2$. Assume each vertex of type
$j$ has exactly $d$ children of type $j-1$ when $j>0$ and no children when $j=0$.
Let $b_i$ be proportional to $\tilde d^{-i}$ for $i\in J$ and $\rho:= d / {\tilde d}$.
It can be seen that these parameters satisfy the assumptions of 
Definition~\ref{def:EMGW} and the resulting \emgw{} is just the
biased Canopy Tree of Example~\ref{ex:canopy-biased}.
In particular, when $\tilde d=d$, one gets the Canopy Tree of Example~\ref{ex:canopy}.
		
\noindent
(ii)
Let $J:=\{1,2\}$ and $d\geq 1$. Let $\pi^{(1)}$ and $\pi^{(2)}$ be concentrated
on $(d,1)$ and $(0,1)$ respectively. Let $\rho:=d$ and $b:=(1-\frac 1d,\frac 1d)$.
One gets an \emgw{}. When $d\geq 2$,
this is just the \eft{} of Example~\ref{ex:isolatedEnd}.
For $d=1$, it is a single bi-infinite path where all vertices have type 2.

\noindent
(iii)
Let $J:=\mathbb N$ and $\pi^{(j)}$ be concentrated on
$(1,1,\ldots, 1, 0,0,\ldots)$, where $j\in J$ and there are precisely $j$ ones in the vector.
For an arbitrary $0<p<1$, the vector $b$ in which $b_i=p(1-p)^{i-1}$ is a
left eigenvector of the corresponding matrix $M$ with eigenvalue
$\rho:=\frac 1p$. The resulting \emgw{} is just the comb of Example~\ref{ex:comb}.
\end{example}

\begin{example}
	\label{ex:EMGW:pruning}
It can be seen that by pruning an (single-type) \egwt{} as in Definition~\ref{def:pruning},
one obtains (the underlying \eft{} of) an \emgwt{}.
Here, the type of each vertex can be set as its distance from the last generation.
\end{example}

\subsection{The Generalized Diestel-Lieder Graph}
\label{sec:DL}

The Diestel-Leader graph~\cite{DL}, recalled below, is a well known
and non-trivial example of non-unimodular graph. This graph is constructed on
the product of two non-isomorphic regular trees with distinguished ends.
Here, a natural generalization is presented where one replaces the two regular trees
by independent \egwt{}s. Also, a natural vertex-shift $F$ is considered on
this graph and the connected component of the $F$-graph is studied.
	
	Let $[\bs T_1, \bs o_1]$ and $[\bs T_2, \bs o_2]$ be two independent 
        \egwt{}s with average offspring cardinalities $m_1$ and $m_2$ respectively. 
        In the following, the notation $F, l$ and $D_n$ of Subsection~\ref{sec:ft} is
        used without reference to the underlying \eft{} as the context always indicates what is meant.

	Consider the directed graph with vertex set $V(T_1)\times V(T_2)$ and a directed edge
	from each vertex $(v_1,v_2)$ to each vertex in $\{F(v_1)\}\times F^{-1}(v_2)$. So, $(v_1,v_2)$ has $d_1(v_2)$ outgoing edges and $d_1(v_1)$ incoming edges. In the case where each $[\bs T_i, \bs o_i]$, $i=1,2$ is a regular tree with one distinguished end
	(Example~\ref{ex:regularEGW}), this gives the Diestel-Leader graph as defined in~\cite{DL}. Define the \textbf{generalized Diestel-Leader graph} $[\bs G, \bs o]$	to be the connected component containing $\bs o:=(\bs o_1, \bs o_2)$ of this directed graph. It can be seen that $V(\bs G)=\{(v_1,v_2):{l(\bs o_1,v_1)+l(\bs o_2, v_2)}=0\}$. Also, if $m_1\neq m_2$, it is easy to see that the resulting network is non-unimodular.

\begin{lemma}
	\label{lemma:DL-quasiinv}
	The generalized Diestel-Leader graph is quasi-invariant with Radon-Nikodym cocycle $\Delta((o_1,o_2),(v_1,v_2))= (\frac{m_1}{m_2})^{{-l(o_1,v_1)}}$.
\end{lemma}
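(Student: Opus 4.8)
The plan is to derive the quasi-invariance of $[\bs G,\bs o]$, together with the explicit cocycle, from the offspring-invariant mass transport principle applied separately to the two independent factors $[\bs T_1,\bs o_1]$ and $[\bs T_2,\bs o_2]$. Since an \egwt{} is offspring-invariant (Proposition~\ref{prop:EGW-unimodular}), Corollary~\ref{cor:quasi-inv} gives that each $[\bs T_i,\bs o_i]$ is quasi-invariant with cocycle $\Delta_i[T_i,o_i,v_i]=m_i^{\,l(o_i,v_i)}$; equivalently, for every measurable $\psi_i:\mathcal G_{**}\to\mathbb R^{\geq 0}$,
\[
\mathbb E_i\Big[\sum_{u\in V(\bs T_i)}\psi_i[\bs T_i,\bs o_i,u]\Big]=\mathbb E_i\Big[\sum_{u\in V(\bs T_i)}m_i^{\,l(u,\bs o_i)}\,\psi_i[\bs T_i,u,\bs o_i]\Big].
\]
I would first record that, as in Example~\ref{ex:connectedComponent}, the doubly-rooted graph $[\bs G,(a_1,a_2),(b_1,b_2)]$ is a measurable function of the pair $\big([\bs T_1,a_1,b_1],[\bs T_2,a_2,b_2]\big)$: the component of $(a_1,a_2)$ equals $\{(w_1,w_2): l(a_1,w_1)+l(a_2,w_2)=0\}$ with the prescribed edges, which together with the two marked vertices is reconstructed from the two doubly-rooted trees. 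Write $\phi[\bs G,(a_1,a_2),(b_1,b_2)]=\tilde\phi\big([\bs T_1,a_1,b_1],[\bs T_2,a_2,b_2]\big)$ for an arbitrary measurable $\phi:\mathcal G_{**}\to\mathbb R^{\geq 0}$.

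Next I would compute $\int\phi\,d\mathcal P^{(r)}=\mathbb E\big[\sum_{w\in V(\bs G)}\phi[\bs G,\bs o,w]\big]$. Using $V(\bs G)=\{(v_1,v_2): l(\bs o_1,v_1)+l(\bs o_2,v_2)=0\}$, rewrite this sum as a double sum over $V(\bs T_1)\times V(\bs T_2)$ carrying the indicator $\mathbf 1\{l(\bs o_1,v_1)+l(\bs o_2,v_2)=0\}$; by independence of $\bs T_1,\bs T_2$ and Tonelli it becomes an iterated expectation. Conditionally on $[\bs T_2,\bs o_2]$, apply the identity above to the $\bs T_1$-integral: exchanging the two tree-$1$ roots turns the indicator into $\mathbf 1\{l(\bs o_1,v_1)=l(\bs o_2,v_2)\}$ (because $l(v_1,\bs o_1)=-l(\bs o_1,v_1)$) and produces the factor $m_1^{\,l(v_1,\bs o_1)}$. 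Then, conditionally on the re-rooted tree-$1$ data, apply it to the $\bs T_2$-integral: exchanging the two tree-$2$ roots restores the indicator $\mathbf 1\{l(\bs o_1,v_1)+l(\bs o_2,v_2)=0\}$ and produces the factor $m_2^{\,l(v_2,\bs o_2)}$. Since now $\tilde\phi$ is evaluated at $\big([\bs T_1,v_1,\bs o_1],[\bs T_2,v_2,\bs o_2]\big)=\phi[\bs G,(v_1,v_2),\bs o]$, this yields
\[
\int\phi\,d\mathcal P^{(r)}=\mathbb E\Big[\sum_{(v_1,v_2)\in V(\bs G)}m_1^{\,l(v_1,\bs o_1)}\,m_2^{\,l(v_2,\bs o_2)}\,\phi[\bs G,(v_1,v_2),\bs o]\Big].
\]

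To finish, note that on $V(\bs G)$ one has $l(\bs o_1,v_1)=-l(\bs o_2,v_2)$, hence $l(v_1,\bs o_1)=-l(\bs o_1,v_1)$ and $l(v_2,\bs o_2)=l(\bs o_1,v_1)$, so the weight simplifies to the strictly positive quantity $(m_1/m_2)^{-l(\bs o_1,v_1)}$. Comparing the last display with the defining relation $\int\phi\,d\mathcal P^{(r)}=\int\phi\,\Delta\,d\mathcal P^{(l)}$ (valid for all measurable $\phi\geq 0$) reads off $\Delta$ on the $\mathcal P^{(l)}$-typical configurations, which is the cocycle in the statement after using the cocycle relation $\Delta(x,y)=\Delta(y,x)^{-1}$ and $l(w,v)=-l(v,w)$ to match the order of the arguments; strict positivity of $\Delta$ then upgrades this one-sided relation to mutual absolute continuity of $\mathcal P^{(r)}$ and $\mathcal P^{(l)}$, so $[\bs G,\bs o]$ is quasi-invariant. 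I expect the main obstacle to be exactly this last exponent/argument bookkeeping---tracking how the constraint $l(\bs o_1,v_1)+l(\bs o_2,v_2)=0$ and the two weights $m_1^{\pm l},m_2^{\pm l}$ behave under the two successive root exchanges; by contrast, the reduction of $[\bs G,\cdot,\cdot]$ to a function of the two doubly-rooted trees and the Tonelli/monotone-convergence justifications for the possibly infinite inner sums are routine.
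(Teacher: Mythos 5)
Your argument is correct and is essentially the paper's own proof: both reduce the two-sided mass transport for $[\bs G,\bs o]$ to the offspring-invariant mass transport principle (Proposition~\ref{prop:mtp-sigma-inv}, via Proposition~\ref{prop:EGW-unimodular}) applied separately to the two independent factors, using the constraint $l(\bs o_1,v_1)+l(\bs o_2,v_2)=0$ on $V(\bs G)$; your Tonelli/conditioning route and the paper's reduction to product indicators supported on $\{l(o_1,v_1)=n\}\times\{l(o_2,v_2)=-n\}$ are interchangeable ways of combining the two one-dimensional identities. One caveat on the last bookkeeping step you flagged as the main hazard: your display gives $\Delta((v_1,v_2),(\bs o_1,\bs o_2))=(m_1/m_2)^{-l(\bs o_1,v_1)}$, so inverting to reorder the arguments yields $\Delta((o_1,o_2),(v_1,v_2))=(m_1/m_2)^{+l(o_1,v_1)}$, which differs by a sign in the exponent from the stated formula --- but this is consistent with the paper's subsequent claim that the component $[\bs T,\bs o]$ has mean offspring $m_1/m_2$ (which requires $\Delta(\bs o,f(\bs o))=m_2/m_1$), so the discrepancy is a sign typo in the lemma's statement rather than an error in your derivation.
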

\begin{proof}
	One should prove that for any event $A\subseteq\mathcal G_{**}$, one has
	
	\begin{equation}
	\label{eq:DL-quasiinv}
	\omid{\sum_{v\in V(\bs G)}\identity{A}[\bs G, \bs o,v]} = \omid{\sum_{v\in V(\bs G)}\identity{A}[\bs G, v, \bs o]\Delta(v,\bs o)}.
	\end{equation}
	Let $\tau:\mathcal T_{**}\times\mathcal T_{**}\rightarrow\mathcal G_{**}$ be the function that maps $([T_1,o_1,v_1],[T_2,o_2,v_2])$ to the directed graph with vertex set $V(T_1)\times V(T_2)$ constructed by the above method rooted
at the two pairs $(o_1,o_2)$ and $(v_1,v_2)$. It can be seen that $\tau$ is well-defined and measurable.
By the definition of the product sigma-field, $\identity{A}\circ \tau $ can be approximated by finite sums
of functions with a product decomposition $\identity{A_1}[T_1, o_1,v_1]\identity{A_2}[T_2,o_2,v_2]$ 
for events $A_1,A_2\subseteq \mathcal T_{**}$. So it is enough to prove~\eqref{eq:DL-quasiinv}
for this types of function. 
Moreover, one can assume that for some given $n\in\mathbb Z$, $l(o_1, v_1)=n$ on $A_1$ and $l(o_2,v_2)=-n$ on $A_2$.
Under this assumption, it is straightforward to obtain
	
	\begin{eqnarray*}
		\omid{\sum_{v\in V(\bs G)}\identity{A}[\bs G, \bs o,v]} 
		=\omid{\sum_{v_1\in V(\bs T_1)}\identity{A_1}[\bs T_1, \bs o_1, v_1]} \omid{\sum_{v_2\in V(\bs T_2)} \identity{A_2}[\bs T_2, \bs o_2, v_2]}
	\end{eqnarray*}
	and 
	\begin{eqnarray*}
		& & \omid{\sum_{v\in V(\bs G)}\identity{A}[\bs G, v, \bs o] (\frac {m_1}{m_2})^{l(v_1, \bs o_1)}}  
		= \omid{\sum_{v\in V(\bs G)}\identity{A}[\bs G, v, \bs o] (\frac {m_1}{m_2})^{-n}}  \\
		&&= \omid{\sum_{v_1\in V(\bs T_1)}\identity{A_1}[\bs T_1, v_1, \bs o_1]m_1^{l(v_1, \bs o_1)}}
		\times  \omid{\sum_{v_2\in V(\bs T_2)} \identity{A_2}[\bs T_2, v_2, \bs o_2]m_2^{l(v_2, \bs o_2)}}.
	\end{eqnarray*}
	Propositions~\ref{prop:mtp-sigma-inv} and~\ref{prop:EGW-unimodular} imply that the right hand sides are equal. Therefore, so are the left hand sides. Now, the claim is a direct consequence of~\eqref{eq:DL-quasiinv}.
\end{proof}

A natural vertex-shift is now defined on the generalized Diestel-Leader graph using i.i.d. extra marks.
Assume each vertex in $\bs T_2$ has at least one child a.s. Roughly speaking, it consists in picking a
member $f(v_1,v_2)$ of $\{F(v_1)\}\times F^{-1}(v_2)$ uniformly at random, and independently for all
vertices $(v_1,v_2)$. This can be made rigorous by
adding i.i.d. marks $t(v_1,v_2)\in [0,1]\times [0,1]$ to the vertices to obtain a
new random network. By proceeding as in Example~\ref{ex:iidMarks}, one can show
that the claim of Lemma~\ref{lemma:DL-quasiinv} is still valid after having added these marks.
Using the first coordinate of $t(v_1,v_2)$, one samples a number in $\{1,2,\ldots, \card{F^{-1}(v_2)}\}$
uniformly at random. Then, one uses this number and the natural order of the second coordinates
of the marks on $\{F(v_1)\}\times F^{-1}(v_2)$, to pick one element of this set.
This defines a vertex-shift $f$ with the desired property.

Finally, let $[\bs T, \bs o]$ be the connected component containing $\bs o$
of the graph $\bs G^f$ of the vertex-shift (Definition~\ref{def:foliation})
rooted at $\bs o$. The following results bear on the structure of $[\bs T, \bs o]$.
As stressed earlier, the notation $D_n$ for descendants of order $n$ is used
both for $[\bs T, \bs o]$ and $[\bs T_i, \bs o_i]$
without reference to the underlying \eft{} as there will be no ambiguity from the context.

\begin{lemma}
The connected component $[\bs T, \bs o]$ is an offspring-invariant \eft{}
with average offspring cardinality $\frac {m_1}{m_2}$.
\end{lemma}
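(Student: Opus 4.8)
The plan is to reduce the claim to the general results of Subsection~\ref{sec:sigma-inv-network}, applied to the network $[\bs G, \bs o]$ carrying the i.i.d. marks $t$. As recalled just above (arguing as in Example~\ref{ex:iidMarks}), this marked network is still quasi-invariant, with the same Radon--Nikodym cocycle $\Delta$ as in Lemma~\ref{lemma:DL-quasiinv}, and $f$ is a genuine vertex-shift, so Proposition~\ref{prop:sigma-inv-Network} is available. First I would show that $[\bs G, \bs o]$ is $f$-offspring-invariant by checking that $\Delta(\bs o, f(\bs o))$ is almost surely a deterministic constant. By construction $f(\bs o) \in \{F(\bs o_1)\}\times F^{-1}(\bs o_2)$, so the first coordinate of $f(\bs o)$ is always the parent $F(\bs o_1)$ of $\bs o_1$ in $\bs T_1$, for which $l_{\bs T_1}(\bs o_1, F(\bs o_1)) = -1$; by the formula of Lemma~\ref{lemma:DL-quasiinv} the value $\Delta(\bs o, f(\bs o))$ depends only on this quantity and hence is a.s. a constant $c$. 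Thus Proposition~\ref{prop:sigma-inv-Network} gives that $[\bs G, \bs o]$ is $f$-offspring-invariant and that $m := \omid{d_1(\bs o)} = 1/c$.

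To identify $m$, I would compute $\omid{d_1(\bs o)}$ directly. Conditionally on $(\bs T_1, \bs o_1, \bs T_2, \bs o_2)$, a vertex $(u_1, u_2)\in V(\bs G)$ is an $f$-preimage of $\bs o$ precisely when $u_1 \in F^{-1}(\bs o_1)$, $u_2 = F(\bs o_2)$, and the uniform choice made by $f$ at $(u_1, F(\bs o_2))$ selects $\bs o_2$ among the children of $F(\bs o_2)$; these choices are independent over $u_1$, each of probability $1/d_1(F(\bs o_2))$, so $d_1(\bs o)$ is conditionally $\mathrm{Binomial}(d_1(\bs o_1), 1/d_1(F(\bs o_2)))$. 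Using independence of $\bs T_1$ and $\bs T_2$, the fact that $\omid{d_1(\bs o_1)} = m_1$ for the root of an \egwt{}, and the fact that $d_1(F(\bs o_2))$ is distributed as the size-biased offspring count of $\bs T_2$ (so $\omid{1/d_1(F(\bs o_2))} = 1/m_2$), one obtains $m = \omid{d_1(\bs o)} = m_1/m_2$, and hence $c = m_2/m_1$.

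Finally I would transfer offspring-invariance from $[\bs G,\bs o]$ to the connected component $[\bs T,\bs o]$. If $m_1 \neq m_2$, then $m\neq 1$ and the lemma of Subsection~\ref{sec:sigma-inv-network} asserting that the component of $\bs o$ in $\bs G^f$ is a.s. acyclic and an offspring-invariant \eft{} applies verbatim. If $m_1 = m_2$, then $\Delta \equiv 1$, so $\bs G$ is unimodular; the lemma of Subsection~\ref{sec:sigma-inv-network} on the unimodular case gives that $[\bs T,\bs o]$ is unimodular, and since the iterates $f^n(\bs o)$ have pairwise distinct first coordinates $F^n(\bs o_1)$ (the ancestors of $\bs o_1$ in $\bs T_1$), $\bs T$ is a.s. infinite, hence a unimodular \eft{} and, by Proposition~\ref{prop:sigma_1 unimodular-new}, offspring-invariant. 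In either case, because $\bs T$ contains every $f$-preimage of $\bs o$, its mean number of children equals $\omid{d_1(\bs o)} = m_1/m_2$, which is the assertion.

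The whole argument is essentially an assembly of Proposition~\ref{prop:sigma-inv-Network} with the lemmas of Subsection~\ref{sec:sigma-inv-network}, together with the explicit structure of \egwt{}s; the single point that demands care is the orientation bookkeeping --- tracking the direction of $f$ against the sign conventions for $l(\cdot,\cdot)$ and for $\Delta$ --- so that the average offspring comes out as $m_1/m_2$ rather than its reciprocal, which is exactly what the direct binomial computation of $\omid{d_1(\bs o)}$ pins down.
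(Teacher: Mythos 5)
Your proof is correct and follows essentially the same route as the paper, whose argument is precisely the combination of Proposition~\ref{prop:sigma-inv-Network} with Lemma~\ref{lemma:DL-quasiinv} that you assemble. Your two additions are worthwhile rather than redundant: the direct binomial computation of $\omid{d_1(\bs o)}=m_1/m_2$ settles the orientation/sign bookkeeping independently of the exponent convention in the stated cocycle formula (which, taken literally, would yield the reciprocal), and the separate treatment of $m_1=m_2$ covers the case excluded by the $m\neq 1$ hypothesis of the acyclicity lemma.
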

\begin{proof}
The claim is a direct consequence of Proposition~\ref{prop:sigma-inv-Network} and Lemma~\ref{lemma:DL-quasiinv}.
\end{proof}
\begin{proposition}
\label{prop:dl-egw}
The connected component $[\bs T, \bs o]$ defined above is an \egwt{} if and only if
either $[\bs T_2, \bs o_2]$ is a regular tree with a distinguished end
or $[\bs T_1, \bs o_1]$ is a semi-infinite or doubly infinite path. 
\end{proposition}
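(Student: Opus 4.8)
The plan is to use the characterization of \egwt{}s given in Theorem~\ref{thm:EGW-characterization}: since $[\bs T, \bs o]$ is already offspring-invariant (by the previous lemma), it is an \egwt{} if and only if the number of children $d_1(\bs o)$ of the root is independent of the non-descendant part $D^c(\bs o)$ of $[\bs T, \bs o]$. So the whole proof reduces to analyzing when this independence holds, in terms of the two building trees $[\bs T_1, \bs o_1]$ and $[\bs T_2,\bs o_2]$. First I would unwind the structure of $[\bs T, \bs o]$ explicitly: a vertex of $\bs T$ is a pair $(v_1,v_2)$ with $l(\bs o_1,v_1)+l(\bs o_2,v_2)=0$, the parent map $f$ picks $F(v_1)$ in the first coordinate and a uniformly random $f$-preimage of $v_2$ in the second, so $d_1(\bs o)=\card{F^{-1}(v_2)}=d_1(v_2)$ in $\bs T_2$; that is, the number of children of the root of $\bs T$ is governed entirely by $\bs T_2$. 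In the descendant direction one moves up in $\bs T_1$ and down in $\bs T_2$, so $D(\bs o)$ "sees" all of $D(\bs o_1)$ (the descendants of $\bs o_1$ in $\bs T_1$) together with the ancestor ray of $\bs o_2$ in $\bs T_2$; conversely $D^c(\bs o)$ sees the ancestor ray of $\bs o_1$ together with the full descendant tree $D(\bs o_2)$ and its siblings along the ray. I would make this bookkeeping precise (the technicalities about which vertices of $T_1\times T_2$ lie in the descendant versus non-descendant part of $\bs o$), since it is the crux of the argument.

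Granting that bookkeeping, the "if" direction is easy. If $[\bs T_2,\bs o_2]$ is a regular tree with a distinguished end, then $d_1(\bs o)=d_1(\bs o_2)$ is deterministic, hence trivially independent of everything, in particular of $D^c(\bs o)$; by Theorem~\ref{thm:EGW-characterization} this forces $[\bs T,\bs o]$ to be an \egwt{}. If instead $[\bs T_1,\bs o_1]$ is a semi-infinite or doubly infinite path, then $\bs T_1$ has no branching at all, so $D^c(\bs o)$ carries no information coming from the first coordinate; what remains of $D^c(\bs o)$ is determined by $\bs T_2$ near the ancestor ray of $\bs o_2$, and by the explicit construction of the \egwt{} $[\bs T_2,\bs o_2]$ (Definition~\ref{def:egwt}) the size-biased child count at $\bs o_2$ is produced together with the rest of the ray-and-descendant data in a way that exhibits the required conditional independence — one reads off from the construction that $d_1(\bs o_2)$ is independent of the non-descendant structure. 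Again Theorem~\ref{thm:EGW-characterization} concludes. I would phrase both cases as: the claimed hypotheses kill the only mechanism that could couple $d_1(\bs o)$ with $D^c(\bs o)$.

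For the "only if" direction, I would argue by contraposition: assume $[\bs T_2,\bs o_2]$ is \emph{not} a regular tree with a distinguished end and $[\bs T_1,\bs o_1]$ is \emph{not} a path, and exhibit a dependence between $d_1(\bs o)$ and $D^c(\bs o)$. Because $\bs T_2$ is an \egwt{} whose offspring distribution $\pi_2$ is not a point mass, the size-biased variable $z = d_1(\bs o_2)$ is non-deterministic; and in the \egwt{} construction the value $z$ is correlated with the number of \emph{siblings} of $\bs o_0^{(2)}$ at its parent, i.e. with data living on the non-descendant side. More importantly, because $\bs T_1$ is not a path, the descendant tree $D(\bs o_1)$ has, with positive probability, a vertex of out-degree (child-count in $\bs T_1$) at least $2$ at some finite depth; combined with the fact that $D^c(\bs o)$ records the ancestor ray of $\bs o_1$ \emph{and its off-ray branches}, one can localize an event $E$ measurable with respect to $D^c(\bs o)$ on which $d_1(\bs o)$ has a different conditional law than unconditionally. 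The main obstacle — and the part I expect to require the most care — is exactly this: producing a clean, honestly-verifiable event witnessing the dependence, because it forces one to track precisely how the two \egwt{} constructions interleave inside the product graph and to rule out accidental independence (for instance when $\bs T_1$ has branching but $\pi_1$ is arranged so that the ancestor-ray statistics happen to decouple — one must check this cannot happen unless $\bs T_1$ is a path). I would handle this by reducing to the joint law of $(d_1(\bs o_2),\ D(\bs o_1),\ \text{ancestor data of }\bs o_1,\bs o_2)$ and using the explicit independence structure of Definitions~\ref{def:egwt} to compute the relevant conditional distributions directly.
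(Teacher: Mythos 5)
Your overall strategy is the same as the paper's: reduce everything to Theorem~\ref{thm:EGW-characterization} and analyze when $d_1(\bs o)$ is independent of $D^c(\bs o)$, using the inclusion $D^c(\bs o)\subseteq D^c(\bs o_1)\times\bigl(D(\bs o_2)\cup\{F(\bs o_2),F^2(\bs o_2),\ldots\}\bigr)$. However, there is a concrete error at the heart of your bookkeeping: you claim $d_1(\bs o)=\card{F^{-1}(\bs o_2)}=d_1(\bs o_2)$, i.e.\ that the child count of the root of $\bs T$ is ``governed entirely by $\bs T_2$.'' That quantity is the out-degree of $\bs o$ in the directed product graph $\bs G$, not the number of children of $\bs o$ in the family tree $\bs T=\bs G^f$. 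The children of $\bs o$ in $\bs T$ are the $f$-preimages of $\bs o$; they live in $D_1(\bs o_1)\times\{F(\bs o_2)\}$, and each candidate $(v_1,F(\bs o_2))$ is retained independently with probability $1/d_1(F(\bs o_2))$. Hence, conditional on $d_1(\bs o_1)=a$ and $d_1(F(\bs o_2))=b$, one has $d_1(\bs o)\sim\mathrm{Binomial}(a,1/b)$ — it depends on \emph{both} trees. This error invalidates your ``if'' argument in the case where $\bs T_2$ is regular: $d_1(\bs o)$ is then not deterministic, so you cannot conclude independence trivially. The correct mechanism (as in the paper) is that $d_1(\bs o)$ is independent of the subgraph induced by the right-hand side of the inclusion above, and when $d_1(F(\bs o_2))$ is a.s.\ constant the thinning parameter carries no information, so conditioning on that subgraph leaves the law of $d_1(\bs o)$ unchanged.

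The same misidentification misdirects your ``only if'' direction, which you in any case leave as a sketch without exhibiting the witnessing event — and that witness is precisely the content of the proof. The paper's dependence witness is $X:=d_2(D^c(\bs o),f(\bs o))$, the number of grandchildren of $f(\bs o)$ lying in $D^c(\bs o)$: on the event $d_1(f(\bs o))=l>1$ (possible since $\bs T_1$ is not a path), the child counts of the $l$ children of $f(\bs o)$ are obtained by thinning independent $\pi^{(1)}$-samples with the \emph{same} random retention probability $1/d_1(F(\bs o_2))$, which is non-deterministic precisely because $\bs T_2$ is not regular; this common randomness couples $d_1(\bs o)$ to $X$ even after conditioning on $d_1(F(\bs o_1))$. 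Your proposed mechanism (correlation between $d_1(\bs o_2)$ and the siblings of $\bs o_2$ at its parent) does not engage the actual source of dependence and, as you yourself note, you have not verified it. So the proposal has the right skeleton but a wrong structural identification that breaks one half of the ``if'' direction and leaves the ``only if'' direction unproved.
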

\begin{proof}
The proof is based on Theorem~\ref{thm:EGW-characterization} (for another proof, see Remark~\ref{rem:DL} below). To use this result,
consider the non-descendants $D^c(\bs o)$ of the root as follows.
One has $f^n(v_1,v_2)\in \{F^n(v_1)\}\times D_n(v_2)$.
Therefore, $D_n((v_1,v_2))\subseteq D_n(v_1)\times \{F^n(v_2)\}$. One can deduce that 
	\begin{equation}
	\label{eq:DL-D^c}
	D^c(\bs o)\subseteq D^c(\bs o_1)\times \left(D(\bs o_2)\cup \{F(\bs o_2),F^2(\bs o_2),\ldots\}\right).
	\end{equation}
	
One has $D_1(\bs o)\subseteq D_1(\bs o_1)\times \{F(\bs o_2)\}$.
Moreover, for each $(v_1,v_2)$ in the latter set, one has 
$\{F(v_1)\}\times D_1(v_2)=\{\bs o_1\}\times D_1(F(\bs o_2))$.
The definition of the vertex-shift $f$ implies that conditional on $d_1(\bs o_1)=a$ 
and $d_1(F(\bs o_2))=b$, 
$d_1(\bs o)$ has a binomial distribution with parameters $(a,\frac 1b)$.
Moreover, $d_1(\bs o)$ is independent of the subgraph $\bs S$ of $\bs G$
induced by the right hand side of~\eqref{eq:DL-D^c} (and the marks of the vertices).
		
First, assume $[\bs T_1, \bs o_1]$ is a path. In this case, the non-descendants
$D^c(\bs o)$ form a deterministic path, which is thus independent of $d_1(\bs o)$.
Therefore, $[\bs T, \bs o]$ is an \egw{} by Theorem~\ref{thm:EGW-characterization} and Lemma~\ref{lemma:DL-quasiinv}.
		
Now, assume $[\bs T_1, \bs o_1]$ is not a path. Therefore, there is $l > 1$ such that
$d_1(f(\bs o))=l$ with positive probability. For each $(v_1,v_2)\in V(\bs S)$,
the cardinality of $\{F(v_1)\}\times D_1(v_2)$ is determined by $\bs S$ and the
degree sequence of $F(\bs o_2),F^2(\bs o_2),\ldots$.
Assume $[\bs T_2, \bs o_2]$ is a regular tree with offspring cardinality $b$.
It follows from~\eqref{eq:DL-D^c} and the definition of the vertex-shift
that conditional on $\bs S$, $D^c(\bs o)$ is independent from $d_1(\bs o)$.
Since $d_1(\bs o)$ is also independent from $\bs S$, it follows that $d_1(\bs o)$
is independent from $D^c(\bs o)$. Therefore, Theorem~\ref{thm:EGW-characterization}
and Lemma~\ref{lemma:DL-quasiinv} imply that $[\bs T, \bs o]$ is an \egwt{}.

Finally, assume $[\bs T_2, \bs o_2]$ is not a regular tree and that $[\bs T_1, \bs o_1]$ is not a path.
Let $A$ be the event that $d_1(f(\bs o))=l$, which happens with positive probability.
Note that $D_1(f(\bs o))\subseteq D_1(F(\bs o_1))\times\{\bs o_2\}$.
Let $X:=d_2(D^c(\bs o), f(\bs o))$ be the number of grand-children of
$f(\bs o)$ in $D^c(\bs o)$. It is shown below that conditional on $A$,
$d_1(\bs o)$ and $X$ are not independent when conditioning on $d_1(F(\bs o_1))$. This implies that $d_1(\bs o)$ is not independent from $D^c(\bs o)$. Thus, from Theorem~\ref{thm:EGW-characterization}, $[\bs T, \bs o]$ is not an \egwt{}.
		
Let $v\in D_1(F(\bs o_1))$ be such that $(v,\bs o_2)\in D_1(f(\bs o))$.
For each $(w,z)\in D_1(v)\times \{F(\bs o_2)\}$, the set $\{F(w)\}\times D_1(z)$
has exactly $d_1(F(\bs o_2))$ elements. Condition on $d_1(f(\bs o))=l>1$ and $d_1(F(\bs o_2))=b$.  
Then, the offsprings of the elements in $D_1(f(\bs o))$ are distributed as follows: for each of
the $l$ elements $(v,\bs o_2)$ in this set, independently add a random number of potential 
children with the offspring distribution of $[\bs T_1, \bs o_1]$, and keep each of them 
independently with probability $\frac 1b = \frac 1{d_1(F(\bs o_2))}$ (and delete otherwise).
Since $d_1(F(\bs o_2))$ is non-deterministic and independent of $d_1(f(\bs o))$,
it follows that $d_1(\bs o)$ and $X$ are not independent when conditioning
on $d_1(F(\bs o_1))$. As concluded in the previous paragraph, 
it follows that $[\bs T, \bs o]$ is not an \egwt{} and the proof is completed.
\end{proof}

	\begin{remark}
		\label{rem:DL}
		It can be seen directly that $[\bs T, \bs o]$ is an \textit{age dependant \egwt{}}
                described as follows. Note that Lemma~\ref{lemma:DL-quasiinv} implies that this construction gives
an offspring-invariant \eft{}. Let $\pi^{(i)}$ be the offspring distribution of $[\bs T_i, \bs o_i]$ for $i=1,2$.
First, construct a random independent sequence $(b_i)_{i=-\infty}^{\infty}$ such that $b_i$ has distribution
$\widehat{\pi}^{(2)}$ for $i\geq 0$ (corresponding to $d_1(F^{i+1}(\bs o_2))$ above) and has distribution
$\pi^{(2)}$ for $i<0$ (corresponding to $d_1(F(v_i))$, where $v_i$ is the second coordinate of
$ f^{-i}(\bs o)$ above). Then, start with a path, namely $\bs o_0, \bs o_1, \ldots$, and set 
$f(\bs o_i):=\bs o_{i+1}$. For each $i\geq 1$, independently add some potential children to
$\bs o_i$ such that their cardinality (including $\bs o_{i-1}$) has distribution $\widehat{\pi}^{(1)}$.
Keep each of the new vertices independently with probability $\frac 1{b_{-i}}$ and delete it otherwise.
Then, for $\bs o_0$ and each newly added vertex, given it is on level $j$, add some potential
children with distribution $\pi^{(1)}$ and keep each of them independently with probability
$\frac 1{b_j}$. Continuing this process for the new vertices iteratively gives a random rooted
\eft{} (rooted at $\bs o_0$) which has the same distribution as $[\bs T, \bs o]$.
	\end{remark}

\subsection{Bibliographical Comments} 
\label{sec:bibEGW}

This subsection gathers a comprehensive list of connections between
\egwt{}s and random trees of the literature. 
In spite of these connections, the main mathematical objects and several results of this section appear to be new,
like for instance Theorem~\ref{thm:EGW-characterization} and the results stressed as such below.

First note that the unimodular case of \egwt{}s
(i.e. the critical case $m=1$) should not be confused with
the Unimodular Galton-Watson Tree~\cite{LyPePe95}.
The latter is an undirected tree with a different construction.

The critical case of \egwt{} (where $m=1$)
was introduced in~\cite{fringe} as an example of
\textit{invariant Sin-Trees} (see Subsection~\ref{sec:bibEFT}).
The \textbf{skeleton tree} considered in~\cite{Bordenave,objective} is the special case 
of the latter with offspring distribution Poisson of parameter 1.
The critical case of \egwt{}s is also described 
in a different context in~\cite{Ka77} as 
discussed below and part~\eqref{prop:EGW-classification:1} 
of Proposition~\ref{prop:EGW-classification} is proved therein.
However, the properties of the cases $m>1$ and $m<1$ appear to be new
to the best of the authors' knowledge.

The \egwt{} is related to the \textbf{size-biased Galton-Watson tree} \cite{conceptual}.
Both are obtained from the ordinary \gwt{} by biasing the
probability distribution by the population of the $n$-th generation 
and letting $n$ tend to infinity, but the latter keeps the root at the initial vertex and the former moves the root to an $n$-descendant (as in Definition~\ref{def:typicalDescendant}).
Another similarity is their undirected trees, where
the only difference is the degree distribution of the root:
the probability that the root has $k$ neighbors is $\pi_{k-1}$
in the former and $\frac{k\pi_k}m$ in the latter.
Therefore, the distributions of the two undirected trees are mutually
absolutely continuous (resp. identical) if and only if $\pi_k>0$
for all $k\geq 0$ (resp. $\pi$ is a Poisson distribution). 
However, this does not hold for the multi-type versions
of \egwt{}s and Size-Biased Galton-Watson Trees~\cite{KuLyPePe97}
and the laws of the undirected trees are generally non-equivalent.

\egwt{}s can also be connected to the stationary 
regime of \textbf{branching processes with immigration} as discussed in \cite{Soren}, 
when deleting the ancestors $\{F^i(\bs o):i\geq 0\}$ of the root and regarding the
(other) children of $F^i(\bs o)$ as the immigrants at time $-(i-1)$ for each $i\geq 0$.
Then, the part up to level 0 of the \eft{} is converted to a branching processes with immigration.
With this adaptation,
the properties of the stationary regime of this type of processes (Lemma 6.6 in \cite{Soren}
are exactly the foil classification of \egwt{}.
Hence, Proposition \ref{prop:EGW-classification} gives
a new (non analytical) proof of these results,
and the classification of offspring-invariant \eft{}s
(Proposition~\ref{prop:sigmaClassification})
can be seen as an extension of these classical results beyond branching processes.

In~\cite{Ka77}, successive iterates of a \textit{clustering mechanism}
are considered starting from a given stationary point process
(or a single point). Such a mechanism is defined by replacing
each point in the point process with a random cluster of points
independently and simultaneously for all points.
The question studied is the \textit{stability} of this, i.e.
the convergence of the Palm distribution of the $n$'th step
(as $n$ tends to infinity), which is
similar to the operator $\sigma_n$ in~(\ref{eq:sigma_n}) above.
However, to disregard trivial cases, the expected number of points
in the clusters is assumed to be 1. To study the limit, 
the genealogy of the points in all steps is considered.
This is called the \textit{method of backward trees} in~\cite{Ka77}.
Note that the descendants of every given point form a \gwt{}.
Some criteria for the convergence are studied in~\cite{Ka77}.
Moreover, by considering the limit, the \textit{backward tree}
is considered, which in our language is the \eft{}
obtained by applying $\sigma_{\infty}$ to the \gwt{}.
This tree is identical with the case $m=1$ of the \egwt{}. 
See also Section~13.5 of~\cite{DaVe08} for a
concise introduction to this and other related works.

\section*{Acknowledgements}
This work was supported 
by a grant of the Simons Foundations
(\#197982 to the University of Texas at Austin). 
The second author thanks the \emph{Research and Technology Vice-presidency}
of Sharif University of Technology for its support.

\end{document}